\newcommand{\onen}{{\mathbf 1}_{n}}
\newcommand{\onenn}[1]{{\mathbf 1}_{#1}}
\newcommand{\onel}{{\mathbf 1}_{\lambda}}
\def\k{\Bbbk}
\newcommand\E{{\sf{E}}}
\newcommand\sE{{\cal{E}}}
\newcommand\F{{\sf{F}}}
\newcommand\sF{{\cal{F}}}
\newcommand{\U}{\dot{{\rm U}}}
\newcommand{\Ucat}{\cal{U}}
\newcommand{\UcatD}{\dot{\cal{U}}}
\newcommand{\xsum}[2]{
  \xy
  (0,.4)*{\sum};
  (0,3.7)*{\scs #2};
  (0,-2.9)*{\scs #1};
  \endxy
}
\newcommand{\refequal}[1]{\xy {\ar@{=}^{#1}
(-1,0)*{};(1,0)*{}};
\endxy}
\newcommand{\Hom}{{\rm Hom}}
\renewcommand{\to}{\rightarrow}
\newcommand{\maps}{\colon}
\newcommand{\End}{{\rm End}}
\newcommand{\rk}{{\rm rk}}
\newcommand{\Proj}{{\rm Proj}}
\newcommand{\scs}{\scriptstyle}
\def\l{\lambda}
\newcommand{\g}{\mathfrak{g}}
\def\Id{\mathrm{Id}}
\def\mf{\mathfrak}
\theoremstyle{plain}% default
\newtheorem{thm}{Theorem}[section]
\newtheorem{lem}[thm]{Lemma}
\newtheorem{prop}[thm]{Proposition}
\newtheorem{cor}[thm]{Corollary}
\theoremstyle{definition}
\newtheorem{defn}{Definition}[section]
\newtheorem{conj}{Conjecture}[section]
\theoremstyle{remark}
\newtheorem*{rem}{Remark}
\newtheorem{remark}[thm]{Remark}
\numberwithin{equation}{section}
\def\emph#1{{\sl #1\/}}
\let\phi=\varphi
\let\theta=\vartheta
\let\epsilon=\varepsilon
\def\Z{{\mathbbm Z}}
\def\cal#1{\mathcal{#1}}%
\def\1{\mathbbm{1}}%
\def\nn{\notag}
\def\la{\langle}
\def\ra{\rangle}
\newcommand{\Uup}{
    \xy {\ar (0,-3)*{};(0,3)*{} };(1.5,0)*{};(-1.5,0)*{};\endxy}
\newcommand{\Udown}{
    \xy {\ar (0,3)*{};(0,-3)*{} };(1.5,0)*{};(-1.5,0)*{};\endxy}
\newcommand{\Uupdot}{
   \xy {\ar (0,-3)*{};(0,3)*{} };(0,0)*{\bullet};(1.5,0)*{};(-1.5,0)*{};\endxy}
\newcommand{\Ucupr}{\;\;
    \vcenter{\xy (-2,2)*{}; (2,2)*{} **\crv{(-2,-2) & (2,-2)}?(1)*\dir{>};
            (2,-3)*{};(-2,3)*{}; \endxy} \;\; }
\newcommand{\Ucupl}{\;\;
    \vcenter{\xy (2,2)*{}; (-2,2)*{} **\crv{(2,-2) & (-2,-2)}?(1)*\dir{>};
            (2,-3)*{};(-2,3)*{}; \endxy} \;\; }
\newcommand{\Ucapr}{\;\;
    \vcenter{\xy (-2,-1)*{}; (2,-1)*{} **\crv{(-2,3) & (2,3)}?(1)*\dir{>};
            (2,-3)*{};(-2,3)*{}; \endxy} \;\; }
\newcommand{\Ucapl}{\;\;
    \vcenter{\xy (2,-2)*{}; (-2,-2)*{} **\crv{(2,2) & (-2,2)}?(1)*\dir{>};
            (2,-3)*{};(-2,3)*{}; \endxy} \;\; }
\newcommand{\sUup}{
    \xy {\ar (0,-2)*{};(0,2)*{} };(1.5,0)*{};(-1.5,0)*{};\endxy}
\newcommand{\sUdown}{
    \xy {\ar (0,2)*{};(0,-2)*{} };(1.5,0)*{};(-1.5,0)*{};\endxy}
\newcommand{\sUupdot}{
   \xy {\ar (0,-2)*{};(0,2)*{} };(0,0)*{\scs \bullet};(1.5,0)*{};(-1.5,0)*{};\endxy}
\newcommand{\ccbub}[1]{
\xybox{%
 (-6,0)*{};
  (6,0)*{};
  (-4,0)*{}="t1";
  (4,0)*{}="t2";
  "t2";"t1" **\crv{(4,6) & (-4,6)};
  ?(.05)*\dir{>} ?(1)*\dir{>};
  "t2";"t1" **\crv{(4,-6) & (-4,-6)};
   ?(.25)*\dir{}+(0,0)*{\bullet}+(0,-3)*{\scs {#1}};
}}
\newcommand{\iccbub}[2]{
\xybox{%
 (-6,0)*{};
  (6,0)*{};
  (-4,0)*{}="t1";
  (4,0)*{}="t2";
  "t2";"t1" **\crv{(4,6) & (-4,6)}; ?(.7)*\dir{}+(-2,0)*{\scs #2}
  ?(.05)*\dir{>} ?(1)*\dir{>};
  "t2";"t1" **\crv{(4,-6) & (-4,-6)};
   ?(.3)*\dir{}+(0,0)*{\bullet}+(0,-3)*{\scs {#1}};
}}
\newcommand{\icbub}[2]{
\xybox{%
 (-6,0)*{};
  (6,0)*{};
  (-4,0)*{}="t1";
  (4,0)*{}="t2";
  "t2";"t1" **\crv{(4,6) & (-4,6)};?(.7)*\dir{}+(-2,0)*{\scs #2};
   ?(0)*\dir{<} ?(.95)*\dir{<};
  "t2";"t1" **\crv{(4,-6) & (-4,-6)};
   ?(.3)*\dir{}+(0,0)*{\bullet}+(0,-3)*{\scs {#1}};
}}
\newcommand{\cbub}[1]{
\xybox{%
 (-6,0)*{};
  (6,0)*{};
  (-4,0)*{}="t1";
  (4,0)*{}="t2";
  "t2";"t1" **\crv{(4,6) & (-4,6)};
   ?(0)*\dir{<} ?(.95)*\dir{<};
  "t2";"t1" **\crv{(4,-6) & (-4,-6)};
   ?(.75)*\dir{}+(0,0)*{\bullet}+(0,-3)*{\scs {#1}};
}}
\newcommand{\bbe}[1]{\xybox{%
  (-2,0)*{};
  (2,0)*{};
  (0,0);(0,-18) **\dir{-}; ?(.5)*\dir{<}+(2.3,0)*{\scriptstyle{#1}};
}}
\newcommand{\bbpef}[1]{\xybox{%
  (-6,0)*{};
  (6,0)*{};
  (-4,0)*{}="t1";
  (4,0)*{}="t2";
  "t1";"t2" **\crv{(-4,-6) & (4,-6)}; ?(.15)*\dir{>} ?(.9)*\dir{>}
    ?(.5)*\dir{}+(0,-2)*{\scriptstyle{#1}};
}}
\newcommand{\bbpfe}[1]{\xybox{%
  (-6,0)*{};
  (6,0)*{};
  (-4,0)*{}="t1";
  (4,0)*{}="t2";
  "t2";"t1" **\crv{(4,-6) & (-4,-6)}; ?(.15)*\dir{>} ?(.9)*\dir{>} ?(.5)*\dir{}+(0,-2)*{\scriptstyle{#1}};
}}
\newcommand{\bbcfe}[1]{\xybox{%
  (-6,0)*{};
  (6,0)*{};
  (-4,0)*{}="t1";
  (4,0)*{}="t2";
  "t1";"t2" **\crv{(-4,6) & (4,6)}; ?(.15)*\dir{>} ?(.9)*\dir{>}
  ?(.5)*\dir{}+(0,2)*{\scriptstyle{#1}};
}}
\newcommand{\bbcef}[1]{\xybox{%
  (-6,0)*{};
  (6,0)*{};
  (-4,0)*{}="t1";
  (4,0)*{}="t2";
  "t2";"t1" **\crv{(4,6) & (-4,6)}; ?(.15)*\dir{>}
  ?(.9)*\dir{>} ?(.5)*\dir{}+(0,2)*{\scriptstyle{#1}};
}}
\newcommand{\lowrru}[1]{\xybox{%
  (-8,0)*{};
  (8,0)*{};
  (-6,-18)*{};(6,-9)*{} **\crv{(-6,-13) & (6,-15)} ?(1)*\dir{>};
  (6,-9)*{};(6,0)*{}  **\dir{-} ?(.3)*\dir{ }+(2,0)*{\scs {\bf j}};
}}
\newcommand{\lowllu}[1]{\xybox{%
  (-8,0)*{};
  (8,0)*{};
  (6,-18)*{};(-6,-9)*{} **\crv{(6,-13) & (-6,-15)} ?(1)*\dir{>};
  (-6,-9)*{};(-6,0)*{}  **\dir{-} ?(.3)*\dir{ }+(-2,0)*{\scs {\bf j}};
}}
\newcommand{\nccbub}{
\xybox{%
 (-6,0)*{};
  (6,0)*{};
  (-4,0)*{}="t1";
  (4,0)*{}="t2";
  "t2";"t1" **\crv{(4,6) & (-4,6)}; ?(.05)*\dir{>} ?(1)*\dir{>};
  "t2";"t1" **\crv{(4,-6) & (-4,-6)}; ?(.3)*\dir{};
}}
\newcommand{\ncbub}{
\xybox{%
 (-6,0)*{};
  (6,0)*{};
  (-4,0)*{}="t1";
  (4,0)*{}="t2";
  "t2";"t1" **\crv{(4,6) & (-4,6)}; ?(0)*\dir{<} ?(.95)*\dir{<};
  "t2";"t1" **\crv{(4,-6) & (-4,-6)}; ?(.3)*\dir{};
}}
\newcommand{\bbdl}[1]{\xybox{%
  (2,0);(0,-8) **\crv{(2,-2)&(0,-6)}; ?(.5)*\dir{>}
}}
\newcommand{\bbdlu}[1]{\xybox{%
  (2,0);(0,-8) **\crv{(2,-2)&(0,-6)}; ?(.5)*\dir{<}
}}
\newcommand{\bbdr}[1]{\xybox{%
  (-2,0);(0,-8) **\crv{(-2,-2)&(0,-6)}; ?(.5)*\dir{>}
}}
\newcommand{\bbdru}[1]{\xybox{%
  (-2,0);(0,-8) **\crv{(-2,-2)&(0,-6)}; ?(.5)*\dir{<}
}}
\newcommand{\chern}[1]{\begin{pspicture}(-0.5,-0.5)(0.5,0.5)
    %\psellipse[fillstyle=solid,fillcolor=white](0,0)(.35,.55)
    \rput(0,0){\psframebox[framearc=.4,fillstyle=solid, linewidth=.8pt]{\small $\scriptstyle #1$}} \end{pspicture}
}
\newcommand{\sccbub}{%
\xybox{%
 (-6,0)*{};
  (6,0)*{};
  (-4,0)*{}="t1";
  (4,0)*{}="t2";
  "t2";"t1" **\crv{(4,6) & (-4,6)}; ?(.05)*\dir{>} ?(1)*\dir{>};
  "t2";"t1" **\crv{(4,-6) & (-4,-6)};
}}
\newcommand{\scbub}{%
\xybox{%
 (-6,0)*{};
  (6,0)*{};
  (-4,0)*{}="t1";
  (4,0)*{}="t2";
  "t2";"t1" **\crv{(4,6) & (-4,6)}; ?(0)*\dir{<} ?(.95)*\dir{<};
  "t2";"t1" **\crv{(4,-6) & (-4,-6)};
}}
\title{Implicit structure in 2-representations of quantum groups}
\author{Sabin Cautis}
\address{Department of Mathematics, University of Southern California, Los Angeles, CA 90089, USA}
\email{cautis@usc.edu}
\author{Aaron D. Lauda}
\address{Department of Mathematics, University of Southern California, Los Angeles, CA 90089, USA}
\email{lauda@usc.edu}
\begin{document}
%
% ==============================================================================

\begin{abstract}
Given a strong 2-representation of a Kac-Moody Lie algebra (in the sense of Rouquier) we show how to extend it to a 2-representation of categorified quantum groups (in the sense of Khovanov-Lauda). This involves checking certain extra 2-relations which are explicit in the definition by Khovanov-Lauda and, as it turns out, implicit in Rouquier's definition. Some applications are also discussed. \\ \\
Key words: Kac-Moody Lie algebras, higher representation theory, categorified quantum groups, higher relations. MSC: 20G42, 18D05.
\end{abstract}

%\setcounter{tocdepth}{2} \tableofcontents
% \thanks{The first auther was partially supported by the NSF grants XXX and the Alfred P. Sloan foundation.}
% \thanks{The second author was partially supported by the NSF grants DMS-0739392, DMS-0855713, and the Alfred P. Sloan foundation.  }

\maketitle

% ==============================================================================
%
\section{Introduction}
%
% ==============================================================================

Higher representation theory studies actions of Lie algebras, such as ${\mathfrak{sl}}_2$, on categories rather than vector spaces. Instead of linear maps between vector spaces, such as $E$ and $F$, there are functors $\E$ and $\F$ between categories. Subsequently, the work of Chuang and Rouquier~\cite{CR} revealed the importance of studying natural transformations between these functors.  These natural transformations are invisible in the usual representation theory.

New structure in higher representation theory arises via the 2-relations -- new relations holding between natural transformations. All of this data, namely the functors and the natural transformations, is captured and formalized in the concept of a categorified quantum group. These are 2-categories which were introduced and studied by Khovanov-Lauda in \cite{Lau1,KL,KL2,KL3}. Closely related 2-categories were independently introduced by Rouquier in \cite{Rou2}. In these 2-categories, the functors correspond to 1-morphisms while the natural transformations correspond to 2-morphisms.

One obvious difference between the approaches of Khovanov-Lauda and Rouquier is certain extra 2-relations which occur in \cite{KL,KL2,KL3} but not in \cite{Rou2}. These relations induce explicit 2-isomorphisms lifting certain equalities in the quantum group. The Khovanov-Lauda approach uses a diagrammatic framework where 2-morphisms are depicted diagrammatically using string-like diagrams (as explained later). When interpreted diagrammatically, the extra 2-relations in the Khovanov-Lauda approach tell you how to simplify an arbitrary string diagram (depicting a 2-morphism). Being able to simplify such diagrams can also be useful in other contexts. For example, in \cite{C} these 2-relations were used to compute various knot homologies of torus knots.

The main result of this paper (Theorem \ref{thm:main}) shows that these extra relations are essentially forced upon us. By adding the requirement that there are no negative degree endomorphisms of the identity 1-morphism we show that a 2-representation of 2-Kac-Moody algebras in the sense of Rouquier extends to a 2-representation of categorified quantum groups in the sense of Khovanov-Lauda.  In the process of proving this we also clarify the definition of these categorified quantum groups by defining a 2-category $\Ucat_Q(\mf{g})$ for arbitrary Kac-Moody Lie algebra $\mathfrak{g}$ (and an arbitrary choice of scalars $Q$ in the definition of the KLR algebra). These definitions differ slightly from those currently in the literature but, as mentioned above, are forced upon us.

We also show that biadjointness between the functors $\E$ and $\F$ is a formal consequence of the other conditions (Proposition \ref{prop:lradj}). In other words, one gets for free that the left and right adjoints of $\E$ are isomorphic (up to a grading shift). This was also noticed by Rouquier in \cite{Rou2}.

We finish by briefly discussing some applications (Section~\ref{sec_apps}). In particular, it was conjectured in \cite{KL3} that the 2-category $\Ucat_Q(\mf{g})$ should admit various 2-representations. Theorem \ref{thm:main} resolves one of these conjectures by allowing us to deduce the existence of a 2-representation of the 2-category $\Ucat_Q(\mf{g})$ on derived categories of coherent sheaves on cotangent bundles to Grassmannians.  While the added condition that there are no negative degree endomorphisms of the identity 1-morphisms is a natural condition to verify in most geometric settings, checking this condition in algebraic settings such as categories of projective modules over cyclotomic quotients of KLR algebras can be more challenging.

\medskip

In the remaining part of the introduction we sketch the definition of the 2-category $\Ucat_Q(\mf{g})$ and what it means to have a $Q$-strong 2-representation of $\g$. A $Q$-strong 2-representation of $\g$ is essentially a 2-Kac-Moody representation in the sense of Rouquier with one additional condition that there are no negative degree endomorphisms of the identity 1-morphisms $\1_\l$. Our main result (Theorem \ref{thm:main}) states that a $Q$-strong 2-representation of $\g$ induces a 2-representation of $\Ucat_Q(\mf{g})$.

This article provides a comparison between the 2-representation theories of the 2-categories introduced by Rouquier and that of the 2-category $\Ucat_Q(\mf{g})$. In practice, one checks the $Q$-strong 2-representation conditions, which are simpler and easier to verify, and uses Theorem \ref{thm:main} to define an action of the 2-category $\Ucat_Q(\mf{g})$. The 2-category $\Ucat_Q(\mf{g})$ is more elaborate and includes more relations telling you how to simplify any string diagram.

In \cite{Rou2} the notion of a 2-Kac-Moody representation is formalized by introducing several 2-categories associated to Kac-Moody algebras. Isomorphisms lifting quantum Serre relations are obtained via a localization procedure on certain 1-morphisms in these 2-categories. A 2-Kac-Moody representation is then defined as a 2-functor from one of these 2-categories to an appropriate 2-category $\cal{K}$. The results in this article are insufficient to deduce that any version of the 2-categories Rouquier defines has Grothendieck ring isomorphic to the quantum group. This is because we do not know how to verify the condition that there are no negative degree endomorphisms of $\1_\l$ (which is required for Theorem~\ref{thm:main} to hold) from the localization procedure used in Rouquier's approach.

\bigskip

% -------------------------------------------------------------------------------
%
\subsection{The 2-category $\Ucat_Q(\mf{g})$}
%
% -------------------------------------------------------------------------------

To a Cartan datum and a choice of scalars $Q$ (both defined in section~\ref{sec:datum}) we define a 2-category $\Ucat_Q(\mf{g})$. Note that the 2-category $\Ucat(\mf{g})$ from \cite{KL3} corresponds to the choice of scalars with  $t_{ij}=t_{ji}=1$ and $s_{ij}^{pq}=0$ for all $i,j \in I$.

\begin{defn} \label{defU_cat}
The 2-category $\Ucat_Q(\mf{g})$ is the graded additive $\Bbbk$-linear 2-category(see section~\ref{sec:categories}) consisting of:
\begin{itemize}
\item objects $\lambda$ for $\lambda \in X$.
\item 1-morphisms are formal direct sums of compositions of
$$\onel, \quad \onenn{\l+\alpha_i} \sE_i = \onenn{\l+\alpha_i} \sE_i\onel = \sE_i \onel \quad \text{ and }\quad \onenn{\lambda-\alpha_i} \sF_i = \onenn{\lambda-\alpha_i} \sF_i\onel = \sF_i\onel.$$
for $i \in I$ and $\l \in X$.
\item 2-morphisms are $\k$-vector spaces spanned by compositions of (decorated) tangle-like diagrams illustrated below.
\begin{align}
  \xy 0;/r.17pc/:
 (0,7);(0,-7); **\dir{-} ?(.75)*\dir{>};
 (0,0)*{\bullet};
 (7,3)*{ \scs \lambda};
 (-9,3)*{\scs  \lambda+\alpha_i};
 (-2.5,-6)*{\scs i};
 (-10,0)*{};(10,0)*{};
 \endxy &\maps \cal{E}_i\onel \to \cal{E}\onel\la (\alpha_i,\alpha_i) \ra  & \quad
 &
    \xy 0;/r.17pc/:
 (0,7);(0,-7); **\dir{-} ?(.75)*\dir{<};
 (0,0)*{\bullet};
 (7,3)*{ \scs \lambda};
 (-9,3)*{\scs  \lambda-\alpha_i};
 (-2.5,-6)*{\scs i};
 (-10,0)*{};(10,0)*{};
 \endxy\maps \cal{F}_i\onel \to \cal{F}_i\onel\la (\alpha_i,\alpha_i) \ra  \nn \\
   & & & \nn \\
   \xy 0;/r.17pc/:
  (0,0)*{\xybox{
    (-4,-4)*{};(4,4)*{} **\crv{(-4,-1) & (4,1)}?(1)*\dir{>} ;
    (4,-4)*{};(-4,4)*{} **\crv{(4,-1) & (-4,1)}?(1)*\dir{>};
    (-5.5,-3)*{\scs i};
     (5.5,-3)*{\scs j};
     (9,1)*{\scs  \lambda};
     (-10,0)*{};(10,0)*{};
     }};
  \endxy \;\;&\maps \cal{E}_i\cal{E}_j\onel  \to \cal{E}_j\cal{E}_i\onel\la -(\alpha_i,\alpha_j) \ra  &
  &
   \xy 0;/r.17pc/:
  (0,0)*{\xybox{
    (-4,4)*{};(4,-4)*{} **\crv{(-4,1) & (4,-1)}?(1)*\dir{>} ;
    (4,4)*{};(-4,-4)*{} **\crv{(4,1) & (-4,-1)}?(1)*\dir{>};
    (-6.5,-3)*{\scs i};
     (6.5,-3)*{\scs j};
     (9,1)*{\scs  \lambda};
     (-10,0)*{};(10,0)*{};
     }};
  \endxy\;\; \maps \cal{F}_i\cal{F}_j\onel  \to \cal{F}_j\cal{F}_i\onel\la -(\alpha_i,\alpha_j) \ra  \nn \\
  & & & \nn \\
     \xy 0;/r.17pc/:
    (0,-3)*{\bbpef{i}};
    (8,-5)*{\scs  \lambda};
    (-10,0)*{};(10,0)*{};
    \endxy &\maps \onel  \to \cal{F}_i\cal{E}_i\onel\la d_i+(\l, \alpha_i) \ra   &
    &
   \xy 0;/r.17pc/:
    (0,-3)*{\bbpfe{i}};
    (8,-5)*{\scs \lambda};
    (-10,0)*{};(10,0)*{};
    \endxy \maps \onel  \to\cal{E}_i\cal{F}_i\onel\la d_i-(\l, \alpha_i) \ra  \nn \\
      & & & \nn \\
  \xy 0;/r.17pc/:
    (0,0)*{\bbcef{i}};
    (8,4)*{\scs  \lambda};
    (-10,0)*{};(10,0)*{};
    \endxy & \maps \cal{F}_i\cal{E}_i\onel \to\onel\la d_i+(\l, \alpha_i) \ra  &
    &
 \xy 0;/r.17pc/:
    (0,0)*{\bbcfe{i}};
    (8,4)*{\scs  \lambda};
    (-10,0)*{};(10,0)*{};
    \endxy \maps\cal{E}_i\cal{F}_i\onel  \to\onel\la d_i-(\l, \alpha_i) \ra \nn
\end{align}
\end{itemize}
\end{defn}
Here we follow the conventions in \cite{Lau1} and read diagrams from right to left and bottom to top.  The identity 2-morphism of the 1-morphism $\cal{E}_i \onel$ is represented by an upward-oriented line labeled by $i$ (likewise, the identity 2-morphism of $\cal{F}_i \onel$ is represented by a downward-oriented line labeled $i$).

The 2-morphisms satisfy the following relations:
\begin{enumerate}
\item \label{item_cycbiadjoint} The 1-morphisms $\cal{E}_i \onel$ and $\cal{F}_i \onel$ are biadjoint (up to a specified degree shift). Moreover, the 2-morphisms are $Q$-cyclic with respect to this biadjoint structure (see section \ref{sec:cycbiadjoint}).

\item The $\cal{E}$'s carry an action of the KLR algebra\footnote{The KLR algebra is also called the quiver Hecke algebra in the literature.} associated to $Q$, while the $\cal{F}$'s carry an action of the KLR algebra associated to $Q'$, where $Q'$ is determined from $Q$ by a simple procedure (see sections \ref{sec:datum} and \ref{sec:KLR}).

\item When $i \ne j$ one has the mixed relations (see section \ref{sec:mixedrels}) relating $\cal{E}_i \cal{F}_j$ and $\cal{F}_j \cal{E}_i$.

\item \label{item_positivity} Dotted bubbles of negative degree are zero while dotted bubbles of degree zero are equal to the identity (see section \ref{sec:dotbubbles}).

\item \label{item_highersl2} For any $i \in I$ one has the extended ${\mf{sl}}_2$ relations (see section \ref{sec:highersl2}).
\end{enumerate}

The 2-category $\UcatD_Q(\mf{g})$ is the idempotent completion of the 2-category $\Ucat_Q(\mf{g})$.

\begin{rem}
Note that the notion of $Q$-cyclic biadjointness, and the precise form of the mixed relations in the 2-category $\Ucat_Q(\mf{g})$, have not appeared in the literature before.
\end{rem}
% -------------------------------------------------------------------------------
%
\subsection{A $Q$-strong 2-representation of $\g$}\label{sec:strongcat}
%
% -------------------------------------------------------------------------------

\begin{defn} \label{def_Qstrong}
A $Q$-strong 2-representation of $\g$ consists of a graded, additive $\Bbbk$-linear idempotent complete 2-category $\cal{K}$ where:
\begin{itemize}
\item The objects of $\cal{K}$ are indexed by the weights $\l \in X$.
\item There are identity 1-morphisms $\1_\l$ for each $\l$, as well as 1-morphisms $\E_i \1_{\l}: \lambda \rightarrow\l + \alpha_i$ in $\cal{K}$. We also assume that $\E_i \1_\l$ has both right and left adjoints and define the 1-morphism $\1_\l \F_i: \l + \alpha_i \rightarrow \l$ as
\begin{equation} \label{eq_defF}
  \1_\l \F_i := (\E_i \1_\l)_R \la - (\alpha_i, \l) - d_i \ra ,
\end{equation}
where $d_i = (\alpha_i, \alpha_i)/2$. Note that, because our adjoints are not yet fixed, the 1-morphism $\1_\l \F_i$ is only defined up to non-canonical isomorphism.
%Other 1-morphisms are obtained from these by taking composites, direct sums, grading shifts, and adding images of idempotent 2-morphisms.
See section \ref{sec:datum} for more details on the notation above.
\end{itemize}

On this data we impose the following conditions:
\begin{enumerate}
\item (Integrability) The identity 1-morphism $\1_{\l+r\alpha_i}$ of the object $\l+r\alpha_i$ is isomorphic to the zero 1-morphism for $r \gg 0$ or $r \ll 0$.
\item \label{co:hom} $\Hom_{\cal{K}}(\1_{\l}, \1_{\l} \la l \ra)$ is zero if $l < 0$ and one-dimensional if $l=0$. Moreover, the space of 2-morphisms between any two 1-morphisms is finite dimensional.

\item \label{co:EF} We have the following isomorphisms in $\cal{K}$:
$$\F_i \E_i \1_{\l} \cong \E_i \F_i \1_{\l} \oplus_{[- \la i,\l \ra]_i} \1_\l \text{ if } \la i,\l \ra \le 0$$
$$\E_i \F_i \1_{\l} \cong \F_i \E_i \1_{\l} \oplus_{[\la i,\l \ra]_i} \1_\l \text{ if } \la i,\l \ra \ge 0,$$
where the direct sum over a Laurent polynomial is defined in section~\ref{sec:categories}.
\item \label{co:KLR} The $\E$'s carry an action of the KLR algebra associated to $Q$.
\item \label{co:EiFj} If $i \ne j \in I$ then $\F_j \E_i \1_{\l} \cong \E_i \F_j \1_{\l}$ in $\cal{K}$.
\end{enumerate}
\end{defn}

When $Q$ is clear from the context we will simply call this a strong 2-representation of $\mf{g}$. Note that we do not require an action of the KLR algebra on the $\F$'s. The existence of such an action will follow formally from the action on the $\E$'s.

{\bf{Important convention.}} The integrability condition above implies that ``most'' objects are isomorphic to the zero object  (by which we mean, their identity 1-morphisms are isomorphic to the zero 1-morphism). If $\l$ is the zero object then, by definition, $\Hom_{\cal{K}}(\1_\l,\1_\l \la l \ra) = 0$ for all $l$. So, to be precise, condition (\ref{co:hom}) above should say that $\Hom_{\cal{K}}(\1_\l,\1_\l)$ is one-dimensional if $\l$ is non-zero. There are many other such instances later in this paper. The convention is that any statement about a certain Hom being non-zero (for example, the claims in lemma \ref{lem:1} or corollaries \ref{cor:0} and \ref{cor:2}) assumes that all weights involved are non-zero (otherwise the Hom space is automatically zero).

\begin{rem} \hfill
\begin{enumerate}
 \item Our definition of a $Q$-strong 2-representation of $\g$ is nearly identical to Rouquier's definition of a 2-Kac-Moody representation. The exception is condition (2) which is used repeatedly later in this article.

  \item The integrability condition is only used to show that the functors $\E$s and $\F$s are biadjoint and that the 1-morphisms $\E_i$ have no negative degree endomorphisms. If one adds these conditions to the definition then one can drop the integrability condition. Usually this is a poor compromise since checking biadjointness can be difficult. However, there are situations where checking biadjointness and the vanishing of negative degree 2-morphisms is easy but where the integrability condition fails.

 \item The condition about finite dimensional Homs is used to conclude that the categories have the Krull-Schmidt property. This is only used to define the concept of rank of a map which is subsequently only used in Lemma~\ref{lem:Xind}.
\end{enumerate}
\end{rem}

In \cite[Definition 9.3]{Lau1} it is shown that whenever the KLR algebra acts on 1-morphisms in an additive $\Bbbk$-linear idempotent complete 2-category $\cal{K}$ one can define divided powers $\E^{(a)}_i\1_{\l}$ as the image of certain idempotents (with an appropriate shift). So the existence of these divided powers are a consequence of our definition, rather than part of it.

% -------------------------------------------------------------------------------
%
\subsection{Main results}
%
% -------------------------------------------------------------------------------

\begin{defn}
A 2-representation of $\UcatD_Q(\mf{g})$ is a graded additive $\Bbbk$-linear 2-functor $\UcatD_Q(\mf{g})\to \cal{K}$ for some graded, additive 2-category $\cal{K}$.
\end{defn}

When all of the Hom categories $\cal{K}(x,y)$ between objects $x$ and $y$ of $\cal{K}$ are idempotent complete, in other words {$\cal{K}$ is isomorphic to its Karoubi completion $\dot{\cal{K}}$}, then any graded additive $\Bbbk$-linear 2-functor $\Ucat_{Q}(\mf{g}) \to \cal{K}$ extends uniquely to a 2-representation of $\UcatD_Q(\mf{g})$ (see section~\ref{sec:categories}).

\begin{thm}\label{thm:main}
Any $Q$-strong 2-representation of $\g$ on $\cal{K}$ extends to a 2-representation $\UcatD_Q(\mf{g}) \to \cal{K}$.
\end{thm}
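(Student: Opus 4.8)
The plan is to manufacture, from the abstract data of a $Q$-strong 2-representation on $\cal{K}$, each generating 2-morphism of $\Ucat_Q(\mf{g})$ (dots, upward/downward and mixed crossings, cups and caps) and then to verify the five families of relations in Definition \ref{defU_cat}. Since the generators split naturally into groups, I would construct them in stages, arranging the bookkeeping of degree shifts so that each abstractly-guaranteed structure on $\cal{K}$ matches a concrete diagrammatic generator, and I would postpone the two ``implicit'' relation families (the bubble relations and the extended $\mathfrak{sl}_2$ relations) to the end, since these are where the added positivity hypothesis \ref{co:hom} must do the essential work.

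First I would fix the biadjoint structure. By hypothesis $\E_i\1_\l$ has a left and a right adjoint, and $\1_\l\F_i$ is \emph{defined} as a shift of the right adjoint in \eqref{eq_defF}; by Proposition \ref{prop:lradj} the left adjoint agrees with the right adjoint up to a grading shift, so $\E_i$ and $\F_i$ are biadjoint and we obtain the cup and cap 2-morphisms as the units and counits of the two adjunctions. The KLR action of condition \ref{co:KLR} supplies the dots and upward crossings on the $\E$-side; rotating these around the newly-defined cups and caps produces the dots and crossings on the $\F$-side (so that the $Q'$-KLR action on the $\F$'s is forced, as promised in the remark after Definition \ref{def_Qstrong}) together with the mixed crossings. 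At this point I must verify the $Q$-cyclic relation \ref{item_cycbiadjoint}, i.e.\ that pulling a generator around a cap or cup returns the expected generator up to the prescribed scalar determined by $Q$; I expect this to be essentially formal once the adjunction data and degree conventions are pinned down.

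Next I would treat the dotted bubble relations \ref{item_positivity}. A dotted bubble is a closed diagram and hence an endomorphism of some identity 1-morphism $\1_\l$, and a degree count shows that a bubble carrying too few dots has strictly negative degree. Condition \ref{co:hom} then forces every such endomorphism to vanish, giving the vanishing of negative-degree bubbles, and forces degree-zero bubbles to be scalar multiples of the identity. A normalization argument, using the one-dimensionality of $\Hom_{\cal{K}}(\1_\l,\1_\l)$ together with the adjunction and KLR relations to compare clockwise and counterclockwise bubbles, then pins these scalars to the required values. This is the first place where the positivity hypothesis is genuinely indispensable.

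The main obstacle will be the extended $\mathfrak{sl}_2$ relations \ref{item_highersl2}. Here I would take the two isomorphisms of condition \ref{co:EF}, namely $\F_i\E_i\1_\l \cong \E_i\F_i\1_\l \oplus \1_\l^{\oplus}$ and its mirror, as the backbone: the strategy is to write down explicit candidate 2-morphisms built from cups, caps, dots and bubbles that should realize these direct-sum decompositions and to prove they are mutually inverse. Each candidate is a 2-morphism between \emph{known} 1-morphisms, so finite-dimensionality and positivity of the hom-spaces determine it up to finitely many scalar unknowns; composing with the abstract isomorphisms of \ref{co:EF} and reducing to endomorphisms of $\1_\l$, which are at most one-dimensional in each nonnegative degree, lets me solve for those scalars and confirm that the relations hold on the nose. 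The delicacy is that the decompositions in \ref{co:EF} are given only abstractly, so the bulk of the work is organizing the concretely-defined generators of $\Ucat_Q(\mf{g})$ so as to identify them with the abstract splitting maps. Finally the mixed relations for $i\neq j$ follow from the isomorphism $\F_j\E_i\1_\l \cong \E_i\F_j\1_\l$ of condition \ref{co:EiFj} together with cyclicity, again by pinning down the undetermined coefficients against low-degree hom-spaces.
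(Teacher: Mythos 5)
Your proposal follows the same architecture as the paper — biadjointness via Proposition \ref{prop:lradj}, the remaining generators obtained by rotation, bubble relations from positivity, the extended $\mathfrak{sl}_2$ relations extracted by inverting an explicit diagrammatic map against the abstract isomorphism of condition (\ref{co:EF}), and finally the mixed relations — but its central step rests on a false premise. You assert that endomorphisms of $\1_\l$ ``are at most one-dimensional in each nonnegative degree,'' so that your candidate splitting maps are ``determined up to finitely many scalar unknowns.'' Condition (\ref{co:hom}) controls $\Hom(\1_{\l},\1_{\l}\la l \ra)$ only for $l\le 0$; in positive degrees this space is typically large (in the motivating examples it is a polynomial algebra generated by dotted bubbles). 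Consequently the undetermined coefficients in the inverse of the map $\zeta$ of Corollary \ref{cor:1} are not scalars but arbitrary positive-degree elements of $\End(\1_\l)$, and nothing in your argument rules out that the abstract splittings guaranteed by condition (\ref{co:EF}) involve 2-morphisms of $\cal{K}$ that are not expressible by diagrams at all — which is exactly the difficulty the theorem has to overcome, and which your ``solve for the scalars'' step silently assumes away.

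The missing idea is the paper's Lemma \ref{lem:main}: for $m$ below an explicit bound, every $f\in\Hom^m(\E\1_n,\E\1_n)$ is a sum of powers of the dot composed with endomorphisms of the identity, i.e.\ these hom spaces are generated by dots as modules over $\End(\1_{n+2})$. This is what licenses Proposition \ref{prop_form-of-inv}: the components of $\zeta^{-1}$ are dotted caps with coefficients $f_i(\ell)\in\End(\1_n)$, and the equation $\zeta^{-1}\zeta=\Id$ then recursively forces each $f_i(\ell)$ to be a polynomial in bubbles, whose coefficients are identified with the symmetric-function coefficients $\alpha_\lambda$ of \eqref{eq_complete_alpha} via relations (A1)--(A5), with the remaining scalars pinned down in Lemma \ref{lem_coeff}. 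Two further steps you underestimate: the $Q$-cyclicity you call ``essentially formal'' occupies Section \ref{sec:proofcycbiadjoint} (Lemmas \ref{lem:A} and \ref{lem:B}) and itself requires Lemma \ref{lem:main}, dot-closing arguments, bubble positivity, and a separate treatment of the weight $n=-1$; and normalizing the degree-zero bubbles to $1$ requires knowing they are \emph{nonzero}, which is Corollary \ref{cor:degz-bubbles} via the rank argument of Lemma \ref{lem:Xind}, not a consequence of condition (\ref{co:hom}) alone.
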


\noindent {\bf Acknowledgments:}
The authors would like to thank Mikhail Khovanov, Anthony Licata, Joshua Sussan and Ben Webster for helpful discussions and Masaki Kashiwara for helpful correspondences. S.C. was supported by NSF grants DMS-0964439, DMS-1101439 and the Alfred P. Sloan foundation. A.L. was supported by the NSF grants DMS-0739392, DMS-0855713, and the Alfred P. Sloan foundation.

% ==============================================================================
%
\section{The full definition of $\UcatD_Q(\mf{g})$}
%
% ==============================================================================

% ==============================================================================
%
\subsection{Some conventions}\label{sec:conventions}
%
% ==============================================================================

Fix a base field $\Bbbk$. We do not assume $\Bbbk$ to be of characteristic 0, nor algebraically closed.

% -------------------------------------------------------------------------------
%
\subsubsection{The Cartan datum and choice of scalars $Q$}\label{sec:datum}
%
% -------------------------------------------------------------------------------

We fix a Cartan datum consisting of
\begin{itemize}
\item a free $\Z$-module $X$ (the weight lattice),
\item for $i \in I$ ($I$ is an indexing set) there are elements $\alpha_i \in X$ (simple roots) and $\Lambda_i \in X$ (fundamental weights),
\item for $i \in I$ an element $h_i \in X^\vee = \Hom_{\Z}(X,\Z)$ (simple coroots),
\item a bilinear form $(\cdot,\cdot )$ on $X$.
\end{itemize}
Write $\langle \cdot, \cdot \rangle \maps X^{\vee} \times X
\to \Z$ for the canonical pairing. These data should satisfy:
\begin{itemize}
\item $(\alpha_i, \alpha_i) \in 2\Z_{>0}$ for any $i\in I$,
\item $\la i,\lambda\ra :=\langle h_i, \lambda \rangle = 2 \frac{(\alpha_i,\lambda)}{(\alpha_i,\alpha_i)}$
  for $i \in I$ and $\lambda \in X$,
\item $(\alpha_i,\alpha_j) \leq 0$  for $i,j\in I$ with $i \neq j$,
\item $\langle h_j, \Lambda_i \rangle =\delta_{ij}$ for all $i,j \in I$.
\end{itemize}
Hence $C_{i,j}=\{\langle h_i, \alpha_j \rangle\}_{i,j\in I}$ is a symmetrizable generalized Cartan matrix. In what follows we write
\begin{equation}
 d_{ij} =-\langle i, \alpha_j \rangle
\end{equation}
for $i,j\in I$. We denote by $X^+ \subset X$ the dominant weights which are of the form $\sum_i \lambda_i \Lambda_i$ where $\lambda_i \ge 0$.

Write $d_i=\frac{(\alpha_i,\alpha_i)}{2}$ and let $q_i=q^{d_i}$, $[n]_i=q_i^{n-1}+q_i^{n-3}+\dots + q_i^{1-n}$,
$[n]_i!=[n]_i[n-1]_i\dots [1]_i$.
\medskip

Associated to a Cartan datum we also fix a choice of scalars $Q$ consisting of
\begin{itemize}
  \item $t_{ij}$ for all $i,j \in I$ ,
  \item $s_{ij}^{pq}\in \Bbbk$ for $i \neq j$,  $0 \leq p < d_{ij}$, and $0 \leq q < d_{ji}$,
  \item $r_i\in \Bbbk^{\times}$ for all $i \in I$,
\end{itemize}
such that
\begin{itemize}
\item $t_{ii}=1$ for all $i \in I$ and $t_{ij} \in \Bbbk^{\times}$ for $i\neq j$,
 \item $s_{ij}^{pq}=s_{ji}^{qp}$,
 \item $t_{ij}=t_{ji}$ when $d_{ij}=0$.
\end{itemize}
We set $s_{ij}^{pq}=0$ when $p,q<0$ or $d_{ij} \geq p$ or $d_{ji} \geq q$.

Given a set of scalars $Q$ we denote by $Q'$ another set of scalars given by
$$r'_{i}=-r_{i}, \hspace{.5cm} t'_{ij}=t_{ji}^{-1} \quad \text{$i \neq j$,} \hspace{.5cm} \text{ and } \hspace{.5cm} (s')_{ij}^{pq}=t_{ij}^{-1}t_{ji}^{-1}s_{ij}^{pq}.$$

% -------------------------------------------------------------------------------
%
\subsubsection{2-categories and idempotent completion}\label{sec:categories}
%
% -------------------------------------------------------------------------------

By a graded category we will mean a category equipped with an auto-equivalence $\la 1 \ra$. We denote by $\la l \ra$ the auto-equivalence obtained by applying $\la 1 \ra$ $l$ times. If $A,B$ are two objects then $\Hom^l(A,B)$ will be short-hand for $\Hom(A,B \la l \ra)$.
A graded additive $\Bbbk$-linear 2-category is a category enriched over graded additive $\Bbbk$-linear categories, that is, a 2-category $\cal{K}$ such that the Hom categories $\Hom_{\cal{K}}(A,B)$ between objects $A$ and $B$ are graded additive $\Bbbk$-linear categories and the composition maps $\Hom_{\cal{K}}(A,B) \times \Hom_{\cal{K}}(B,C) \to \Hom_{\cal{K}}(A,C)$ form graded additive $\Bbbk$-linear functor.

A graded additive $\Bbbk$-linear 2-functor $F \maps \cal{K}\to \cal{K}'$ is a (weak) 2-functor that maps the Hom categories $\Hom_{\cal{K}}(A,B)$ to $\Hom_{\cal{K}'}(FA,FB)$ by additive functors that commute with the auto-equivalence $\la 1 \ra$.

Given a 1-morphism $A$ in an additive 2-category $\cal{K}$ and a Laurent polynomial $f=\sum f_a q^a\in \Z[q,q^{-1}]$ we write $\oplus_f A$ or $A^{\oplus f}$ for the direct sum over $a\in \Z$ of $f_a$ copies of $A\la a\ra$.  In particular, if $f = [n]_i$, then we write $\oplus_{[n]_i} A$ to denote the direct sum $\oplus_{k=0}^{n-1} A \la d_i(n-1-2k) \ra$.

An additive category $\cal{C}$ is said to be idempotent complete when every idempotent 1-morphism splits in $\cal{C}$. We say that the additive 2-category $\cal{K}$ is idempotent complete when the Hom categories $\Hom_{\cal{K}}(A,B)$ are idempotent complete for any pair of objects $A, B$ of $\cal{K}$, so that all idempotent 2-morphisms split. The idempotent completion, or Karoubi envelope $\dot{\cal{C}}$, of an additive category $\cal{C}$ can be viewed as a minimal enlargement of the category $\cal{C}$ so that idempotents split. The idempotent completion $\dot{\cal{K}}$ of a 2-category $\cal{K}$ is the 2-category with the same objects as $\cal{K}$, but with Hom categories given by the usual Karoubi envelope of $\Hom_{\cal{K}}(A,B)$. Any additive 2-functor $\cal{K} \to \cal{K}'$ that has splitting of idempotent 2-morphisms in $\cal{K}'$ extends uniquely to an additive 2-functor $\dot{\cal{K}} \to \cal{K'}$; see \cite[Section 3.4]{KL3} for more details.

%%
%% Segway proposed by referee - not part of the above section!
%%
With the above conventions in place, we now proceed to fill in the missing details in Definition \ref{defU_cat}, by describing all the 2-relations diagrammatically.
%%
%%

% -------------------------------------------------------------------------------
%
\subsection{$Q$-cyclic biadjointness}\label{sec:cycbiadjoint}
%
% -------------------------------------------------------------------------------

Diagrammatically, biadjointness corresponds to the following equalities of diagrams:
\begin{equation} \label{eq_biadjoint1}
 \xy   0;/r.17pc/:
    (-8,0)*{}="1";
    (0,0)*{}="2";
    (8,0)*{}="3";
    (-8,-10);"1" **\dir{-};
    "1";"2" **\crv{(-8,8) & (0,8)} ?(0)*\dir{>} ?(1)*\dir{>};
    "2";"3" **\crv{(0,-8) & (8,-8)}?(1)*\dir{>};
    "3"; (8,10) **\dir{-};
    (12,-9)*{\lambda};
    (-6,9)*{\lambda+\alpha_i};
    \endxy
    \; =
    \;
\xy   0;/r.17pc/:
    (-8,0)*{}="1";
    (0,0)*{}="2";
    (8,0)*{}="3";
    (0,-10);(0,10)**\dir{-} ?(.5)*\dir{>};
    (5,8)*{\lambda};
    (-9,8)*{\lambda+\alpha_i};
    \endxy
\qquad \quad \xy  0;/r.17pc/:
    (8,0)*{}="1";
    (0,0)*{}="2";
    (-8,0)*{}="3";
    (8,-10);"1" **\dir{-};
    "1";"2" **\crv{(8,8) & (0,8)} ?(0)*\dir{<} ?(1)*\dir{<};
    "2";"3" **\crv{(0,-8) & (-8,-8)}?(1)*\dir{<};
    "3"; (-8,10) **\dir{-};
    (12,9)*{\lambda+\alpha_i};
    (-6,-9)*{\lambda};
    \endxy
    \; =
    \;
\xy  0;/r.17pc/:
    (8,0)*{}="1";
    (0,0)*{}="2";
    (-8,0)*{}="3";
    (0,-10);(0,10)**\dir{-} ?(.5)*\dir{<};
    (9,-8)*{\lambda+\alpha_i};
    (-6,-8)*{\lambda};
    \endxy
\end{equation}

\begin{equation}\label{eq_biadjoint2}
 \xy   0;/r.17pc/:
    (8,0)*{}="1";
    (0,0)*{}="2";
    (-8,0)*{}="3";
    (8,-10);"1" **\dir{-};
    "1";"2" **\crv{(8,8) & (0,8)} ?(0)*\dir{>} ?(1)*\dir{>};
    "2";"3" **\crv{(0,-8) & (-8,-8)}?(1)*\dir{>};
    "3"; (-8,10) **\dir{-};
    (12,9)*{\lambda};
    (-5,-9)*{\lambda+\alpha_i};
    \endxy
    \; =
    \;
    \xy 0;/r.17pc/:
    (8,0)*{}="1";
    (0,0)*{}="2";
    (-8,0)*{}="3";
    (0,-10);(0,10)**\dir{-} ?(.5)*\dir{>};
    (5,-8)*{\lambda};
    (-9,-8)*{\lambda+\alpha_i};
    \endxy
\qquad \quad \xy   0;/r.17pc/:
    (-8,0)*{}="1";
    (0,0)*{}="2";
    (8,0)*{}="3";
    (-8,-10);"1" **\dir{-};
    "1";"2" **\crv{(-8,8) & (0,8)} ?(0)*\dir{<} ?(1)*\dir{<};
    "2";"3" **\crv{(0,-8) & (8,-8)}?(1)*\dir{<};
    "3"; (8,10) **\dir{-};
    (12,-9)*{\lambda+\alpha_i};
    (-6,9)*{\lambda};
    \endxy
    \; =
    \;
\xy   0;/r.17pc/:
    (-8,0)*{}="1";
    (0,0)*{}="2";
    (8,0)*{}="3";
    (0,-10);(0,10)**\dir{-} ?(.5)*\dir{<};
   (9,8)*{\lambda+\alpha_i};
    (-6,8)*{\lambda};
    \endxy
\end{equation}

The $Q$-cyclic condition for dots is equivalent to:
\begin{equation} \label{eq_cyclic_dot}
 \xy 0;/r.17pc/:
    (-8,5)*{}="1";
    (0,5)*{}="2";
    (0,-5)*{}="2'";
    (8,-5)*{}="3";
    (-8,-10);"1" **\dir{-};
    "2";"2'" **\dir{-} ?(.5)*\dir{<};
    "1";"2" **\crv{(-8,12) & (0,12)} ?(0)*\dir{<};
    "2'";"3" **\crv{(0,-12) & (8,-12)}?(1)*\dir{<};
    "3"; (8,10) **\dir{-};
    (15,-9)*{\lambda+\alpha_i};
    (-12,9)*{\lambda};
    (0,4)*{\bullet};
    (10,8)*{\scs };
    (-10,-8)*{\scs };
    \endxy
    \quad = \quad
      \xy 0;/r.17pc/:
 (0,10);(0,-10); **\dir{-} ?(.75)*\dir{<}+(2.3,0)*{\scriptstyle{}}
 ?(.1)*\dir{ }+(2,0)*{\scs };
 (0,0)*{\bullet};
 (-6,5)*{\lambda};
 (8,5)*{\lambda+\alpha_i};
 (-10,0)*{};(10,0)*{};(-2,-8)*{\scs };
 \endxy
    \quad = \quad
   \xy 0;/r.17pc/:
    (8,5)*{}="1";
    (0,5)*{}="2";
    (0,-5)*{}="2'";
    (-8,-5)*{}="3";
    (8,-10);"1" **\dir{-};
    "2";"2'" **\dir{-} ?(.5)*\dir{<};
    "1";"2" **\crv{(8,12) & (0,12)} ?(0)*\dir{<};
    "2'";"3" **\crv{(0,-12) & (-8,-12)}?(1)*\dir{<};
    "3"; (-8,10) **\dir{-};
    (15,9)*{\lambda+\alpha_i};
    (-12,-9)*{\lambda};
    (0,4)*{\bullet};
    (-10,8)*{\scs };
    (10,-8)*{\scs };
    \endxy
\end{equation}
The $Q$-cyclic relations for crossings are given by
\begin{equation} \label{eq_almost_cyclic}
   \xy 0;/r.17pc/:
  (0,0)*{\xybox{
    (-4,4)*{};(4,-4)*{} **\crv{(-4,1) & (4,-1)}?(1)*\dir{>} ;
    (4,4)*{};(-4,-4)*{} **\crv{(4,1) & (-4,-1)}?(1)*\dir{>};
    (-6.5,-3)*{\scs i};
     (6.5,-3)*{\scs j};
     (- 9,1)*{\scs  \lambda};
     (-10,0)*{};(10,0)*{};
     }};
  \endxy \quad = \quad
  t_{ij}^{-1}\xy 0;/r.17pc/:
  (0,0)*{\xybox{
    (4,-4)*{};(-4,4)*{} **\crv{(4,-1) & (-4,1)}?(1)*\dir{>};
    (-4,-4)*{};(4,4)*{} **\crv{(-4,-1) & (4,1)};
     (-4,4)*{};(18,4)*{} **\crv{(-4,16) & (18,16)} ?(1)*\dir{>};
     (4,-4)*{};(-18,-4)*{} **\crv{(4,-16) & (-18,-16)} ?(1)*\dir{<}?(0)*\dir{<};
     (-18,-4);(-18,12) **\dir{-};(-12,-4);(-12,12) **\dir{-};
     (18,4);(18,-12) **\dir{-};(12,4);(12,-12) **\dir{-};
     (8,1)*{ \lambda};
     (-10,0)*{};(10,0)*{};
     (-4,-4)*{};(-12,-4)*{} **\crv{(-4,-10) & (-12,-10)}?(1)*\dir{<}?(0)*\dir{<};
      (4,4)*{};(12,4)*{} **\crv{(4,10) & (12,10)}?(1)*\dir{>}?(0)*\dir{>};
      (-20,11)*{\scs j};(-10,11)*{\scs i};
      (20,-11)*{\scs j};(10,-11)*{\scs i};
     }};
  \endxy
\quad =  \quad t_{ji}^{-1}
\xy 0;/r.17pc/:
  (0,0)*{\xybox{
    (-4,-4)*{};(4,4)*{} **\crv{(-4,-1) & (4,1)}?(1)*\dir{>};
    (4,-4)*{};(-4,4)*{} **\crv{(4,-1) & (-4,1)};
     (4,4)*{};(-18,4)*{} **\crv{(4,16) & (-18,16)} ?(1)*\dir{>};
     (-4,-4)*{};(18,-4)*{} **\crv{(-4,-16) & (18,-16)} ?(1)*\dir{<}?(0)*\dir{<};
     (18,-4);(18,12) **\dir{-};(12,-4);(12,12) **\dir{-};
     (-18,4);(-18,-12) **\dir{-};(-12,4);(-12,-12) **\dir{-};
     (8,1)*{ \lambda};
     (-10,0)*{};(10,0)*{};
      (4,-4)*{};(12,-4)*{} **\crv{(4,-10) & (12,-10)}?(1)*\dir{<}?(0)*\dir{<};
      (-4,4)*{};(-12,4)*{} **\crv{(-4,10) & (-12,10)}?(1)*\dir{>}?(0)*\dir{>};
      (20,11)*{\scs i};(10,11)*{\scs j};
      (-20,-11)*{\scs i};(-10,-11)*{\scs j};
     }};
  \endxy
\end{equation}
This definition ensures that downward strands are equipped with an action of the KLR algebra associated with the choice of scalars $Q'$ (we define $Q'$ in section \ref{sec:datum} and the KLR algebras in section \ref{sec:KLR}).

The $Q$-cyclic condition also relates our definition of sideways crossings to the other possible definition of sideways crossings:
\begin{equation} \label{eq_crossl-gen}
  \xy 0;/r.18pc/:
  (0,0)*{\xybox{
    (-4,-4)*{};(4,4)*{} **\crv{(-4,-1) & (4,1)}?(1)*\dir{>} ;
    (4,-4)*{};(-4,4)*{} **\crv{(4,-1) & (-4,1)}?(0)*\dir{<};
    (-5,-3)*{\scs j};
     (6.5,-3)*{\scs i};
     (9,2)*{ \lambda};
     (-12,0)*{};(12,0)*{};
     }};
  \endxy
\quad := \quad
 \xy 0;/r.17pc/:
  (0,0)*{\xybox{
    (4,-4)*{};(-4,4)*{} **\crv{(4,-1) & (-4,1)}?(1)*\dir{>};
    (-4,-4)*{};(4,4)*{} **\crv{(-4,-1) & (4,1)};
   %  (-4,4)*{};(18,4)*{} **\crv{(-4,16) & (18,16)} ?(1)*\dir{>};
    % (4,-4)*{};(-18,-4)*{} **\crv{(4,-16) & (-18,-16)} ?(1)*\dir{<}?(0)*\dir{<};
     (-4,4);(-4,12) **\dir{-};
     (-12,-4);(-12,12) **\dir{-};
     (4,-4);(4,-12) **\dir{-};(12,4);(12,-12) **\dir{-};
     (16,1)*{\lambda};
     (-10,0)*{};(10,0)*{};
     (-4,-4)*{};(-12,-4)*{} **\crv{(-4,-10) & (-12,-10)}?(1)*\dir{<}?(0)*\dir{<};
      (4,4)*{};(12,4)*{} **\crv{(4,10) & (12,10)}?(1)*\dir{>}?(0)*\dir{>};
      (-14,11)*{\scs i};(-2,11)*{\scs j};
      (14,-11)*{\scs i};(2,-11)*{\scs j};
     }};
  \endxy
  \quad = \quad t_{ij} \;\;
 \xy 0;/r.17pc/:
  (0,0)*{\xybox{
    (-4,-4)*{};(4,4)*{} **\crv{(-4,-1) & (4,1)}?(1)*\dir{<};
    (4,-4)*{};(-4,4)*{} **\crv{(4,-1) & (-4,1)};
     (4,4);(4,12) **\dir{-};
     (12,-4);(12,12) **\dir{-};
     (-4,-4);(-4,-12) **\dir{-};(-12,4);(-12,-12) **\dir{-};
     (16,1)*{\lambda};
     (10,0)*{};(-10,0)*{};
     (4,-4)*{};(12,-4)*{} **\crv{(4,-10) & (12,-10)}?(1)*\dir{>}?(0)*\dir{>};
      (-4,4)*{};(-12,4)*{} **\crv{(-4,10) & (-12,10)}?(1)*\dir{<}?(0)*\dir{<};
     }};
     (12,11)*{\scs j};(0,11)*{\scs i};
      (-17,-11)*{\scs j};(-5,-11)*{\scs i};
  \endxy
\end{equation}
\begin{equation} \label{eq_crossr-gen}
  \xy 0;/r.18pc/:
  (0,0)*{\xybox{
    (-4,-4)*{};(4,4)*{} **\crv{(-4,-1) & (4,1)}?(0)*\dir{<} ;
    (4,-4)*{};(-4,4)*{} **\crv{(4,-1) & (-4,1)}?(1)*\dir{>};
    (5.1,-3)*{\scs i};
     (-6.5,-3)*{\scs j};
     (9,2)*{ \lambda};
     (-12,0)*{};(12,0)*{};
     }};
  \endxy
\quad := \quad
 \xy 0;/r.17pc/:
  (0,0)*{\xybox{
    (-4,-4)*{};(4,4)*{} **\crv{(-4,-1) & (4,1)}?(1)*\dir{>};
    (4,-4)*{};(-4,4)*{} **\crv{(4,-1) & (-4,1)};
     (4,4);(4,12) **\dir{-};
     (12,-4);(12,12) **\dir{-};
     (-4,-4);(-4,-12) **\dir{-};(-12,4);(-12,-12) **\dir{-};
     (16,-6)*{\lambda};
     (10,0)*{};(-10,0)*{};
     (4,-4)*{};(12,-4)*{} **\crv{(4,-10) & (12,-10)}?(1)*\dir{<}?(0)*\dir{<};
      (-4,4)*{};(-12,4)*{} **\crv{(-4,10) & (-12,10)}?(1)*\dir{>}?(0)*\dir{>};
      (14,11)*{\scs j};(2,11)*{\scs i};
      (-14,-11)*{\scs j};(-2,-11)*{\scs i};
     }};
  \endxy
  \quad = \quad t_{ji} \;\;
  \xy 0;/r.17pc/:
  (0,0)*{\xybox{
    (4,-4)*{};(-4,4)*{} **\crv{(4,-1) & (-4,1)}?(1)*\dir{<};
    (-4,-4)*{};(4,4)*{} **\crv{(-4,-1) & (4,1)};
   %  (-4,4)*{};(18,4)*{} **\crv{(-4,16) & (18,16)} ?(1)*\dir{>};
    % (4,-4)*{};(-18,-4)*{} **\crv{(4,-16) & (-18,-16)} ?(1)*\dir{<}?(0)*\dir{<};
     (-4,4);(-4,12) **\dir{-};
     (-12,-4);(-12,12) **\dir{-};
     (4,-4);(4,-12) **\dir{-};(12,4);(12,-12) **\dir{-};
     (16,6)*{\lambda};
     (-10,0)*{};(10,0)*{};
     (-4,-4)*{};(-12,-4)*{} **\crv{(-4,-10) & (-12,-10)}?(1)*\dir{>}?(0)*\dir{>};
      (4,4)*{};(12,4)*{} **\crv{(4,10) & (12,10)}?(1)*\dir{<}?(0)*\dir{<};
      (-14,11)*{\scs i};(-2,11)*{\scs j};(14,-11)*{\scs i};(2,-11)*{\scs j};
     }};
  \endxy
\end{equation}
where the  equality in \eqref{eq_crossl-gen} and \eqref{eq_crossr-gen}
follow from \eqref{eq_almost_cyclic}.

% -------------------------------------------------------------------------------
%
\subsection{KLR algebras} \label{sec:KLR}
%
% -------------------------------------------------------------------------------

The KLR algebra $R=R_Q$ associated to $Q$ is defined by finite $\Bbbk$-linear combinations of braid--like diagrams in the plane, where each strand is labeled by a vertex $i \in I$.  Strands can intersect and can carry dots but triple intersections are not allowed.  Diagrams are considered up to planar isotopy that do not change the combinatorial type of the diagram. We recall the local relations:

\begin{enumerate}[i)]
\item
If all strands are labeled by the same $i \in I$ then the  NilHecke algebra axioms hold
 \begin{equation}
 \vcenter{\xy 0;/r.17pc/:
    (-4,-4)*{};(4,4)*{} **\crv{(-4,-1) & (4,1)}?(1)*\dir{};
    (4,-4)*{};(-4,4)*{} **\crv{(4,-1) & (-4,1)}?(1)*\dir{};
    (-4,4)*{};(4,12)*{} **\crv{(-4,7) & (4,9)}?(1)*\dir{};
    (4,4)*{};(-4,12)*{} **\crv{(4,7) & (-4,9)}?(1)*\dir{};
 \endxy}
 \;\; =\;\; 0, \qquad \quad
 \vcenter{
 \xy 0;/r.17pc/:
    (-4,-4)*{};(4,4)*{} **\crv{(-4,-1) & (4,1)}?(1)*\dir{};
    (4,-4)*{};(-4,4)*{} **\crv{(4,-1) & (-4,1)}?(1)*\dir{};
    (4,4)*{};(12,12)*{} **\crv{(4,7) & (12,9)}?(1)*\dir{};
    (12,4)*{};(4,12)*{} **\crv{(12,7) & (4,9)}?(1)*\dir{};
    (-4,12)*{};(4,20)*{} **\crv{(-4,15) & (4,17)}?(1)*\dir{};
    (4,12)*{};(-4,20)*{} **\crv{(4,15) & (-4,17)}?(1)*\dir{};
    (-4,4)*{}; (-4,12) **\dir{-};
    (12,-4)*{}; (12,4) **\dir{-};
    (12,12)*{}; (12,20) **\dir{-};
   %(18,8)*{\lambda};
\endxy}
 \;\; =\;\;
 \vcenter{
 \xy 0;/r.17pc/:
    (4,-4)*{};(-4,4)*{} **\crv{(4,-1) & (-4,1)}?(1)*\dir{};
    (-4,-4)*{};(4,4)*{} **\crv{(-4,-1) & (4,1)}?(1)*\dir{};
    (-4,4)*{};(-12,12)*{} **\crv{(-4,7) & (-12,9)}?(1)*\dir{};
    (-12,4)*{};(-4,12)*{} **\crv{(-12,7) & (-4,9)}?(1)*\dir{};
    (4,12)*{};(-4,20)*{} **\crv{(4,15) & (-4,17)}?(1)*\dir{};
    (-4,12)*{};(4,20)*{} **\crv{(-4,15) & (4,17)}?(1)*\dir{};
    (4,4)*{}; (4,12) **\dir{-};
    (-12,-4)*{}; (-12,4) **\dir{-};
    (-12,12)*{}; (-12,20) **\dir{-};
  %(10,8)*{\lambda};
\endxy} \label{eq_nil_rels}
  \end{equation}
\begin{eqnarray}
 r_{i}\;\;\xy 0;/r.18pc/:
  (4,4);(4,-4) **\dir{-}?(0)*\dir{}+(2.3,0)*{};
  (-4,4);(-4,-4) **\dir{-}?(0)*\dir{}+(2.3,0)*{};
  %(9,2)*{n};
 \endxy
 \quad =
\xy 0;/r.18pc/:
  (0,0)*{\xybox{
    (-4,-4)*{};(4,4)*{} **\crv{(-4,-1) & (4,1)}?(1)*\dir{}?(.25)*{\bullet};
    (4,-4)*{};(-4,4)*{} **\crv{(4,-1) & (-4,1)}?(1)*\dir{};
    % (8,1)*{ n};
     (-10,0)*{};(10,0)*{};
     }};
  \endxy
 \;\; -
\xy 0;/r.18pc/:
  (0,0)*{\xybox{
    (-4,-4)*{};(4,4)*{} **\crv{(-4,-1) & (4,1)}?(1)*\dir{}?(.75)*{\bullet};
    (4,-4)*{};(-4,4)*{} **\crv{(4,-1) & (-4,1)}?(1)*\dir{};
   %  (8,1)*{ n};
     (-10,0)*{};(10,0)*{};
     }};
  \endxy
 \;\; =
\xy 0;/r.18pc/:
  (0,0)*{\xybox{
    (-4,-4)*{};(4,4)*{} **\crv{(-4,-1) & (4,1)}?(1)*\dir{};
    (4,-4)*{};(-4,4)*{} **\crv{(4,-1) & (-4,1)}?(1)*\dir{}?(.75)*{\bullet};
 %    (8,1)*{ n};
     (-10,0)*{};(10,0)*{};
     }};
  \endxy
 \;\; -
  \xy 0;/r.18pc/:
  (0,0)*{\xybox{
    (-4,-4)*{};(4,4)*{} **\crv{(-4,-1) & (4,1)}?(1)*\dir{} ;
    (4,-4)*{};(-4,4)*{} **\crv{(4,-1) & (-4,1)}?(1)*\dir{}?(.25)*{\bullet};
 %    (8,1)*{ n};
     (-10,0)*{};(10,0)*{};
     }};
  \endxy \nn \\ \label{eq_nil_dotslide}
\end{eqnarray}

\item For $i \neq j$
\begin{eqnarray}
  \vcenter{\xy 0;/r.17pc/:
    (-4,-4)*{};(4,4)*{} **\crv{(-4,-1) & (4,1)}?(1)*\dir{};
    (4,-4)*{};(-4,4)*{} **\crv{(4,-1) & (-4,1)}?(1)*\dir{};
    (-4,4)*{};(4,12)*{} **\crv{(-4,7) & (4,9)}?(1)*\dir{};
    (4,4)*{};(-4,12)*{} **\crv{(4,7) & (-4,9)}?(1)*\dir{};
 % (8,8)*{\lambda};
  (-5,-3)*{\scs i};
     (5.1,-3)*{\scs j};
 \endxy}
 \qquad = \qquad
 \left\{
 \begin{array}{ccc}
     t_{ij}\;\xy 0;/r.17pc/:
  (3,9);(3,-9) **\dir{-}?(.5)*\dir{}+(2.3,0)*{};
  (-3,9);(-3,-9) **\dir{-}?(.5)*\dir{}+(2.3,0)*{};
  %(8,2)*{\lambda};
  (-5,-6)*{\scs i};     (5.1,-6)*{\scs j};
 \endxy &  &  \text{if $(\alpha_i, \alpha_j)=0$,}\\ \\
 t_{ij} \vcenter{\xy 0;/r.17pc/:
  (3,9);(3,-9) **\dir{-}?(.5)*\dir{}+(2.3,0)*{};
  (-3,9);(-3,-9) **\dir{-}?(.5)*\dir{}+(2.3,0)*{};
  %(8,2)*{\lambda};
  (-3,4)*{\bullet};(-6.5,5)*{\scs d_{ij}};
  (-5,-6)*{\scs i};     (5.1,-6)*{\scs j};
 \endxy} \;\; + \;\; t_{ji}
  \vcenter{\xy 0;/r.17pc/:
  (3,9);(3,-9) **\dir{-}?(.5)*\dir{}+(2.3,0)*{};
  (-3,9);(-3,-9) **\dir{-}?(.5)*\dir{}+(2.3,0)*{};
  %(12,2)*{\lambda};
  (3,4)*{\bullet};(7,5)*{\scs d_{ji}};
  (-5,-6)*{\scs i};     (5.1,-6)*{\scs j};
 \endxy}
 \;\; + \;\;
 \xsum{p,q}{} s_{ij}^{pq}\;
 \vcenter{\xy 0;/r.17pc/:
  (3,9);(3,-9) **\dir{-}?(.5)*\dir{}+(2.3,0)*{};
  (-3,9);(-3,-9) **\dir{-}?(.5)*\dir{}+(2.3,0)*{};
  %(12,2)*{\lambda};
  (-3,4)*{\bullet};(-5.5,5)*{\scs p};
  (3,4)*{\bullet};(6,5)*{\scs q};
  (-5,-6)*{\scs i};     (5.1,-6)*{\scs j};
 \endxy}
   &  & \text{if $(\alpha_i, \alpha_j) \neq 0$,}
 \end{array}
 \right. \nn \\\label{eq_r2_ij-gen}
\end{eqnarray}
where the summation in the case $(\alpha_i, \alpha_j) \neq 0$ is over all $p$, $q$ such that
\begin{equation} \label{eq_pq}
(\alpha_i,\alpha_i)p+(\alpha_j,\alpha_j)q = -2 (\alpha_i,\alpha_j).
\end{equation}

\item For $i \neq j$ the dot sliding relations
\begin{eqnarray} \label{eq_dot_slide_ij-gen}
\xy 0;/r.18pc/:
  (0,0)*{\xybox{
    (-4,-4)*{};(4,4)*{} **\crv{(-4,-1) & (4,1)}?(1)*\dir{}?(.75)*{\bullet};
    (4,-4)*{};(-4,4)*{} **\crv{(4,-1) & (-4,1)}?(1)*\dir{};
    (-5,-3)*{\scs i};
     (5.1,-3)*{\scs j};
    % (8,1)*{ \lambda};
     (-10,0)*{};(10,0)*{};
     }};
  \endxy
 \;\; =
\xy 0;/r.18pc/:
  (0,0)*{\xybox{
    (-4,-4)*{};(4,4)*{} **\crv{(-4,-1) & (4,1)}?(1)*\dir{}?(.25)*{\bullet};
    (4,-4)*{};(-4,4)*{} **\crv{(4,-1) & (-4,1)}?(1)*\dir{};
    (-5,-3)*{\scs i};
     (5.1,-3)*{\scs j};
     %(8,1)*{ \lambda};
     (-10,0)*{};(10,0)*{};
     }};
  \endxy
\qquad  \xy 0;/r.18pc/:
  (0,0)*{\xybox{
    (-4,-4)*{};(4,4)*{} **\crv{(-4,-1) & (4,1)}?(1)*\dir{};
    (4,-4)*{};(-4,4)*{} **\crv{(4,-1) & (-4,1)}?(1)*\dir{}?(.75)*{\bullet};
    (-5,-3)*{\scs i};
     (5.1,-3)*{\scs j};
   %  (8,1)*{ \lambda};
     (-10,0)*{};(10,0)*{};
     }};
  \endxy
\;\;  =
  \xy 0;/r.18pc/:
  (0,0)*{\xybox{
    (-4,-4)*{};(4,4)*{} **\crv{(-4,-1) & (4,1)}?(1)*\dir{} ;
    (4,-4)*{};(-4,4)*{} **\crv{(4,-1) & (-4,1)}?(1)*\dir{}?(.25)*{\bullet};
    (-5,-3)*{\scs i};
     (5.1,-3)*{\scs j};
   %  (8,1)*{ \lambda};
     (-10,0)*{};(12,0)*{};
     }};
  \endxy
\end{eqnarray}
hold.

\item Unless $i = k$ and $(\alpha_i, \alpha_j) < 0$ the relation
\begin{equation}
 \vcenter{
 \xy 0;/r.17pc/:
    (-4,-4)*{};(4,4)*{} **\crv{(-4,-1) & (4,1)}?(1)*\dir{};
    (4,-4)*{};(-4,4)*{} **\crv{(4,-1) & (-4,1)}?(1)*\dir{};
    (4,4)*{};(12,12)*{} **\crv{(4,7) & (12,9)}?(1)*\dir{};
    (12,4)*{};(4,12)*{} **\crv{(12,7) & (4,9)}?(1)*\dir{};
    (-4,12)*{};(4,20)*{} **\crv{(-4,15) & (4,17)}?(1)*\dir{};
    (4,12)*{};(-4,20)*{} **\crv{(4,15) & (-4,17)}?(1)*\dir{};
    (-4,4)*{}; (-4,12) **\dir{-};
    (12,-4)*{}; (12,4) **\dir{-};
    (12,12)*{}; (12,20) **\dir{-};
  %(18,8)*{\lambda};
  (-6,-3)*{\scs i};
  (6,-3)*{\scs j};
  (15,-3)*{\scs k};
\endxy}
 \;\; =\;\;
 \vcenter{
 \xy 0;/r.17pc/:
    (4,-4)*{};(-4,4)*{} **\crv{(4,-1) & (-4,1)}?(1)*\dir{};
    (-4,-4)*{};(4,4)*{} **\crv{(-4,-1) & (4,1)}?(1)*\dir{};
    (-4,4)*{};(-12,12)*{} **\crv{(-4,7) & (-12,9)}?(1)*\dir{};
    (-12,4)*{};(-4,12)*{} **\crv{(-12,7) & (-4,9)}?(1)*\dir{};
    (4,12)*{};(-4,20)*{} **\crv{(4,15) & (-4,17)}?(1)*\dir{};
    (-4,12)*{};(4,20)*{} **\crv{(-4,15) & (4,17)}?(1)*\dir{};
    (4,4)*{}; (4,12) **\dir{-};
    (-12,-4)*{}; (-12,4) **\dir{-};
    (-12,12)*{}; (-12,20) **\dir{-};
  %(10,8)*{\lambda};
  (7,-3)*{\scs k};
  (-6,-3)*{\scs j};
  (-14,-3)*{\scs i};
\endxy} \label{eq_r3_easy-gen}
\end{equation}
holds. Otherwise, $(\alpha_i, \alpha_j) < 0$ and
\begin{equation}
r_{i}^{-1}\left(\;\; \vcenter{
 \xy 0;/r.17pc/:
    (-4,-4)*{};(4,4)*{} **\crv{(-4,-1) & (4,1)}?(1)*\dir{};
    (4,-4)*{};(-4,4)*{} **\crv{(4,-1) & (-4,1)}?(1)*\dir{};
    (4,4)*{};(12,12)*{} **\crv{(4,7) & (12,9)}?(1)*\dir{};
    (12,4)*{};(4,12)*{} **\crv{(12,7) & (4,9)}?(1)*\dir{};
    (-4,12)*{};(4,20)*{} **\crv{(-4,15) & (4,17)}?(1)*\dir{};
    (4,12)*{};(-4,20)*{} **\crv{(4,15) & (-4,17)}?(1)*\dir{};
    (-4,4)*{}; (-4,12) **\dir{-};
    (12,-4)*{}; (12,4) **\dir{-};
    (12,12)*{}; (12,20) **\dir{-};
  %(18,8)*{\lambda};
  (-6,-3)*{\scs i};
  (6,-3)*{\scs j};
  (14,-3)*{\scs i};
\endxy}
\quad - \quad
 \vcenter{
 \xy 0;/r.17pc/:
    (4,-4)*{};(-4,4)*{} **\crv{(4,-1) & (-4,1)}?(1)*\dir{};
    (-4,-4)*{};(4,4)*{} **\crv{(-4,-1) & (4,1)}?(1)*\dir{};
    (-4,4)*{};(-12,12)*{} **\crv{(-4,7) & (-12,9)}?(1)*\dir{};
    (-12,4)*{};(-4,12)*{} **\crv{(-12,7) & (-4,9)}?(1)*\dir{};
    (4,12)*{};(-4,20)*{} **\crv{(4,15) & (-4,17)}?(1)*\dir{};
    (-4,12)*{};(4,20)*{} **\crv{(-4,15) & (4,17)}?(1)*\dir{};
    (4,4)*{}; (4,12) **\dir{-};
    (-12,-4)*{}; (-12,4) **\dir{-};
    (-12,12)*{}; (-12,20) **\dir{-};
  %(10,8)*{\lambda};
  (6,-3)*{\scs i};
  (-6,-3)*{\scs j};
  (-14,-3)*{\scs i};
\endxy}\;\; \right)
 \;\; =\;\;
 t_{ij}\sum_{\ell_1+\ell_2=d_{ij}-1} \;\;
\xy 0;/r.17pc/:
  (4,12);(4,-12) **\dir{-}?(.5)*\dir{};
  (-4,12);(-4,-12) **\dir{-}?(.5)*\dir{}?(.25)*\dir{}+(0,0)*{\bullet}+(-3,0)*{\scs \ell_1};
  (12,12);(12,-12) **\dir{-}?(.5)*\dir{}?(.25)*\dir{}+(0,0)*{\bullet}+(3,0)*{\scs \ell_2};
  %(20,-2)*{\lambda};
  (-6,-9)*{\scs i};     (6.1,-9)*{\scs j};
  (14,-9)*{\scs i};
 \endxy
 \;\; + \;\;
 \sum_{p,q} s_{ij}^{pq} \sum_{\xy (0,2.5)*{\scs \ell_1+\ell_2}; (0,-1)*{\scs =p-1}; \endxy}
 \xy 0;/r.17pc/:
  (4,12);(4,-12)   **\dir{-}?(.5)*\dir{} ?(.25)*\dir{}+(0,0)*{\bullet}+(-3,0)*{\scs q};
  (-4,12);(-4,-12) **\dir{-}?(.5)*\dir{}?(.25)*\dir{}+(0,0)*{\bullet}+(-3,0)*{\scs \ell_1};
  (12,12);(12,-12) **\dir{-}?(.5)*\dir{}?(.25)*\dir{}+(0,0)*{\bullet}+(3,0)*{\scs \ell_2};
  %(20,-2)*{\lambda};
  (-6,-9)*{\scs i};     (6.1,-9)*{\scs j};
  (14,-9)*{\scs i};
 \endxy
 \label{eq_r3_hard-gen}
\end{equation}
where the $p,q$ summation is as in \eqref{eq_pq}.
\end{enumerate}

\begin{rem} \hfill
\begin{enumerate}
\item It is always possible to rescale the $ii$-crossing by $r_{i}$ so that $r_{i}=1$ in the definition above. In the literature it is common to see both $r_{i}=1$ or $r_{i}=-1$.

\item The 2-category $\Ucat_Q(\mf{g})$ is a cyclic 2-category (as defined in \cite{Lau1}) if $t_{ij}=t_{ji}$ for all $i,j$ with $d_{ij}>0$.

\item When the Cartan datum is symmetric, i.e. simply-laced,  so that $(\alpha_i,\alpha_i)=2$ and $(\alpha_i,\alpha_j)=-1$, then the KLR algebra takes on a simplified form. In particular, $s_{ij}^{pq}=0$ for all $i,j$ and $p,q$, making
the summations in \eqref{eq_r2_ij-gen} vanish. Since $d_{ij}=1$, the right-hand side in \eqref{eq_r3_hard-gen} has only one term.  Furthermore, if the graph corresponding to the symmetric Cartan datum is a tree, then it is always possible to rescale the coefficients so that $t_{ij}=t_{ji}=1$, see \cite{KL2}.
\end{enumerate}
\end{rem}

Inductively applying the NilHecke dot slide relation gives the equation
\begin{equation} \label{eq_ind_dotslide}
\xy 0;/r.18pc/:
  (0,0)*{\xybox{
    (-4,-4)*{};(4,4)*{} **\crv{(-4,-1) & (4,1)}?(1)*\dir{>}?(.25)*{\bullet}+(-2.5,1)*{\scs m};
    (4,-4)*{};(-4,4)*{} **\crv{(4,-1) & (-4,1)}?(1)*\dir{>};
     (8,-4)*{n};
     (-10,0)*{};(10,0)*{};
     }};
  \endxy
 \;\; -
 \xy 0;/r.18pc/:
  (0,0)*{\xybox{
    (-4,-4)*{};(4,4)*{} **\crv{(-4,-1) & (4,1)}?(1)*\dir{>}?(.75)*{\bullet}+(2.5,-1)*{\scs m};
    (4,-4)*{};(-4,4)*{} **\crv{(4,-1) & (-4,1)}?(1)*\dir{>};
     (8,-4)*{n};
     (-10,0)*{};(10,0)*{};
     }};
  \endxy
 \;\; =
\xy 0;/r.18pc/:
  (0,0)*{\xybox{
    (-4,-4)*{};(4,4)*{} **\crv{(-4,-1) & (4,1)}?(1)*\dir{>};
    (4,-4)*{};(-4,4)*{} **\crv{(4,-1) & (-4,1)}?(1)*\dir{>}?(.75)*{\bullet}+(-2.5,-1)*{\scs m};
     (8,3)*{ n};
     (-10,0)*{};(10,0)*{};
     }};
  \endxy
 \;\; -
  \xy 0;/r.18pc/:
  (0,0)*{\xybox{
    (-4,-4)*{};(4,4)*{} **\crv{(-4,-1) & (4,1)}?(1)*\dir{>} ;
    (4,-4)*{};(-4,4)*{} **\crv{(4,-1) & (-4,1)}?(1)*\dir{>}?(.25)*{\bullet}+(2.5,1)*{\scs m};
     (8,3)*{n};
     (-10,0)*{};(10,0)*{};
     }};
  \endxy
  \;\; = \;\; r_{i}
  \sum_{f_1 + f_2 = m-1}
  \xy  0;/r.18pc/:
  (3,4);(3,-4) **\dir{-}?(0)*\dir{<} ?(.5)*\dir{}+(0,0)*{\bullet}+(2.5,1)*{\scs f_2};
  (-3,4);(-3,-4) **\dir{-}?(0)*\dir{<}?(.5)*\dir{}+(0,0)*{\bullet}+(-2.5,1)*{\scs f_1};;
  (9,-4)*{n};
 \endxy
\end{equation}
that will be useful in what follows.

% -------------------------------------------------------------------------------
%
\subsection{Mixed relations}\label{sec:mixedrels}
%
% -------------------------------------------------------------------------------

For $i \neq j$ relations
\begin{equation}
 \vcenter{   \xy 0;/r.18pc/:
    (-4,-4)*{};(4,4)*{} **\crv{(-4,-1) & (4,1)}?(1)*\dir{>};
    (4,-4)*{};(-4,4)*{} **\crv{(4,-1) & (-4,1)}?(1)*\dir{<};?(0)*\dir{<};
    (-4,4)*{};(4,12)*{} **\crv{(-4,7) & (4,9)};
    (4,4)*{};(-4,12)*{} **\crv{(4,7) & (-4,9)}?(1)*\dir{>};
  (8,8)*{\lambda};(-6,-3)*{\scs i};
     (6,-3)*{\scs j};
 \endxy}
 \;\; = \;\; t_{ji}\;\;
\xy 0;/r.18pc/:
  (3,9);(3,-9) **\dir{-}?(.55)*\dir{>}+(2.3,0)*{};
  (-3,9);(-3,-9) **\dir{-}?(.5)*\dir{<}+(2.3,0)*{};
  (8,2)*{\lambda};(-5,-6)*{\scs i};     (5.1,-6)*{\scs j};
 \endxy
\qquad \quad
    \vcenter{\xy 0;/r.18pc/:
    (-4,-4)*{};(4,4)*{} **\crv{(-4,-1) & (4,1)}?(1)*\dir{<};?(0)*\dir{<};
    (4,-4)*{};(-4,4)*{} **\crv{(4,-1) & (-4,1)}?(1)*\dir{>};
    (-4,4)*{};(4,12)*{} **\crv{(-4,7) & (4,9)}?(1)*\dir{>};
    (4,4)*{};(-4,12)*{} **\crv{(4,7) & (-4,9)};
  (8,8)*{\lambda};(-6,-3)*{\scs i};
     (6,-3)*{\scs j};
 \endxy}
 \;\;=\;\; t_{ij}\;\;
\xy 0;/r.18pc/:
  (3,9);(3,-9) **\dir{-}?(.5)*\dir{<}+(2.3,0)*{};
  (-3,9);(-3,-9) **\dir{-}?(.55)*\dir{>}+(2.3,0)*{};
  (8,2)*{\lambda};(-5,-6)*{\scs i};     (5.1,-6)*{\scs j};
 \endxy
\end{equation}
hold.

% -------------------------------------------------------------------------------
%
\subsection{Dotted bubbles}\label{sec:dotbubbles}
%
% -------------------------------------------------------------------------------

Condition (\ref{item_positivity}) of Definition~\ref{defU_cat} regarding dotted bubbles can be summarized diagrammatically as follows.  Firstly, for all $m \in \Z_+$ one has
\begin{equation} \label{eq_positivity_bubbles}
\xy 0;/r.18pc/:
 (-12,0)*{\icbub{m}{i}};
 (-8,8)*{\lambda};
 \endxy
  = 0
 \qquad  \text{if $m<\la i, \lambda\ra-1$,} \qquad \xy 0;/r.18pc/: (-12,0)*{\iccbub{m}{i}};
 (-8,8)*{\lambda};
 \endxy = 0\quad
  \text{if $m< -\la i,\lambda \ra-1$}
\end{equation}
which means that dotted bubbles of negative degree are zero. On the other hand, a dotted bubble of degree zero is just the identity 2-morphism:
\[
\xy 0;/r.18pc/:
 (0,0)*{\cbub{\la i,\lambda \ra-1}};
  (4,8)*{\lambda};
 \endxy
  =  \Id_{\onenn{\lambda}} \quad \text{for $\la i,\lambda \ra \geq 1$,}
  \qquad \quad
  \xy 0;/r.18pc/:
 (0,0)*{\ccbub{-\la i,\lambda \ra-1}};
  (4,8)*{\lambda};
 \endxy  =  \Id_{\onenn{\lambda}} \quad \text{for $\la i,\lambda \ra \leq -1$.}\]

% -------------------------------------------------------------------------------
%
\subsection{Extended ${\mf{sl}}_2$ relations}\label{sec:highersl2}
%
% -------------------------------------------------------------------------------

% -------------------------------------------------------------------------------
%
\subsubsection{Conventions for ${\mf{sl}}_2$ and Cartan data with $I=\{i\}$ }\label{sec:sl2convs}
%
% -------------------------------------------------------------------------------

When $\mf{g}=\mf{sl}_2$ we use a simplified notation for the Cartan datum. We identify the weight lattice $X$ with $\Z$ by labeling the weight $\lambda$ with $n \in \Z$ where $\la i, \lambda\ra =n$.  In this case $d_{ii}=-\la i, \alpha_i\ra =2$,  $d_i=\frac{(\alpha_i,\alpha_i)}{2}=1$, so that $q_i=q$ and $[n]_i=[n]$.  Furthermore, the choice of scalars $Q$ is determined by the single parameter $r_{i}$.  By rescaling the $ii$-crossing, it is easy to see that for all choices of $Q$ the 2-categories $\Ucat_Q(\mf{sl}_2)$ are isomorphic to the 2-category $\Ucat(\mf{sl}_2)$ given by taking $r_{i}=1$.

For other Cartan datum with a single vertex $I = \{i\}$ and $(\alpha_i,\alpha_i) \in 2\Z_{>0}+2$, the 2-categories $\Ucat_Q(\mf{g})$ are again determined by the parameter $r_{i}$. By appropriate rescaling these 2-categories can be made to have $r_{i}=1$.  These 2-categories $\Ucat_Q(\mf{g})$ are isomorphic to the 2-category $\Ucat(\mf{sl}_2)$ by an additive $\Bbbk$-linear functor that only effects the degrees of 2-morphisms. In this way, we can treat the case of a single vertex $I=\{i\}$ in a similar manner as the case of $\mf{sl}_2$.
%Here we use the conventions mentioned above for ${\mf{sl}}_2$ (section \ref{sec:sl2convs}).

% -------------------------------------------------------------------------------
%
\subsubsection{Fake bubbles } \label{subsubsec-fake}
%
% -------------------------------------------------------------------------------

In order to describe certain extended ${\mf{sl}}_2$ relations, it is convenient to use a shorthand notation from \cite{Lau1} called fake bubbles (see also section~\ref{sec:symm} for more details). These are diagrams for dotted bubbles where the labels of the number of dots is negative, but the total degree of the dotted bubble taken with these negative dots is still positive. They allow us to write these extended ${\mf{sl}}_2$ relations more uniformly (i.e. independent of whether the weight $n$ is positive or negative). The fake bubbles are defined in weight $n=0$ as
\begin{equation}
\vcenter{\xy 0;/r.18pc/:
    (2,-11)*{\cbub{-1}{}};
  (12,-2)*{0};
 \endxy} \;\; =  \Id_{\onenn{0}}, \qquad \quad
 \vcenter{\xy 0;/r.18pc/:
    (2,-11)*{\ccbub{-1}{}};
  (12,-2)*{0};
 \endxy} \;\; =  \Id_{\onenn{0}}.
 \end{equation}
In weight $n \ne 0$, the fake bubbles are inductively defined by the equations
\begin{equation} \label{eq_homo_inf_grass}
\sum_{
 \xy (0,3)*{\scs \ell_1+\ell_2=j}; \endxy} \;\;
\xy 0;/r.17pc/:
 (0,0)*{\cbub{n-1+\ell_1}{}};
 (20,0)*{\ccbub{-n-1+\ell_2}{}};(0,10)*{};
 (8,8)*{n};
 \endxy \;\; = \;\; \delta_{j,0},
\end{equation}
where $j\leq |n|$.  The equation \eqref{eq_homo_inf_grass} for $j$ larger than $|n|$  follows from the extended ${\mf{sl}}_2$ relations given below.  These relations then relate clockwise and counter-clockwise real bubbles.

Equation \eqref{eq_homo_inf_grass} for all $j$ can be encoded in the homogeneous terms in $t$ of the `infinite Grassmannian' equation
\begin{center}
\begin{eqnarray}
 \makebox[0pt]{ $
\left( \xy 0;/r.15pc/:
 (0,0)*{\ccbub{-n-1}{}};
  (4,8)*{n};
 \endxy
 +
 \xy 0;/r.15pc/:
 (0,0)*{\ccbub{-n-1+1}{}};
  (4,8)*{n};
 \endxy t
 + \cdots +
\xy 0;/r.15pc/:
 (0,0)*{\ccbub{-n-1+\alpha}{}};
  (4,8)*{n};
 \endxy t^{\alpha}
 + \cdots
\right)
%%%%
\left(\xy 0;/r.15pc/:
 (0,0)*{\cbub{n-1}{}};
  (4,8)*{n};
 \endxy
 + \xy 0;/r.15pc/:
 (0,0)*{\cbub{n-1+1}{}};
  (4,8)*{n};
 \endxy t
 +\cdots +
\xy 0;/r.15pc/:
 (0,0)*{\cbub{n-1+\alpha}{}};
 (4,8)*{n};
 \endxy t^{\alpha}
 + \cdots
\right) =1.$ } \nn \\ \label{eq_infinite_Grass}
\end{eqnarray}
\end{center}

%Now we can define the extended ${\mf{sl}}_2$ relations.

% ---------------------------------------------------------------------------
%
\subsubsection{The relations}
%
% -------------------------------------------------------------------------------

If $n > 0$ then we have:
\begin{equation} \label{eq_reduction-ngeqz}
  \xy 0;/r.17pc/:
  (14,8)*{n};
  (-3,-10)*{};(3,5)*{} **\crv{(-3,-2) & (2,1)}?(1)*\dir{>};?(.15)*\dir{>};
    (3,-5)*{};(-3,10)*{} **\crv{(2,-1) & (-3,2)}?(.85)*\dir{>} ?(.1)*\dir{>};
  (3,5)*{}="t1";  (9,5)*{}="t2";
  (3,-5)*{}="t1'";  (9,-5)*{}="t2'";
   "t1";"t2" **\crv{(4,8) & (9, 8)};
   "t1'";"t2'" **\crv{(4,-8) & (9, -8)};
   "t2'";"t2" **\crv{(10,0)} ;
 \endxy\;\; = \;\; 0
\qquad \qquad
  \xy 0;/r.17pc/:
  (-14,8)*{n};
  (3,-10)*{};(-3,5)*{} **\crv{(3,-2) & (-2,1)}?(1)*\dir{>};?(.15)*\dir{>};
    (-3,-5)*{};(3,10)*{} **\crv{(-2,-1) & (3,2)}?(.85)*\dir{>} ?(.1)*\dir{>};
  (-3,5)*{}="t1";  (-9,5)*{}="t2";
  (-3,-5)*{}="t1'";  (-9,-5)*{}="t2'";
   "t1";"t2" **\crv{(-4,8) & (-9, 8)};
   "t1'";"t2'" **\crv{(-4,-8) & (-9, -8)};
   "t2'";"t2" **\crv{(-10,0)} ;
 \endxy \;\; = \;\; r_{i}
 \sum_{g_1+g_2=n}^{}
  \xy 0;/r.17pc/:
  (-12,8)*{n};
  (0,0)*{\bbe{}};(2,-8)*{\scs};
  (-12,-2)*{\ccbub{-n-1+g_2}};
  (0,6)*{\bullet}+(3,-1)*{\scs g_1};
 \endxy
\end{equation}
\begin{equation}
 \vcenter{\xy 0;/r.17pc/:
  (-8,0)*{};
  (8,0)*{};
  (-4,10)*{}="t1";
  (4,10)*{}="t2";
  (-4,-10)*{}="b1";
  (4,-10)*{}="b2";(-6,-8)*{\scs };(6,-8)*{\scs };
  "t1";"b1" **\dir{-} ?(.5)*\dir{<};
  "t2";"b2" **\dir{-} ?(.5)*\dir{>};
  (10,2)*{n};
  \endxy}
\;\; = \;\;
 -\;\; r_{i}^{-2}\;
 \vcenter{   \xy 0;/r.17pc/:
    (-4,-4)*{};(4,4)*{} **\crv{(-4,-1) & (4,1)}?(1)*\dir{>};
    (4,-4)*{};(-4,4)*{} **\crv{(4,-1) & (-4,1)}?(1)*\dir{<};?(0)*\dir{<};
    (-4,4)*{};(4,12)*{} **\crv{(-4,7) & (4,9)};
    (4,4)*{};(-4,12)*{} **\crv{(4,7) & (-4,9)}?(1)*\dir{>};
  (8,8)*{n};(-6,-3)*{\scs };
     (6.5,-3)*{\scs };
 \endxy}
  \;\; + \;\;
   \sum_{ \xy  (0,3)*{\scs f_1+f_2+f_3}; (0,0)*{\scs =n-1};\endxy}
    \vcenter{\xy 0;/r.17pc/:
    (-10,10)*{n};
    (-8,0)*{};
  (8,0)*{};
  (-4,-15)*{}="b1";
  (4,-15)*{}="b2";
  "b2";"b1" **\crv{(5,-8) & (-5,-8)}; ?(.05)*\dir{<} ?(.93)*\dir{<}
  ?(.8)*\dir{}+(0,-.1)*{\bullet}+(-3,2)*{\scs f_3};
  (-4,15)*{}="t1";
  (4,15)*{}="t2";
  "t2";"t1" **\crv{(5,8) & (-5,8)}; ?(.15)*\dir{>} ?(.95)*\dir{>}
  ?(.4)*\dir{}+(0,-.2)*{\bullet}+(3,-2)*{\scs \; f_1};
  (0,0)*{\ccbub{\scs \quad -n-1+f_2}};
  \endxy}
   \qquad \quad
 \vcenter{\xy 0;/r.17pc/:
  (-8,0)*{};(-6,-8)*{\scs };(6,-8)*{\scs };
  (8,0)*{};
  (-4,10)*{}="t1";
  (4,10)*{}="t2";
  (-4,-10)*{}="b1";
  (4,-10)*{}="b2";
  "t1";"b1" **\dir{-} ?(.5)*\dir{>};
  "t2";"b2" **\dir{-} ?(.5)*\dir{<};
  (10,2)*{n};
  \endxy}
\;\; = \;\;
 -\;\; r_{i}^{-2}\;
   \vcenter{\xy 0;/r.17pc/:
    (-4,-4)*{};(4,4)*{} **\crv{(-4,-1) & (4,1)}?(1)*\dir{<};?(0)*\dir{<};
    (4,-4)*{};(-4,4)*{} **\crv{(4,-1) & (-4,1)}?(1)*\dir{>};
    (-4,4)*{};(4,12)*{} **\crv{(-4,7) & (4,9)}?(1)*\dir{>};
    (4,4)*{};(-4,12)*{} **\crv{(4,7) & (-4,9)};
  (8,8)*{n};(-6,-3)*{\scs };  (6,-3)*{\scs };
 \endxy}
\end{equation}

If $n < 0$ then we have:
\begin{equation} \label{eq_reduction-nleqz}
  \xy 0;/r.17pc/:
  (14,8)*{n};
  (-3,-10)*{};(3,5)*{} **\crv{(-3,-2) & (2,1)}?(1)*\dir{>};?(.15)*\dir{>};
    (3,-5)*{};(-3,10)*{} **\crv{(2,-1) & (-3,2)}?(.85)*\dir{>} ?(.1)*\dir{>};
  (3,5)*{}="t1";  (9,5)*{}="t2";
  (3,-5)*{}="t1'";  (9,-5)*{}="t2'";
   "t1";"t2" **\crv{(4,8) & (9, 8)};
   "t1'";"t2'" **\crv{(4,-8) & (9, -8)};
   "t2'";"t2" **\crv{(10,0)} ;
 \endxy \;\; = \;\; - r_{i}\sum_{f_1+f_2=-n}
 \xy 0;/r.17pc/:
  (19,4)*{n};
  (0,0)*{\bbe{}};(-2,-8)*{\scs };
  (12,-2)*{\cbub{n-1+f_2}};
  (0,6)*{\bullet}+(3,-1)*{\scs f_1};
 \endxy
\qquad \qquad
  \xy 0;/r.17pc/:
  (-14,8)*{n};
  (3,-10)*{};(-3,5)*{} **\crv{(3,-2) & (-2,1)}?(1)*\dir{>};?(.15)*\dir{>};
    (-3,-5)*{};(3,10)*{} **\crv{(-2,-1) & (3,2)}?(.85)*\dir{>} ?(.1)*\dir{>};
  (-3,5)*{}="t1";  (-9,5)*{}="t2";
  (-3,-5)*{}="t1'";  (-9,-5)*{}="t2'";
   "t1";"t2" **\crv{(-4,8) & (-9, 8)};
   "t1'";"t2'" **\crv{(-4,-8) & (-9, -8)};
   "t2'";"t2" **\crv{(-10,0)} ;
 \endxy\;\; = \;\;
0
\end{equation}
\begin{equation}
\vcenter{\xy 0;/r.17pc/:
  (-8,0)*{};
  (8,0)*{};
  (-4,10)*{}="t1";
  (4,10)*{}="t2";
  (-4,-10)*{}="b1";
  (4,-10)*{}="b2";(-6,-8)*{\scs };(6,-8)*{\scs };
  "t1";"b1" **\dir{-} ?(.5)*\dir{<};
  "t2";"b2" **\dir{-} ?(.5)*\dir{>};
  (10,2)*{n};
  \endxy}
\;\; = \;\;
 -\;\; r_{i}^{-2}\;
\vcenter{   \xy 0;/r.17pc/:
    (-4,-4)*{};(4,4)*{} **\crv{(-4,-1) & (4,1)}?(1)*\dir{>};
    (4,-4)*{};(-4,4)*{} **\crv{(4,-1) & (-4,1)}?(1)*\dir{<};?(0)*\dir{<};
    (-4,4)*{};(4,12)*{} **\crv{(-4,7) & (4,9)};
    (4,4)*{};(-4,12)*{} **\crv{(4,7) & (-4,9)}?(1)*\dir{>};
  (8,8)*{n};(-6,-3)*{\scs };
     (6.5,-3)*{\scs };
 \endxy}
\qquad \quad
 \vcenter{\xy 0;/r.17pc/:
  (-8,0)*{};(-6,-8)*{\scs };(6,-8)*{\scs };
  (8,0)*{};
  (-4,10)*{}="t1";
  (4,10)*{}="t2";
  (-4,-10)*{}="b1";
  (4,-10)*{}="b2";
  "t1";"b1" **\dir{-} ?(.5)*\dir{>};
  "t2";"b2" **\dir{-} ?(.5)*\dir{<};
  (10,2)*{n};
  (-10,2)*{n};
  \endxy}
\;\; = \;\;
 -\;\; r_{i}^{-2}\;
  \vcenter{\xy 0;/r.17pc/:
    (-4,-4)*{};(4,4)*{} **\crv{(-4,-1) & (4,1)}?(1)*\dir{<};?(0)*\dir{<};
    (4,-4)*{};(-4,4)*{} **\crv{(4,-1) & (-4,1)}?(1)*\dir{>};
    (-4,4)*{};(4,12)*{} **\crv{(-4,7) & (4,9)}?(1)*\dir{>};
    (4,4)*{};(-4,12)*{} **\crv{(4,7) & (-4,9)};
  (8,8)*{n};(-6,-3)*{\scs };  (6,-3)*{\scs };
 \endxy}
  \;\; + \;\;
    \sum_{ \xy  (0,3)*{\scs g_1+g_2+g_3}; (0,0)*{\scs =-n-1};\endxy}
    \vcenter{\xy 0;/r.17pc/:
    (-8,0)*{};
  (8,0)*{};
  (-4,-15)*{}="b1";
  (4,-15)*{}="b2";
  "b2";"b1" **\crv{(5,-8) & (-5,-8)}; ?(.1)*\dir{>} ?(.95)*\dir{>}
  ?(.8)*\dir{}+(0,-.1)*{\bullet}+(-3,2)*{\scs g_3};
  (-4,15)*{}="t1";
  (4,15)*{}="t2";
  "t2";"t1" **\crv{(5,8) & (-5,8)}; ?(.15)*\dir{<} ?(.9)*\dir{<}
  ?(.4)*\dir{}+(0,-.2)*{\bullet}+(3,-2)*{\scs g_1};
  (0,0)*{\cbub{\scs \quad\; n-1 + g_2}};
  (-10,10)*{n};
  \endxy} \label{eq_ident_decomp-nleqz}
\end{equation}

If $n =0$ then we have:
\begin{equation}
  \xy 0;/r.17pc/:
  (14,8)*{n};
  (-3,-10)*{};(3,5)*{} **\crv{(-3,-2) & (2,1)}?(1)*\dir{>};?(.15)*\dir{>};
    (3,-5)*{};(-3,10)*{} **\crv{(2,-1) & (-3,2)}?(.85)*\dir{>} ?(.1)*\dir{>};
  (3,5)*{}="t1";  (9,5)*{}="t2";
  (3,-5)*{}="t1'";  (9,-5)*{}="t2'";
   "t1";"t2" **\crv{(4,8) & (9, 8)};
   "t1'";"t2'" **\crv{(4,-8) & (9, -8)};
   "t2'";"t2" **\crv{(10,0)} ;
 \endxy\;\; = \;\;-r_{i}
   \xy 0;/r.17pc/:
  (-12,8)*{n};
  (0,0)*{\bbe{}};
 \endxy
\qquad \qquad
  \xy 0;/r.17pc/:
  (-14,8)*{n};
  (3,-10)*{};(-3,5)*{} **\crv{(3,-2) & (-2,1)}?(1)*\dir{>};?(.15)*\dir{>};
    (-3,-5)*{};(3,10)*{} **\crv{(-2,-1) & (3,2)}?(.85)*\dir{>} ?(.1)*\dir{>};
  (-3,5)*{}="t1";  (-9,5)*{}="t2";
  (-3,-5)*{}="t1'";  (-9,-5)*{}="t2'";
   "t1";"t2" **\crv{(-4,8) & (-9, 8)};
   "t1'";"t2'" **\crv{(-4,-8) & (-9, -8)};
   "t2'";"t2" **\crv{(-10,0)} ;
 \endxy \;\; = \;\; r_{i}
  \xy 0;/r.17pc/:
  (-12,8)*{n};
  (0,0)*{\bbe{}};
 \endxy
\end{equation}
\begin{equation}
 \vcenter{\xy 0;/r.17pc/:
  (-8,0)*{};
  (8,0)*{};
  (-4,10)*{}="t1";
  (4,10)*{}="t2";
  (-4,-10)*{}="b1";
  (4,-10)*{}="b2";(-6,-8)*{\scs };(6,-8)*{\scs };
  "t1";"b1" **\dir{-} ?(.5)*\dir{<};
  "t2";"b2" **\dir{-} ?(.5)*\dir{>};
  (10,2)*{n};
  \endxy}
\;\; = \;\;
 -\;\; r_{i}^{-2}\;
 \vcenter{   \xy 0;/r.17pc/:
    (-4,-4)*{};(4,4)*{} **\crv{(-4,-1) & (4,1)}?(1)*\dir{>};
    (4,-4)*{};(-4,4)*{} **\crv{(4,-1) & (-4,1)}?(1)*\dir{<};?(0)*\dir{<};
    (-4,4)*{};(4,12)*{} **\crv{(-4,7) & (4,9)};
    (4,4)*{};(-4,12)*{} **\crv{(4,7) & (-4,9)}?(1)*\dir{>};
  (8,8)*{n};(-6,-3)*{\scs };
     (6.5,-3)*{\scs };
 \endxy}
   \qquad \quad
 \vcenter{\xy 0;/r.17pc/:
  (-8,0)*{};(-6,-8)*{\scs };(6,-8)*{\scs };
  (8,0)*{};
  (-4,10)*{}="t1";
  (4,10)*{}="t2";
  (-4,-10)*{}="b1";
  (4,-10)*{}="b2";
  "t1";"b1" **\dir{-} ?(.5)*\dir{>};
  "t2";"b2" **\dir{-} ?(.5)*\dir{<};
  (10,2)*{n};
  \endxy}
\;\; = \;\;
 -\;\; r_{i}^{-2}\;
   \vcenter{\xy 0;/r.17pc/:
    (-4,-4)*{};(4,4)*{} **\crv{(-4,-1) & (4,1)}?(1)*\dir{<};?(0)*\dir{<};
    (4,-4)*{};(-4,4)*{} **\crv{(4,-1) & (-4,1)}?(1)*\dir{>};
    (-4,4)*{};(4,12)*{} **\crv{(-4,7) & (4,9)}?(1)*\dir{>};
    (4,4)*{};(-4,12)*{} **\crv{(4,7) & (-4,9)};
  (8,8)*{n};(-6,-3)*{\scs };  (6,-3)*{\scs };
 \endxy}
\end{equation}

\begin{rem}
These extended relations are useful because they allow one to simplify any closed diagram (one without loose ends) into a sum of diagrams containing no crossings.
%Although these extended relations are a bit complicated, they are useful because they allow one to simplify any closed diagram so that it does not contain any loops or crossings.
\end{rem}

% -------------------------------------------------------------------------------
%
\subsection{An alternative characterization of the extended ${\mf{sl}}_2$ relations}
%
% -------------------------------------------------------------------------------

The extended ${\mf{sl}}_2$ relations in section \ref{sec:highersl2} are somewhat mysterious. On the other hand, there is an alternative, simpler way, to define or summarize them.

We will use the sideways crossings defined in \eqref{eq_crossl-gen} and \eqref{eq_crossr-gen} as well as the fake bubbles defined in section~\ref{subsubsec-fake}. It turns out that the extended ${\mf{sl}}_2$ conditions are equivalent to the following conditions:
\begin{itemize}
\item If $n \ge 0$ then the 2-morphism
\begin{equation}\label{eq_phi-minus}
\xy 0;/r.15pc/:
    (-4,-4)*{};(4,4)*{} **\crv{(-4,-1) & (4,1)}?(0)*\dir{<} ;
    (4,-4)*{};(-4,4)*{} **\crv{(4,-1) & (-4,1)}?(1)*\dir{>};
  \endxy \;\; \bigoplus_{k=0}^{n-1}
\vcenter{\xy 0;/r.15pc/:
 (-4,2)*{}="t1"; (4,2)*{}="t2";
 "t2";"t1" **\crv{(4,-5) & (-4,-5)}; ?(1)*\dir{>}
 ?(.8)*\dir{}+(0,-.1)*{\bullet}+(-3,-1)*{\scs k};;
 \endxy}: \sF \sE \onen \bigoplus_{k=0}^{n-1} \onen \la n-1-2k \ra \rightarrow \sE \sF \onen
\end{equation}
is an isomorphism with the inverse given by
 \begin{equation} -r_i^{-2} \;
\xy 0;/r.15pc/:
    (-4,-4)*{};(4,4)*{} **\crv{(-4,-1) & (4,1)}?(1)*\dir{>} ;
    (4,-4)*{};(-4,4)*{} **\crv{(4,-1) & (-4,1)}?(0)*\dir{<};
  \endxy \;\; \bigoplus_{k=0}^{n-1}
\left(
 \sum_{\xy (0,0)*{\scs j_1+j_2 =}; (0,-3)*{\scs n-1 -k}; \endxy }
    \; \vcenter{\xy 0;/r.15pc/:
    (-4,-2)*{}="t1";  (4,-2)*{}="t2";
     "t2";"t1" **\crv{(4,5) & (-4,5)};  ?(.1)*\dir{<} %?(.05)*\dir{<}
     ?(.85)*\dir{}+(0,-.1)*{\bullet}+(-2,2.5)*{\scs j_1};
 (2,13)*{\ccbub{\;\;-n-1+j_2}{}};  \endxy} \right)
 : \sE \sF \onen \rightarrow\sF \sE \onen \bigoplus_{k=0}^{n-1} \onen \la n-1-2k \ra
\end{equation}
\item If $n \le 0$ then the 2-morphism
\begin{equation}\label{eq_phi-plus}
\xy 0;/r.15pc/:
    (-4,-4)*{};(4,4)*{} **\crv{(-4,-1) & (4,1)}?(1)*\dir{>} ;
    (4,-4)*{};(-4,4)*{} **\crv{(4,-1) & (-4,1)}?(0)*\dir{<};
  \endxy \;\; \bigoplus_{k=0}^{-n-1}
\vcenter{\xy 0;/r.15pc/:
 (4,2)*{}="t1"; (-4,2)*{}="t2";
 "t2";"t1" **\crv{(-4,-5) & (4,-5)}; ?(1)*\dir{>}
 ?(.8)*\dir{}+(0,-.1)*{\bullet}+(3,-1)*{\scs k};;
 \endxy}:  \sE \sF \onen \bigoplus_{k=0}^{-n-1} \onen \la -n-1-2k \ra \rightarrow \sF \sE  \onen
\end{equation}
is an isomorphism with the inverse given by
\begin{equation}
-r_i^{-2} \; \xy 0;/r.15pc/:
    (-4,-4)*{};(4,4)*{} **\crv{(-4,-1) & (4,1)}?(1)*\dir{>} ;
    (4,-4)*{};(-4,4)*{} **\crv{(4,-1) & (-4,1)}?(0)*\dir{<};
  \endxy \;\; \bigoplus_{k=0}^{-n-1}
\left(
 \sum_{\xy (0,0)*{\scs j_1+j_2 =}; (0,-3)*{\scs -n-1 -k}; \endxy }
    \vcenter{\xy 0;/r.15pc/:
    (4,-2)*{}="t1";  (-4,-2)*{}="t2";
     "t2";"t1" **\crv{(-4,5) & (4,5)};  ?(.1)*\dir{<} %?(.05)*\dir{<}
     ?(.85)*\dir{}+(0,-.1)*{\bullet}+(5,1)*{\scs j_1};
 (2,13)*{\cbub{\;\;n-1+j_2}{}};  \endxy} \right)
 :  \sF \sE \onen \rightarrow \sE \sF \onen \bigoplus_{k=0}^{-n-1} \onen \la -n-1-2k \ra.
\end{equation}
\end{itemize}
For more details see \cite[Section 3]{Lau3}.

% -------------------------------------------------------------------------------
%
\section{Formal structures in $Q$-strong 2-representations }
%\section{Spaces of maps and biadjointness}
%
% -------------------------------------------------------------------------------

In this section we assume $\mathfrak{g} = \mathfrak{sl}_2$. By studying the spaces of maps between certain 1-morphisms in a $Q$-strong 2-representation we prove two main results. The first is that $\E$ and $\F$ are biadjoint. The second main result is Lemma \ref{lem:main} which describes $\Hom^m(\E {\bf 1}_n, \E {\bf 1}_n)$ when $m \le 2|n+1|$. This Lemma is an important tool used in the proof of Theorem \ref{thm:main}.

The proof of biadjointness proceeds in several steps.  We first show that there is a one-dimensional space of 2-morphisms for units and counits in these adjunctions.  Hence, the caps and cups are unique up to a scalar.  In this section we fix these maps arbitrarily.  In section~\ref{subsec:adjoint} we rescale these maps appropriately.

% -------------------------------------------------------------------------------
%
\subsection{Some general notions}
%
% -------------------------------------------------------------------------------

The fact that the space of maps between any two 1-morphisms in a $Q$-strong 2-representation of $\g$ is finite dimensional means that the Krull-Schmidt property holds for $\Hom$ categories. This means that any 1-morphism has a unique direct sum decomposition (see section 2.2 of \cite{Rin}). In particular, this means that if $A,B,C$ are morphisms and $V$ is a $\Z$-graded vector space then we have the following cancellation laws (see section 4 of \cite{CK3}):
\begin{eqnarray*}
A \oplus B \cong A \oplus C &\Rightarrow& B \cong C \\
A \otimes_\k V \cong B \otimes_\k V &\Rightarrow& A \cong B.
\end{eqnarray*}

A brick in a (graded) category is an indecomposable object $A$ such that $\End(A) =\Bbbk$. For example, by Lemma \ref{lem:1} below, $\E \1_m$ is a brick.

Suppose that $ A $ is a brick and that $ X, Y $ are arbitrary objects. Then a morphism $f : X \rightarrow Y $ gives rise to a bilinear pairing $ \Hom(A, X) \times \Hom(Y, A) \rightarrow \Hom(A,A) = \k $.  We define the $A$-rank $\rk_A^0(f)$ of $f$ to be the rank of this bilinear pairing.

We can also define $A$-rank as follows. Choose (non-canonical) direct sum decompositions $X = A \otimes V \oplus B$ and $Y = A \otimes V' \oplus B'$ where $V, V' $ are $\Bbbk$ vector spaces, and $B, B'$ do not contain $A$ as a direct summand. Then one of the matrix coefficients of $ f $ is a map $ A \otimes V \rightarrow A \otimes V' $, which (since $ A $ is a brick) is equivalent to a linear map $ V \rightarrow V' $.  The $A$-rank of $ f $ equals the rank of this linear map.

We define the total $A$-rank $\rk_A(f)$ of $f$ as $\sum_{i} \rk^0_{A \la i \ra}(f)$. In this paper, this will always turn out to be a finite integer. If the total $A$-rank of $f$ is $r$ then we say that ``$f$ gives an isomorphism on $r$ summands of the form $A \la \cdot \ra$''.

% -------------------------------------------------------------------------------
%
\subsection{The adjoint induction hypothesis and its consequences}
%
% -------------------------------------------------------------------------------

For a 1-morphism $u$, let $u_L$ (respectively $u_R$) denote its left (respectively right) adjoint.
Our definition of $\1_{n}\F$ implies that $(\E\1_{n})_R = \1_{n}\F \la n + 1\ra $ and that
$(\1_{n}\F)_L =\E\1_{n} \la n + 1\ra$.
In what follows we make use of the fact that  $(u_L)_R=u$ and $(v_R)_L=v$ for all 1-morphisms $u,v$ and that the adjunctions give rise to isomorphisms
\begin{alignat}{3}
 \Hom(ux,y) &\; \cong \;& \Hom (x, u_R y),
 &\qquad \Hom(x,uy)&\; \cong \;&  \Hom (u_Lx,  y), \nn\\
 \Hom(xv,y) &\; \cong \;& \Hom(x,yv_L),  &\qquad \Hom(x,yv) &\; \cong \;& \Hom(xv_R,y).
\end{alignat}

We now make use of the integrability assumption in the definition of a $Q$-strong 2-representation. For positive weight spaces $(n\geq 0)$ we proceed by decreasing induction on $n$ starting from the highest weight.  For negative weight spaces $(n \leq 0)$ we perform increasing induction on $n$ starting from the lowest weight.  To simplify the exposition we focus on the case $n \geq 0$. Throughout this section we make the following assumption:

\medskip
\noindent\fbox {
   \parbox{\linewidth}{
{\bf Adjoint induction hypothesis:}  \newline Fix $n\geq 0$.  By induction assume that if $m > n$ we have
\begin{equation} \label{eq:ind_hyp}
 (\E {\1}_m)_L \cong {\1}_m \F \la -m-1 \ra\text{ and } ({\1}_m\F)_R \cong \E{\1}_m \la -m-1 \ra,
\end{equation}
where for $m$ beyond the highest weight this claim is vacuously true since both maps are zero.
    }
}

\medskip

In this section we derive a number of consequences of the adjoint induction hypothesis \eqref{eq:ind_hyp} culminating in the proof that \eqref{eq:ind_hyp} holds for $m=n$.

\begin{lem}\label{lem:1}
Assuming the adjoint induction hypothesis \eqref{eq:ind_hyp}, if $m \ge n$ then $\Hom(\E \1_m, \E \1_m \la l \ra)$ is zero if $l < 0$ and one-dimensional if $l=0$.  The same is true for $\Hom(\1_m \F, \1_m \F \la l \ra)$.
\end{lem}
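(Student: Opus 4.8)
The plan is to reduce both assertions, via adjunction, to a statement about maps out of $\1_m$ and then to run a downward induction on the weight. First note that the two claims are equivalent: since $\1_m \F$ is the right adjoint of $\E\1_m$ up to the shift in \eqref{eq_defF}, applying $(-)_R$ to $2$-morphisms gives $\Hom(\E\1_m,\E\1_m\la l\ra)\cong\Hom(\1_m\F,\1_m\F\la l\ra)$ (the grading shift is preserved because $(\E\1_m\la -l\ra)_R=(\E\1_m)_R\la l\ra$), so it is enough to treat $\E\1_m$. Folding one leg of the $\E\dashv\F$ adjunction, which exists because right adjoints are assumed, yields
\[
\Hom(\E\1_m,\E\1_m\la l\ra)\cong\Hom(\1_m,\F\E\1_m\la m+1+l\ra).
\]
The lower bound is then free: the identity $2$-morphism (equivalently the adjunction unit $\1_m\to\F\E\1_m\la m+1\ra$) is nonzero for $m$ nonzero, so the space is at least one-dimensional when $l=0$. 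Everything reduces to the \textbf{upper bound}: $\Hom(\1_m,\F\E\1_m\la j\ra)$ vanishes for $j<m+1$ and is at most one-dimensional for $j=m+1$.

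I would prove this by downward induction on $m\ge n$, the base case being the top of the weight string where the objects vanish. For $m>n$ the induction hypothesis \eqref{eq:ind_hyp} supplies \emph{both} adjoints of $\E\1_m$, so I can fold the other leg using $(\E\1_m)_L\cong\1_m\F\la -m-1\ra$ and land at the neighbouring weight $m+2$:
\[
\Hom(\E\1_m,\E\1_m\la l\ra)\cong\Hom(\1_{m+2},\E\F\1_{m+2}\la l-m-1\ra).
\]
Now apply the commutator isomorphism (condition (\ref{co:EF})), which for $m+2>0$ reads $\E\F\1_{m+2}\cong\F\E\1_{m+2}\oplus_{[m+2]}\1_{m+2}$. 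For $l\le 0$ the $\oplus_{[m+2]}\1_{m+2}$ ladder contributes $\sum_{k=0}^{m+1}\dim\Hom(\1_{m+2},\1_{m+2}\la l-2k\ra)$, which by condition (\ref{co:hom}) is $0$ for $l<0$ and exactly $1$ for $l=0$; while the $\F\E\1_{m+2}$ summand contributes $\Hom(\1_{m+2},\F\E\1_{m+2}\la l-m-1\ra)\cong\Hom(\E\1_{m+2},\E\1_{m+2}\la l-2m-4\ra)$, which vanishes for $l\le 0$ by the induction hypothesis applied at weight $m+2$. This gives the upper bound (indeed the exact answer) for all $m>n$.

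The remaining case $m=n$ is the \textbf{main obstacle}: the left adjoint of $\E\1_n$ is not yet known, so one cannot transport the computation to weight $n+2$, and the two naive foldings of $\E\F\1_n$ reach only the lower, still-undetermined weight $n-2$. Here I would instead pin down the $\1_n$-isotypic part of $\E\F\1_n$ directly from Lemma \ref{lem:Xind}: the degree-two endomorphism $\sUupdot\sUdown$ satisfies $\rk_{\1_n}(\sUupdot\sUdown)=n-1$, which is precisely the rank of the raising operator on the ladder of $n$ copies $\1_n\la n-1-2k\ra$ already present in $\E\F\1_n\cong\F\E\1_n\oplus_{[n]}\1_n$. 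Combined with condition (\ref{co:hom}) (which kills all negative-degree contributions and makes the bottom piece one-dimensional), this should force that $\F\E\1_n$ contributes no summand $\1_n\la s\ra$ with $s>-(n+1)$ and exactly one with $s=-(n+1)$, giving the required bound on $\Hom(\1_n,\F\E\1_n\la j\ra)$ for $j\le n+1$. I expect the step that needs the most care to be exactly this last one — ruling out extra $\1_n$-summands of $\F\E\1_n$ in the window $-(n+1)<s\le 0$, in particular any on which $\sUupdot\sUdown$ happens to act by zero and which Lemma \ref{lem:Xind} therefore does not detect.
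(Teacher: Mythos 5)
Your reduction of the $\F$-statement to the $\E$-statement, and your argument for $m>n$, are correct and are essentially a mirror image of the paper's computation: you fold with the left adjoint at weight $m$ and the (definitional) right adjoint at weight $m+2$, whereas the paper folds with the definitional right adjoint at weight $m$ and the left adjoint at weight $m+2$. The difference is not cosmetic: your choice requires \eqref{eq:ind_hyp} \emph{at weight $m$ itself}, which is unavailable exactly when $m=n$, and this is what forces you into a separate argument there. That separate argument has a genuine gap. You want to bound $\Hom(\1_n,\F\E\1_n\la j\ra)$ by counting summands of $\F\E\1_n$ of the form $\1_n\la s\ra$ via the rank statement of Lemma \ref{lem:Xind}, but (i) that Hom space also receives contributions from indecomposable summands of $\F\E\1_n$ that are \emph{not} grading shifts of $\1_n$, about which condition (\ref{co:hom}) and the rank of $\sUupdot\sUdown$ say nothing; (ii) as you yourself note, the rank count is blind to any $\1_n\la s\ra$-summand on which the dot endomorphism acts by zero; and (iii) Lemma \ref{lem:Xind} is vacuous for $n\le 1$, so the cases $m=n\in\{0,1\}$ are not addressed at all. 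Since $m=n$ is precisely the case the ambient induction of Proposition \ref{prop:lradj} needs (it feeds into Corollary \ref{cor:2}, Lemma \ref{lem:A'}, and the adjunction statements at weight $n$), the proposal as written does not prove the lemma.

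The fix is the paper's asymmetric folding, which needs no case distinction and no appeal to Lemma \ref{lem:Xind}. First use the right adjoint at weight $m$, which is free from the definition \eqref{eq_defF} (namely $(\E\1_m)_R\cong\1_m\F\la m+1\ra$, no induction required), to get
\[
\Hom(\E\1_m,\E\1_m\la l\ra)\cong\Hom(\E\F\1_{m+2}\la m+1\ra,\1_{m+2}\la l\ra);
\]
then apply the commutator isomorphism at weight $m+2$, and only then fold with the left adjoint \emph{at weight $m+2$}, where \eqref{eq:ind_hyp} does apply because $m+2>n$ for every $m\ge n$. The $\F\E\1_{m+2}$ summand becomes $\Hom(\E\1_{m+2},\E\1_{m+2}\la l-2m-4\ra)$, which vanishes for $l\le 0$ by the decreasing induction, while the ladder contributes $\bigoplus_{k=0}^{m+1}\Hom(\1_{m+2},\1_{m+2}\la l-2k\ra)$, which is $0$ for $l<0$ and $\k$ for $l=0$ by condition (\ref{co:hom}). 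With this ordering of the foldings the case $m=n$ is not special, and your two-step structure (lower bound from the identity, upper bound from a separate computation) collapses into a single chain of isomorphisms giving the exact dimension.
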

\begin{proof}
We prove the result for $\E$ by (decreasing) induction on $m$ (the result for $\F$ follows by adjunction). We have
\begin{eqnarray*}
& & \Hom(\E \1_m, \E \1_m \la l \ra) \\
&\cong& \Hom(\E (\E \1_m)_R \1_{m+2}, \1_{m+2} \la l \ra) \\
&\cong& \Hom(\E \F \1_{m+2} \la m+1 \ra, \1_{m+2} \la l \ra) \\
&\cong& \Hom(\F \E \1_{m+2} \bigoplus_{[m+2]} \1_{m+2}, \1_{m+2} \la l-m-1 \ra) \\
&\cong& \Hom((\E \1_{m+2})_L \E \1_{m+2} \la m+3 \ra, \1_{m+2} \la l-m-1 \ra) \oplus \Hom(\1_{m+2}, \bigoplus_{[m+2]} \1_{m+2} \la l-m-1 \ra) \\
&\cong& \Hom(\E \1_{m+2}, \E \1_{m+2} \la l-2m-4 \ra) \bigoplus_{k=0}^{m+1} \Hom(\1_{m+2}, \1_{m+2} \la l-2k \ra).
\end{eqnarray*}
By induction the first term above is zero and, by condition (\ref{co:hom}) of Definition \ref{def_Qstrong}, all the terms in the direct sum are zero unless $k=0$ and $l=0$. In that case we get $\Hom(\1_{m+2}, \1_{m+2}) \cong \k$ and we are done.
\end{proof}

\begin{cor}\label{cor:0} Assuming the adjoint induction hypothesis \eqref{eq:ind_hyp}, if $m \ge n-2$ then $\Hom(\E \E \1_m, \E \E \1_m \la l \ra)$ is zero if $l < -2$ and one-dimensional if $l=-2$.
\end{cor}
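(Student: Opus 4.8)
The plan is to prove the statement by decreasing induction on $m$, trading the endomorphism space of $\E\E\1_m$ (via adjunction) for data sitting at weight $m+2$, where Lemma~\ref{lem:1} is available precisely because $m+2\ge n$. The base case is immediate: once $m$ lies beyond the top weight, $\1_{m+4}$ and hence $\E\E\1_m$ is zero by integrability, so the Hom space vanishes and the claim holds vacuously.

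\textbf{Reduction to weight $m+2$.} For the inductive step, first peel off the inner (rightmost) $\E\1_m$ occurring in the target, using the adjunction $\E\1_m \dashv (\E\1_m)_R$ with $(\E\1_m)_R=\1_m\F\la m+1\ra$, which is available directly from the definition~\eqref{eq_defF} of $\F$. This yields
$$\Hom(\E\E\1_m,\E\E\1_m\la l\ra)\;\cong\;\Hom(\E\E\F\1_{m+2},\,\E\1_{m+2}\la l-m-1\ra).$$
Now apply the commutation isomorphism of condition~\ref{co:EF} twice, at the weights $m+2\ge 0$ and $m+4\ge 0$, to obtain
$$\E\E\F\1_{m+2}\;\cong\;\F\E\E\1_{m+2}\;\oplus\;\bigoplus_{[m+2]}\E\1_{m+2}\;\oplus\;\bigoplus_{[m+4]}\E\1_{m+2}.$$
Each of the two rightmost factors contributes a sum of spaces $\Hom(\E\1_{m+2},\E\1_{m+2}\la\,\cdot\,\ra)$, which by Lemma~\ref{lem:1} (legitimate since $m+2\ge n$) is $0$ in negative internal degree and one–dimensional in degree $0$. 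A short bookkeeping of the shifts then shows that for $l<-2$ every internal degree occurring is strictly negative, so both factors vanish, whereas for $l=-2$ exactly one internal summand — lying in the $\bigoplus_{[m+4]}$ factor — lands in degree $0$ and contributes a single copy of $\k$.

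\textbf{The remaining summand and the obstacle.} It remains to treat $\F\E\E\1_{m+2}$. Here I would peel off the outer $\F\1_{m+6}$ using the right adjoint $(\F\1_{m+6})_R\cong \E\1_{m+4}\la -m-5\ra$; this step is legitimate because biadjointness is already known at weight $m+4$ (indeed $m+4>n$, so \eqref{eq:ind_hyp} applies). The effect is to identify this summand with
$$\Hom(\E\E\1_{m+2},\,\E\E\1_{m+2}\la l-2m-6\ra)\;=\;\End^{\,l-2m-6}(\E\E\1_{m+2}),$$
the very same invariant one weight higher and in the strictly lower degree $l-2m-6<-2$; by the induction hypothesis this space is zero. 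Assembling the three pieces gives $0$ when $l<-2$ and a one–dimensional space when $l=-2$, as asserted. The main obstacle is discipline in the bookkeeping: one must keep every adjunction direction straight (all the peelings actually needed use \emph{right} adjoints, which are known — either from the definition of $\F$, or, for the single $\F$–peeling, from biadjointness at the strictly higher weight $m+4$, so that the delicate boundary weight $m=n-2$ causes no trouble) and track the grading shifts carefully enough to certify that exactly one internal summand survives in degree $-2$.
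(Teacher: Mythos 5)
Your proposal is correct and follows essentially the same route as the paper's own proof: peel the rightmost $\E \1_m$ from the target via $(\E\1_m)_R = \1_m\F\la m+1\ra$, decompose $\E\E\F\1_{m+2}$ by applying condition (\ref{co:EF}) at weights $m+2$ and $m+4$, dispose of the $\F\E\E\1_{m+2}$ summand by peeling the $\F$ with the right adjoint furnished by biadjointness at weight $m+4$ (hypothesis (\ref{eq:ind_hyp})), and finish with Lemma \ref{lem:1} together with the corollary's own decreasing induction. Even the final bookkeeping agrees with the paper: for $l=-2$ the unique surviving summand is the top-degree term of the $\bigoplus_{[m+4]}$ factor, yielding $\End(\E\1_{m+2})\cong\k$.
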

\begin{proof}
The proof is by (decreasing) induction on $m$. We have
\begin{eqnarray*}
& & \Hom(\E \E \1_m, \E \E \1_m) \\
&\cong& \Hom(\E \E (\E \1_m)_R \1_{m+2}, \E \1_{m+2}) \\
&\cong& \Hom(\E \E \F \1_{m+2} \la m+1 \ra, \E \1_{m+2}) \\
&\cong& \Hom(\E \F \E \1_{m+2}, \E \1_{m+2} \la -m-1 \ra) \bigoplus_{k=0}^{m+1} \Hom(\E \1_{m+2}, \E \1_{m+2} \la m+1-2k-m-1 \ra) \\
&\cong& \Hom(\F \E \E \1_{m+2}, \E \1_{m+2} \la -m-1 \ra) \bigoplus_{k=0}^{m+3} \Hom(\E \1_{m+2}, \E 1_{m+2} \la m+3-2k-m-1 \ra) \\
& & \bigoplus_{k=0}^{m+1} \Hom(\E \1_{m+2}, \E \1_{m+2} \la -2k \ra) \\
&\cong& \Hom(\E \E \1_{m+2}, \E \E \1_{m+2} \la -2m-6 \ra) \bigoplus_{k=0}^{m+3} \Hom(\E \1_{m+2}, \E_{m+2} \la -2k+2 \ra) \\
& & \bigoplus_{k=0}^{m+1} \Hom(\E \1_{m+2}, \E \1_{m+2} \la -2k \ra).
\end{eqnarray*}
Shifting by $\la l \ra$ where $l < -2$ we find that the first term is zero by induction, and the others are zero by Lemma \ref{lem:1}. If $l=-2$ the same vanishing holds with the exception of the term in the middle summation when $k=0$, which yields $\End(\E \1_{m+2}) \cong \k$.
\end{proof}

\begin{cor}\label{cor:2} Assuming the adjoint induction hypothesis \eqref{eq:ind_hyp}, if $m \ge n$ then $\Hom(\E \1_m, \E \E \F \1_m \la -m-1 \ra) \cong \k$.
\end{cor}
\begin{proof}
Moving the $\F$ past the $\E$'s using the ${\mathfrak{sl}}_2$ commutation relation we get
\begin{eqnarray*}
& & \Hom(\E \1_m, \E \E \F \1_m \la -m-1 \ra) \\
&\cong& \Hom(\E \1_m, \F \E \E \1_m \la -m-1 \ra) \bigoplus_{k=0}^{m+1} \Hom(\E \1_m,\E \1_m \la -2k \ra) \bigoplus_{k=0}^{m-1} \Hom(\E \1_m, \E \1_m \la -2k-2 \ra).
\end{eqnarray*}
By Lemma \ref{lem:1} all the terms in the middle and right summations are zero except in the middle summation when $k=0$, in which case we get $\End(\E \1_m) \cong \k$. The term on the left is equal to
$$\Hom((\1_{m+2} \F)_L \E \1_m, \E \E \1_m \la -m-1 \ra) \cong \Hom(\E \E \1_m, \E \E \1_m \la -2m-4 \ra)$$
which vanishes by Corollary \ref{cor:0} since $n \ge 0$.
\end{proof}

\begin{lem} \label{lem:XXX}
 Assuming the adjoint induction hypothesis \eqref{eq:ind_hyp}, if $m \ge n$ then $\Hom(\E\F \1_m, \F \E  \1_m \la l \ra)$ is is zero if $l < 0$ and one-dimensional if $l=0$.
\end{lem}

\begin{proof}
Our assumption \eqref{eq_defF} implies that $(\F\1_m)_L=\1_m\E\la m -1\ra$ for all $m$.  Using this fact, the claim reduces to Corollary~\ref{cor:0}.
\end{proof}

\begin{remark} \label{rem:XXX}
Lemma~\ref{lem:XXX} implies that the map $\E\F\1_{n} \cong \F\E\1_n \oplus_{[n]}\1_n \to \F\E\1_n$ which projects $\E\F\1_n$ onto the $\F\E\1_n$ summand is unique up to a scalar multiple so it must induce an isomorphism from the $\F\E\1_{n}$ summand.  This projection must also be the zero map from any summand $\1_n \la n-1-2k \ra$ on the left to $\F\E\1_n$ on the right.
\end{remark}

%
%We prove Proposition \ref{prop:lradj} when $n \ge 0$ (the case $n \le 0$ is similar - see Remark \ref{rem:nle0}). First, to define the 2-morphism $\Ucupl$ in (\ref{eq:A}) we note that
%\begin{eqnarray*}
%& & \Hom({\1}_{n+2} \la n+1 \ra, \E \F {\1}_{n+2}) \\&\cong& \Hom({\1}_{n+2} \la n+1 \ra, \F \E \1_{n+2} \bigoplus_{[n+2]} \1_{n+2}) \\&\cong& \Hom(\1_{n+2} \la n+1 \ra, (\E \1_{n+2})_R \E \1_{n+2} \la -n-3 \ra) \bigoplus_{k=0}^{n+1} \Hom(\1_{n+2}, \1_{n+2} \la -2k \ra) \\
%&\cong& \Hom(\E \1_{n+2}, \E \1_{n+2} \la -2n-4 \ra) \oplus \Hom(\1_{n+2}, \1_{n+2})
%\end{eqnarray*}
%We take $\Ucupl$ to be the identity map in the second summand. This corresponds to the inclusion of $\1_{n+2}$ into the lowest degree summand of $\1_{n+2}$ inside $\E \F \1_{n+2}$. Similarly, we define the left hand map in (\ref{eq:A}) as the projection out of the top degree summand $\1_{n+2}$ in $\E \F \1_{n+2}$.
%
%The right hand map in (\ref{eq:B}) is defined by adjunction using the fact that $\F$ equals $\E_R$ up to a shift. The last adjunction is more difficult to define and we will do this in the process of proving proposition \ref{prop:lradj}. The proof is by decreasing induction on $n$ starting from the highest weight. In particular, we use here that the representation is integrable.

%% -------------------------------------------------------------------------------
%%
%\subsection{Induced maps}
%%
%% -------------------------------------------------------------------------------

The following Lemma will be key for constructing the adjunction maps.  The proof would be greatly simplified by knowing that $\E\1_{n-2}$ and $\F\E\1_n$ were indecomposable.  However, we have not yet established these facts.

\begin{lem}\label{lem:Xind} Suppose that $\1_n \ne 0$. If $n > 1$ then $\sUupdot \sUdown: \E \F \1_n \rightarrow \E \F \1_n \la 2 \ra$ induces an isomorphism on $n-1$ summands of the form $\1_n \la k \ra$, in other words $\rk_{\1_n}(\sUupdot \sUdown) = n-1$.

Similarly, if $n < -1$ then $\sUdown \sUupdot: \F \E \1_n \rightarrow \F \E \1_n \la 2 \ra$ induces an isomorphism on $-n-1$ summands of the form $\1_n \la k \ra$, in other words $\rk_{\1_n}(\sUdown \sUupdot) = -n-1$.
\end{lem}
\begin{rem} If $n = -1, 0 ,1$ the statement above is vacuous (which is why we only consider $n > 1$ and $n < -1$).
\end{rem}
\begin{proof}
We prove the case $n > 1$ (the case $n < -1$ is proved in the same way). To do this we only use the commutator $\mf{sl}_2$ relation together with the fact that $\sUupdot \sUup: \E \E \1_n \rightarrow \E \E \1_n \la 2 \ra$ induces a map
$$\E^{(2)} \1_n \la -1 \ra \oplus \E^{(2)} \1_n \la 1 \ra \rightarrow \E^{(2)} \1_n \la 1 \ra \oplus \E^{(2)} \1_n \la 3 \ra$$
which is an isomorphism on the common summand $\E^{(2)} \la 1 \ra$.
%(i.e. $\rk_{\E^{(2)}}(\sUupdot \sUup) = 1$).
 This is a consequence of the NilHecke algebra action.  Using this fact, the map
\begin{equation}\label{eq:1}
\sUupdot \sUup \sUdown: \E \E \F \1_{n-2} \rightarrow \E \E \F \1_{n-2} \la 2 \ra.
\end{equation}
induces an isomorphism on the summand $\E^{(2)} \F \1_{n-2} \la 1 \ra$ (note that $\E^{(2)}\F \1_{n-2}$ is not zero, since $\E^{(2)}\F\F\1_n$ contains $\1_n$ as a summand (this uses $n>1$), and $\1_n \ne 0$).

Temporarily assume that $\E\1_{n-2}$ is indecomposable.  Then, by  \cite[Lemma 4.2]{CKL3} we know that
$$\E^{(2)} \F \1_{n-2} \cong \F \E^{(2)} \1_{n-2} \bigoplus_{i=0}^{n-2} \E \1_{n-2} \la n-2-2i \ra$$
which means that $\rk_{\E}(\sUupdot\sUup\sUdown) \ge n-1$.

On the other hand, by decomposing $\E\F\1_{n-2}$, the map in \eqref{eq:1} induces the map
$$\sUupdot\sUdown\sUup \bigoplus_{i=0}^{n-3} \sUupdot: \E \F \E \1_{n-2} \bigoplus_{i=0}^{n-3} \E \1_{n-2} \la n-3-2i \ra \rightarrow \E \F \E \1_{n-2} \la 2 \ra \bigoplus_{i=0}^{n-3} \E \1_{n-2} \la n-1-2i \ra.$$
The maps $\sUupdot \maps  \E \1_{n-2} \la n-3-2i \ra \to \E \1_{n-2} \la n-1-2i \ra$ are clearly never isomorphisms.  Hence, the only contribution to the $\E$-rank of (\ref{eq:1}) comes from the map
$$\sUupdot\sUdown\sUup: \E \F \E \1_{n-2} \rightarrow \E \F \E \1_{n-2} \la 2 \ra.$$
To compute the $\E$-rank of this map, note that by Remark~\ref{rem:XXX} the projection map $\E\F\E\1_{n-2} \to\F\E\E\1_{n-2}$ is zero on all summands $(1_n\la n -1-2k \ra)\E\1_{n-2}$.  Then
$\E \F \E \1_{n-2} \cong \F \E \E \1_{n-2} \bigoplus_{i=0}^{n-1} \E \1_{n-2} \la n-1-2i \ra$ and $\F \E \E \1_{n-2}$ does not contribute to the $\E$-rank. Thus, we have shown
\[
\rk_{\E}(\sUupdot\sUdown\sUup) = \rk_{\E}(\sUupdot\sUup\sUdown) = n-1.
\]

To complete the proof, note that $\rk_{\E\1_{n-2}}(\sUupdot\sUdown\sUup) = n-1$ is equal to $\rk_{\1_n}(\sUupdot\sUdown)$. This is because the summand $\F \E \E \1_{n-2}$ of $\E \F \E \1_{n-2}$ does not contain any summands $\E \1_{n-2}$. To see this suppose $\F\E\E\1_{n-2} = \E\1_{n-2} + A$ for some $A$. Applying $\F$ on the right gives $\F\E\E\F\1_n = \E\F\1_n + A \F \1_n$ and simplifying both sides gives that either $\F\E\1_n$ or $\F^{(2)}\E^{(2)} \1_n$ contains a copy of $\1_n$. To see this is impossible, one shows using Lemma~\ref{lem:1} and Corollary~\ref{cor:0} that the space of inclusions $\Hom(\1_n \la l \ra, \F\E\1_n)$ or $\Hom(\1_n \la l \ra, \F^{(2)}\E^{(2)}\1_n)$ is zero.

% Finally, the proof follows from our assumption that $\E\1_{n-2}$ is indecomposable using the cancelation property to relate the $\1_n$-rank of the map $\sUupdot \sUdown$ to the $\E$-rank of $\sUupdot\sUdown\sUup$.  Note that there is no contribution to the $\1_n$-rank of $\sUupdot \sUdown$ coming from $\F\E\1_n$ since this term contains no summands isomorphic to shifts of $\1_n$. This can be seen by showing that the space of inclusions $\Hom(\1_n \la l \ra, \F\E\1_n)$ for such a summand is zero using Lemma~\ref{lem:1}.  This implies that there is no contribution to the $\E$-rank of $\sUupdot\sUdown\sUup$ coming from $\F\E\E\1_n$, since if $\E\1_n$ is a summand of $\F\E\E\1_n$, then $\F\E\E\1_n = \E\1_n + A$ for some $A$. Applying $\F$ on the left gives $\F\E\E\F\1_n = \E\F\1_n + A\F\1_n$ and simplifying both sides gives that either $\F\E\1_n$ or $\F^(2)\E^(2)\1_n$ contains a copy of $\1_n$.

Finally, if $\E\1_{n-2}$ is decomposable then the argument in the paragraph above shows that it must contain some indecomposable $X \1_{n-2}$ so that $\F \E \E \1_{n-2}$ does not contain $X \1_{n-2}$. Now suppose that $X \1_{n-2}$ occurs with multiplicity $\mu$ inside $\E \1_{n-2}$. Then the same argument as above shows that $\rk_{X \1_{n-2}}(\sUupdot\sUdown\sUup) = \mu(n-1)$ and subsequently $\mu \cdot \rk_{\1_n}(\sUupdot \sUdown) = \mu(n-1)$ which gives $\rk_{\1_n}(\sUupdot \sUdown) = n-1$.
\end{proof}

%===================================================

% -------------------------------------------------------------------------------
%
\subsection{Consequences of $Q$-strongness} \label{sec:consequence-sl2}
%
% -------------------------------------------------------------------------------

The structure of a $Q$-strong 2-representation imposes strong conditions on the Hom spaces between various maps.
%In particular, the units and counits for various adjoints are determined formally from this data.
For $n \ge 0$ the $\mf{sl}_2$-relation
% and the adjoint structure in Definition~\ref{def_Qstrong} formally fixes
determines a 2-morphism $\Ucupl: \1_{n+2} \la n+1 \ra \rightarrow  \E \F \1_{n+2}$ as follows:
\begin{eqnarray*}
& & \Hom({\1}_{n+2} \la n+1 \ra, \E \F {\1}_{n+2}) \\&\cong& \Hom({\1}_{n+2} \la n+1 \ra, \F \E \1_{n+2} \bigoplus_{[n+2]} \1_{n+2}) \\&\cong& \Hom(\1_{n+2} \la n+1 \ra, (\E \1_{n+2})_R \E \1_{n+2} \la -n-3 \ra) \bigoplus_{k=0}^{n+1} \Hom(\1_{n+2}, \1_{n+2} \la -2k \ra) \\
&\cong& \Hom(\E \1_{n+2}, \E \1_{n+2} \la -2n-4 \ra) \oplus \Hom(\1_{n+2}, \1_{n+2}).
\end{eqnarray*}
We take $\Ucupl$ to be the identity map in the second summand. This corresponds to the inclusion of $\1_{n+2}$ into the lowest degree summand of $\1_{n+2}$ inside $\E \F \1_{n+2}$. Similarly, we define a 2-morphism $\Ucapr: \E \F \1_{n+2} \la n+1 \ra \rightarrow \1_{n+2}$ as the projection out of the top degree summand $\1_{n+2}$ in $\E \F \1_{n+2}$.

By fixing an arbitrary choice of unit and counit realizing $\1_{\l}\F\la n+1\ra$ as the right adjoint of $\E\1_{\l}$, we can define a 2-morphism $\Ucupr: \1_n \rightarrow \F \E \1_n \la n+1 \ra$ as the unit map for this adjunction.
All of the arguments in this section only make use of the fact that this map is chosen as the unit map for some choice of adjoint structure.  In Corollary~\ref{cor:EFidhoms} we show that the units and counits are all unique up to a scalar.  This choice is fixed in Section~\ref{subsec:adjoint}.

At this stage we do not yet define the last adjunction map $\Ucapl: \F \E \1_n \rightarrow \1_n \la n+1 \ra$ for $n \geq 0$.  This map is more difficult to define and is constructed (up to scalar) in subsection~\ref{subsubsec:last}.  The proof adjunction axioms are verified by decreasing induction on $n$ starting from the highest weight. In particular, we use here that the representation is integrable.

The $\mf{sl}_2$-relations from Definition~\ref{def_Qstrong} \eqref{co:EF} defines similar maps for $n \le 0$ -- see Remark \ref{rem:nle0}. In this case, one gets maps $\Ucapl: \F \E \1_n \rightarrow \1_n \la n+1 \ra$  and $\Ucupr: \1_n \rightarrow \F \E \1_n \la n+1 \ra$ using the decomposition of $\F\E\1_n$ and the map $\Ucapr: \E \F \1_{n+2} \la n+1 \ra \rightarrow \1_{n+2}$ by fixing a choice of adjoint structure.

%================================================

\begin{cor} \label{cor:degz-bubbles}
The maps
 \begin{align}
  \xy 0;/r.18pc/:
 (0,0)*{\cbub{n-1}};
  (4,8)*{n};
 \endxy &\maps \1_n \to \1_n  \qquad \text{for $n >0$,}
 \\ \nn \\
  \xy 0;/r.18pc/:
 (0,0)*{\ccbub{-n-1}};
  (4,8)*{n};
 \endxy &\maps \1_n \to \1_n  \qquad \text{for $n <0$,}
\end{align}
are both equal to some non-zero multiple of $\Id_{\1_n}$.
\end{cor}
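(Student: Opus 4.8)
The plan is to separate the statement into two claims: that each bubble equals \emph{some} scalar multiple of $\Id_{\1_n}$, and that this scalar is nonzero. The first is immediate. Since we may assume $n \neq 0$ (Important Convention), condition (\ref{co:hom}) of Definition \ref{def_Qstrong} gives $\End(\1_n) = \Hom^0(\1_n,\1_n) \cong \k$, so any degree-zero endomorphism of $\1_n$ is automatically $c\cdot\Id_{\1_n}$ for some $c\in\k$. The whole content is therefore to show $c \neq 0$, and I would do this for $n>0$ (the case $n<0$ being symmetric).

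For $n>0$ the idea is to read the closed diagram $\cbub{n-1}$ as the composite
\[
\1_n \xrightarrow{\ \text{cup}\ } \E\F\1_n \xrightarrow{\ (\sUupdot\sUdown)^{\,n-1}\ } \E\F\1_n \xrightarrow{\ \text{cap}\ } \1_n,
\]
where the cup and cap are the biadjunction (co)unit $2$-morphisms $\Ucupl$ and $\Ucapr$ fixed in Proposition \ref{prop:lradj}, and the $n-1$ dots are placed on the upward ($\E$) strand of the loop, i.e.\ realized by the single-dot map $\sUupdot\sUdown\maps \E\F\1_n \to \E\F\1_n\la 2\ra$. Here I use the decomposition $\E\F\1_n \cong \F\E\1_n \oplus_{[n]}\1_n$, whose $\1_n$-part has multiplicity space $V$ spanned by the $n$ copies $S_0=\1_n\la n-1\ra,\, S_1 = \1_n\la n-3\ra,\, \dots,\, S_{n-1}=\1_n\la 1-n\ra$. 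A short degree count identifies the roles of all three maps on $V$: by the computation in the proof of Proposition \ref{prop:lradj} the cup is the nonzero inclusion of $\1_n$ onto the extremal copy $S_0$, and dually the cap is the nonzero projection onto the opposite extremal copy $S_{n-1}$.

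The crux is the middle map. Since a dot has degree $+2$, the operator induced by $\sUupdot\sUdown$ on the multiplicity space $V$ sends the copy $S_k$ (of shift $n-1-2k$) into $S_{k+1}$ and kills $S_{n-1}$; in particular it is nilpotent on the $n$-dimensional space $V$. Lemma \ref{lem:Xind} tells us precisely that its $\1_n$-rank is $\rk_{\1_n}(\sUupdot\sUdown) = n-1 = \dim V - 1$, and a nilpotent operator of rank $\dim V - 1$ is regular (a single Jordan block), so its $(n-1)$-st power is nonzero and carries $S_0$ isomorphically onto $S_{n-1}$. Sandwiching this between the nonzero inclusion onto $S_0$ and the nonzero projection off $S_{n-1}$ yields a nonzero element of $\End(\1_n)$, hence $c \neq 0$. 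I expect the main obstacle to be exactly this step: one must confirm that the single dot acts on the tower $S_0,\dots,S_{n-1}$ as a strictly index-raising (hence nilpotent) operator, so that the rank input of Lemma \ref{lem:Xind} forces \emph{regular} nilpotency and thereby nonvanishing of $(\sUupdot\sUdown)^{n-1}$ between the two ends; working inside the multiplicity space $V$ (rather than with individual matrix coefficients) is what cleanly rules out any cancellation routed through the $\F\E\1_n$ summand.

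The degenerate case $n=1$ is handled directly: there are no dots, $\E\F\1_1$ contains a single copy of $\1_1$, and the cup and cap are inclusion into and projection off that one copy, so the composite is visibly a nonzero scalar. For $n<0$ the entire argument applies verbatim after exchanging the roles of $\E$ and $\F$: one replaces $\cbub{n-1}$ by $\ccbub{-n-1}$, the decomposition by $\F\E\1_n \cong \E\F\1_n \oplus_{[-n]}\1_n$, the dotted map by $\sUdown\sUupdot$, and invokes the second statement of Lemma \ref{lem:Xind}, namely $\rk_{\1_n}(\sUdown\sUupdot) = -n-1$, to conclude regular nilpotency and nonvanishing as before.
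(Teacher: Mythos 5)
Your proposal is correct and follows essentially the same route as the paper's own proof: the paper likewise reads the degree-zero bubble as cup, followed by the $(n-1)$-fold iterate of $\sUupdot\sUdown$ on $\E\F\1_n$, followed by cap, identifies the cup and cap (via Proposition \ref{prop:lradj}) as the inclusion of and projection onto the two extremal $\1_n$-summands, and invokes Lemma \ref{lem:Xind} to conclude that the dotted map carries the bottom summand isomorphically onto the top one, whence the composite is a nonzero element of $\End(\1_n)\cong\k$. Your regular-nilpotency discussion on the multiplicity space $V$ (together with the observation that degree reasons forbid contributions routed through $\F\E\1_n$) is just a more explicit rendering of the step the paper compresses into ``this is a corollary of Lemma \ref{lem:Xind}''.
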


\begin{proof}
As usual, we prove the case $n > 0$ (the case $n < 0$ follows similarly).

By construction, the map $\;\vcenter{\xy 0;/r.14pc/: (-4,2)*{}="t1"; (4,2)*{}="t2"; "t2";"t1" **\crv{(4,-5) & (-4,-5)}; ?(1)*\dir{>};
\endxy}: \1_n \rightarrow \E \F \1_n \la -n+1 \ra$ is an isomorphism between the summand $\1_n$ on the left and the corresponding $\1_n$ on the right hand side. Then applying $^{n-1}\Uupdot\Udown: \E \F \1_n \la -n+1 \ra \rightarrow \E \F \1_n \la n-1 \ra$ induces an isomorphism between the summands $\1_n$ on either side (this is a corollary of Lemma \ref{lem:Xind}).

Finally, again by construction, the map $\xy 0;/r.15pc/: (0,-4)*{};
 (-4,-2)*{}="t1"; (4,-2)*{}="t2";
 "t2";"t1" **\crv{(4,5) & (-4,5)}; ?(0)*\dir{<};
 \endxy: \E \F \1_n \la n-1 \ra \rightarrow \1_n$ is an isomorphism between $\1_n$ on the right side and the corresponding summand $\1_n$ on the left hand side. Thus the composition
\[\1_n \xrightarrow{\vcenter{\xy 0;/r.15pc/:
 (-4,2)*{}="t1"; (4,2)*{}="t2";
 "t2";"t1" **\crv{(4,-5) & (-4,-5)}; ?(1)*\dir{>};
 \endxy}} \E \F  \1_n \la -n+1 \ra \xrightarrow{n-1 \Uupdot \Udown} \E \F \1_n \la n-1 \ra \xrightarrow{\xy 0;/r.15pc/: (0,-4)*{};
 (-4,-2)*{}="t1"; (4,-2)*{}="t2";
 "t2";"t1" **\crv{(4,5) & (-4,5)}; ?(0)*\dir{<};
 \endxy} \1_n
 \]
is an isomorphism and this completes the proof since $\Hom(\1_n, \1_n) \cong \k$.
\end{proof}

%%%----------------------------------------------------------------------
%Fix $n \ge 0$. By induction we can assume that if $m > n$ we have
%\begin{equation}\label{eq:ind_hyp}
%(\E {\1}_m)_R \cong {\1}_m \F \la m+1 \ra \text{ and } (\E {\1}_m)_L \cong {\1}_m \F \la -m-1 \ra
%\end{equation}
%since when $m$ is beyond the highest weight this claim is vacuously true since both maps are zero.  Assuming the induction hypothesis (\ref{eq:ind_hyp}) above we have the following facts. The reader should recall the `Important Convention' from Definition~\ref{def_Qstrong}.

%% CUT

\begin{lem}\label{lem:A'} If $m \ge n$ then the following two maps
\[
\xy 0;/r.15pc/:
    (-12,-4)*{};(-12,4)*{} **\dir{-}?(1)*\dir{>} ;
    (-4,-4)*{};(4,4)*{} **\crv{(-4,-1) & (4,1)};
    (4,-4)*{};(-4,4)*{} **\crv{(4,-1) & (-4,1)}?(1)*\dir{>};
    (-4,-4)*{};(-12,-4)*{} **\crv{(-4,-8) & (-12,-8)};
    (4,-10)*{}; (4,-4)*{} **\dir{-};
    (16,-2)*{m};
\endxy
\qquad \qquad \qquad
\xy 0;/r.15pc/:
    (12,-4)*{};(12,4)*{} **\dir{-} ;
    (4,-4)*{};(-4,4)*{} **\crv{(4,-1) & (-4,1)}?(1)*\dir{>};
    (-4,-4)*{};(4,4)*{} **\crv{(-4,-1) & (4,1)}?(1)*\dir{>};
    (4,-4)*{};(12,-4)*{} **\crv{(4,-8) & (12,-8)};
    (-4,-10)*{}; (-4,-4)*{} **\dir{-};
    (19,-4)*{m};
\endxy
\]
are non-zero multiples of the unique 2-morphism $\E \1_m \rightarrow \E \E \F \1_m \la -m-1 \ra$ from Corollary \ref{cor:2}.
\end{lem}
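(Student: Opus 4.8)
The plan is to place both 2-morphisms in the one-dimensional space $\Hom(\E\1_m,\E\E\F\1_m\la -m-1\ra)\cong\k$ furnished by Corollary \ref{cor:2}, and then to show that each is a \emph{nonzero} multiple of its generator. Reading the pictures as composites of 1-morphisms, the left-hand map is
$$\E\1_m\xrightarrow{\;\Ucupl\otimes\id_\E\;}\E\F\E\1_m\xrightarrow{\;\id_\E\otimes\iota\;}\E\E\F\1_m,$$
where $\iota\maps\F\E\1_m\to\E\F\1_m$ is the up-down crossing applied to the two rightmost strands, while the right-hand map is
$$\E\1_m\xrightarrow{\;\id_\E\otimes\Ucupl\;}\E\E\F\1_m\xrightarrow{\;\psi\otimes\id_\F\;}\E\E\F\1_m,$$
where $\psi$ is the upward $\E\E$-crossing applied to the two leftmost strands. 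A direct degree count, using the degree of $\Ucupl$ (which differs between the two composites because the cup sits over different weights) together with the degree $0$ of $\iota$ and the degree $-2$ of $\psi$, confirms that both composites have degree $-m-1$, hence lie in the space of Corollary \ref{cor:2}. Thus each is a scalar multiple of the generator and it remains only to check that the scalars are nonzero.

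For the left-hand map I would argue that it is a split injection, and in particular nonzero. By the construction in the proof of Proposition \ref{prop:lradj}, $\Ucupl$ is the inclusion of $\1$ as the lowest-degree summand of $\E\F$, so $\Ucupl\otimes\id_\E$ is split injective. Since $m\ge 0$ we have $\E\F\1_m\cong\F\E\1_m\oplus_{[m]}\1_m$, and the up-down crossing $\iota$ was \emph{defined} as the inclusion of the summand $\F\E\1_m$; tensoring on the left with $\id_\E$ keeps it split injective. A composite of split injections is injective, and (invoking the Important Convention that the weights involved are nonzero) $\E\1_m\ne 0$, so the left-hand map is nonzero, hence a nonzero multiple of the generator.

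For the right-hand map this argument fails, because the upward $\E\E$-crossing $\psi$ is the nilpotent NilHecke divided-difference operator and is not injective. Instead I would relate it to the left-hand map. Using the cyclicity relations \eqref{eq_crossl-gen}--\eqref{eq_crossr-gen} to rewrite the up-down crossing $\iota$ of the left-hand map in terms of the upward crossing decorated by a cup and a cap, and then applying the biadjointness (zig-zag) relations \eqref{eq_biadjoint1}--\eqref{eq_biadjoint2} to slide $\Ucupl$ from the left of the diagram across to the right, one transforms the left-hand map into a scalar multiple of the right-hand map. Every scalar produced along the way is a product of the invertible constants $t_{ij},t_{ji},r_i\in\k^\times$, so the overall scalar is a unit; since the left-hand map is nonzero, so is the right-hand one.

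The main obstacle is precisely this last reduction: performing the isotopy that converts the up-down crossing of the left diagram into the upward crossing of the right diagram while keeping exact track of the (unit) scalar it produces. An alternative that sidesteps rewriting one crossing as the other would be to exhibit explicitly the projection $\E\E\F\1_m\la -m-1\ra\to\E\1_m$ onto the distinguished $\E$-summand carrying the generator of Corollary \ref{cor:2}, and to check that its composite with the right-hand map is a nonzero endomorphism of the brick $\E\1_m$ (which is forced to be a scalar by Lemma \ref{lem:1}); however, identifying that projection diagrammatically is of comparable difficulty, so I expect the cyclicity-plus-biadjointness reduction to be the cleaner route.
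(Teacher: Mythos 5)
Your placement of both maps in the one-dimensional space of Corollary \ref{cor:2} (the degree count) and your treatment of the left-hand map are correct and agree with the paper: the paper likewise factors it as $\E \1_m \xrightarrow{\Ucupl \sUup} \E\F\E\1_m \la -m-1 \ra \to \E\E\F\1_m \la -m-1\ra$, where the first arrow is the inclusion of the lowest-degree summand and the second is (whiskering by $\id_\E$ of) the up-down crossing, which for $m \ge 0$ is by construction the inclusion of the summand $\F\E\1_m$ of $\E\F\1_m$; a composite of split inclusions is non-zero.

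The argument you give for the right-hand map, however, has a genuine gap: it is circular. Lemma \ref{lem:A'} sits inside the inductive proof of Proposition \ref{prop:lradj}, and the biadjointness (zig-zag) relations \eqref{eq_biadjoint1}--\eqref{eq_biadjoint2} for the cups and caps at the relevant weights are precisely what is established, \emph{using this lemma}, in the Proposition that immediately follows it. Similarly, the $Q$-cyclicity relations and the identities \eqref{eq_crossl-gen}--\eqref{eq_crossr-gen} expressing the up-down crossing via the upward crossing decorated by cups and caps are relations of the target 2-category $\Ucat_Q(\mf{g})$; in the 2-representation $\cal{K}$ they are only proved in Section \ref{sec:proofcycbiadjoint}, after Proposition \ref{prop:lradj} is complete. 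At the stage of Lemma \ref{lem:A'} the up-down crossing in $\cal{K}$ is \emph{defined} abstractly as a summand inclusion (unique up to scalar by Lemma \ref{lem:homs}), with no proven relation to the NilHecke crossing, so you may not "slide the cup across" or trade one crossing for the other. In short, you are assuming in $\cal{K}$ exactly the relations this portion of the paper is in the business of verifying.

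The paper's proof of non-vanishing of the right-hand map uses only what is available at this stage, namely the NilHecke dot-slide relation \eqref{eq_nil_dotslide} coming from the KLR action (condition (\ref{co:KLR}) of Definition \ref{def_Qstrong}): assume the map is zero, postcompose with a dot on the middle upward strand, and slide the dot through the crossing. One of the two resulting terms factors through the original (zero) map, while the other is $\pm r_i$ times $\sUup\,\Ucupl : \E\1_m \to \E\E\F\1_m\la -m+1\ra$, which is the inclusion of $\E\1_m$ into the lowest-degree summand and hence non-zero --- a contradiction. Your fallback suggestion (exhibiting the projection onto the distinguished $\E$-summand) is left unexecuted, so as written your proof of the right-hand half of the lemma is incomplete.
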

\begin{proof}
We just need to show that both maps are non-zero. Suppose the map on the right is zero. Adding a dot at the top of the middle upward pointing strand and sliding it past the crossing using the NilHecke relation~\eqref{eq_nil_dotslide} one gets two terms. One term is again zero (because it is the composition of a dot and the original map) and the other is just the adjoint map
$$\E \1_m \xrightarrow{\sUup \Ucupl} \E \E \F \1_m \la -m+1 \ra$$
which cannot be zero because it is the inclusion of $\E \1_m$ into the lowest degree summand inside $\E \E \F \1_m \la -m+1 \ra$. Thus the map on the right must be non-zero.

On the other hand, the map on the left is the composition
\[
\xy
(-40,0)*+{\E \1_m}="1";
(0,0)*+{ \E \F \E \1_m \la -m-1 \ra}="2";
(50,0)*+{ \E \E \F \1_m \la -m-1 \ra}="3";
 {\ar^-{\Ucupl \sUup} "1";"2"};
 {\ar^-{\xy 0;/r.15pc/:
    (-12,-4)*{};(-12,4)*{} **\dir{-}?(1)*\dir{>} ;
    (-4,-4)*{};(4,4)*{} **\crv{(-4,-1) & (4,1)}?(0)*\dir{<} ;
    (4,-4)*{};(-4,4)*{} **\crv{(4,-1) & (-4,1)}?(1)*\dir{>};
\endxy} "2";"3"};
\endxy
\]
where the first map is an inclusion of $\E \1_m$ into the lowest degree copy of $\E \1_m$ in $(\E \F) \E \1_m$ and the second map is induced by the inclusion of $\F \E \1_{m}$ into $\E \F \1_{m} \cong \E \F \1_{m} \oplus_{[m]} \1_{m}$. Since this is the composition of two inclusions it is also non-zero.
\end{proof}

\subsubsection{Defining the last adjunction} \label{subsubsec:last}

We are still assuming, for simplicity, that $n \ge 0$. Recall that we defined all but one of the adjunction maps, namely the 2-morphism $\Ucapl: \F \E \1_n \rightarrow \1_n \la n+1 \ra$. This map cannot be defined formally by adjunction and since $n \ge 0$ the 1-morphism $\F \E \1_n$ is indecomposable so one cannot define it as an inclusion. To overcome this problem we construct this 2-morphism by defining an up-down crossing and composing it with the map $\Ucapr$ defined in section~~\ref{sec:consequence-sl2}.

We define the crossing
$\;\xy 0;/r.14pc/:
    (-4,-4)*{};(4,4)*{} **\crv{(-4,-1) & (4,1)}?(0)*\dir{<} ;
    (4,-4)*{};(-4,4)*{} **\crv{(4,-1) & (-4,1)}?(1)*\dir{>};
\endxy$
as the map $\F \E \1_n \rightarrow \E \F \1_n \cong \F \E \1_n \oplus_{[n]} \1_n$ which includes $\F \E \1_n$ into $\E \F \1_n$. Note that this map is unique (up to scalar) because by a simple adjunction calculation there exist no non-zero maps $\F \E \1_n \rightarrow \oplus_{[n]} \1_n$. We then define the 2-morphism $\Ucapl$ as the composite
\begin{equation}\label{eq:C}
\xy
 (-40,0)*+{\F \E \1_n}="1";
 (-15,0)*+{\E \F \1_n }="2";
 (15,0)*+{\E \F \1_n \la 2n \ra}="3";
 (45,0)*+{\1_n \la n+1 \ra.}="4";
 {\ar^-{\xy 0;/r.15pc/:
    (-4,-4)*{};(4,4)*{} **\crv{(-4,-1) & (4,1)}?(0)*\dir{<} ;
    (4,-4)*{};(-4,4)*{} **\crv{(4,-1) & (-4,1)}?(1)*\dir{>};
\endxy} "1"; "2"};
 {\ar^-{n\Uupdot\Udown} "2"; "3"};
 {\ar^-{\Ucapr} "3"; "4"};
\endxy
\end{equation}
A similar definition of the cap 2-morphism appears in \cite[Proposition 5.4]{Lau1} and \cite[Section 4.1.4]{Rou2}.

\subsection{Biadjointness}

Recall that in a $Q$-strong 2-representation we define $\1_n \F$ as the right adjoint of $\E \1_n$ together with a grading shift~\eqref{eq_defF}. We now show that $\E$s and $\F$s are biadjoint to each other (up to a grading shift).

\begin{prop}\label{prop:lradj} Assuming the adjoint induction hypothesis \eqref{eq:ind_hyp}, then given a strong 2-representation of $\mf{sl}_2$ the left and right adjoints of $\E$ are isomorphic up to specified shifts. More precisely $(\E \1_n)_L \cong (\E \1_n)_R \la -2(n + 1) \ra$ given by the 2-morphisms
\begin{equation}\label{eq:A}
\Ucapr: \E \F \1_{n+2} \la n+1 \ra \rightarrow \1_{n+2} \hspace{1.0cm}
\Ucupl: \1_{n+2} \la n+1 \ra \rightarrow  \E \F \1_{n+2}
\end{equation}
\begin{equation}\label{eq:B}
\Ucapl: \F \E \1_n \rightarrow \1_n \la n+1 \ra \hspace{1.0cm}\Ucupr: \1_n \rightarrow \F \E \1_n \la n+1 \ra
\end{equation}
defined above satisfy the adjunction relations (\ref{eq_biadjoint1}) and (\ref{eq_biadjoint2}) up to non-zero multiples.
\end{prop}

\begin{rem} We will choose specific scalars for the 2-morphisms above in section \ref{subsec:adjoint}. The result above implies that
\begin{enumerate}
\item $(\E^{(r)} {\1}_n)_R \cong {\1}_n \F^{(r)} \la r(n + r) \ra$,
\item $(\E^{(r)} {\1}_n)_L \cong {\1}_n \F^{(r)} \la -r(n + r) \ra$.
\end{enumerate}
\end{rem}

%\begin{prop} The 2-morphisms
%\begin{eqnarray*}
%\Ucapr: \E \F \1_{n+2} \la n+1 \ra \rightarrow \1_{n+2} &\text{ and }& \Ucupl: \1_{n+2} \la n+1 \ra \rightarrow  \E \F \1_{n+2} \\
%\Ucapl: \F \E \1_n \rightarrow \1_n \la n+1 \ra &\text{ and }& \Ucupr: \1_n \rightarrow \F \E \1_n \la n+1 \ra
%\end{eqnarray*}
%defined above satisfy satisfy the adjunction relations (\ref{eq_biadjoint1}) and (\ref{eq_biadjoint2}) up to non-zero multiples.
%\end{prop}
\begin{proof}
We prove the adjunction identity on the left of (\ref{eq_biadjoint1}) (the second one follows formally). Since $\End(\E\1_n) \cong \k$ it suffices to show that the left side of (\ref{eq_biadjoint1}) is non-zero. Now, $\Hom(\E \F \1_{n+2} \la n+1 \ra, \1_{n+2}) \cong \Hom(\E \1_n, \E \1_n) \cong \k$ by Lemma \ref{lem:1}. So the map $\Ucapr: \E \F \1_{n+2} \la n+1 \ra \rightarrow \1_{n+2}$ must be equal to the adjunction map (up to a multiple). Since $\Ucupr: \1_n \rightarrow \F \E \1_n \la n+1 \ra$ is also a non-zero multiple of the adjunction map their composition must be non-zero.

Likewise, we now prove the adjunction identity on the left in (\ref{eq_biadjoint2}) (the second one follows formally) by showing that the left side of (\ref{eq_biadjoint2}) is non-zero. Using Lemma \ref{lem:A'} it follows that up to non-zero multiples the left side of (\ref{eq_biadjoint2}) is equal to the composition
\begin{equation}
  \xy 0;/r.17pc/:
  (14,8)*{n};
  (-3,-10)*{};(3,5)*{} **\crv{(-3,-2) & (2,1)}??(.15)*\dir{>};
  (3,-5)*{};(-3,10)*{} **\crv{(2,-1) & (-3,2)}?(.85)*\dir{>} ?(.1)*\dir{>};
  (3,5)*{}="t1";  (9,5)*{}="t2";
  (3,-5)*{}="t1'";  (9,-5)*{}="t2'";
   "t1";"t2" **\crv{(4,8) & (9, 8)} ;
   "t1'";"t2'" **\crv{(4,-8) & (9, -8)};
   "t2'";"t2" **\crv{(10,0)} ?(.5)*\dir{<};
  (3,4)*{\bullet}+(-1,4)*{\scs n};
 \endxy\;\;
\end{equation}
Moving one of the dots through the crossing using the NilHecke relation~\eqref{eq_nil_dotslide} gives
\begin{equation}
  \xy 0;/r.17pc/:
  (14,8)*{n};
  (-3,-10)*{};(3,5)*{} **\crv{(-3,-2) & (2,1)}??(.15)*\dir{>};
  (3,-5)*{};(-3,10)*{} **\crv{(2,-1) & (-3,2)}?(.85)*\dir{>} ?(.1)*\dir{>};
  (3,5)*{}="t1";  (9,5)*{}="t2";
  (3,-5)*{}="t1'";  (9,-5)*{}="t2'";
   "t1";"t2" **\crv{(4,8) & (9, 8)} ;
   "t1'";"t2'" **\crv{(4,-8) & (9, -8)};
   "t2'";"t2" **\crv{(10,0)} ?(.5)*\dir{<};
  (3,4)*{\bullet}+(-1,4)*{\scs n};
 \endxy\;\; = \quad   \xy 0;/r.17pc/:
  (16,8)*{n};
  (-3,-10)*{};(3,5)*{} **\crv{(-3,-2) & (2,1)}??(.15)*\dir{>};
  (3,-5)*{};(-3,10)*{} **\crv{(2,-1) & (-3,2)}?(.85)*\dir{>} ?(.1)*\dir{>};
  (3,5)*{}="t1";  (9,5)*{}="t2";
  (3,-5)*{}="t1'";  (9,-5)*{}="t2'";
   "t1";"t2" **\crv{(4,8) & (9, 8)} ;
   "t1'";"t2'" **\crv{(4,-8) & (9, -8)};
   "t2'";"t2" **\crv{(10,0)} ?(.5)*\dir{<};
  (-1.5,-4)*{\bullet}; (3,5)*{\bullet}+(1,4)*{\scs n-1};
 \endxy
 \;\; + \quad  r_i \;\;
   \xy 0;/r.18pc/:
 (0,0)*{\cbub{n-1}};
  (4,8)*{n};
 (-10,10);(-10,-10); **\dir{-} ?(1)*\dir{>}+(2.3,0)*{\scriptstyle{}}
 \endxy
\end{equation}
where the first term on the right hand side is zero since it is the composite of a dot and an endomorphism of $\E \1_n$ of degree $-2$.  By Corollary \ref{cor:degz-bubbles} the remaining term on the right-hand side above is some non-zero multiple of the identity.
\end{proof}

Thus we get that \newline
\noindent\fbox {
   \parbox{\linewidth}{
\[
(\E \1_n)_R \cong \1_n \F  \la n+1 \ra \text{ and } (\E\1_n)_L \cong \1_n \F  \la -n-1 \ra,
\]
\[
 \left(\F\1_n\right)_L =  \1_n\E  \la n-1\ra\text{ and }  \left(\F\1_n\right)_R =  \1_n\E  \la -n+1\ra,
\]
    }
}

\medskip
%$$(\E {\1}_n)_R \cong {\1}_n \F \la n+1 \ra \text{ and } (\E {\1}_n)_L \cong {\1}_n \F \la -n-1 \ra.$$
\noindent which completes the induction step for the adjoint induction hypothesis \eqref{eq:ind_hyp}.
%To finish off the proof of Proposition \ref{prop:lradj} we show that these adjunction maps are unique up to a multiple.

\begin{cor}\label{cor:EFidhoms}
Given a strong 2-representation of $\mf{sl}_2$ we have
$$\Hom(\E \F {\1}_{n+2} \la n+1 \ra, {\1}_{n+2}) \cong \k, \hspace{1.0cm}
\Hom({\1}_{n+2} \la n+1 \ra, \E \F {\1}_{n+2}) \cong \k,$$
$$\Hom(\F \E {\1}_n, {\1}_n \la n+1 \ra) \cong \k, \hspace{1.0cm}
\Hom({\1}_n, \F \E \1_n \la n+1 \ra) \cong \k,$$
showing that the maps in Proposition \ref{prop:lradj} are unique up to a non-zero multiple.
\end{cor}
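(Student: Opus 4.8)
The plan is to reduce each of the four Hom-spaces, by the biadjunction that the induction has just established, to either $\End(\E\1_n)$ or $\End(\1_n\F)$ — both of which are $\k$ by Lemma \ref{lem:1}. Throughout I keep the assumption $n \ge 0$ (the case $n \le 0$ being entirely analogous), and I freely use the isomorphisms $(\E\1_n)_R \cong \1_n\F\la n+1\ra$ and $(\E\1_n)_L \cong \1_n\F\la -n-1\ra$ supplied above, together with the resulting one-sided adjoints of $\1_n\F$, namely $(\1_n\F)_R = \E\1_n\la -n-1\ra$ and $(\1_n\F)_L = \E\1_n\la n+1\ra$. The guiding idea is that $\E\F\1_{n+2} = (\E\1_n)\circ(\1_n\F)$ and $\F\E\1_n = (\1_n\F)\circ(\E\1_n)$, so in each case I slide the outer factor across the Hom using whichever of its adjoints makes the two grading shifts $\la\pm(n+1)\ra$ cancel.

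For the two spaces involving $\E\F\1_{n+2}$ I slide the outer $\E\1_n$. For the first, the right adjoint gives $\Hom((\E\1_n)\circ(\1_n\F)\la n+1\ra,\,\1_{n+2}) \cong \Hom(\1_n\F\la n+1\ra,\,(\E\1_n)_R\1_{n+2}) = \Hom(\1_n\F\la n+1\ra,\,\1_n\F\la n+1\ra)$, which is $\End(\1_n\F)\cong\k$; this is exactly the computation already noted in the proof of the preceding proposition. For the second, the left adjoint gives $\Hom(\1_{n+2}\la n+1\ra,\,(\E\1_n)\circ(\1_n\F)) \cong \Hom((\E\1_n)_L\1_{n+2}\la n+1\ra,\,\1_n\F)$, and since $(\E\1_n)_L\1_{n+2}\la n+1\ra = \1_n\F$ this is again $\End(\1_n\F)\cong\k$.

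For the two spaces involving $\F\E\1_n$ I slide the outer $\1_n\F$. For the third, the right adjoint gives $\Hom((\1_n\F)\circ(\E\1_n),\,\1_n\la n+1\ra) \cong \Hom(\E\1_n,\,(\1_n\F)_R\,\1_n\la n+1\ra)$, and $(\1_n\F)_R\,\1_n\la n+1\ra = \E\1_n\la -n-1\ra\la n+1\ra = \E\1_n$, so this is $\End(\E\1_n)\cong\k$. For the fourth, the left adjoint gives $\Hom(\1_n,\,(\1_n\F)\circ(\E\1_n)\la n+1\ra) \cong \Hom((\1_n\F)_L\,\1_n,\,\E\1_n\la n+1\ra) = \Hom(\E\1_n\la n+1\ra,\,\E\1_n\la n+1\ra)\cong\k$. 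In every case Lemma \ref{lem:1}, in its $\E$- or $\F$-form, supplies the one-dimensionality.

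The only delicate point — and the step I would flag as the main obstacle — is the grading bookkeeping: one must pick the correct one-sided adjoint in each case. Using the wrong adjoint instead produces $\Hom(\E\1_n,\E\1_n\la 2n+2\ra)$ (or its $\F$-analogue), whose positive internal degree $2n+2$ lies outside the range controlled by Lemma \ref{lem:1}. Choosing the adjoint so that the shifts $\la n+1\ra$ and $\la -n-1\ra$ cancel is precisely what lands each reduction on a degree-zero endomorphism space; once that choice is made, each computation is immediate.
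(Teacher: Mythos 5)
Your proof is correct and takes essentially the same route as the paper: each of the four Hom spaces is reduced, via the (bi)adjunctions $(\E\1_n)_R \cong \1_n\F\la n+1\ra$ and $(\E\1_n)_L \cong \1_n\F\la -n-1\ra$ just established, to a degree-zero endomorphism space $\End(\E\1_n)$ or $\End(\1_n\F)$, which is $\k$ by Lemma \ref{lem:1}. The paper writes out only the first computation and declares the other three ``similar''; your four explicit reductions (including the observation that one must choose the adjoint whose shift cancels $\la n+1\ra$) are precisely what that similarity amounts to.
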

\begin{proof}
We calculate the first space (the other three are similar). We have
\begin{eqnarray*}
\Hom(\E \F \1_{n+2} \la n+1 \ra, \1_{n+2})
&\cong& \Hom(\F \1_{n+2} \la n+1 \ra, (\E \1_{n})_R \1_{n+2}) \\
&\cong& \Hom(\F \1_{n+2}, \F \1_{n+2}) \\
&\cong& \k.
\end{eqnarray*}

\end{proof}

\begin{remark}\label{rem:nle0}
The proof of Proposition~\ref{prop:lradj} when $n \le 0$ is analogous to the one above. But since there is a break in symmetry in the definition of a $Q$-strong 2-representation of $\g$, namely we use $\E_R$ instead of $\E_L$, we spell out a few details about how to alter the proof above.

We fix $n \le 0$. The induction hypothesis (\ref{eq:ind_hyp}) is now for $m < n$. Lemma \ref{lem:1} is now for $m \le n$ and one proves it by moving the $\E$ by adjunction from the left side to the right side. Likewise, Corollary \ref{cor:0} is for $m \le n+2$.

Now, the left map in (\ref{eq:A}) is defined by adjunction. The maps in (\ref{eq:B}) are defined using the decomposition of $\F \E \1_n$. Finally, the right map in (\ref{eq:A}) is defined by the decomposition of $\E \F \1_{n+2}$ if $n=0,-1$ and, if $n \le -2$, by the composition
\[
\xy
 (-50,0)*+{\1_{n+2}}="1";
 (-25,0)*+{\F \E \1_{n+2} \la n+3 \ra}="2";
 (20,0)*+{\F \E \1_{n+2} \la -n-1 \ra}="3";
 (55,0)*+{\E \F \1_{n+2} \la -n-1 \ra.}="4";
 {\ar^-{\Ucupr} "1"; "2"};
 {\ar^-{\Udown\Uupdot {-n-2}} "2"; "3"};
 {\ar^-{\xy 0;/r.15pc/:
    (-4,-4)*{};(4,4)*{} **\crv{(-4,-1) & (4,1)}?(0)*\dir{<} ;
    (4,-4)*{};(-4,4)*{} **\crv{(4,-1) & (-4,1)}?(1)*\dir{>};
\endxy} "3"; "4"};
\endxy
\]
\end{remark}

\subsection{Up-down crossings}
At this point we no longer need to assume the adjoint induction hypothesis.  We have established for arbitrary weights $n$ that both the left and right adjoint of $\F$ is $\E$ (up to a specified shift). Consequently we can prove the following.

\begin{lem}\label{lem:homs}
For any $n \in \Z$ we have
$$\Hom(\E \F \1_n, \F \E \1_n) \cong \k \cong \Hom(\F \E \1_n, \E \F \1_n).$$
\end{lem}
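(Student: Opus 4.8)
The plan is to reduce the computation of each of these graded Hom spaces to one of the one-dimensionality results already established—most directly Corollary \ref{cor:2}—by using the biadjointness of $\E$ and $\F$ from Proposition \ref{prop:lradj} to bend strands. First I would record that, since Proposition \ref{prop:lradj} has now been established at every weight, $\E$ and $\F$ are genuinely biadjoint (up to the specified shifts) at all weights, so the mate/rotation isomorphisms between Hom spaces are available everywhere; in particular $(\F \1_{n+2})_R \cong \E \1_n \la -n-1 \ra$ and $(\F \1_{n+2})_L \cong \E \1_n \la n+1 \ra$. By the symmetry interchanging $\E$ and $\F$ (and $n$ with $-n$) discussed in Remark \ref{rem:nle0}, it suffices to treat $n \ge 0$.

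For $\Hom(\F \E \1_n, \E \F \1_n)$ I would bend the outer $\F$ of the source $\F \E \1_n = \F \1_{n+2} \circ \E \1_n$ across to the target using the adjunction $\Hom(\F \1_{n+2} \circ Z, W) \cong \Hom(Z, (\F \1_{n+2})_R \circ W)$. Taking $Z = \E \1_n$ and $W = \E \F \1_n$ this yields $\Hom(\F \E \1_n, \E \F \1_n) \cong \Hom(\E \1_n, \E \E \F \1_n \la -n-1 \ra)$, which is exactly the space shown to be one-dimensional in Corollary \ref{cor:2} (with $m = n$). Hence this Hom is $\cong \k$.

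For $\Hom(\E \F \1_n, \F \E \1_n)$ I would instead bend the outer $\F$ of the \emph{target} $\F \E \1_n = \F \1_{n+2} \circ \E \1_n$ onto the source via the left adjoint, using $\Hom(X, \F \1_{n+2} \circ Z) \cong \Hom((\F \1_{n+2})_L \circ X, Z)$ with $X = \E \F \1_n$ and $Z = \E \1_n$, obtaining $\Hom(\E \F \1_n, \F \E \1_n) \cong \Hom(\E \E \F \1_n, \E \1_n \la -n-1 \ra)$. This is the ``transpose'' of Corollary \ref{cor:2}, and I would prove it is one-dimensional by a short Krull–Schmidt bookkeeping: applying the $\mathfrak{sl}_2$ commutation isomorphism (condition (\ref{co:EF}) of Definition \ref{def_Qstrong}) at weights $n$ and $n+2$ gives $\E \E \F \1_n \cong \F \E \E \1_n \oplus_{[n+2]} \E \1_n \oplus_{[n]} \E \1_n$. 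Then $\Hom(\F \E \E \1_n, \E \1_n \la -n-1 \ra)$ reduces by adjunction to $\Hom(\E \E \1_n, \E \E \1_n \la l \ra)$ with $l < -2$, which vanishes by Corollary \ref{cor:0}; in the remaining two summations every term lies in strictly negative degree and so vanishes by Lemma \ref{lem:1}, with the single exception of one copy of $\Hom(\E \1_n, \E \1_n) \cong \k$ coming from the lowest-degree piece of $\oplus_{[n+2]} \E \1_n$.

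The only genuinely delicate point is the degree bookkeeping in the last step: one must track the grading shifts through the two applications of the commutation relation and through each adjunction and verify that, after the overall shift $\la -n-1 \ra$, precisely one graded summand lands in degree zero while every other summand is pushed into negative degree, where Lemma \ref{lem:1} and Corollary \ref{cor:0} apply. The outstanding cases then follow formally: the degenerate $n = 0$ (where the two commutation relations coincide, $\E \F \1_0 \cong \F \E \1_0$, and the computation above specializes directly), and the case $n \le 0$, handled by the mirror-image argument using the opposite commutation relation together with the $\E \leftrightarrow \F$ analogues of Corollaries \ref{cor:0} and \ref{cor:2}.
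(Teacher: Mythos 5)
Your proposal is correct, but it follows a genuinely different reduction than the paper's. The paper disposes of $\Hom(\E\F\1_n,\F\E\1_n)$ in one stroke by a \emph{double} adjunction, bending the outer $\E$ of the source onto the target and the inner $\E$ of the target onto the source, so that both sides become $\F\F\1_{n+2}$ with shifts differing by $2$:
\[
\Hom(\E \F \1_n, \F \E \1_n) \;\cong\; \Hom(\F (\E \1_n)_R \1_{n+2}, (\E \1_{n-2})_R \F \1_{n+2}) \;\cong\; \Hom(\F \F \1_{n+2} \la n+1 \ra, \F \F \1_{n+2} \la n-1 \ra) \;\cong\; \k,
\]
the last step being (the adjoint of) Corollary \ref{cor:0}; the other isomorphism is then declared to follow similarly. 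You instead treat the two spaces asymmetrically: for $\Hom(\F\E\1_n,\E\F\1_n)$ you bend only the outer $\F$ of the source and observe that the resulting space $\Hom(\E\1_n,\E\E\F\1_n\la -n-1\ra)$ is \emph{exactly} Corollary \ref{cor:2} with $m=n$ — a nice observation that makes explicit that this corollary is the mate of the Hom space in question — while for $\Hom(\E\F\1_n,\F\E\1_n)$ you bend the target's $\F$ and then must re-run a decomposition argument, expanding $\E\E\F\1_n \cong \F\E\E\1_n \oplus_{[n+2]}\E\1_n\oplus_{[n]}\E\1_n$ and checking degrees, which is essentially a transposed rerun of the proof of Corollary \ref{cor:2}. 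Your bookkeeping there is right: the $\F\E\E$ term reduces by adjunction to $\Hom(\E\E\1_n,\E\E\1_n\la -2n-4\ra)=0$ by Corollary \ref{cor:0}, the $[n+2]$-summand contributes exactly one copy of $\End(\E\1_n)\cong\k$ (from the $\la -n-1\ra$ piece), and the $[n]$-summand sits entirely in negative degrees and dies by Lemma \ref{lem:1}. The trade-off is clear: the paper's double-adjunction trick is shorter and uniform (one citation of Corollary \ref{cor:0} settles everything), whereas your route identifies precisely which earlier results are the mates of the two Hom spaces, at the cost of repeating a Krull--Schmidt computation that the double adjunction avoids.
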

\begin{proof}
We prove that $\Hom(\E \F \1_n, \F \E \1_n) \cong \k$ (the other case follows similarly). One has
\begin{eqnarray*}
\Hom(\E \F \1_n, \F \E \1_n)
&\cong& \Hom(\F (\E \1_n)_R \1_{n+2}, (\E \1_{n-2})_R \F \1_{n+2}) \\
&\cong& \Hom(\F \F \1_{n+2} \la n+1 \ra, \F \F \1_{n+2} \la n-1 \ra) \cong \k
\end{eqnarray*}
where the last isomorphism follows from (the adjoint of) Corollary \ref{cor:0}.
\end{proof}

Subsequently, we denote these maps $\F \E \1_n \rightarrow \E \F \1_n$ and $\E \F \1_n \rightarrow \F \E \1_n$ by $\;\xy 0;/r.14pc/:
    (-4,-4)*{};(4,4)*{} **\crv{(-4,-1) & (4,1)}?(0)*\dir{<} ;
    (4,-4)*{};(-4,4)*{} **\crv{(4,-1) & (-4,1)}?(1)*\dir{>};
  \endxy$ and $\xy 0;/r.14pc/:
    (-4,-4)*{};(4,4)*{} **\crv{(-4,-1) & (4,1)}?(1)*\dir{>} ;
    (4,-4)*{};(-4,4)*{} **\crv{(4,-1) & (-4,1)}?(0)*\dir{<};
\endxy\;$ respectively. For the moment these maps are uniquely defined only up to a non-zero scalar.

\begin{cor} \label{cor:1} If $n \ge 0$ then the map
\begin{equation}\label{eq:iso1}
\zeta\;\;:=\;\;
\xy 0;/r.15pc/:
    (-4,-4)*{};(4,4)*{} **\crv{(-4,-1) & (4,1)}?(0)*\dir{<} ;
    (4,-4)*{};(-4,4)*{} **\crv{(4,-1) & (-4,1)}?(1)*\dir{>};
  \endxy \;\; \bigoplus_{k=0}^{n-1}
\vcenter{\xy 0;/r.15pc/:
 (-4,2)*{}="t1"; (4,2)*{}="t2";
 "t2";"t1" **\crv{(4,-5) & (-4,-5)}; ?(1)*\dir{>}
 ?(.8)*\dir{}+(0,-.1)*{\bullet}+(-3,-1)*{\scs k};;
 \endxy}: \F \E \1_n \bigoplus_{k=0}^{n-1} \1_n \la n-1-2k \ra \rightarrow \E \F \1_n
\end{equation}
induces an isomorphism. Likewise, if $n \le 0$ then the map
\begin{equation}\label{eq:iso2}
\zeta\;\;:=\;\;
\xy 0;/r.15pc/:
    (-4,-4)*{};(4,4)*{} **\crv{(-4,-1) & (4,1)}?(1)*\dir{>} ;
    (4,-4)*{};(-4,4)*{} **\crv{(4,-1) & (-4,1)}?(0)*\dir{<};
  \endxy \;\; \bigoplus_{k=0}^{-n-1}
\vcenter{\xy 0;/r.15pc/:
 (4,2)*{}="t1"; (-4,2)*{}="t2";
 "t2";"t1" **\crv{(-4,-5) & (4,-5)}; ?(1)*\dir{>}
 ?(.8)*\dir{}+(0,-.1)*{\bullet}+(3,-1)*{\scs k};;
 \endxy}:  \E\F \1_n \bigoplus_{k=0}^{-n-1} \1_n \la -n-1-2k \ra \rightarrow \F \E \1_n
\end{equation}
is an isomorphism.
\end{cor}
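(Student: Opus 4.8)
The plan is to exhibit an explicit one-sided inverse for $\zeta$ and then reduce the question of invertibility to a computation on the diagonal, exploiting the fact that the indecomposable summands in sight are pairwise non-isomorphic. I treat $n \ge 0$ (the case $n \le 0$ in \eqref{eq:iso2} is entirely symmetric, exchanging the roles of $\E$ and $\F$ and the clockwise/counter-clockwise bubbles). Write $M := \F\E\1_n \oplus \bigoplus_{k=0}^{n-1}\1_n\la n-1-2k\ra$ for the source of $\zeta$, and recall from condition (\ref{co:EF}) of Definition \ref{def_Qstrong} that $M \cong \E\F\1_n$; thus $\zeta$ is a degree-$0$ map between two isomorphic objects. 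I would first define a candidate $\eta\colon \E\F\1_n \to M$ whose $\F\E\1_n$-component is the reverse up-down crossing $\E\F\1_n\to\F\E\1_n$ from Lemma \ref{lem:homs}, and whose component into the $k$-th summand $\1_n\la n-1-2k\ra$ is the cap carrying $n-1-k$ dots. A degree count shows $\eta$ is a degree-$0$ map, so $\eta\zeta\in\End(M)$.

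The structural heart of the argument is that the indecomposable summands of $M$ are \emph{pairwise non-isomorphic}: the shifts $n-1-2k$ are distinct for $0\le k\le n-1$, and $\F\E\1_n$ is an indecomposable with $\End^0(\F\E\1_n)=\k$ (this follows from Lemma \ref{lem:homs} together with condition (\ref{co:EF}), since $\Hom^0(\F\E\1_n,\F\E\1_n)$ embeds into the one-dimensional space $\Hom(\E\F\1_n,\F\E\1_n)$ and contains the identity, while the negative-degree part vanishes) and is not isomorphic to any $\1_n\la\cdot\ra$. Consequently any component of an endomorphism of $M$ between two distinct summands lies in the radical of the finite-dimensional algebra $\End(M)$, so $\eta\zeta$ is invertible if and only if each of its diagonal components is an isomorphism. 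This is where the pairwise-distinctness does all the work: I would never need to compute the off-diagonal blocks or establish any triangularity.

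It then remains to evaluate the diagonal components. For the $k$-th copy of $\1_n$, the diagonal component of $\eta\zeta$ is the cap-with-$(n-1-k)$-dots composed with the cup-with-$k$-dots; this is a closed dotted bubble carrying $(n-1-k)+k = n-1$ dots, hence of degree $0$, and therefore a non-zero multiple of $\Id_{\1_n}$ by Corollary \ref{cor:degz-bubbles}. For the $\F\E\1_n$ summand, the diagonal component is the up-down crossing $\F\E\1_n\to\E\F\1_n$ followed by its reverse. Since $\Hom(\F\E\1_n,\E\F\1_n)\cong\k$ by Lemma \ref{lem:homs} and the split inclusion $\iota\colon\F\E\1_n\hookrightarrow\E\F\1_n$ furnished by condition (\ref{co:EF}) is a non-zero element of this line, the crossing must equal a non-zero scalar multiple of $\iota$; likewise the reverse crossing is a non-zero multiple of the corresponding retraction $\pi$, so their composite is a non-zero multiple of $\pi\iota=\Id_{\F\E\1_n}$. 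Every diagonal component is thus an isomorphism, so $\eta\zeta$ is invertible and $\zeta$ is a split monomorphism. Writing $\E\F\1_n\cong M\oplus N'$ for a complement $N'$ of the image, we get $M\oplus N'\cong \E\F\1_n\cong M$, and the cancellation law $A\oplus B\cong A\oplus C\Rightarrow B\cong C$ forces $N'\cong 0$; hence $\zeta$ is an isomorphism.

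The main obstacle, and the step I would be most careful with, is pinning down the diagonal components precisely: verifying that the cup/cap composites really are the degree-$0$ dotted bubbles to which Corollary \ref{cor:degz-bubbles} applies (getting the orientation and the dot count exactly right), and confirming that $\F\E\1_n$ is a brick so that the reduction ``invertible $\iff$ diagonal invertible'' is legitimate. Once these two facts are secured, the remainder is formal Krull--Schmidt bookkeeping.
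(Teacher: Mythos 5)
Your proof is correct, but it takes a route whose mechanics differ from the paper's. The paper analyzes $\zeta$ itself as a matrix with respect to the decomposition $\E\F\1_n \cong \F\E\1_n \oplus \bigoplus_{k}\1_n\la n-1-2k\ra$: Lemma~\ref{lem:homs} forces the crossing component to be an isomorphism on the $\F\E$ summand and zero into the $\1_n$ summands, vanishing of negative-degree endomorphisms of $\1_n$ makes the block between the $\1_n$ summands upper triangular, and the diagonal entries are shown to be isomorphisms by a direct appeal to Lemma~\ref{lem:Xind} (the rank of the dotted endomorphism of $\E\F\1_n$). You instead build an explicit candidate left inverse $\eta$ (reverse crossing plus dotted caps), dispose of all off-diagonal terms of $\eta\zeta$ at once via the Krull--Schmidt/radical observation that the summands of the source are pairwise non-isomorphic indecomposables, and evaluate the diagonal: the $\1_n$ entries are closed bubbles with $n-1$ dots, non-zero multiples of the identity by Corollary~\ref{cor:degz-bubbles}, while the $\F\E$ entry is the composite of the two crossings, non-zero by one-dimensionality of the relevant Hom spaces; a final split-mono-plus-cancellation step upgrades invertibility of $\eta\zeta$ to invertibility of $\zeta$. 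The underlying non-formal input is the same in both arguments --- Corollary~\ref{cor:degz-bubbles} is itself deduced from Lemma~\ref{lem:Xind} --- but your version trades the triangularity bookkeeping for the extra cancellation step, and it nicely foreshadows Proposition~\ref{prop_form-of-inv}, where the genuine inverse $\zeta^{-1}$ is computed and your $\eta$ appears as its leading term (the true inverse carries fake-bubble corrections, which is precisely why you only need $\eta\zeta$ invertible rather than equal to the identity). Two small points to tighten: first, justify the assertion that $\F\E\1_n$ is not isomorphic to any $\1_n\la n-1-2k\ra$ --- it follows from your own embedding argument, since decomposing the one-dimensional space $\Hom(\E\F\1_n,\F\E\1_n)$ forces $\Hom(\1_n\la n-1-2k\ra,\F\E\1_n)=0$ for every $k$ in range, which such an isomorphism would violate; second, note that the $k$ dots from the cup and the $n-1-k$ dots from the cap sit at different points of the same upward ($\E$) strand of the closed circle, so they merge into the single dotted bubble of Corollary~\ref{cor:degz-bubbles} by composing dots along one strand, without invoking any cyclicity (which is not yet available at this stage of the paper).
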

\begin{proof}
We prove the case $n \ge 0$ (the case $n \le 0$ is proved similarly).

We know that
$$\E \F \1_n \cong \F \E \1_n \bigoplus_{k=0}^{n-1} \1_n \la n-1-2k \ra$$
and by Lemma \ref{lem:homs} the map $\xy 0;/r.14pc/:
    (-4,-4)*{};(4,4)*{} **\crv{(-4,-1) & (4,1)}?(0)*\dir{<} ;
    (4,-4)*{};(-4,4)*{} **\crv{(4,-1) & (-4,1)}?(1)*\dir{>};
  \endxy $  must induce an isomorphism between the $\F \E \1_n$ summands and must induce the zero map from the $\F \E \1_n$ summand on the left to any summand $\1_n \la n-1-2k \ra$ on the right.

It remains to show that $\bigoplus_{k=0}^{n-1} \vcenter{\xy 0;/r.15pc/:
 (-4,2)*{}="t1"; (4,2)*{}="t2";
 "t2";"t1" **\crv{(4,-5) & (-4,-5)}; ?(1)*\dir{>}
 ?(.8)*\dir{}+(0,-.1)*{\bullet}+(-3,-1)*{\scs k};;
 \endxy}$ induces an isomorphism between the summands $\1_n \la n-1-2k \ra$ on either side. Since $\Hom(\1_n, \1_n \la l \ra) = 0$ if $l < 0$ it follows that the induced map
$$\bigoplus_{k=0}^{n-1} \1_n \la n-1-2k \ra \rightarrow \bigoplus_{k=0}^{n-1} \1_n \la n-1-2k \ra$$
is upper triangular (when expressed as a matrix). It remains to show that the maps on the diagonal are isomorphisms between the summands $\1_n \la n-1-2k \ra$ on either side.

Now, by construction, the map $\;\vcenter{\xy 0;/r.14pc/:
 (-4,2)*{}="t1"; (4,2)*{}="t2"; "t2";"t1" **\crv{(4,-5) & (-4,-5)}; ?(1)*\dir{>};
 \endxy}: \1_n \la n - 1 \ra \rightarrow \E \F \1_n$ is an isomorphism onto the summand  $\1_n \la n - 1 \ra$ on the right side. Consequently, by Lemma \ref{lem:Xind}, the composition
$$\1_n \la n-1-2k \ra \xrightarrow{\vcenter{\xy 0;/r.15pc/:
 (-4,2)*{}="t1"; (4,2)*{}="t2";
 "t2";"t1" **\crv{(4,-5) & (-4,-5)}; ?(1)*\dir{>};
 \endxy}} \E \F \1_n \la -2k \ra \xrightarrow{k\Uupdot\Udown} \E \F \1_n $$
must also induce an isomorphism between the summands $\1_n \la n-1-2k \ra$ on either side. This proves that all the diagonal entries are isomorphisms so we are done.
\end{proof}

% -------------------------------------------------------------------------------
%
\subsection{Endomorphisms of $\E \1_n$}
%
% -------------------------------------------------------------------------------

\begin{lem}\label{lem:main}
Suppose $m < 2|n+2|$ (or $m=2$ and $n=-1$) and $f \in \Hom^m(\E \1_n, \E \1_n)$. If $n \ge -1$ then $f$ is of the form
\begin{equation}\label{eq:main1}
\sum_i \;\;
      \xy 0;/r.17pc/:
 (0,10);(0,-10); **\dir{-} ?(1)*\dir{>}+(2.3,0)*{\scriptstyle{}}
 ?(.1)*\dir{ }+(2,0)*{\scs };
 (0,0)*{\bullet}+(3,1)*{i};
 (-10,5)*{ n+2};
 (10,5)*{ n };
 (-16,-2)*{\chern{f_i}};
 (10,0)*{};(-2,-8)*{\scs };
 \endxy
\end{equation}
where $f_i \in \Hom^{m-2i}(\1_{n+2}, \1_{n+2})$. Similarly, if $n \le -1$ (or $m=2$ and $n=-1$) then $f$ is of the form
\begin{equation}\label{eq:main2}
\sum_i \;\;
      \xy 0;/r.17pc/:
 (0,10);(0,-10); **\dir{-} ?(1)*\dir{>}+(2.3,0)*{\scriptstyle{}}
 ?(.1)*\dir{ }+(2,0)*{\scs };
 (0,0)*{\bullet}+(3,1)*{i};
 (-10,5)*{ n+2};
 (10,5)*{ n };
 (16,-2)*{\chern{f_i}};
 (10,0)*{};(-2,-8)*{\scs };
 \endxy
\end{equation}
where $f_i \in \Hom^{m-2i}(\1_n, \1_n)$. By adjunction there are analogous results for $f \in \Hom^m(\F \1_n, \F \1_n)$.
\end{lem}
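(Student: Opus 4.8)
The plan is to prove the statement for $n \ge -1$ and deduce the case $n \le -1$ by the same left--right reflection symmetry used throughout this section, with the $\F$-versions following from the biadjointness of Proposition \ref{prop:lradj}. Since each diagram in \eqref{eq:main1} is manifestly homogeneous of degree $m$ (a strand carrying $i$ dots has degree $2i$ and $f_i \in \Hom^{m-2i}(\1_{n+2},\1_{n+2})$), and the assertion is only that an arbitrary $f$ \emph{takes} this form, the entire content is that the 2-morphisms \eqref{eq:main1} \textbf{span} $\Hom^m(\E\1_n, \E\1_n)$ in the stated degree range. I will argue this by a decreasing induction on $n$, mirroring the induction already set up for biadjointness.

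First I would transport the problem into weight $n+2$, where the Homs computations of the previous subsections are available. Exactly the chain of adjunctions used in the proof of Lemma \ref{lem:1} (with $m$ there replaced by $n$) gives
\[
\Hom^m(\E\1_n, \E\1_n) \;\cong\; \Hom\bigl(\E\F\1_{n+2},\, \1_{n+2}\la m-n-1\ra\bigr).
\]
Now apply the decomposition $\E\F\1_{n+2} \cong \F\E\1_{n+2} \oplus_{[n+2]} \1_{n+2}$ from Corollary \ref{cor:1} (valid since $n+2 \ge 1$). The copies of $\1_{n+2}$ contribute
\[
\bigoplus_{k=0}^{n+1} \Hom\bigl(\1_{n+2}, \1_{n+2}\la m-2n-2+2k\ra\bigr) \;=\; \bigoplus_{i=0}^{n+1} \Hom^{m-2i}(\1_{n+2},\1_{n+2}),
\]
after the reindexing $i = n+1-k$; tracing the cup/dot maps realizing $\zeta$ shows these summands are carried precisely to the diagrams in \eqref{eq:main1} with $f_i$ of degree $m-2i$ placed on the left. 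The remaining summand $\Hom(\F\E\1_{n+2}, \1_{n+2}\la m-n-1\ra)$ folds, again by adjunction, back to $\Hom^{m+2}(\E\1_{n+2},\E\1_{n+2})$ at weight $n+2$; since $m < 2|n+2|$ forces $m+2 < 2|n+4|$, the degree bound is preserved and this term is governed by the (decreasing) induction hypothesis on $n$.

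Running this reduction and assembling the two contributions expresses every $f$ in the normal form \eqref{eq:main1}, with the bubble coefficients $f_i$ produced on the degree-zero level by Corollary \ref{cor:degz-bubbles}. The hypothesis $m < 2|n+2|$ enters decisively: it bounds the number of admissible dots by $\lfloor m/2\rfloor \le n+1$, so that the $n+2$ copies of $\1_{n+2}$ already account for every index $i$, and it is exactly the threshold below which only bubbles of a single orientation occur and the bubble-slide corrections vanish --- this is what makes placing each $f_i$ unambiguously on the left (for $n \ge -1$) legitimate. The main obstacle I anticipate is precisely this bookkeeping: keeping the grading shifts consistent across the adjunction isomorphisms and $\zeta$, and verifying faithfully that the abstract direct-sum summands of the homological computation are transported to the explicit diagrams \eqref{eq:main1} rather than to some other spanning set differing by strictly lower-order terms. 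Finally, the boundary case $n=-1$ (where $|n+2|=1$ and only $m=2$ is permitted) should be checked by hand: here $\E\F\1_1 \cong \F\E\1_1 \oplus \1_1$, and the two surviving summands are a plain strand decorated by a degree-two bubble ($i=0$) and a strand carrying a single dot ($i=1$), which is the asserted form.
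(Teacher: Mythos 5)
Your skeleton — pass to weight $n+2$ by adjunction, split $\E\F\1_{n+2}$ via Corollary \ref{cor:1}, and match the $\1_{n+2}$-summands with the dotted diagrams of \eqref{eq:main1} — is exactly the paper's route. But the step where you dispose of the remaining summand contains a genuine error, and it is precisely the step where the hypothesis $m<2|n+2|$ does its work. Computing correctly: using $(\1_{n+2}\F)_R \cong \E\1_{n+2}\la -n-3\ra$ (equivalently, the paper's left-adjoint version), one gets
$\Hom(\F\E\1_{n+2},\,\1_{n+2}\la m-n-1\ra)\;\cong\;\Hom\bigl(\E\1_{n+2},\,\E\1_{n+2}\la m-2(n+2)\ra\bigr)$,
not $\Hom^{m+2}(\E\1_{n+2},\E\1_{n+2})$ (your shift would require $n=-3$). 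Since $m<2(n+2)$, this Hom sits in strictly negative degree and therefore \emph{vanishes} outright by Lemma \ref{lem:1} — no induction on $n$ is needed at all. Your reading of the hypothesis (bounding the number of dots, killing "bubble-slide corrections") misattributes its role: its decisive use is to make this leftover term negative-degree, hence zero.

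Beyond the arithmetic, the induction you propose in its place would not close. If the leftover term were genuinely $\Hom^{m+2}(\E\1_{n+2},\E\1_{n+2})$, then invoking the lemma at weight $n+2$ would only tell you it is spanned by dot-and-bubble diagrams \emph{at weight $n+2$}; transporting those back through the adjunction produces endomorphisms of $\E\1_n$ that factor through $\F\E\1_{n+2}$, i.e.\ diagrams containing cups and caps (curls). Rewriting curls into the normal form \eqref{eq:main1} requires the extended $\mf{sl}_2$ relations, which are established only in Section \ref{sec:proofsl2} \emph{using} this lemma — so the argument would be circular, or at best leave the spanning claim unproved. Two smaller points: for $n=-1$ the lemma also covers $m<2$, not "only $m=2$" (those cases go through the general argument, since $m-2(n+2)<0$ still holds); and in the exceptional case $m=2$, $n=-1$ the leftover term has degree $m-2(n+2)=0$, so it survives as $\End(\E\1_1)\cong\k$ and is what produces the dot — your by-hand description of that case is plausible but never confronts why the extra summand appears there and nowhere else.
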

\begin{proof}
We prove the case $n \ge -1$ (the case $n \le -1$ is proved similarly). We will deal with the special case $m=2$ and $n=-1$ at the end. We have
\begin{eqnarray*}
\Hom^m(\E \1_n, \E \1_n)
&\cong& \Hom^m(\1_{n+2}, \E (\E \1_n)_L \1_{n+2}) \\
&\cong& \Hom^m(\1_{n+2}, \E \F \1_{n+2} \la -(n+1) \ra) \\
&\cong& \Hom^m(\1_{n+2}, \bigoplus_{[n+2]} \1_{n+2} \la -(n+1) \ra \oplus \F \E \1_{n+2} \la -(n+1) \ra) \\
&\cong& \Hom^{m-n-1}(\1_{n+2}, \bigoplus_{[n+2]} \1_{n+2}) \oplus \Hom^m((\F \1_{n+4})_L, \E \1_{n+2} \la -(n+1) \ra) \\
&\cong& \Hom^{m-n-1}(\1_{n+2}, \bigoplus_{[n+2]} \1_{n+2}) \oplus \Hom^m(\E \1_{n+2} \la n+3 \ra, \E \1_{n+2} \la -(n+1) \ra) \\
&\cong& \Hom^{m-n-1}(\1_{n+2}, \bigoplus_{[n+2]} \1_{n+2})
\end{eqnarray*}
where the last line follows since $m<2(n+2)$ meaning $\Hom^m(\E \1_{n+2}, \E \1_{n+2} \la -2(n+2) \ra) = 0.$ Keeping track of degrees, we find that
\begin{equation}\label{eq:2}
\Hom^m(\E \1_n, \E \1_n) \cong \bigoplus_{k \ge 0} \Hom(\1_{n+2}, \1_{n+2} \la m-2k \ra).
\end{equation}
If $f \in \Hom^m(\E \1_n, \E \1_n)$ then we denote the map induced by adjunction $f' \in \Hom^m(\1_{n+2}, \E \F \1_{n+2} \la -(n+1) \ra)$ and the induced maps on the right side of (\ref{eq:2}) by $f_k \in \Hom(\1_{n+2}, \1_{n+2} \la m-2k \ra)$.

Now let us trace through the series of isomorphisms above in order to explicitly identify $f'$ with $f_k$. The critical isomorphism is the third one where one uses the isomorphism
$$\F \E \1_{n+2} \bigoplus_{k=0}^{n+1} \1_{n+2} \la -n-1+2k \ra \rightarrow \E \F \1_{n+2}$$
from Corollary \ref{cor:1}. Thus we find that $f'$ corresponds to the sum over $k$ of the composition
$${\1}_{n+2} \xrightarrow{f_k} {\1}_{n+2} \la m-2k \ra \xrightarrow{\vcenter{\xy 0;/r.15pc/:
 (-4,2)*{}="t1"; (4,2)*{}="t2";
 "t2";"t1" **\crv{(4,-5) & (-4,-5)}; ?(1)*\dir{>};
 \endxy}} \E\F {\1}_{n+2} \la m - (n+1) - 2k \ra \xrightarrow{k\Uupdot\Udown} \E \F {\1}_{n+2} \la m-(n+1) \ra.$$
Consequently, using the adjunction which relates $f$ and $f'$ we find that $f$ is given by the composition
$${\1}_{n+2} \E \xrightarrow{f_k\Uup} {\1}_{n+2} \E \la m-2k \ra \xrightarrow{k\Uupdot} {\1}_{n+2} \E \la m \ra.$$
This implies what we needed to prove.

If $m=2$ and $n=-1$ then the long calculation above yields
\begin{equation}\label{eq:new}
\Hom^2(\E \1_{-1}, \E \1_{-1}) \cong \Hom^2(\1_1, \1_1) \oplus \Hom(\E \1_1, \E \1_1).
\end{equation}
Tracing back through the isomorphisms the first summand above consists of maps of the form (\ref{eq:main1}) with $i=0$ while the second summand is one-dimensional. Since a dot is not of the form (\ref{eq:main1}) with $i=0$ it must, together with those maps of the form (\ref{eq:main1}), span the space $\Hom^2(\E \1_{-1}, \E \1_{-1})$.
\begin{remark}
Note that if one were to trace through the isomorphisms above, the right most space in (\ref{eq:new}) corresponds to a left curl. The argument above implies that this left curl must be some linear combination of a dot and a map of the form (\ref{eq:main1}) with $i=0$.
\end{remark}

\end{proof}

\section{Cyclic biadjointness}

In this section we fix all adjunction maps and prove cyclic biadjointness when $\g = {\mf{sl}}_2$. This means that the two possible ways for defining dots on downward strands, sideways crossings or downward crossings give the same 2-morphisms.

% -------------------------------------------------------------------------------
%
\subsection{Fixing adjunction maps} \label{subsec:adjoint}
%
% -------------------------------------------------------------------------------

Recall that at this point, adjunction maps are only determined up to a scalar. There are two constraints on the choices of scalar we can make: the adjunction relations \eqref{eq_biadjoint1} and \eqref{eq_biadjoint2}, and the normalization of degree zero bubbles.
% In Corollary \ref{cor:degz-bubbles} it was shown that negative degree dotted bubbles are zero while a dotted bubble of degree zero must be a non-zero multiple of the identity map (note that for $n=0$ there are no degree zero dotted bubbles).
This system of constraints is over determined by exactly one relation: the normalization of the degree zero bubble in weights $n=+1$ and $n=-1$ are not independent.

In this section we rescale these caps and cups to satisfy the adjoint constraints \eqref{eq_biadjoint1} and \eqref{eq_biadjoint2}  and so that
\begin{equation} \label{eq:4.1}
\xy 0;/r.18pc/:
 (0,0)*{\cbub{n-1}};
  (4,8)*{n};
 \endxy
  = \Id_{\1_n} \quad \text{for $n \geq 1$,}
  \qquad \quad
  \xy 0;/r.18pc/:
 (0,0)*{\ccbub{-n-1}};
  (4,8)*{n};
 \endxy
  =  \Id_{\1_n} \quad \text{for $n < -1$.}
\end{equation}
Recall that negative degree dotted bubbles are zero while a dotted bubble of degree zero must be a non-zero multiple of the identity map by Corollary~\ref{cor:degz-bubbles} (note that for $n=0$ there are no degree zero dotted bubbles). By rescaling the adjunction maps we can ensure the degree zero bubbles satisfy the conditions above. More precisely, we rescale in the following order.
\[
\begin{tabular}{|l|c|c|c|c|}
 \hline
 $ m \geq 0$ &
 \text{ $\xy
    (10,-5)*{m-1};
    (0,-2)*{m+1};
     (-6,0)*{}; (6,0)*{} **\crv{(-6,-8) & (6,-8)} ?(1)*\dir{>};
    \endxy$}
  & \text{$\xy
    (10,0)*{m+1};
    (0,-3)*{m-1};
    (-6,-5)*{};(6,-5)*{} **\crv{(-6,3) & (6,3)} ?(1)*\dir{>};
    \endxy$}
  & \text{$\xy
    (10,-5)*{m+1};
    (0,-2)*{m-1};
    (-6,0)*{};(6,0)*{} **\crv{(-6,-8) & (6,-8)} ?(0)*\dir{<};
    \endxy$}
  & \text{$\xy
    (10,0)*{m-1};
    (0,-3)*{m+1};
    (-6,-5)*{};(6,-5)*{} **\crv{(-6,3) & (6,3)} ?(0)*\dir{<};
    \endxy$}
    \\& & &  &\\ \hline
 & \;\;\txt{ fixed arbitrarily} \;\;
 & \;\;\txt{ determined by \\adjunction}\;\;
 & \;\;\txt{ fixed by value\\ of bubble}\;\;
 & \;\;\txt{ determined by\\ adjunction}\;\;
 \\
 \hline
\end{tabular}
\]
\[
\begin{tabular}{|l|c|c|c|c|}
 \hline
 $ m < 0$ &  \text{$\xy
    (10,-5)*{m+1};
    (0,-2)*{m-1};    (-6,0)*{};(6,0)*{} **\crv{(-6,-8) & (6,-8)} ?(0)*\dir{<};
    \endxy$}
  & \text{$\xy
    (10,0)*{m-1};
    (0,-3)*{m+1};
    (-6,-5)*{};(6,-5)*{} **\crv{(-6,3) & (6,3)} ?(0)*\dir{<};
    \endxy$}
  & \text{$\xy
    (10,-5)*{m-1};
    (0,-2)*{m+1};
    (-6,0)*{};(6,0)*{} **\crv{(-6,-8) & (6,-8)} ?(1)*\dir{>};
    \endxy$}
  & \text{$\xy
    (10,0)*{m+1};
    (0,-3)*{m-1};
    (-6,-5)*{};(6,-5)*{} **\crv{(-6,3) & (6,3)} ?(1)*\dir{>};
    \endxy$}
    \\& & &  &\\ \hline
 & \;\;\txt{ fixed arbitrarily}\;\;
 & \;\;\txt{ determined by \\adjunction}\;\;
 & \;\;\txt{ fixed by value\\ of bubble}\;\;
 & \;\;\txt{ determined by\\ adjunction}\;\;
 \\
 \hline
\end{tabular}
\]
After fixing cups and caps as above, all bubbles of degree zero are equal to the identity except for the counter-clockwise bubble in weight $n=-1$. This is why the case $n=-1$ was missing from \eqref{eq:4.1}.
%The only time this rescaling fails is when $m=0$ above.
%In that case, the rescaling for $m \geq 0$ fixes the value of the positive bubble in weight $n=1$, so that
%\begin{equation} \label{eq_bub1}
%  \xy 0;/r.17pc/:
% (0,0)*{\ncbub};
%  (4,8)*{+1};
% \endxy = \Id_{\1_n}
%\end{equation}
%where $+1$ denotes the outside region of the bubble. However, the rescaling for $m=0$ fails to rescale the positive degree bubble in region $n=-1$.
This bubble is multiplication by some arbitrary scalar $c_{-1}$:
\begin{equation} \label{eq_defcmone}
    \xy 0;/r.17pc/:
 (0,0)*{\nccbub};
  (4,8)*{-1};
 \endxy
  =  c_{-1} \Id_{\1_n}
\end{equation}
where $-1$ denotes the outside region of the bubble. We will show in Lemma~\ref{lem_coeff} that in fact $c_{-1} = 1$.

% -------------------------------------------------------------------------------
%
\subsection{Cyclic biadjointness}\label{sec:proofcycbiadjoint}
%
% -------------------------------------------------------------------------------

Recall that the action of dots and crossings is originally defined only for upward pointing strands. To define dots on downward strands we use the (bi)adjunction between $\E$ and $\F$.  We define the downward dot and downward crossing by taking the `right-mates' under adjunction to their upward analogues.
\begin{equation} \label{eq_down_def}
      \xy 0;/r.17pc/:
 (0,10);(0,-10); **\dir{-} ?(.75)*\dir{<}+(2.3,0)*{\scriptstyle{}}
 ?(.1)*\dir{ }+(2,0)*{\scs };
 (0,0)*{\txt\large{$\bullet$}};
 (-6,5)*{ n};
 (8,5)*{ n +2};
 (-10,0)*{};(10,0)*{};(-2,-8)*{\scs };
 \endxy
    \;\; := \;\;
   \xy 0;/r.17pc/:
    (8,5)*{}="1";
    (0,5)*{}="2";
    (0,-5)*{}="2'";
    (-8,-5)*{}="3";
    (8,-10);"1" **\dir{-};
    "2";"2'" **\dir{-} ?(.5)*\dir{<};
    "1";"2" **\crv{(8,12) & (0,12)} ?(0)*\dir{<};
    "2'";"3" **\crv{(0,-12) & (-8,-12)}?(1)*\dir{<};
    "3"; (-8,10) **\dir{-};
    (15,9)*{n+2};
    (-12,-9)*{n};
    (0,4)*{\txt\large{$\bullet$}};
    (-10,8)*{\scs };
    (10,-8)*{\scs };
    \endxy
\qquad \qquad
\xy
  (0,0)*{\xybox{
    (-4,-4)*{};(4,4)*{} **\crv{(-4,-1) & (4,1)}?(0)*\dir{<} ;
    (4,-4)*{};(-4,4)*{} **\crv{(4,-1) & (-4,1)}?(0)*\dir{<};
     (-8,0)*{ n};
     (-12,0)*{};(12,0)*{};
     }};
  \endxy \;\; :=  \;\;
 \xy 0;/r.17pc/:
  (0,0)*{\xybox{
    (4,-4)*{};(-4,4)*{} **\crv{(4,-1) & (-4,1)}?(1)*\dir{>};
    (-4,-4)*{};(4,4)*{} **\crv{(-4,-1) & (4,1)};
     (-4,4)*{};(18,4)*{} **\crv{(-4,16) & (18,16)} ?(1)*\dir{>};
     (4,-4)*{};(-18,-4)*{} **\crv{(4,-16) & (-18,-16)} ?(1)*\dir{<}?(0)*\dir{<};
     (-18,-4);(-18,12) **\dir{-};(-12,-4);(-12,12) **\dir{-};
     (18,4);(18,-12) **\dir{-};(12,4);(12,-12) **\dir{-};
     (8,1)*{ n};
     (-10,0)*{};(10,0)*{};
     (-4,-4)*{};(-12,-4)*{} **\crv{(-4,-10) & (-12,-10)}?(1)*\dir{<}?(0)*\dir{<};
      (4,4)*{};(12,4)*{} **\crv{(4,10) & (12,10)}?(1)*\dir{>}?(0)*\dir{>};
     }};
  \endxy
\end{equation}
Because $\E$ and $\F$ are biadjoint up to shift, we could have defined the downward maps using `left-mates' under adjunction. The next two results (Lemmas \ref{lem:A} and \ref{lem:B}) show that both ways give the same map (we refer to this as the ``cyclic biadjointness property'').

\begin{lem}\label{lem:A}
Given a strong 2-representation of $\mf{sl}_2$ we have:
\begin{equation}\label{eq:cycbiadjoint-dot}
   \xy 0;/r.17pc/:
    (-8,5)*{}="1";
    (0,5)*{}="2";
    (0,-5)*{}="2'";
    (8,-5)*{}="3";
    (-8,-10);"1" **\dir{-};
    "2";"2'" **\dir{-} ?(.5)*\dir{<};
    "1";"2" **\crv{(-8,12) & (0,12)} ?(0)*\dir{<};
    "2'";"3" **\crv{(0,-12) & (8,-12)}?(1)*\dir{<};
    "3"; (8,10) **\dir{-};
    (15,-9)*{ n+2};
    (-12,9)*{n};
    (0,4)*{\bullet};
    (10,8)*{\scs };
    (-10,-8)*{\scs };
    \endxy  \quad = \quad
   \xy 0;/r.17pc/:
    (8,5)*{}="1";
    (0,5)*{}="2";
    (0,-5)*{}="2'";
    (-8,-5)*{}="3";
    (8,-10);"1" **\dir{-};
    "2";"2'" **\dir{-} ?(.5)*\dir{<};
    "1";"2" **\crv{(8,12) & (0,12)} ?(0)*\dir{<};
    "2'";"3" **\crv{(0,-12) & (-8,-12)}?(1)*\dir{<};
    "3"; (-8,10) **\dir{-};
    (15,9)*{n+2};
    (-12,-9)*{n};
    (0,4)*{\txt\large{$\bullet$}};
    (-10,8)*{\scs };
    (10,-8)*{\scs };
    \endxy
\end{equation}
\end{lem}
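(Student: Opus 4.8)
The plan is to regard both sides of \eqref{eq:cycbiadjoint-dot} as elements of $\Hom^2(\F\1_{n+2},\F\1_{n+2})$ and to identify them using Lemma~\ref{lem:main}. By the definition \eqref{eq_down_def}, the right-hand side \emph{is} the downward dot, so the content of the lemma is that the left-handed rotation of the upward dot agrees with it. Since biadjointness has been established (Proposition~\ref{prop:lradj}) and the adjunction maps have been normalized in Section~\ref{subsec:adjoint}, both rotations are genuine degree-$2$ endomorphisms of the downward $1$-morphism $\F\1_{n+2}$. Applying Lemma~\ref{lem:main} in its adjoint ($\F$) form at weight $n$ — so that the bound $m=2<2|n+2|$ holds away from the degenerate weights $n=-2,-3$, with $n=-1$ covered by the special clause of that lemma — the space $\Hom^2(\F\1_{n+2},\F\1_{n+2})$ is spanned by the downward dot (the right-hand side, with coefficient $1$ and no bubble) together with the identity strand decorated by a degree-$2$ endomorphism of $\1_{n+2}$. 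Thus I can write the left-hand side as $\alpha$ times the right-hand side plus a ``bubble term'' $\Id_{\F}\cdot b$ with $\alpha\in\k$ and $b\in\Hom^2(\1_{n+2},\1_{n+2})$, and the lemma reduces to proving $\alpha=1$ and $b=0$.

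First I would pin down the dot coefficient $\alpha$. The natural approach is to close each side into a dotted bubble by capping off the strand at top and bottom: the left rotation and the right rotation of a single dot both close to a circle carrying one dot, and the resulting bubbles have their values fixed by the degree-$0$ normalization of Corollary~\ref{cor:degz-bubbles} together with \eqref{eq_bub1}. This common evaluation forces $\alpha=1$. The same conclusion can be seen structurally from the zig-zag relations \eqref{eq_biadjoint1}--\eqref{eq_biadjoint2}, which hold honestly after the rescaling of Section~\ref{subsec:adjoint}: reversing the left rotation recovers the plain upward dotted strand $x$, as does reversing the right rotation, and the compatibility of the two adjunction isomorphisms (guaranteed by the caps and cups being mutually adjoint) identifies the dot parts.

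The remaining point, the vanishing of the bubble term $\Id_\F\cdot b$, is where the main obstacle lies, since a priori $\Hom^2(\1_{n+2},\1_{n+2})$ need not be zero. Here I would argue by degree together with the bubble-positivity conditions: closing up $\Id_\F\cdot b$ produces a dotted bubble whose number of dots is pinned by the degree-$2$ constraint, and the vanishing clause \eqref{eq_positivity_bubbles}, combined with condition (\ref{co:hom}) of Definition~\ref{def_Qstrong}, excludes any nonzero contribution in the relevant degree. As throughout this section, I would run the argument for $n\ge 0$ and then obtain $n\le 0$ by the symmetric variant of Remark~\ref{rem:nle0}, with the two orientations of bubbles interchanged, and check the degenerate weights $n=-2,-3$ by hand. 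Once $\alpha=1$ and $b=0$ are in place, the left-hand side equals the downward dot, which is precisely \eqref{eq:cycbiadjoint-dot}; the companion cyclicity statement for crossings then follows by the parallel bookkeeping carried out in Lemma~\ref{lem:B}.
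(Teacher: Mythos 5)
Your first step coincides with the paper's: apply the adjoint ($\F$-strand) form of Lemma~\ref{lem:main} to write the left-hand rotation as $\alpha$ times the right-hand rotation plus a term $\Id_{\F}\cdot b$ with $b \in \End^2(\1_{n+2})$ (the paper's $\gamma_0$ and $\gamma_1$). The gap is in how you pin down $\alpha$ and $b$. Closing each side into a circle carrying a single dot gives nothing: on the side where the closure has negative degree, a one-dot bubble in weight $n+2$ is zero by \eqref{eq_positivity_bubbles} for every $n>0$ (the degree-zero clockwise bubble there carries $n+1$ dots, not one), so the closed equation reads $0 = \alpha\cdot 0 + b\cdot 0$ and forces nothing; closed the other way, all terms land in $\Hom^{2n+4}(\1_n,\1_n)$, a space that condition (2) of Definition~\ref{def_Qstrong} does not control, so again nothing is pinned down. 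Your argument for $b=0$ collapses for the same reason: the closure of $\Id_{\F}\cdot b$ is $b$ times a bubble with \emph{zero} dots, which vanishes for degree reasons whether or not $b=0$, so no conclusion about $b$ follows; note also that $b$ is an arbitrary element of $\End^2(\1_{n+2})$, not itself a dotted bubble, so bubble positivity cannot be applied to it directly. Your fallback ``structural'' argument is circular: reversing the left rotation uses the left adjunction, reversing the right rotation uses the right adjunction, and the assertion that these two conjugations $\End(\E\1_n)\to\End(\F\1_{n+2})$ agree is exactly the cyclicity being proved --- biadjointness alone never implies it.

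The missing idea, which is the heart of the paper's proof, is to close the relation off with a \emph{variable} number $m$ of additional dots. This produces the identity $(1-\alpha)\,(\text{bubble with } m+1 \text{ dots}) = b\,(\text{bubble with } m \text{ dots})$ in weight $n+2$ (the paper's \eqref{eq_cyclem_dot2}). Choosing $m=n$ makes the left-hand bubble the degree-zero one, hence a nonzero multiple of the identity by Corollary~\ref{cor:degz-bubbles}, while the right-hand bubble has negative degree and vanishes; this forces $\alpha=1$. Then choosing $m=n+1$ forces $b=0$. Finally, the weight $n=-1$ genuinely requires a separate argument, since no choice of $m$ produces the dichotomy above in that case; the paper handles it with dotted curls, the NilHecke relation and the normalization \eqref{eq_bub1}. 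Your proposal invokes the special clause of Lemma~\ref{lem:main} at $n=-1$, but that only gives the \emph{form} of the endomorphism there; the coefficients at $n=-1$ are never determined.
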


\begin{proof}
The analogue of Lemma~\ref{lem:main} for downward pointing strands (obtained by taking adjoints) implies that for $n \geq -1$ we have
\begin{equation} \label{eq_cyclem_dot}
 \xy 0;/r.17pc/:
    (-8,5)*{}="1";
    (0,5)*{}="2";
    (0,-5)*{}="2'";
    (8,-5)*{}="3";
    (-8,-10);"1" **\dir{-};
    "2";"2'" **\dir{-} ?(.5)*\dir{<};
    "1";"2" **\crv{(-8,12) & (0,12)} ?(0)*\dir{<};
    "2'";"3" **\crv{(0,-12) & (8,-12)}?(1)*\dir{<};
    "3"; (8,10) **\dir{-};
    (15,-9)*{ n+2};
    (-12,9)*{n};
    (0,4)*{\bullet};
    (10,8)*{\scs };
    (-10,-8)*{\scs };
    \endxy
    \quad = \quad \gamma_0
   \xy 0;/r.17pc/:
    (8,5)*{}="1";
    (0,5)*{}="2";
    (0,-5)*{}="2'";
    (-8,-5)*{}="3";
    (8,-10);"1" **\dir{-};
    "2";"2'" **\dir{-} ?(.5)*\dir{<};
    "1";"2" **\crv{(8,12) & (0,12)} ?(0)*\dir{<};
    "2'";"3" **\crv{(0,-12) & (-8,-12)}?(1)*\dir{<};
    "3"; (-8,10) **\dir{-};
    (15,9)*{n+2};
    (-12,-9)*{n};
    (0,4)*{\txt\large{$\bullet$}};
    (-10,8)*{\scs };
    (10,-8)*{\scs };
    \endxy
 \;\; + \;\;
       \xy 0;/r.17pc/:
 (0,10);(0,-10); **\dir{-} ?(.75)*\dir{<}+(2.3,0)*{\scriptstyle{}}
 ?(.1)*\dir{ }+(2,0)*{\scs };
  (16,-2)*{\chern{\gamma_1}};
 (-6,6)*{ n};
 (10,6)*{ n +2};
 (-10,0)*{};(10,0)*{};(-2,-8)*{\scs };
 \endxy
 \end{equation}
for $\gamma_0 \in \End(\1_{n+2})=\Bbbk$ and $\gamma_1 \in \End^2(\1_{n+2})$.
Closing off this relation on the left with $m$ dots gives rise to the equality
\begin{equation}
 \xy 0;/r.17pc/:
    (-8,2)*{}="1";
    (0,2)*{}="2";
    (0,-2)*{}="2'";
    (8,-2)*{}="3";
    (-8,-8);"1" **\dir{-};
    "2";"2'" **\dir{-} ?(.8)*\dir{<};
    "1";"2" **\crv{(-8,9) & (0,9)} ?(0)*\dir{<};
    "2'";"3" **\crv{(0,-9) & (8,-9)}?(1)*\dir{<};
    "3"; (8,8) **\dir{-};
    (8,8)*{}; (-16,8)*{} **\crv{(8,20) & (-16,20)};
    (-16,8)*{}; (-16,-8) **\dir{-} ?(.5)*\dir{<};
    (-16,-8)*{}; (-8,-8)*{} **\crv{(-16,-15) & (-8,-15)};
    (15,-9)*{ n+2};
    (0,1)*{\bullet}; (-16,6)*{\bullet}+(-3,1)*{\scs m};
    (10,8)*{\scs };
    (-10,-8)*{\scs };
    \endxy
 \;\; = \;\;
\gamma_0 \;\;
 \xy 0;/r.17pc/:
    (-8,-2)*{}="1";
    (0,-2)*{}="2";
    (0,2)*{}="2'";
    (8,2)*{}="3";
    (-8,8);"1" **\dir{-};
    "2";"2'" **\dir{-} ?(1)*\dir{>};
    "1";"2" **\crv{(-8,-9) & (0,-9)} ?(0)*\dir{>};
    "2'";"3" **\crv{(0,9) & (8,9)}?(1)*\dir{>};
    "3"; (8,-8) **\dir{-};
    (8,-8)*{}; (-16,-8)*{} **\crv{(8,-20) & (-16,-20)};
    (-16,-8)*{}; (-16,8) **\dir{-} ?(.5)*\dir{>};
    (-16,8)*{}; (-8,8)*{} **\crv{(-16,15) & (-8,15)};
    (15,9)*{ n+2};
    (0,-1)*{\bullet}; (-16,6)*{\bullet}+(-3,1)*{\scs m};
    (10,8)*{\scs };
    (-10,-8)*{\scs };
    \endxy
    \;\; + \;\;
  \xy 0;/r.17pc/:
    (-8,-8)*{}="1";
    (-8,8);"1" **\dir{-};
    (-16,-8)*{}; (-16,8) **\dir{-} ?(.5)*\dir{>};
    (-16,8)*{}; (-8,8)*{} **\crv{(-16,15) & (-8,15)};
    "1"; (-16,-6)*{} **\crv{(-8,-15) & (-16,-15)};
    (7,9)*{ n+2};
   (-16,6)*{\bullet}+(-3,1)*{\scs m};
  (5,-2)*{\chern{\gamma_1}};
    \endxy
\end{equation}
which simplifies to
\begin{equation} \label{eq_cyclem_dot2}
 (1-\gamma_0) \;
 \xy 0;/r.17pc/:
(0,0)*{\cbub{m+1}};
(7,9)*{ n+2};
\endxy
\;\; = \;\;
 \xy 0;/r.17pc/:
(0,0)*{\cbub{m}};
 (11,0)*{\chern{\gamma_1}};
 (7,9)*{ n+2};
\endxy
\end{equation}
For $n \geq 0$ take $m=n$ so that the bubble on the left is equal to 1 and the negative degree bubble on the right-hand side is zero. Hence $\gamma_0=1$.  Taking $m=n+1$ then implies that $\gamma_1=0$. A similar argument proves the lemma for $n<-1$.

The case $n=-1$ requires special attention as we can not set $m=n$ in \eqref{eq_cyclem_dot2}. Note that for $n=-1$ the diagram
\begin{equation}
  \xy 0;/r.17pc/:
  (17,8)*{n+2};
  (-3,-10)*{};(3,5)*{} **\crv{(-3,-2) & (2,1)}?(1)*\dir{>};?(.15)*\dir{>};
    (3,-5)*{};(-3,10)*{} **\crv{(2,-1) & (-3,2)}?(.85)*\dir{>} ?(.1)*\dir{>};
  (3,5)*{}="t1";  (9,5)*{}="t2";
  (3,-5)*{}="t1'";  (9,-5)*{}="t2'";
   "t1";"t2" **\crv{(4,8) & (9, 8)};
   "t1'";"t2'" **\crv{(4,-8) & (9, -8)};
   "t2'";"t2" **\crv{(10,0)} ;
 \endxy\;\;
\end{equation}
is zero since it has degree $-2$. Using the NilHecke relation we have
\begin{equation}
  \xy 0;/r.17pc/:
  (16,8)*{n+2};
  (-3,-10)*{};(3,5)*{} **\crv{(-3,-2) & (2,1)}?(1)*\dir{>};?(.15)*\dir{>};
    (3,-5)*{};(-3,10)*{} **\crv{(2,-1) & (-3,2)}?(.85)*\dir{>} ;
  (3,5)*{}="t1";  (9,5)*{}="t2";
  (3,-5)*{}="t1'";  (9,-5)*{}="t2'";
   "t1";"t2" **\crv{(4,8) & (9, 8)};
   "t1'";"t2'" **\crv{(4,-8) & (9, -8)};
   "t2'";"t2" **\crv{(10,0)} ;
  (2.5,-4)*{\bullet};
 \endxy\;\; = \;\;
  \xy 0;/r.17pc/:
  (16,8)*{n+2};
  (-3,-10)*{};(3,5)*{} **\crv{(-3,-2) & (2,1)}?(1)*\dir{>};?(.15)*\dir{>};
    (3,-5)*{};(-3,10)*{} **\crv{(2,-1) & (-3,2)}?(.85)*\dir{>} ?(.1)*\dir{>};
  (3,5)*{}="t1";  (9,5)*{}="t2";
  (3,-5)*{}="t1'";  (9,-5)*{}="t2'";
   "t1";"t2" **\crv{(4,8) & (9, 8)};
   "t1'";"t2'" **\crv{(4,-8) & (9, -8)};
   "t2'";"t2" **\crv{(10,0)} ;
  (-2,4)*{\bullet};
 \endxy
  - \;\; r_{i} \;\;
     \xy 0;/r.17pc/:
 (0,10);(0,-10); **\dir{-} ?(.75)*\dir{>};
 (8,-2)*{\ncbub{}{}};
 (10,6)*{ n+2};
 \endxy
 \;\; \refequal{\eqref{eq:4.1}}\;\;
 - r_{i} \;\;
     \xy 0;/r.17pc/:
 (0,10);(0,-10); **\dir{-} ?(.75)*\dir{>};
 (7,6)*{ n+2};
 \endxy  \;\; = \;\; \xy 0;/r.17pc/:
  (16,8)*{n+2};
  (-3,-10)*{};(3,5)*{} **\crv{(-3,-2) & (2,1)}?(1)*\dir{>};?(.15)*\dir{>};
    (3,-5)*{};(-3,10)*{} **\crv{(2,-1) & (-3,2)}?(.85)*\dir{>} ?(.1)*\dir{>};
  (3,5)*{}="t1";  (9,5)*{}="t2";
  (3,-5)*{}="t1'";  (9,-5)*{}="t2'";
   "t1";"t2" **\crv{(4,8) & (9, 8)};
   "t1'";"t2'" **\crv{(4,-8) & (9, -8)};
   "t2'";"t2" **\crv{(10,0)} ;
  (-2,-4)*{\bullet};
 \endxy
  - \;\; r_{i} \;\;
     \xy 0;/r.17pc/:
 (0,10);(0,-10); **\dir{-} ?(.75)*\dir{>};
 (8,-2)*{\ncbub{}{}};
 (10,6)*{ n+2};
 \endxy
\;\; = \;\;
  \xy 0;/r.17pc/:
  (16,8)*{n+2};
  (-3,-10)*{};(3,5)*{} **\crv{(-3,-2) & (2,1)}?(.15)*\dir{>};
    (3,-5)*{};(-3,10)*{} **\crv{(2,-1) & (-3,2)}?(.85)*\dir{>} ?(.1)*\dir{>};
  (3,5)*{}="t1";  (9,5)*{}="t2";
  (3,-5)*{}="t1'";  (9,-5)*{}="t2'";
   "t1";"t2" **\crv{(4,8) & (9, 8)};
   "t1'";"t2'" **\crv{(4,-8) & (9, -8)};
   "t2'";"t2" **\crv{(10,0)} ;
  (2.5,4)*{\bullet};
 \endxy
\end{equation}
so that these dotted curls are non-zero. Hence, gluing equation \eqref{eq_cyclem_dot} into the diagram
\[
  \xy 0;/r.17pc/:
  (15,10)*{n+2};
  (-3,-10)*{};(3,5)*{} **\crv{(-3,-2) & (2,1)}?(.15)*\dir{>} ?(.85)*\dir{>};
    (3,-5)*{};(-3,10)*{} **\crv{(2,-1) & (-3,2)}?(.85)*\dir{>} ?(.1)*\dir{>};
  (3,5)*{}="t1";  (12,5)*{}="t2";
  (3,-5)*{}="t1'";  (12,-5)*{}="t2'";
   "t1";"t2" **\crv{(4,8) & (12, 8)};
   "t1'";"t2'" **\crv{(4,-8) & (12, -8)};
  (8,5)*{}="tl";
  (16,5)*{}="tr";
  (8,-5)*{}="bl";
  (16,-5)*{}="br";
  "tl";"tr" **\dir{.};
  "tl";"bl" **\dir{.};
  "bl";"br" **\dir{.};
  "tr";"br" **\dir{.};
 \endxy
\]
and simplifying using the adjoint structure implies that $\gamma_0=1$. Then equation \eqref{eq_cyclem_dot2} with $m=0$  implies $\gamma_1=0$. Note that this curl proof can actually be modified to give an alternative proof of this lemma for all $n \geq -1$.
\end{proof}

\begin{lem}\label{lem:B}
In a strong 2-representation of $\mf{sl}_2$ we have:
\begin{equation}
\xy 0;/r.17pc/:
  (0,0)*{\xybox{
    (-4,-4)*{};(4,4)*{} **\crv{(-4,-1) & (4,1)}?(1)*\dir{>};
    (4,-4)*{};(-4,4)*{} **\crv{(4,-1) & (-4,1)};
     (4,4)*{};(-18,4)*{} **\crv{(4,16) & (-18,16)} ?(1)*\dir{>};
     (-4,-4)*{};(18,-4)*{} **\crv{(-4,-16) & (18,-16)} ?(1)*\dir{<}?(0)*\dir{<};
     (18,-4);(18,12) **\dir{-};(12,-4);(12,12) **\dir{-};
     (-18,4);(-18,-12) **\dir{-};(-12,4);(-12,-12) **\dir{-};
     (8,1)*{ n};
     (-10,0)*{};(10,0)*{};
      (4,-4)*{};(12,-4)*{} **\crv{(4,-10) & (12,-10)}?(1)*\dir{<}?(0)*\dir{<};
      (-4,4)*{};(-12,4)*{} **\crv{(-4,10) & (-12,10)}?(1)*\dir{>}?(0)*\dir{>};
     }};
  \endxy
\quad =  \quad
 \xy 0;/r.17pc/:
  (0,0)*{\xybox{
    (4,-4)*{};(-4,4)*{} **\crv{(4,-1) & (-4,1)}?(1)*\dir{>};
    (-4,-4)*{};(4,4)*{} **\crv{(-4,-1) & (4,1)};
     (-4,4)*{};(18,4)*{} **\crv{(-4,16) & (18,16)} ?(1)*\dir{>};
     (4,-4)*{};(-18,-4)*{} **\crv{(4,-16) & (-18,-16)} ?(1)*\dir{<}?(0)*\dir{<};
     (-18,-4);(-18,12) **\dir{-};(-12,-4);(-12,12) **\dir{-};
     (18,4);(18,-12) **\dir{-};(12,4);(12,-12) **\dir{-};
     (8,1)*{ n};
     (-10,0)*{};(10,0)*{};
     (-4,-4)*{};(-12,-4)*{} **\crv{(-4,-10) & (-12,-10)}?(1)*\dir{<}?(0)*\dir{<};
      (4,4)*{};(12,4)*{} **\crv{(4,10) & (12,10)}?(1)*\dir{>}?(0)*\dir{>};
     }};
  \endxy
\end{equation}
\end{lem}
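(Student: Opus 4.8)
The plan is to recognize both diagrams as elements of a single one-dimensional space of $2$-morphisms and then to check that their coefficients coincide. By the definition of the downward crossing in~\eqref{eq_down_def}, the right-hand diagram \emph{is} the downward crossing $\F\F\1_n \to \F\F\1_n$, built by rotating the upward (NilHecke) crossing around one side, while the left-hand diagram is the rotation around the other side. In particular both are $2$-morphisms $\F\F\1_n \to \F\F\1_n$ with the same source and target, and both carry the same degree as the upward crossing, namely $-2$; thus both lie in $\Hom^{-2}(\F\F\1_n,\F\F\1_n)$.

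The first step is to show $\Hom^{-2}(\F\F\1_n,\F\F\1_n) \cong \k$. Using the biadjointness from Proposition~\ref{prop:lradj} (so that each $\F$ is, up to a shift, simultaneously the left and right adjoint of $\E$) one may rotate the two downward strands to obtain an isomorphism $\Hom^{-2}(\F\F\1_n,\F\F\1_n) \cong \Hom^{-2}(\E\E\1_m,\E\E\1_m)$ for the appropriate weight $m$; by Corollary~\ref{cor:0} the right-hand space is exactly one-dimensional (note that $l=-2$ is precisely the case treated there). One can see the same thing directly from the spanning set: a degree $-2$ endomorphism of two parallel strands is a combination of a single crossing together with parallel strands carrying decorations of total degree $-2$, and since dots have degree $+2$ while bubbles of negative degree vanish by~\eqref{eq_positivity_bubbles}, only the crossing term can occur. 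Either way there are no ``lower order'' terms available, so the left-hand diagram is simply a scalar $a$ times the right-hand diagram, and it remains to prove $a=1$.

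To pin down $a$ I would place a dot on one of the two strands and slide it through the crossing. On the upward crossing this is governed by the NilHecke relation~\eqref{eq_nil_dotslide}, and Lemma~\ref{lem:A} (cyclicity of dots, just established) guarantees that dots on the rotated, downward strands are unambiguous; feeding the dot-slide through both rotations and comparing the resulting identity-strand and dotted-bubble terms forces the coefficients of the two diagrams to agree. Equivalently, one can close both diagrams off into a single dotted bubble and evaluate using the normalizations of the cups, caps, and degree-zero bubbles fixed in Section~\ref{subsec:adjoint} (in particular~\eqref{eq_bub1}), obtaining the same non-zero scalar in each case. A useful simplification is that the left-right reflection of the diagrammatic calculus, which for $\mathfrak{sl}_2$ preserves the chosen normalization since we may take $r_i=1$, interchanges the two diagrams; applying it to $\mathrm{LEFT}=a\cdot\mathrm{RIGHT}$ gives $\mathrm{RIGHT}=a\cdot\mathrm{LEFT}$ and hence $a^2=1$, so it then suffices to fix the sign by a single bubble evaluation.

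The routine part of the argument is the reduction to a one-dimensional Hom space, which is immediate from Proposition~\ref{prop:lradj} together with Corollary~\ref{cor:0}. The main obstacle is the scalar computation: carefully closing off the two rotated crossings and tracking the various scalars (the parameter $r_i$, the degree-zero bubble values, and the constant $c_{-1}$ from~\eqref{eq_defcmone}) through the NilHecke dot-slides and the adjunction (zig-zag) identities~\eqref{eq_biadjoint1} and~\eqref{eq_biadjoint2}, so as to confirm that the coefficient is exactly $1$ rather than merely $\pm 1$ or some other unit.
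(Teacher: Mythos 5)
Your proposal is correct and takes essentially the same route as the paper: you first reduce to showing the left-hand diagram equals a scalar $\kappa$ times the right-hand one, using that both live in the one-dimensional space $\Hom^{-2}(\F\F\1_n,\F\F\1_n)$ (biadjointness plus Corollary \ref{cor:0}), and then pin down $\kappa=1$ by sliding a dot through the crossing with the NilHecke relation, using dot cyclicity (Lemma \ref{lem:A}) and the zig-zag identities so that the correction terms force $\kappa=1$ --- which is exactly the paper's argument via equation \eqref{eq_kappa_dot}. The only caveat is that your two auxiliary suggestions (the diagrammatic spanning-set heuristic and the left-right reflection giving $a^2=1$) are not justified in an abstract 2-representation $\cal{K}$, but they are also not needed, since your dot-slide argument alone determines the scalar.
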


\begin{proof}
 We must have
\begin{equation} \label{eq:cyclic-kappa}
\xy 0;/r.17pc/:
    (-4,-4)*{};(4,4)*{} **\crv{(-4,-1) & (4,1)}?(1)*\dir{>};
    (4,-4)*{};(-4,4)*{} **\crv{(4,-1) & (-4,1)};
     (4,4)*{};(-18,4)*{} **\crv{(4,16) & (-18,16)} ?(1)*\dir{>};
     (-4,-4)*{};(18,-4)*{} **\crv{(-4,-16) & (18,-16)} ?(1)*\dir{<}?(0)*\dir{<};
     (18,-4);(18,12) **\dir{-};(12,-4);(12,12) **\dir{-};
     (-18,4);(-18,-12) **\dir{-};(-12,4);(-12,-12) **\dir{-};
     (8,1)*{ n};
     (4,-4)*{};(12,-4)*{} **\crv{(4,-10) & (12,-10)}?(1)*\dir{<}?(0)*\dir{<};
      (-4,4)*{};(-12,4)*{} **\crv{(-4,10) & (-12,10)}?(1)*\dir{>}?(0)*\dir{>};
  \endxy
\quad =  \quad \kappa\;\;\;
 \xy 0;/r.17pc/:
    (4,-4)*{};(-4,4)*{} **\crv{(4,-1) & (-4,1)}?(1)*\dir{>};
    (-4,-4)*{};(4,4)*{} **\crv{(-4,-1) & (4,1)};
     (-4,4)*{};(18,4)*{} **\crv{(-4,16) & (18,16)} ?(1)*\dir{>};
     (4,-4)*{};(-18,-4)*{} **\crv{(4,-16) & (-18,-16)} ?(1)*\dir{<}?(0)*\dir{<};
     (-18,-4);(-18,12) **\dir{-};(-12,-4);(-12,12) **\dir{-};
     (18,4);(18,-12) **\dir{-};(12,4);(12,-12) **\dir{-};
     (8,1)*{ n};
     (-4,-4)*{};(-12,-4)*{} **\crv{(-4,-10) & (-12,-10)}?(1)*\dir{<}?(0)*\dir{<};
      (4,4)*{};(12,4)*{} **\crv{(4,10) & (12,10)}?(1)*\dir{>}?(0)*\dir{>};
  \endxy
\end{equation}
for some scalar $\kappa$, using Corollary~\ref{cor:0}.  Using cyclicity for dots proven above, this equation implies
\begin{equation}
\xy 0;/r.17pc/:
    (-4,-4)*{};(4,4)*{} **\crv{(-4,-1) & (4,1)}?(1)*\dir{>};
    (4,-4)*{};(-4,4)*{} **\crv{(4,-1) & (-4,1)}?(1)*\dir{}+(0,0)*{\bullet};
     (4,4)*{};(-18,4)*{} **\crv{(4,16) & (-18,16)};
     (-4,-4)*{};(18,-4)*{} **\crv{(-4,-16) & (18,-16)} ?(1)*\dir{<}?(0)*\dir{<};
     (18,-4);(18,12) **\dir{-};(12,-4);(12,12) **\dir{-};
     (-18,4);(-18,-12) **\dir{-}?(0)*\dir{>};(-12,4);(-12,-12) **\dir{-};
     (8,1)*{ n};
      (4,-4)*{};(12,-4)*{} **\crv{(4,-10) & (12,-10)}?(1)*\dir{<}?(0)*\dir{<};
      (-4,4)*{};(-12,4)*{} **\crv{(-4,10) & (-12,10)}?(1)*\dir{>};
  \endxy
\quad =  \quad \kappa\;\;\;
 \xy 0;/r.17pc/:
    (4,-4)*{};(-4,4)*{} **\crv{(4,-1) & (-4,1)}?(1)*\dir{}+(0,0)*{\bullet};
    (-4,-4)*{};(4,4)*{} **\crv{(-4,-1) & (4,1)};
     (-4,4)*{};(18,4)*{} **\crv{(-4,16) & (18,16)} ?(1)*\dir{>};
     (4,-4)*{};(-18,-4)*{} **\crv{(4,-16) & (-18,-16)} ?(1)*\dir{<}?(0)*\dir{<};
     (-18,-4);(-18,12) **\dir{-};(-12,-4);(-12,12) **\dir{-};
     (18,4);(18,-12) **\dir{-};(12,4);(12,-12) **\dir{-};
     (8,1)*{ n};
     (-4,-4)*{};(-12,-4)*{} **\crv{(-4,-10) & (-12,-10)}?(1)*\dir{<}?(0)*\dir{<};
      (4,4)*{};(12,4)*{} **\crv{(4,10) & (12,10)}?(1)*\dir{>}?(0)*\dir{>};
  \endxy
\end{equation}

Then by the NilHecke relation we have
\begin{equation} \label{eq_kappa_dot}
\xy 0;/r.17pc/:
    (-4,-4)*{};(4,4)*{} **\crv{(-4,-1) & (4,1)}?(1)*\dir{>};
    (4,-4)*{};(-4,4)*{} **\crv{(4,-1) & (-4,1)}?(1)*\dir{}+(0,0)*{\bullet};
     (4,4)*{};(-18,4)*{} **\crv{(4,16) & (-18,16)};
     (-4,-4)*{};(18,-4)*{} **\crv{(-4,-16) & (18,-16)} ?(1)*\dir{<}?(0)*\dir{<};
     (18,-4);(18,12) **\dir{-};(12,-4);(12,12) **\dir{-};
     (-18,4);(-18,-12) **\dir{-}?(0)*\dir{>};(-12,4);(-12,-12) **\dir{-};
     (8,1)*{ n};
      (4,-4)*{};(12,-4)*{} **\crv{(4,-10) & (12,-10)}?(1)*\dir{<}?(0)*\dir{<};
      (-4,4)*{};(-12,4)*{} **\crv{(-4,10) & (-12,10)}?(1)*\dir{>};
  \endxy
\quad =  \quad
\xy 0;/r.17pc/:
    (-4,-4)*{};(4,4)*{} **\crv{(-4,-1) & (4,1)}?(1)*\dir{>};
    (4,-4)*{};(-4,4)*{} **\crv{(4,-1) & (-4,1)} ?(0)*\dir{}+(0,0)*{\bullet};
     (4,4)*{};(-18,4)*{} **\crv{(4,16) & (-18,16)} ?(1)*\dir{>};
     (-4,-4)*{};(18,-4)*{} **\crv{(-4,-16) & (18,-16)} ?(1)*\dir{<}?(0)*\dir{<};
     (18,-4);(18,12) **\dir{-};(12,-4);(12,12) **\dir{-};
     (-18,4);(-18,-12) **\dir{-};(-12,4);(-12,-12) **\dir{-};
     (8,1)*{ n};
      (4,-4)*{};(12,-4)*{} **\crv{(4,-10) & (12,-10)}?(1)*\dir{<};
      (-4,4)*{};(-12,4)*{} **\crv{(-4,10) & (-12,10)}?(1)*\dir{>}?(0)*\dir{>};
  \endxy
  \;\; + \;\; r_{i} \;\;
 \xy 0;/r.17pc/:
    (-4,-4)*{};(-4,4)*{} **\dir{-} ?(.5)*\dir{>};
    (4,-4)*{};(4,4)*{} **\dir{-} ?(.5)*\dir{>};
     (4,4)*{};(-18,4)*{} **\crv{(4,16) & (-18,16)} ?(1)*\dir{>};
     (-4,-4)*{};(18,-4)*{} **\crv{(-4,-16) & (18,-16)} ?(1)*\dir{<};
     (18,-4);(18,12) **\dir{-};(12,-4);(12,12) **\dir{-};
     (-18,4);(-18,-12) **\dir{-};(-12,4);(-12,-12) **\dir{-};
     (8,1)*{ n};
      (4,-4)*{};(12,-4)*{} **\crv{(4,-10) & (12,-10)}?(1)*\dir{<};
      (-4,4)*{};(-12,4)*{} **\crv{(-4,10) & (-12,10)}?(1)*\dir{>};
  \endxy
\end{equation}
\begin{equation} \label{eq_kappa_dot2}
 \kappa\;\;\;
 \xy 0;/r.17pc/:
    (4,-4)*{};(-4,4)*{} **\crv{(4,-1) & (-4,1)}?(1)*\dir{}+(0,0)*{\bullet};
    (-4,-4)*{};(4,4)*{} **\crv{(-4,-1) & (4,1)};
     (-4,4)*{};(18,4)*{} **\crv{(-4,16) & (18,16)} ?(1)*\dir{>};
     (4,-4)*{};(-18,-4)*{} **\crv{(4,-16) & (-18,-16)} ?(1)*\dir{<}?(0)*\dir{<};
     (-18,-4);(-18,12) **\dir{-};(-12,-4);(-12,12) **\dir{-};
     (18,4);(18,-12) **\dir{-};(12,4);(12,-12) **\dir{-};
     (8,1)*{ n};
     (-4,-4)*{};(-12,-4)*{} **\crv{(-4,-10) & (-12,-10)}?(1)*\dir{<}?(0)*\dir{<};
      (4,4)*{};(12,4)*{} **\crv{(4,10) & (12,10)}?(1)*\dir{>}?(0)*\dir{>};
  \endxy \quad = \quad \kappa \;\;\;
   \xy 0;/r.17pc/:
    (4,-4)*{};(-4,4)*{} **\crv{(4,-1) & (-4,1)}?(1)*\dir{>}?(0)*\dir{}+(0,0)*{\bullet};
    (-4,-4)*{};(4,4)*{} **\crv{(-4,-1) & (4,1)};
     (-4,4)*{};(18,4)*{} **\crv{(-4,16) & (18,16)} ?(1)*\dir{>};
     (4,-4)*{};(-18,-4)*{} **\crv{(4,-16) & (-18,-16)} ?(1)*\dir{<};
     (-18,-4);(-18,12) **\dir{-};(-12,-4);(-12,12) **\dir{-};
     (18,4);(18,-12) **\dir{-};(12,4);(12,-12) **\dir{-};
     (8,1)*{ n};
     (-4,-4)*{};(-12,-4)*{} **\crv{(-4,-10) & (-12,-10)}?(1)*\dir{<}?(0)*\dir{<};
      (4,4)*{};(12,4)*{} **\crv{(4,10) & (12,10)}?(1)*\dir{>}?(0)*\dir{>};
  \endxy
 \;\; + \;\; r_{i}  \kappa
 \;\;\;
   \xy 0;/r.17pc/:
    (4,-4)*{};(4,4)*{} **\dir{-}?(.5)*\dir{>};
    (-4,-4)*{};(-4,4)*{} **\dir{-} ?(.5)*\dir{>};
     (-4,4)*{};(18,4)*{} **\crv{(-4,16) & (18,16)} ?(1)*\dir{>};
     (4,-4)*{};(-18,-4)*{} **\crv{(4,-16) & (-18,-16)} ?(1)*\dir{<};
     (-18,-4);(-18,12) **\dir{-};
     (-12,-4);(-12,12) **\dir{-};
     (18,4);(18,-12) **\dir{-};
     (12,4);(12,-12) **\dir{-};
     (8,1)*{ n};
     (-4,-4)*{};(-12,-4)*{} **\crv{(-4,-10) & (-12,-10)}?(1)*\dir{<};
      (4,4)*{};(12,4)*{} **\crv{(4,10) & (12,10)}?(1)*\dir{>};
  \endxy
\end{equation}
The first two terms of \eqref{eq_kappa_dot} are equal to the first two terms of \eqref{eq_kappa_dot2} respectively, using \eqref{eq:cyclic-kappa}.  Thus
\begin{equation}
\xy 0;/r.17pc/:
    (4,-10)*{};(4,10)*{} **\dir{-}?(.5)*\dir{<};
    (-4,-10)*{};(-4,10)*{} **\dir{-} ?(.5)*\dir{<};
     (10,4)*{ n};
  \endxy
 \quad = \quad
 \kappa \;\;\;
   \xy 0;/r.17pc/:
    (4,-10)*{};(4,10)*{} **\dir{-}?(.5)*\dir{<};
    (-4,-10)*{};(-4,10)*{} **\dir{-} ?(.5)*\dir{<};
     (10,4)*{ n};
  \endxy
\end{equation}
so that $\kappa=1$.
\end{proof}

The sideways crossings were defined up to a scalar by Lemma~\ref{lem:homs}. We now fix these scalars as follows.
\begin{equation}
  \xy
  (0,0)*{\xybox{
    (-4,-4)*{};(4,4)*{} **\crv{(-4,-1) & (4,1)}?(1)*\dir{>} ;
    (4,-4)*{};(-4,4)*{} **\crv{(4,-1) & (-4,1)}?(0)*\dir{<};
     (8,2)*{ n};
     (-12,0)*{};(12,0)*{};
     }};
  \endxy
:=
 \xy 0;/r.17pc/:
  (0,0)*{\xybox{
    (4,-4)*{};(-4,4)*{} **\crv{(4,-1) & (-4,1)}?(1)*\dir{>};
    (-4,-4)*{};(4,4)*{} **\crv{(-4,-1) & (4,1)};
   %  (-4,4)*{};(18,4)*{} **\crv{(-4,16) & (18,16)} ?(1)*\dir{>};
    % (4,-4)*{};(-18,-4)*{} **\crv{(4,-16) & (-18,-16)} ?(1)*\dir{<}?(0)*\dir{<};
     (-4,4);(-4,12) **\dir{-};
     (-12,-4);(-12,12) **\dir{-};
     (4,-4);(4,-12) **\dir{-};(12,4);(12,-12) **\dir{-};
     (16,1)*{n};
     (-10,0)*{};(10,0)*{};
     (-4,-4)*{};(-12,-4)*{} **\crv{(-4,-10) & (-12,-10)}?(1)*\dir{<}?(0)*\dir{<};
      (4,4)*{};(12,4)*{} **\crv{(4,10) & (12,10)}?(1)*\dir{>}?(0)*\dir{>};
     }};
  \endxy
  \qquad  \qquad
  \xy %0;/r.18pc/:
  (0,0)*{\xybox{
    (-4,-4)*{};(4,4)*{} **\crv{(-4,-1) & (4,1)}?(0)*\dir{<} ;
    (4,-4)*{};(-4,4)*{} **\crv{(4,-1) & (-4,1)}?(1)*\dir{>};
    (5.1,-3)*{\scs i};
     (-6.5,-3)*{\scs j};
     (9,2)*{ n};
     (-12,0)*{};(12,0)*{};
     }};
  \endxy
\quad := \quad
 \xy 0;/r.17pc/:
  (0,0)*{\xybox{
    (-4,-4)*{};(4,4)*{} **\crv{(-4,-1) & (4,1)}?(1)*\dir{>};
    (4,-4)*{};(-4,4)*{} **\crv{(4,-1) & (-4,1)};
     (4,4);(4,12) **\dir{-};
     (12,-4);(12,12) **\dir{-};
     (-4,-4);(-4,-12) **\dir{-};(-12,4);(-12,-12) **\dir{-};
     (16,-6)*{n};
     (10,0)*{};(-10,0)*{};
     (4,-4)*{};(12,-4)*{} **\crv{(4,-10) & (12,-10)}?(1)*\dir{<}?(0)*\dir{<};
      (-4,4)*{};(-12,4)*{} **\crv{(-4,10) & (-12,10)}?(1)*\dir{>}?(0)*\dir{>};
     % (14,11)*{\scs j};(2,11)*{\scs i};
%      (-14,-11)*{\scs j};(-2,-11)*{\scs i};
     }};
  \endxy
\end{equation}

% ==============================================================================
%
\section{Proof of the extended ${\mf{sl}}_2$ relations}\label{sec:proofsl2}
%
% ==============================================================================

By the remarks in section~\ref{sec:sl2convs}, if $I = \{i\}$ then by rescaling degrees it suffices to work with Cartan datum associated to $\mf{sl}_2$.

% -------------------------------------------------------------------------------
%
\subsection{Symmetric functions and dotted bubbles} \label{sec:symm}
%
% -------------------------------------------------------------------------------

This section will formally define 'fake bubbles' in a $Q$-strong 2-representation. Let ${\rm Sym}$ denote the graded ring of symmetric functions in countably infinite variables. As a graded vector space ${\rm Sym} = \bigoplus_{\ell=0}^{\infty} {\rm Sym}^{\ell}$ where ${\rm Sym}^{\ell}$ denote the homogeneous symmetric functions of degree $\ell$. The ring ${\rm Sym}$ can be identified with the polynomial ring $\Z[e_1,e_2,\dots]$ and the polynomial ring $\Z[h_1,h_2,\dots]$ where $e_j$ is the $j$th elementary symmetric function, and $h_j$ is the $j$th complete symmetric function. These symmetric functions are related by the equation
\begin{equation} \label{eq_eh_rel}
  \sum_{\ell_1+\ell_2=m} (-1)^{\ell_2}e_{\ell_1}h_{\ell_2} = \delta_{m,0},
\end{equation}
where $e_{0}=h_{0}=1$ and $e_r=h_r=0$ for $r<0$. Given a partition $\lambda = (\lambda_1,\lambda_2, \dots, \lambda_m)$ let $e_{\lambda} = e_{\lambda_1}e_{\lambda_2} \dots e_{\lambda_m}$.

From equation \eqref{eq_eh_rel} one can recursively rewrite the complete symmetric functions in terms of the elementary symmetric functions
\begin{equation} \label{eq_complete_alpha}
  h_r = \sum_{\lambda: |\lambda|=r} \alpha_{\lambda} e_{\lambda}
\end{equation}
for some coefficients $\alpha_{\lambda}\in \Z$.

Define a grading preserving map
\begin{eqnarray} \label{eq_symm_iso}
 \phi^n \maps {\rm Sym} &
 \longrightarrow& \End(\1_n) \\
  e_{\lambda}=e_{\lambda_1}\dots e_{\lambda_m} &\mapsto&
  \left\{
  \begin{array}{ccc}
    \xy 0;/r.18pc/:
 (-12,0)*{\cbub{n-1+\lambda_1}};
  (8,0)*{\cbub{n-1+\lambda_2}};
  (36,0)*{\cbub{n-1+\lambda_m}};
  (20,0)*{\cdots};
 (-8,8)*{n};
 \endxy & \quad & \text{if $n > 0$} \\ & &\\
    \xy 0;/r.18pc/:
 (-12,0)*{\ccbub{-n-1+\lambda_1}};
  (8,0)*{\ccbub{-n-1+\lambda_2}};
  (36,0)*{\ccbub{-n-1+\lambda_m}};
  (20,0)*{\cdots};
 (-8,8)*{n};
 \endxy & \quad & \text{if $n<0$.}
  \end{array}
  \right.
\end{eqnarray}
and write
\begin{equation} \label{eq_def_eln}
  e_{\lambda,n} := \phi^n(e_{\lambda}),
\end{equation}
with $\deg(e_{\lambda,n})= 2(\lambda_1 + \lambda_2 + \dots + \lambda_m)$. When $\lambda$ is the empty partition we write $e_{\lambda,n}=e_{\emptyset}=1$.

It was shown in \cite[Proposition 8.2]{Lau1} that this map is an isomorphism in the 2-category $\Ucat(\mathfrak{sl}_2)$. In this context, the map $\phi^n$ provides a natural interpretation to fake bubbles introduced in section~\ref{sec:highersl2}.  Indeed, comparing the homogeneous terms \ref{eq_homo_inf_grass} with equation \eqref{eq_eh_rel} of the infinite Grassmannian equation show that the degree $r$ fake bubble in region $n$ corresponds to $\phi^n((-1)^{r}h_r)$.  In particular, in the 2-category $\Ucat(\mathfrak{sl}_2)$ one has
  \begin{equation} \label{eq_fake_nleqz}
  \vcenter{\xy 0;/r.18pc/:
    (2,-11)*{\cbub{n-1+r}{}};
  (12,-2)*{n};
 \endxy} \;\; = \;\;
\left\{
 \begin{array}{ccl}
   \xsum{\lambda: |\lambda|=r}{}
 \alpha_{\lambda} \;\; \vcenter{\xy 0;/r.18pc/:
    (2,-3)*{\ccbub{-n-1+\lambda_1}{}};
    (13,-2)*{\cdots};
    (28,-3)*{\ccbub{-n-1+\lambda_m}{}};
  (12,8)*{n};
 \endxy}  & \quad & \text{if $0 \leq r < -n+1$}
 \\ \\
   0 & \quad & \text{$r<0$.}
 \end{array}
 \right.
 \end{equation}
for $n\leq 0$, and
  \begin{equation} \label{eq_fake_ngeqz}
  \vcenter{\xy 0;/r.18pc/:
    (2,-11)*{\ccbub{-n-1+r}{}};
  (12,-2)*{n};
 \endxy} \;\; = \;\;
\left\{
 \begin{array}{ccl}
   \xsum{\lambda: |\lambda|=r}{}
 \alpha_{\lambda}  \;\; \vcenter{\xy 0;/r.18pc/:
    (2,-3)*{\cbub{n-1+\lambda_1}{}};
    (13,-2)*{\cdots};
    (28,-3)*{\cbub{n-1+\lambda_m}{}};
  (12,8)*{n};
 \endxy} & \quad & \text{if $0 \leq r < n+1$}
 \\ \\
   0 & \quad & \text{$r<0$.}
 \end{array}
 \right.
 \end{equation}
for $n\geq 0$, where for $n=0$ we define
\begin{equation}
\vcenter{\xy 0;/r.18pc/:
    (2,-11)*{\cbub{-1}{}};
  (12,-2)*{0};
 \endxy} \;\; =  \Id_{\1_{0}}, \qquad \quad
 \vcenter{\xy 0;/r.18pc/:
    (2,-11)*{\ccbub{-1}{}};
  (12,-2)*{0};
 \endxy} \;\; =  \Id_{\1_{0}}.
 \end{equation}  See \cite[Section 3.6]{Lau3} for more details.

% -------------------------------------------------------------------------------
%
\subsection{A general form of the ${\mf{sl}}_2$ commutator relation}
%
% -------------------------------------------------------------------------------

All string diagrams in this subsection are interpreted in $\cal{K}$, for some 2-category $\cal{K}$ admitting a strong 2-representation of $\mathfrak{sl}_2$.  We also take $n \geq 0$ throughout this section. Results for $n \leq 0$ are proven similarly.

By Corollary~\ref{cor:1} the map
$$\zeta\;\;:=\;\;
\xy 0;/r.15pc/:
    (-4,-4)*{};(4,4)*{} **\crv{(-4,-1) & (4,1)}?(0)*\dir{<} ;
    (4,-4)*{};(-4,4)*{} **\crv{(4,-1) & (-4,1)}?(1)*\dir{>};
  \endxy \;\; \bigoplus_{k=0}^{n-1}
\vcenter{\xy 0;/r.15pc/:
 (-4,2)*{}="t1"; (4,2)*{}="t2";
 "t2";"t1" **\crv{(4,-5) & (-4,-5)}; ?(1)*\dir{>} ?(.8)*\dir{}+(0,-.1)*{\bullet}+(-3,-1)*{\scs k};;
 \endxy}: \F \E \1_n \bigoplus_{k=0}^{n-1} \1_n \la n-1-2k \ra \rightarrow \E \F \1_n
$$
is invertible. We describe its inverse $\zeta^{-1}$ diagrammatically as follows:
\begin{equation}
  \vcenter{\xy 0;/r.15pc/:
    (-4,-8)*{};(4,8)*{} **\crv{(-4,-1) & (4,1)}?(.95)*\dir{>} ?(.1)*\dir{>};
    (4,-8)*{};(-4,8)*{} **\crv{(4,-1) & (-4,1)}?(.05)*\dir{<} ?(.9)*\dir{<};
    (0,0)*{\chern{\zeta(n)}}; (-12,2)*{n};
  \endxy} \;\; \bigoplus_{k=0}^{n-1}
\left( \quad\;\;
 \vcenter{\xy 0;/r.15pc/:
     (-4,-2)*{}="t1";  (4,-2)*{}="t2";
     "t2";"t1" **\crv{(4,5) & (-4,5)};  ?(.9)*\dir{<} ?(.05)*\dir{<};
      (0,6)*{\chern{\zeta(n-1-k)}}; (10,14)*{n};\endxy } \;\; \;\;\right)
 : \E \F \1_{n} \rightarrow\F \E \1_{n} \bigoplus_{k=0}^{n-1} \1_{n} \la n-1-2k \ra.
\end{equation}

Condition \ref{co:EF} of Definition~\ref{defU_cat} only requires the {\it{existence}} of isomorphisms between the two 1-morphisms on either side. However, the space of 2-morphisms between a pair of 1-morphisms in $\cal{K}$ could contain maps that cannot be expressed using 2-morphisms from the strong 2-representation of $\mf{sl}_2$, i.e. using dots, crossings, caps and cups. In the next proposition we show that this is not the case for the 2-morphisms giving the isomorphism $\zeta^{-1}$.

\begin{prop} \label{prop_form-of-inv}
The isomorphism $\zeta^{-1}$ has the form
\begin{equation} \label{eq_phi-inverses-U}
  \vcenter{\xy 0;/r.15pc/:
    (-4,-8)*{};(4,8)*{} **\crv{(-4,-1) & (4,1)}?(.95)*\dir{>} ?(.1)*\dir{>};
    (4,-8)*{};(-4,8)*{} **\crv{(4,-1) & (-4,1)}?(.05)*\dir{<} ?(.9)*\dir{<};
    (0,0)*{\chern{\zeta(n)}}; (-12,2)*{n};
  \endxy} \;\;=\;\; \beta_n
  \;
  \vcenter{\xy 0;/r.15pc/:
    (-4,-8)*{};(4,8)*{} **\crv{(-4,-1) & (4,1)}?(.95)*\dir{>} ?(.1)*\dir{>};
    (4,-8)*{};(-4,8)*{} **\crv{(4,-1) & (-4,1)}?(.05)*\dir{<} ?(.9)*\dir{<};
    (12,2)*{n};
  \endxy\;\;}
\qquad  \qquad
\vcenter{\xy 0;/r.15pc/:
     (-4,-2)*{}="t1";  (4,-2)*{}="t2";
     "t2";"t1" **\crv{(4,5) & (-4,5)};  ?(.9)*\dir{<} ?(.05)*\dir{<};
     (0,6)*{\chern{\zeta(\ell)}};
      (12,10)*{n};\endxy}
\;\; = \;\;
\sum_{|\lambda|+j=\ell} \;\; \alpha_{\lambda}^{\ell}(n)e_{\lambda,n}\;
\vcenter{\xy 0;/r.15pc/:
     (-4,-2)*{}="t1";  (4,-2)*{}="t2";
     "t2";"t1" **\crv{(4,5) & (-4,5)};  ?(.9)*\dir{<} ?(.05)*\dir{<}
     ?(.75)*\dir{}+(0,0)*{\bullet}+(-1,3)*{\scs j};
    (10,10)*{n};\endxy}
\end{equation}
for some coefficients $\beta_n \in \Bbbk^{\times}$ and $\alpha_{\lambda}^{\ell}(n) \in \Bbbk$.
\end{prop}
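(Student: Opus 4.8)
The plan is to pin down $\zeta^{-1}$ one matrix entry at a time, using the domain/codomain splitting $\zeta^{-1}\colon \E\F\1_n \to \F\E\1_n \oplus \bigoplus_{k=0}^{n-1}\1_n\la n-1-2k\ra$ coming from Corollary~\ref{cor:1}. There are two kinds of entries: the single \emph{crossing} entry $\E\F\1_n \to \F\E\1_n$, and the \emph{cap} entries $\E\F\1_n \to \1_n\la n-1-2k\ra$ for $0\le k\le n-1$. For each entry I would identify the ambient space of $2$-morphisms and show it is spanned by the diagrams named in \eqref{eq_phi-inverses-U}.

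The crossing entry is the easy one. By Lemma~\ref{lem:homs} the space $\Hom(\E\F\1_n,\F\E\1_n)$ is one-dimensional, spanned by the sideways crossing fixed at the end of Section~\ref{sec:proofcycbiadjoint}; hence the entry equals $\beta_n$ times that crossing for a unique $\beta_n\in\k$. As the crossing is the only entry of $\zeta^{-1}$ whose image can hit the (indecomposable) summand $\F\E\1_n$, surjectivity of the isomorphism $\zeta^{-1}$ forces $\beta_n\neq 0$, so $\beta_n\in\Bbbk^\times$.

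For a cap entry, set $\ell=n-1-k$, so $\zeta(\ell)$ lies in $\Hom(\E\F\1_n,\1_n\la n-1-2k\ra)$. Sliding the outer $\E$ across by the biadjointness of Proposition~\ref{prop:lradj} identifies this with $\Hom^{2\ell}(\F\1_n,\F\1_n)$, and since $2\ell\le 2(n-1)<2n$ this degree lies in the range where Lemma~\ref{lem:main} applies. The $\F$-version of that lemma writes every such $2$-morphism as $\sum_j(\text{$j$-dotted downward strand})\cdot g_j$ with $g_j\in\End^{2(\ell-j)}(\1_n)$ and $j\le\ell$. Transporting back through the same adjunction turns each $j$-dotted strand into a $j$-dotted cap and leaves $g_j$ as a floating endomorphism of $\1_n$, so that $\zeta(\ell)=\sum_j(\text{$j$-dotted cap})\cdot g_j$.

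What remains, and what I expect to be the crux, is to recognize each coefficient $g_j\in\End^{2(\ell-j)}(\1_n)$ as a $\k$-combination of the bubble monomials $e_{\lambda,n}$ with $|\lambda|=\ell-j$; granting this and reading off the scalars as the $\alpha_\lambda^\ell(n)$ yields \eqref{eq_phi-inverses-U}, the degree count $\deg g_j=2(\ell-j)$ forcing $|\lambda|+j=\ell$. The difficulty is that in an abstract strong $2$-representation one does not know in advance that $\End(\1_n)$ is exhausted by bubbles. I would overcome this in the bounded degree range $\ell-j\le n-1$ by feeding the relation $\zeta^{-1}\circ\zeta=\id$ back in: composing a cap entry of $\zeta^{-1}$ with a dotted cup entry of $\zeta$ closes the $\E$ and $\F$ strands into a dotted bubble $\mathcal{B}_{j+k}$, so the $\1_n$-block of $\zeta^{-1}\circ\zeta=\id$ becomes a system of the form $\sum_j \mathcal{B}_{j+k}\,g_j=0$ or $\id_{\1_n}$ in $\End(\1_n)$. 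This is exactly an inverse generating-function relation of infinite-Grassmannian type \eqref{eq_infinite_Grass}, and inverting it through the elementary/complete symmetric function identity \eqref{eq_eh_rel} together with the identification \eqref{eq_symm_iso} exhibits the $g_j$ as the required combinations of the $e_{\lambda,n}$. (Equivalently, one shows directly that $\{e_{\lambda,n}:|\lambda|=p\}$ spans $\End^{2p}(\1_n)$ for $p\le n-1$, bounding the dimension by the same adjunction-plus-Lemma~\ref{lem:main} computation and checking the relevant bubbles are nonzero.)
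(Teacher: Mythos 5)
Your proposal is correct and follows essentially the same route as the paper's own proof: the crossing entry is pinned down by Lemma \ref{lem:homs}, the cap entries are expanded via the adjoint form of Lemma \ref{lem:main}, and the floating coefficients are recognized as bubble polynomials by feeding $\zeta^{-1}\zeta = \Id$ into the $\1_n$-blocks and solving the resulting triangular system of bubble equations (the paper's downward induction on $\ell'$, which only needs invertibility of the degree-zero bubble from Corollary \ref{cor:degz-bubbles}). The only shaky point is your parenthetical alternative---showing outright that the $e_{\lambda,n}$ span $\End^{2p}(\1_n)$---which the axioms of a strong 2-representation do not support (they constrain $\End(\1_n)$ only in non-positive degrees), but your main argument never relies on it.
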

\begin{proof}
The first claim in the Proposition follows immediately from Lemma~\ref{lem:homs}.  For the second claim  take adjoints in Lemma~\ref{lem:main} equation \eqref{eq:main1} so that
\begin{equation} \label{eq_phi-inverses}
\vcenter{\xy 0;/r.15pc/:
(-4,-2)*{}="t1";  (4,-2)*{}="t2";
     "t2";"t1" **\crv{(4,5) & (-4,5)};  ?(.9)*\dir{<} ?(.05)*\dir{<};
      (0,6)*{\chern{\zeta(\ell)}}; (12,10)*{n};\endxy}
\;\; = \;\;
\sum_{i=0}^{\ell} \;\; \;
\vcenter{\xy 0;/r.15pc/:
     (-4,-2)*{}="t1";  (4,-2)*{}="t2";
     "t2";"t1" **\crv{(4,5) & (-4,5)};  ?(.9)*\dir{<} ?(.05)*\dir{<}
     ?(.75)*\dir{}+(0,0)*{\bullet}+(-5,1)*{\scs \ell-i};
      (0,10)*{\chern{f_i(\ell)}}; (12,10)*{n};\endxy}
\;\; :\E\F \1_n \rightarrow \1_n \la 2 \ell+1-n \ra
\end{equation}
for some 2-endomorphisms $f_i(\ell) \in \End_{\Ucat}(\1_{n},\1_{n}\la 2i \ra)$.

The component of $\zeta^{-1} \zeta$ mapping the summand $\1_{n}\la 2\ell'+1-n\ra$ to the summand $\1_{n}\la 2\ell+1-n\ra$ is given by the composite
\begin{equation}
\xy
 (-40,0)*+{\1_n\la 2\ell'+1-n\ra}="1";
 (0,0)*+{\E\F\1_n}="2";
 (40,0)*+{\1_n\la 2\ell+1-n \ra .}="3";
 {\ar^{\vcenter{\xy 0;/r.15pc/:
     (4,2)*{}="t1";  (-4,2)*{}="t2";
     "t2";"t1" **\crv{(-4,-5) & (4,-5)};  ?(.9)*\dir{<} ?(.05)*\dir{<}
     ?(.3)*\dir{}+(0,0)*{\bullet}+(-5,-3)*{\scs \quad n-1-\ell'};
 (10,-8)*{n};\endxy}} "1";"2"};
 {\ar^-{\sum_{i} \;
\vcenter{\xy 0;/r.15pc/:
     (-4,-2)*{}="t1";  (4,-2)*{}="t2";
     "t2";"t1" **\crv{(4,5) & (-4,5)};  ?(.9)*\dir{<} ?(.05)*\dir{<}
     ?(.75)*\dir{}+(0,0)*{\bullet}+(-5,1)*{\scs \ell-i};
      (0,10)*{\chern{f_i(\ell)}}; (12,10)*{n};\endxy}} "2";"3"};
\endxy
\end{equation}
The condition $\zeta^{-1} \zeta = \Id$ implies that this composite must satisfy the equation
\begin{equation}
 \sum_{i=0}^{\ell}
\vcenter{\xy 0;/r.15pc/:
     (2,-6)*{\cbub{n-1+\ell-\ell'-i}{}};
      (0,10)*{\chern{f_{i}(\ell)}};
      (12,2)*{n};\endxy}
\;\; = \;\; \delta_{\ell,\ell'}
\end{equation}
for $0 \leq \ell', \ell \leq n-1$.

By varying $\ell'$ for fixed $\ell$, it is possible to rewrite each of the 2-endomorphisms $f_i(\ell)$ as products of dotted bubbles.  For example, by setting $\ell'=\ell$ it follows that the degree zero 2-endomorphism $f_{0}(\ell)$ is multiplication by the scalar $1$.  Continuing by induction, decreasing $\ell'$ shows that all the $f_i(\ell)$ can be rewritten as a linear combination
\begin{equation}
\vcenter{\xy 0;/r.15pc/:
      (0,0)*{\chern{f_{i}(\ell)}};
      (12,6)*{n};\endxy} \;\; = \;\;
 \sum_{\lambda: |\lambda|=i}\;\; \alpha_{\lambda}^{\ell}(n) e_{\lambda,n}
\end{equation}
of products $e_{\lambda,n}$ of dotted bubbles for some  $\alpha_{\lambda}^{\ell}(n) \in \Bbbk$, completing the proof.
\end{proof}

For notational simplicity we write $\alpha_{\lambda}^{\ell}(n)$ for all $\ell$ and $\lambda$ and $n$, but set $\alpha_{\lambda}^{\ell}(n)=0$ unless $|\lambda|\leq \ell \leq n-1$.

% -------------------------------------------------------------------------------
%
\subsection{Relations resulting from the ${\mf{sl}}_2$ commutator relation}
%
% -------------------------------------------------------------------------------

The plan in this section is to show that $\alpha_\lambda^{(\ell)}(n)$ is equal to $\alpha_\lambda$ (as defined in equation \ref{eq_complete_alpha}) and moreover that $\beta_n = -r_{i}^2$. Observe that $\zeta^{-1}$ in Proposition~\ref{prop_form-of-inv} is the inverse of $\zeta$  if and only if the following relations hold in $\cal{K}$:

\begin{center}
\begin{tabular}{|l|c|}
\hline \multicolumn{2}{|c|}{\bf Relations for $n \geq 0$} \\ \hline & \\
{\bf (A1)} &
\begin{tabular}{c}
  % after \\: \hline or \cline{col1-col2} \cline{col3-col4} ...
  $\textcolor[rgb]{0.00,0.00,1.00}{
  \1_{n}\la 2\ell'+1-n\ra \to \1_{n}\la 2\ell+1-n\ra}$ \\
  $   \delta_{\ell,\ell'} = \xsum{\lambda: |\lambda| \leq \ell}{}\; \alpha_{\lambda}^{\ell}(n)\; e_{\lambda,n}
\xy 0;/r.16pc/:
 (0,0)*{\cbub{n-1+\ell-\ell'-|\lambda|}};
  (4,8)*{n};
 \endxy
$ \\
\end{tabular}
   \\  & \\
   \hline
   & \\
{\bf (A2)} &
\begin{tabular}{c}
  % after \\: \hline or \cline{col1-col2} \cline{col3-col4} ...
 $\textcolor[rgb]{0.00,0.00,1.00}{\F\E\1_{n} \to \F\E\1_{n}}$ \\
 \\
 $ \vcenter{\xy 0;/r.16pc/:
  (-8,0)*{};(-6,-8)*{\scs };(6,-8)*{\scs };
  (8,0)*{};
  (-4,10)*{}="t1";
  (4,10)*{}="t2";
  (-4,-10)*{}="b1";
  (4,-10)*{}="b2";
  "t1";"b1" **\dir{-} ?(.5)*\dir{>};
  "t2";"b2" **\dir{-} ?(.5)*\dir{<};
  (10,2)*{n};
  (-10,2)*{n};
  \endxy}
\;\; = \;\;
 \beta_{n}\;\;
   \vcenter{\xy 0;/r.16pc/:
    (-4,-6)*{};(4,4)*{} **\crv{(-4,-1) & (4,1)}?(1)*\dir{<};?(0)*\dir{<};
    (4,-6)*{};(-4,4)*{} **\crv{(4,-1) & (-4,1)}?(1)*\dir{>};
    (-4,4)*{};(4,14)*{} **\crv{(-4,7) & (4,9)}?(1)*\dir{>};
    (4,4)*{};(-4,14)*{} **\crv{(4,7) & (-4,9)};
  (8,8)*{n};(-6,-3)*{\scs };  (6,-3)*{\scs };
 \endxy}$  \\
\end{tabular}
   \\  & \\
   \hline
   & \\
{\bf (A3)} &
\begin{tabular}{c}
  % after \\: \hline or \cline{col1-col2} \cline{col3-col4} ...
  $\textcolor[rgb]{0.00,0.00,1.00}{\1_{n}\la 2\ell+1-n\ra \to \F\E\1_{n}}$\\
  \\
 $ \beta_{n} \;\; \vcenter{\xy 0;/r.16pc/:
    (-4,6)*{};(4,-4)*{} **\crv{(-4,1) & (4,-1)}?(1)*\dir{>};
    (4,6)*{};(-4,-4)*{} **\crv{(4,1) & (-4,-1)}?(0)*\dir{<};
    (-4,-4)*{};(4,-4)*{} **\crv{(-4,-8) & (4,-8)}?(.5)*\dir{}+(0,0)*{\bullet}+(-1,-3)*{\scs n-1-\ell};
  (12,-2)*{n};(-6,-3)*{\scs };  (6,-3)*{\scs };
 \endxy} \;\; = \;\; 0$  \\
\end{tabular}
     \\  & \\
   \hline
   & \\
{\bf (A4)} &
\begin{tabular}{c}
  % after \\: \hline or \cline{col1-col2} \cline{col3-col4} ...
   $\textcolor[rgb]{0.00,0.00,1.00}{\F\E\1_{n} \to \1_{n}\la 2\ell+1-n\ra}$\\
   \\
  $
    \sum_{\lambda} \;\alpha_{\lambda}^{\ell}(n) \;e_{\lambda,n} \vcenter{\xy 0;/r.16pc/:
    (-4,-6)*{};(4,4)*{} **\crv{(-4,-1) & (4,1)}?(1)*\dir{};?(0)*\dir{<};
    (4,-6)*{};(-4,4)*{} **\crv{(4,-1) & (-4,1)}?(1)*\dir{>};
    (-4,4)*{};(4,4)*{} **\crv{(-4,8) & (4,8)}?(.5)*\dir{}+(0,0)*{\bullet}+(-1,3)*{\scs n-1-\ell};
  (12,2)*{n};(-6,-3)*{\scs };  (6,-3)*{\scs };
 \endxy} \;\; = \;\; 0$ \\
\end{tabular}
    \\  & \\
   \hline
   & \\
{\bf (A5)} &
\begin{tabular}{c}
  % after \\: \hline or \cline{col1-col2} \cline{col3-col4} ...
  $\textcolor[rgb]{0.00,0.00,1.00}{\E\F\1_{n} \to \E\F\1_{n}}$ \\
  \\
  $ \vcenter{\xy 0;/r.16pc/:
  (-8,0)*{};
  (8,0)*{};
  (-4,10)*{}="t1";
  (4,10)*{}="t2";
  (-4,-10)*{}="b1";
  (4,-10)*{}="b2";(-6,-8)*{\scs };(6,-8)*{\scs };
  "t1";"b1" **\dir{-} ?(.5)*\dir{<};
  "t2";"b2" **\dir{-} ?(.5)*\dir{>};
  (10,2)*{n};
  (-10,2)*{n};
  \endxy}
\;\; = \;\;
 \beta_{n}\;\;
 \vcenter{   \xy 0;/r.16pc/:
    (-4,-6)*{};(4,4)*{} **\crv{(-4,-1) & (4,1)}?(1)*\dir{>};
    (4,-6)*{};(-4,4)*{} **\crv{(4,-1) & (-4,1)}?(1)*\dir{<};?(0)*\dir{<};
    (-4,4)*{};(4,14)*{} **\crv{(-4,7) & (4,9)};
    (4,4)*{};(-4,14)*{} **\crv{(4,7) & (-4,9)}?(1)*\dir{>};
  (8,8)*{n};(-6,-3)*{\scs };
     (6.5,-3)*{\scs };
 \endxy}
  \;\; + \;\;
   \xsum{ \xy  (0,8)*{}; (0,3)*{\scs f_1+f_2+|\lambda|}; (0,0)*{\scs =n-1};\endxy}{}\;
    \alpha_{\lambda}^{|\lambda|+f_2}(n) e_{\lambda,n}
    \vcenter{\xy 0;/r.16pc/:
    (-10,10)*{n};
    (-8,0)*{};
  (8,0)*{};
  (-4,-10)*{}="b1";
  (4,-10)*{}="b2";
  "b2";"b1" **\crv{(5,-3) & (-5,-3)}; ?(.05)*\dir{<} ?(.93)*\dir{<}
  ?(.8)*\dir{}+(0,-.1)*{\bullet}+(-3,2)*{\scs f_2};
  (-4,10)*{}="t1";
  (4,10)*{}="t2";
  "t2";"t1" **\crv{(5,3) & (-5,3)}; ?(.15)*\dir{>} ?(.95)*\dir{>}
  ?(.4)*\dir{}+(0,-.2)*{\bullet}+(3,-2)*{\scs \; f_1};
  \endxy}$ \\
\end{tabular}
   \\ & \\
  \hline
\end{tabular}
\end{center}

All the curls in the summation (A4) are zero because of (A3) (note that you can rotate the curl in (A3) by using adjunction). So relation (A4) follows immediately from (A3) and does not impose any conditions on the coefficients $\alpha_\l^{\ell}(n)$.

\begin{prop} \label{prop:alpha}
Setting the coefficients $\alpha_{\lambda}^{\ell}(n)$ from Proposition~\ref{prop_form-of-inv} equal to the coefficients $\alpha_{\lambda}$ from equation~\eqref{eq_complete_alpha} gives a unique solution to the equations in (A1).
\end{prop}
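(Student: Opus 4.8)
The plan is to verify the five relations (A1)--(A5) after substituting $\alpha_\lambda^\ell(n) = \alpha_\lambda$, and to observe that only (A1) actually constrains the coefficients $\alpha_\lambda^\ell(n)$, the remaining relations being either automatic or formal. The cleanest route uses that $\zeta$ is already known to be invertible: by Corollary~\ref{cor:1} the map $\zeta$ is an isomorphism, and by Proposition~\ref{prop_form-of-inv} its genuine inverse has exactly the stated form with some coefficients $\beta_n$ and $\alpha_\lambda^\ell(n)$. Consequently all of (A1)--(A5), being precisely the matrix entries of $\zeta^{-1}\zeta = \Id$ and $\zeta\,\zeta^{-1}=\Id$, automatically hold for these genuine coefficients. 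Thus it suffices to show that the genuine coefficients may be taken to be $\alpha_\lambda$, and this I would read off from (A1) alone.

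The crux is therefore (A1). For fixed $\ell$, reading (A1) as $\ell'$ decreases from $\ell$ produces a triangular system: the equation at $\ell'=\ell$ forces the degree-zero combination $f_0(\ell) = \Id_{\1_n}$, and each successive $\ell'$ determines the next bubble-combination $f_i(\ell) = \sum_{|\lambda|=i}\alpha_\lambda^\ell(n)\,e_{\lambda,n}$ as a unique element of $\End(\1_n)$. I would then translate this through the graded ring homomorphism $\phi^n \maps {\rm Sym}\to\End(\1_n)$ of Section~\ref{sec:symm}, under which $\sum_{|\lambda|=i}\alpha_\lambda e_{\lambda,n} = \phi^n(h_i)$ by \eqref{eq_complete_alpha}, while the remaining dotted bubble of degree $2(\ell-\ell'-|\lambda|)$ is identified, via the fake-bubble formula \eqref{eq_fake_ngeqz}, with the image under $\phi^n$ of the appropriate symmetric function. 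Under this dictionary (A1) becomes nothing other than the homogeneous component of the $e$--$h$ duality \eqref{eq_eh_rel} (equivalently the infinite Grassmannian relation \eqref{eq_homo_inf_grass}), with the Kronecker symbol $\delta_{\ell,\ell'}$ matching the $\delta_{m,0}$ of \eqref{eq_eh_rel} for $m=\ell-\ell'$. Since $\phi^n$ carries \eqref{eq_eh_rel} to the required identity in $\End(\1_n)$ and the elements $f_i(\ell)$ are uniquely determined, we conclude $f_i(\ell) = \phi^n(h_i)$, i.e. $\alpha_\lambda^\ell(n)=\alpha_\lambda$ is a solution.

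If one prefers a direct verification of the remaining relations rather than invoking invertibility, the bookkeeping is still light. Relations (A3) and (A4) hold by Hom-space vanishing: moving the $\E$ across by biadjointness (Proposition~\ref{prop:lradj}) gives
\[
\Hom(\1_n, \F\E\1_n \la n-1-2\ell' \ra) \cong \Hom(\E\1_n, \E\1_n \la -2\ell'-2 \ra) \quad\text{and}\quad \Hom(\F\E\1_n, \1_n \la 2\ell+1-n \ra) \cong \Hom(\E\1_n, \E\1_n \la 2(\ell-n) \ra),
\]
both of which vanish for $0\le \ell',\ell\le n-1$ by Lemma~\ref{lem:1}, so the composites in (A3) and (A4) lie in zero spaces regardless of the coefficients. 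For (A2) the composite of the two sideways crossings is a degree-zero endomorphism of $\F\E\1_n$ which is a nonzero multiple of the identity because each crossing restricts to an isomorphism onto the $\F\E$-summand (Corollary~\ref{cor:1} and Lemma~\ref{lem:homs}); one takes $\beta_n$ to be the reciprocal scalar. Finally (A5) comes for free: (A1)--(A4) say that the candidate $\zeta^{-1}$ assembled from $\alpha_\lambda$ and this $\beta_n$ is a left inverse of $\zeta$, and since $\zeta$ is invertible a left inverse is two-sided, so $\zeta\,\zeta^{-1}=\Id_{\E\F\1_n}$, which is (A5). The case $n\le 0$ is symmetric. I expect the main obstacle to be the symmetric-function step for (A1): keeping careful track of which bubbles are genuine and which are fake, and of the signs entering through \eqref{eq_fake_ngeqz} and \eqref{eq_eh_rel}, so that (A1) lines up with the $e$--$h$ duality exactly; once this and Corollary~\ref{cor:1} are in hand, the rest is essentially formal.
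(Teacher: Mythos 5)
Your proposal is correct and takes essentially the same approach as the paper: everything reduces to (A1), which under $\phi^n$ becomes the $e$--$h$ duality \eqref{eq_eh_rel}, and the recursive uniqueness of the bubble combinations together with invertibility of $\zeta$ (Corollary~\ref{cor:1}) then yields (A5) for free, while (A2)--(A4) impose no conditions on the $\alpha_\lambda^\ell(n)$. The only differences are cosmetic---the paper deduces (A4) from (A3) by rotating the curl rather than by your Hom-vanishing computation, and it leaves the ``left inverse of an invertible map is two-sided'' step implicit---and the sign bookkeeping you flag as the main hazard is indeed delicate in the paper's own conventions as well (compare \eqref{eq_complete_alpha} with the assertion that the degree-$r$ fake bubble corresponds to $\phi^n((-1)^r h_r)$).
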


\begin{proof}
 First we note that equations (A1) can be written in terms of
$$h_s^\ell := \begin{array}{ccc}
(-1)^s \xsum{\lambda : |\lambda|=s}{}\alpha_{\lambda}^{\ell}(n)e_{\lambda,n} & \quad & \text{for $n \in \Z$},
\end{array}$$
where $0 \le s \le \ell$. More specifically, (A1) can be written as
\begin{equation}\label{eq:A1}
\delta_{b,0} = \sum_{\lambda: |\lambda| \leq b} \alpha_{\lambda}^{\ell}(n)e_{\lambda,n} e_{b-|\lambda|,n} \Longleftrightarrow \delta_{b,0} = \sum_{s \le b} (-1)^s h_s^\ell e_{b-s,n}
\end{equation}
where $b = \ell-\ell'$ while (A5) can be written as
\begin{equation}\label{eq:A5}
{\xy 0;/r.16pc/:
  (-8,0)*{};
  (8,0)*{};
  (-4,10)*{}="t1";
  (4,10)*{}="t2";
  (-4,-10)*{}="b1";
  (4,-10)*{}="b2";(-6,-8)*{\scs };(6,-8)*{\scs };
  "t1";"b1" **\dir{-} ?(.5)*\dir{<};
  "t2";"b2" **\dir{-} ?(.5)*\dir{>};
  (10,2)*{n};
  (-10,2)*{n};
  \endxy}
\;\; = \;\;
 \beta_{n}\;\;
 \vcenter{   \xy 0;/r.16pc/:
    (-4,-6)*{};(4,4)*{} **\crv{(-4,-1) & (4,1)}?(1)*\dir{>};
    (4,-6)*{};(-4,4)*{} **\crv{(4,-1) & (-4,1)}?(1)*\dir{<};?(0)*\dir{<};
    (-4,4)*{};(4,14)*{} **\crv{(-4,7) & (4,9)};
    (4,4)*{};(-4,14)*{} **\crv{(4,7) & (-4,9)}?(1)*\dir{>};
  (8,8)*{n};(-6,-3)*{\scs };
     (6.5,-3)*{\scs };
 \endxy}
  \;\; + \;\;
   \xsum{ \xy  (0,8)*{}; (0,3)*{\scs f_1+f_2 < n};\endxy}{}\;
    (-1)^{n-1-f_1-f_2} h_{n-1-f_1-f_2}^{n-1-f_1}
    \vcenter{\xy 0;/r.16pc/:
    (-10,10)*{n};
    (-8,0)*{};
  (8,0)*{};
  (-4,-10)*{}="b1";
  (4,-10)*{}="b2";
  "b2";"b1" **\crv{(5,-3) & (-5,-3)}; ?(.05)*\dir{<} ?(.93)*\dir{<}
  ?(.8)*\dir{}+(0,-.1)*{\bullet}+(-3,2)*{\scs f_2};
  (-4,10)*{}="t1";
  (4,10)*{}="t2";
  "t2";"t1" **\crv{(5,3) & (-5,3)}; ?(.15)*\dir{>} ?(.95)*\dir{>}
  ?(.4)*\dir{}+(0,-.2)*{\bullet}+(3,-2)*{\scs \; f_1};
  \endxy}
\end{equation}

Now, from (\ref{eq:A1}) we can solve uniquely for $h_b^\ell$ in terms of $h_s^\ell$ for $s < b$. So we just need to choose $\alpha_\l^{\ell}(n)$ so that (\ref{eq:A1}) is satisfied. However we can rewrite (\ref{eq:A1}) as
\begin{equation*}
  \sum_{s+r=b}(-1)^s h^{\ell}_s e_r = \delta_{b,0}
\end{equation*}
where $e_r = e_{r,n}(n)$. This means that choosing $\alpha_\l^{\ell}(n) = \alpha_\l$ gives the unique solution to (A1).
\end{proof}

\begin{cor}
Taking the coefficients $\alpha_\l^{\ell}(n)$ equal to $\alpha_\l$ for all $n$ gives a solution of (A2)--(A5).
 \end{cor}

\begin{proof}
The only equation left to verify is (A5).  Since this equation can also be expressed in terms of the $h_s^{\ell}$ in the proof of Proposition~\ref{prop:alpha}, the same argument shows that these coefficients are uniquely determined.
\end{proof}

Since, by the above proposition, the coefficients $\alpha_{\lambda}^{\ell}(n)$ are independent of $\ell$ and $n$ we will just write $\alpha_{\lambda} = \alpha_{\lambda}^{|\lambda|}(n) = \alpha_{\lambda}^{\ell}(n)$ for all $n$ and $0 \leq \ell \leq n-1$.

It is clear that relations (A1)--(A5) allow you to simplify clockwise oriented curls. Less obvious is that they also allow you to simplify counter-clockwise curls. To see this add a cap with $n$ dots to the bottom of equation (A5). Then simplifying using the NilHecke relations and equation (A3). Finally, use equation (A1) noting that $\alpha_0^0(n)=1$. This way you arrive at a formula for simplifying any type of curl.

We now summarize this fact together with relations (A1)--(A5) in a more compact form utilizing fake bubbles in $\cal{K}$, defined as in \eqref{eq_fake_nleqz} and \eqref{eq_fake_ngeqz}. We write down just the case $n \ge 0$ as the case $n < 0$ is similar.

\begin{itemize}
  \item The biadjointness condition, the cyclic condition , and the NilHecke algebra axioms hold.

 \item  Negative degree bubbles are still zero, but a dotted bubble of degree zero is multiplication by 1 for $n \neq -1$, and equal to multiplication by $c_{-1}$ when $n=-1$.

 \item If $n > 0$ the following relations hold: \newline
\begin{equation}\label{eq:first}
  \xy 0;/r.17pc/:
  (14,8)*{n};
  (-3,-10)*{};(3,5)*{} **\crv{(-3,-2) & (2,1)}?(1)*\dir{>};?(.15)*\dir{>};
    (3,-5)*{};(-3,10)*{} **\crv{(2,-1) & (-3,2)}?(.85)*\dir{>} ?(.1)*\dir{>};
  (3,5)*{}="t1";  (9,5)*{}="t2";
  (3,-5)*{}="t1'";  (9,-5)*{}="t2'";
   "t1";"t2" **\crv{(4,8) & (9, 8)};
   "t1'";"t2'" **\crv{(4,-8) & (9, -8)};
   "t2'";"t2" **\crv{(10,0)} ;
 \endxy\;\; = \;\; 0
\qquad \qquad
  \xy 0;/r.17pc/:
  (-14,8)*{n};
  (3,-10)*{};(-3,5)*{} **\crv{(3,-2) & (-2,1)}?(1)*\dir{>};?(.15)*\dir{>};
    (-3,-5)*{};(3,10)*{} **\crv{(-2,-1) & (3,2)}?(.85)*\dir{>} ?(.1)*\dir{>};
  (-3,5)*{}="t1";  (-9,5)*{}="t2";
  (-3,-5)*{}="t1'";  (-9,-5)*{}="t2'";
   "t1";"t2" **\crv{(-4,8) & (-9, 8)};
   "t1'";"t2'" **\crv{(-4,-8) & (-9, -8)};
   "t2'";"t2" **\crv{(-10,0)} ;
 \endxy \;\; = \;\; -\frac{1}{r_{i}\beta_n}
 \sum_{g_1+g_2=n}^{}
  \xy 0;/r.17pc/:
  (-12,8)*{n};
  (0,0)*{\bbe{}};(2,-8)*{\scs};
  (-12,-2)*{\ccbub{-n-1+g_2}};
  (0,6)*{\bullet}+(3,-1)*{\scs g_1};
 \endxy
\end{equation}
\begin{equation}
 \vcenter{\xy 0;/r.17pc/:
  (-8,0)*{};
  (8,0)*{};
  (-4,10)*{}="t1";
  (4,10)*{}="t2";
  (-4,-10)*{}="b1";
  (4,-10)*{}="b2";(-6,-8)*{\scs };(6,-8)*{\scs };
  "t1";"b1" **\dir{-} ?(.5)*\dir{<};
  "t2";"b2" **\dir{-} ?(.5)*\dir{>};
  (10,2)*{n};
  \endxy}
\;\; = \;\;
 \beta_n\;\;
 \vcenter{   \xy 0;/r.17pc/:
    (-4,-4)*{};(4,4)*{} **\crv{(-4,-1) & (4,1)}?(1)*\dir{>};
    (4,-4)*{};(-4,4)*{} **\crv{(4,-1) & (-4,1)}?(1)*\dir{<};?(0)*\dir{<};
    (-4,4)*{};(4,12)*{} **\crv{(-4,7) & (4,9)};
    (4,4)*{};(-4,12)*{} **\crv{(4,7) & (-4,9)}?(1)*\dir{>};
  (8,8)*{n};(-6,-3)*{\scs };
     (6.5,-3)*{\scs };
 \endxy}
  \;\; +\;
   \sum_{ \xy  (0,3)*{\scs f_1+f_2+f_3}; (0,0)*{\scs =n-1};\endxy}
    \vcenter{\xy 0;/r.17pc/:
    (-10,10)*{n};
    (-8,0)*{};
  (8,0)*{};
  (-4,-15)*{}="b1";
  (4,-15)*{}="b2";
  "b2";"b1" **\crv{(5,-8) & (-5,-8)}; ?(.05)*\dir{<} ?(.93)*\dir{<}
  ?(.8)*\dir{}+(0,-.1)*{\bullet}+(-3,2)*{\scs f_3};
  (-4,15)*{}="t1";
  (4,15)*{}="t2";
  "t2";"t1" **\crv{(5,8) & (-5,8)}; ?(.15)*\dir{>} ?(.95)*\dir{>}
  ?(.4)*\dir{}+(0,-.2)*{\bullet}+(3,-2)*{\scs \; f_1};
  (0,0)*{\ccbub{\scs \quad -n-1+f_2}};
  \endxy}
   \qquad \quad
 \vcenter{\xy 0;/r.17pc/:
  (-8,0)*{};(-6,-8)*{\scs };(6,-8)*{\scs };
  (8,0)*{};
  (-4,10)*{}="t1";
  (4,10)*{}="t2";
  (-4,-10)*{}="b1";
  (4,-10)*{}="b2";
  "t1";"b1" **\dir{-} ?(.5)*\dir{>};
  "t2";"b2" **\dir{-} ?(.5)*\dir{<};
  (10,2)*{n};
  \endxy}
\;\; = \;\;
 \beta_n\;\;
   \vcenter{\xy 0;/r.17pc/:
    (-4,-4)*{};(4,4)*{} **\crv{(-4,-1) & (4,1)}?(1)*\dir{<};?(0)*\dir{<};
    (4,-4)*{};(-4,4)*{} **\crv{(4,-1) & (-4,1)}?(1)*\dir{>};
    (-4,4)*{};(4,12)*{} **\crv{(-4,7) & (4,9)}?(1)*\dir{>};
    (4,4)*{};(-4,12)*{} **\crv{(4,7) & (-4,9)};
  (8,8)*{n};(-6,-3)*{\scs };  (6,-3)*{\scs };
 \endxy} \label{eq_EFdecomp_ngz}
\end{equation}
where all bubbles appearing above are fake bubbles.

\item If $n=0$ then
\begin{equation} \label{eq_ident_decompT}
 \vcenter{\xy 0;/r.17pc/:
  (-8,0)*{};
  (8,0)*{};
  (-4,10)*{}="t1";
  (4,10)*{}="t2";
  (-4,-10)*{}="b1";
  (4,-10)*{}="b2";(-6,-8)*{\scs };(6,-8)*{\scs };
  "t1";"b1" **\dir{-} ?(.5)*\dir{<};
  "t2";"b2" **\dir{-} ?(.5)*\dir{>};
  (10,2)*{n};
  \endxy}
\;\; = \;\;
 \beta_0\;\;
 \vcenter{   \xy 0;/r.17pc/:
    (-4,-4)*{};(4,4)*{} **\crv{(-4,-1) & (4,1)}?(1)*\dir{>};
    (4,-4)*{};(-4,4)*{} **\crv{(4,-1) & (-4,1)}?(1)*\dir{<};?(0)*\dir{<};
    (-4,4)*{};(4,12)*{} **\crv{(-4,7) & (4,9)};
    (4,4)*{};(-4,12)*{} **\crv{(4,7) & (-4,9)}?(1)*\dir{>};
  (8,8)*{n};(-6,-3)*{\scs };
     (6.5,-3)*{\scs };
 \endxy}
   \qquad \quad
 \vcenter{\xy 0;/r.17pc/:
  (-8,0)*{};(-6,-8)*{\scs };(6,-8)*{\scs };
  (8,0)*{};
  (-4,10)*{}="t1";
  (4,10)*{}="t2";
  (-4,-10)*{}="b1";
  (4,-10)*{}="b2";
  "t1";"b1" **\dir{-} ?(.5)*\dir{>};
  "t2";"b2" **\dir{-} ?(.5)*\dir{<};
  (10,2)*{n};
  \endxy}
\;\; = \;\;
 \beta_0\;\;
   \vcenter{\xy 0;/r.17pc/:
    (-4,-4)*{};(4,4)*{} **\crv{(-4,-1) & (4,1)}?(1)*\dir{<};?(0)*\dir{<};
    (4,-4)*{};(-4,4)*{} **\crv{(4,-1) & (-4,1)}?(1)*\dir{>};
    (-4,4)*{};(4,12)*{} **\crv{(-4,7) & (4,9)}?(1)*\dir{>};
    (4,4)*{};(-4,12)*{} **\crv{(4,7) & (-4,9)};
  (8,8)*{n};(-6,-3)*{\scs };  (6,-3)*{\scs };
 \endxy}
\end{equation}

Also, we have that
\begin{equation}
  \deg\left(  \text{$  \xy 0;/r.17pc/:
  (14,8)*{n}; (-9,0)*{};(16,0)*{};
  (-3,-10)*{};(3,5)*{} **\crv{(-3,-2) & (2,1)}?(1)*\dir{>};?(.15)*\dir{>};
    (3,-5)*{};(-3,10)*{} **\crv{(2,-1) & (-3,2)}?(.85)*\dir{>} ?(.1)*\dir{>};
  (3,5)*{}="t1";  (9,5)*{}="t2";
  (3,-5)*{}="t1'";  (9,-5)*{}="t2'";
   "t1";"t2" **\crv{(4,8) & (9, 8)};
   "t1'";"t2'" **\crv{(4,-8) & (9, -8)};
   "t2'";"t2" **\crv{(10,0)} ;
 \endxy $} \right) = 0, \qquad \qquad
   \deg\left(\text{$   \xy 0;/r.17pc/:
  (-14,8)*{n}; (9,0)*{};(-16,0)*{};
  (3,-10)*{};(-3,5)*{} **\crv{(3,-2) & (-2,1)}?(1)*\dir{>};?(.15)*\dir{>};
    (-3,-5)*{};(3,10)*{} **\crv{(-2,-1) & (3,2)}?(.85)*\dir{>} ?(.1)*\dir{>};
  (-3,5)*{}="t1";  (-9,5)*{}="t2";
  (-3,-5)*{}="t1'";  (-9,-5)*{}="t2'";
   "t1";"t2" **\crv{(-4,8) & (-9, 8)};
   "t1'";"t2'" **\crv{(-4,-8) & (-9, -8)};
   "t2'";"t2" **\crv{(-10,0)} ;
 \endxy$}\right) = 0.
\end{equation}
Since the space of degree zero endomorphisms $\E \1_n $ is one-dimensional in $\cal{K}$, we must have that both of the above morphisms are multiples of the identity 2-morphism $\Id_{\E \1_n}$.  We keep track of these multiples as follows:
\begin{equation} \label{eq_def_czeropm}
    \text{$  \xy 0;/r.17pc/:
  (14,8)*{n};
  (-3,-10)*{};(3,5)*{} **\crv{(-3,-2) & (2,1)}?(1)*\dir{>};?(.15)*\dir{>};
    (3,-5)*{};(-3,10)*{} **\crv{(2,-1) & (-3,2)}?(.85)*\dir{>} ?(.1)*\dir{>};
  (3,5)*{}="t1";  (9,5)*{}="t2";
  (3,-5)*{}="t1'";  (9,-5)*{}="t2'";
   "t1";"t2" **\crv{(4,8) & (9, 8)};
   "t1'";"t2'" **\crv{(4,-8) & (9, -8)};
   "t2'";"t2" **\crv{(10,0)} ;
 \endxy $}  = - c_0^+\;
   \xy
  (5,4)*{n};
  (0,0)*{\bbe{}};(-2,-8)*{\scs };
 \endxy, \qquad \qquad
   \text{$   \xy 0;/r.17pc/:
  (-14,8)*{n};
  (3,-10)*{};(-3,5)*{} **\crv{(3,-2) & (-2,1)}?(1)*\dir{>};?(.15)*\dir{>};
    (-3,-5)*{};(3,10)*{} **\crv{(-2,-1) & (3,2)}?(.85)*\dir{>} ?(.1)*\dir{>};
  (-3,5)*{}="t1";  (-9,5)*{}="t2";
  (-3,-5)*{}="t1'";  (-9,-5)*{}="t2'";
   "t1";"t2" **\crv{(-4,8) & (-9, 8)};
   "t1'";"t2'" **\crv{(-4,-8) & (-9, -8)};
   "t2'";"t2" **\crv{(-10,0)} ;
 \endxy$} =   c_0^- \xy
  (-5,4)*{n};
  (0,0)*{\bbe{}};(-2,-8)*{\scs };
 \endxy
\end{equation}
\end{itemize}

\begin{lem} \label{lem_coeff} \hfill
\begin{itemize}
  \item The coefficient $c_{-1}$ from equation \eqref{eq_defcmone} satisfies $c_{-1}=1$.
  \item For all values of $n$ such that $\1_n$ is non-zero, we have $\beta_n=-r_{i}^{-2}$.
  \item The coefficients introduced in equation \eqref{eq_def_czeropm} satisfy $c_0^+=c_0^-=r_{i}$.
\end{itemize}
\end{lem}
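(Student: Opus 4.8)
The three scalars $c_{-1}$, $\beta_n$ and $c_0^{\pm}$ all occur in relations that we have already shown to hold in $\cal{K}$ --- equations \eqref{eq_defcmone}, \eqref{eq_EFdecomp_ngz}, \eqref{eq_ident_decompT} and \eqref{eq_def_czeropm} --- and the plan is to pin down their values by evaluating a handful of closed diagrams into scalars. The only external inputs are the normalization \eqref{eq_bub1} of the degree-zero bubble in region $+1$, the NilHecke relation \eqref{eq_nil_dotslide} (and its iterate \eqref{eq_ind_dotslide}, which is where the factors of $r_i$ are generated), biadjointness, the cyclicity established in Lemmas \ref{lem:A} and \ref{lem:B}, and the symmetric-function description of bubbles from Proposition \ref{prop:alpha}. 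I would treat the claims in the order $c_{-1}$, then $\beta_n$, then $c_0^{\pm}$.

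For $c_{-1}=1$ I would work in weights $\pm 1$, the only place where a degree-zero bubble is not already normalized to the identity. Capping and cupping the identity on $\E\1_{-1}$ (equivalently $\F\1_{1}$) produces a dotted curl which, simplified by \eqref{eq_nil_dotslide} and biadjointness exactly as in the $n=-1$ case of Lemma \ref{lem:A}, expresses $c_{-1}$ in terms of the region-$+1$ bubble; since the latter is $1$ by \eqref{eq_bub1}, this forces $c_{-1}=1$. This is the natural starting point because neither crossings of distinct strands nor the unknown scalars $\beta_n$ enter.

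For $\beta_n=-r_i^2$ I would begin from the decomposition \eqref{eq_EFdecomp_ngz}, which writes the identity on $\E\F\1_n$ as $\beta_n$ times the crossing composite plus curl-and-bubble corrections. Pre- and post-composing this relation with a cup and a cap collapses the left-hand side to a single bubble (whose degree-zero part is known) and the right-hand side to $\beta_n$ times a doubled curl together with products of bubbles; simplifying the doubled curl by the iterated dot-slide \eqref{eq_ind_dotslide} extracts the factor $r_i$ twice, and resolving the bubble products against the degree-zero normalizations leaves a scalar identity that forces $\beta_n=-r_i^2$. The same manipulation works verbatim for $n<0$, and the argument is uniform in $n$ precisely because each bubble simplification only ever invokes the weight-independent degree-zero normalization; the case of weight $0$ is handled by the paragraph below.

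Finally, for $c_0^+=c_0^-=r_i$ I would specialize to $n=0$, where \eqref{eq_ident_decompT} holds with no bubble corrections and the two curls of \eqref{eq_def_czeropm} are the multiples $-c_0^+$ and $c_0^-$ of $\Id_{\E\1_0}$. Post-composing \eqref{eq_ident_decompT} with a dot and a cap (respectively a cup), sliding the dot through the crossing via \eqref{eq_nil_dotslide}, and substituting the value $\beta_0=-r_i^2$ just obtained isolates $c_0^{\pm}$ and yields $c_0^{\pm}=r_i$, while the equality $c_0^+=c_0^-$ follows at once from the cyclic biadjointness of Lemmas \ref{lem:A} and \ref{lem:B}, which interchanges the two curls. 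I expect $\beta_n$ to be the main obstacle: it is the only step that simultaneously involves a crossing of identical strands, the iterated dot-slide, and the collapse of bubble products, so essentially all of the bookkeeping --- in particular the correct accumulation of powers of $r_i$ and the verification that the value is independent of $n$ --- is concentrated there.
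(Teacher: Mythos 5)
Your second and third steps track the paper's actual argument fairly closely (the paper's computation \eqref{eq_coeffreduction} closes the commutator relation \eqref{eq_EFdecomp_ngz} around a dotted bubble and extracts $r_i$ twice via \eqref{eq_ind_dotslide}, and its final step caps off \eqref{eq_ident_decompT} at $n=0$), but your first step contains a genuine gap, and it sits at the base of the whole chain. You claim $c_{-1}=1$ follows from dot-slides, biadjointness and \eqref{eq_bub1} alone, ``exactly as in the $n=-1$ case of Lemma \ref{lem:A},'' with neither sideways crossings nor the scalars $\beta_n$ entering. That cannot work. The bubble in \eqref{eq_bub1} is the composite of the degree-zero cup and cap of $\E\F$-type in region $+1$, while $c_{-1}$ in \eqref{eq_defcmone} is the composite of the cup and cap of $\F\E$-type in region $-1$; when a curl is resolved by \eqref{eq_nil_dotslide} the detached loop is literally the curl's own cup and cap joined up, so every curl computation of the Lemma \ref{lem:A} type stays inside one of these two families and never relates them. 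Moreover, under the rescaling freedom remaining after Section \ref{subsec:adjoint} (scale the four degree-zero caps and cups subject to the two zigzag identities), the product of the two degree-zero bubbles, i.e. $1\cdot c_{-1}$, is invariant; so $c_{-1}=1$ is not a normalization but a genuine relation, and it cannot be reached by relations (NilHecke, zigzag, cyclicity) that never couple the two families. The coupling must come from the commutator decomposition: the paper's proof evaluates a vanishing closed curl-with-bubble diagram in weight $\pm 1$ by the $n\le 0$ analogue of \eqref{eq_EFdecomp_ngz} --- precisely the relation you set out to avoid --- obtaining $0=\tfrac{1}{\beta_1}\bigl(c_{-1}-1\bigr)\Id$, where the unknown $\beta_1$ is harmless because it is invertible.

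There is also a circularity in your handling of weight zero. In step two you defer $n=0$ ``to the paragraph below,'' and in step three you ``substitute the value $\beta_0=-r_i^2$ just obtained.'' But the closing-up argument of step two cannot produce $\beta_0$: in weight $0$ there are no degree-zero dotted bubbles, and the curls there are themselves the unknowns $c_0^{\pm}$ of \eqref{eq_def_czeropm}, so that computation only yields the relations $c_0^{+}=-1/(\beta_0 r_i)$ and $c_0^{-}=1/(\beta_0 r_i)$ (this is exactly what the paper extracts from \eqref{eq_coeffreduction} at $n=0$). Pinning down $\beta_0$ requires the independent argument of capping off \eqref{eq_ident_decompT}, which you do mention but which must be run \emph{before}, not after, $c_0^{\pm}$ can be isolated. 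The forced logical order is therefore: $c_{-1}$ via the commutator relation; then $\beta_n$ for $n\ne 0$ together with the two relations for $c_0^{\pm}$ via \eqref{eq_coeffreduction} (which is why $c_{-1}=1$ must come first); and only then $\beta_0$ and $c_0^{\pm}$ simultaneously from \eqref{eq_ident_decompT}.
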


\begin{proof}
The first equality follows from the equalities
\begin{equation}
0
 \;\; \refequal{\eqref{eq_nil_rels}}\;\;
  \vcenter{\xy 0;/r.18pc/:
    (4,-10)*{};(0,0)*{} **\crv{(4,-5) & (0,-5)};
    (0,0)*{};(4,10)*{} **\crv{(0,5) & (4,5)}?(1)*\dir{>};
  (-6,8)*{+1};
  (1,-1)*{\sccbub{}};
 \endxy}
 \;\; = \;\;
\vcenter{\xy 0;/r.18pc/:
    (-4,-10)*{};(0,0)*{} **\crv{(-4,-5) & (0,-5)};
    (0,0)*{};(-4,10)*{} **\crv{(0,5) & (-4,5)}?(1)*\dir{>};
  (8,8)*{-1};(-5,-3)*{\scs {\bf }};
  (0,-1)*{\sccbub{}};
 \endxy}
  \;\; \refequal{\eqref{eq_EFdecomp_ngz}} \;\;
  \frac{1}{\beta_1}
   \xy 0;/r.18pc/:
  (14,8)*{-1};
  (0,0)*{\bbe{}};(-2,-8)*{\scs };
  (12,-2)*{\nccbub};
 \endxy
 \;\; - \;\;\frac{1}{\beta_1}
    \xy 0;/r.18pc/:
  (8,8)*{-1};
  (0,0)*{\bbe{}};(-2,-8)*{\scs };
  (-12,-2)*{\ccbub{-2}};
 \endxy
\end{equation}
where the second equality follows from  equation~\ref{eq_EFdecomp_ngz}.

Using the definition of fake bubbles this implies
\begin{equation}
  0=\frac{1}{\beta_1} \left( \;\; c_{-1}\;\;
   \xy 0;/r.18pc/:
  (8,8)*{-1};
  (0,0)*{\bbe{}};(-2,-8)*{\scs };
 \endxy
 \;\; - \;\;
    \xy 0;/r.18pc/:
  (8,8)*{-1};
  (0,0)*{\bbe{}};(-2,-8)*{\scs };
 \endxy \;\;\right)
\end{equation}
so that $c_{-1}=1$.

For $n\geq 0$ it follows that
\begin{equation} \label{eq_coeffreduction}
\frac{1}{\beta_n}
   \xy 0;/r.18pc/:
  (-14,8)*{n+2};
  (0,0)*{\bbe{}};(-2,-8)*{\scs };
  (-12,-2)*{\cbub{(n+2)-1}};
 \endxy
\;\; \refequal{\eqref{eq_EFdecomp_ngz}}\;\; \vcenter{\xy 0;/r.18pc/:
    (4,-10)*{};(0,0)*{} **\crv{(4,-5) & (0,-5)};
    (0,0)*{};(4,10)*{} **\crv{(0,5) & (4,5)}?(1)*\dir{>};
  (-8,8)*{n+2};(5,-3)*{\scs {\bf }};
  (0,-1)*{\scbub}; (-3,-4)*{\bullet}+(-3.5,-2)*{\scs n+1};
 \endxy}
 \;\; = \;\;
  \vcenter{\xy 0;/r.18pc/:
    (-4,-10)*{};(0,0)*{} **\crv{(-4,-5) & (0,-5)};
    (0,0)*{};(-4,10)*{} **\crv{(0,5) & (-4,5)}?(1)*\dir{>};
  (6,8)*{n};
  (-1,-1)*{\scbub}; (-4,-4)*{\bullet}+(-5.5,-2)*{\scs n+1};
 \endxy}
 \;\; \refequal{\eqref{eq_ind_dotslide}}\;\;
 r_{i}\sum_{\ell_1+\ell_2=n}
 \xy 0;/r.18pc/:
  (12,8)*{n};
   (3,-4)*{};(-3,4)*{} **\crv{(3,-1) & (-3,1)};
    (-3,-4)*{};(3,4)*{} **\crv{(-3,-1) & (3,1)};
    (-3,4);(-3,10) **\dir{-} ?(1)*\dir{>};
    (-3,-10);(-3,-4) **\dir{-};
  (9,4)*{}="t1";
  (3,4)*{}="t2";
  (9,-4)*{}="t1'";
  (3,-4)*{}="t2'";
   "t1";"t2" **\crv{(9,9) & (3, 9)};
   "t1'";"t2'" **\crv{(9,-9) & (3, -9)};
  "t1'";"t1" **\dir{-} ?(.5)*\dir{<};
  (-3,5)*{\bullet}+(-4,1)*{\scs \ell_2};
  (3,5)*{\bullet}+(-2,3)*{\scs \ell_1};
 \endxy \;\;\refequal{\eqref{eq:first}}\;\;
  r_{i}
 \xy 0;/r.18pc/:
  (12,8)*{n};
   (3,-4)*{};(-3,4)*{} **\crv{(3,-1) & (-3,1)};
    (-3,-4)*{};(3,4)*{} **\crv{(-3,-1) & (3,1)};
    (-3,4);(-3,10) **\dir{-} ?(1)*\dir{>};
    (-3,-10);(-3,-4) **\dir{-};
  (9,4)*{}="t1";
  (3,4)*{}="t2";
  (9,-4)*{}="t1'";
  (3,-4)*{}="t2'";
   "t1";"t2" **\crv{(9,9) & (3, 9)};
   "t1'";"t2'" **\crv{(9,-9) & (3, -9)};
  "t1'";"t1" **\dir{-} ?(.5)*\dir{<};
  (3,5)*{\bullet}+(-2,3)*{\scs n};
 \endxy
\end{equation}
When $n=0$ the above together with equation \eqref{eq_def_czeropm}  implies $c_0^+=-\frac{1}{\beta_0r_{i}}$. For $n>0$, simplifying using the NilHecke dot slide formula implies $\beta_n=-r_{i}^{-2}$. A similar calculation for $n \leq 0$ using a clockwise oriented bubble with $-n+1$ dots implies $c_0^-=-\frac{1}{\beta_0r_{i}}$, and then $\beta_n=-r_{i}^{-2}$ for $n<0$.

Capping off the top of \eqref{eq_ident_decompT} for $n=0$ and simplifying the resulting diagram implies that $\beta_0=-r_{i}^2$, and that $c_0^+=c_0^-=r_{i}$.
\end{proof}

\begin{thm}
Any strong 2-representation of $\mathfrak{sl}_2$ on a 2-category $\cal{K}$ extends to a 2-representation $ \Ucat_Q(\mf{g}) \to \cal{K}$.
\end{thm}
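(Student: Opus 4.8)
The plan is to build the 2-functor $F \maps \Ucat_Q(\mathfrak{sl}_2) \to \cal{K}$ directly on generators and then check that it respects each defining relation of $\Ucat_Q(\mathfrak{sl}_2)$; once this is done, the universal property of a 2-category presented by generators and relations furnishes the extension. On objects, $F$ sends each weight $n$ to the corresponding object of $\cal{K}$, and on 1-morphisms it sends $\E\onenn{n}$ and $\F\onenn{n}$ to the functors of the same name, where $\F$ is the shifted right adjoint of \eqref{eq_defF}. On generating 2-morphisms, $F$ sends the upward dot and upward crossing to the NilHecke generators supplied by condition (\ref{co:KLR}) of Definition \ref{def_Qstrong}, and sends the cups and caps to the adjunction 2-morphisms normalized in section \ref{subsec:adjoint}; dots, crossings and sideways crossings on the remaining strands are then \emph{defined} by \eqref{eq_down_def} and \eqref{eq_crossl-gen}--\eqref{eq_crossr-gen}. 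I would first note that the 1-morphism assignment is consistent: the divided powers $\E^{(a)}$ exist in $\cal{K}$ by \cite[Definition 9.3]{Lau1}, and since $\cal{K}$ is idempotent complete, every 1-morphism of $\Ucat_Q(\mathfrak{sl}_2)$ acquires an image. Because we work with a single vertex, all the $i\neq j$ KLR relations and all the mixed relations of section \ref{sec:mixedrels} are vacuous, so only the $\mathfrak{sl}_2$-type relations remain.

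The bulk of the relation-checking has already been carried out, so this step is mostly bookkeeping. The biadjointness relations \eqref{eq_biadjoint1}--\eqref{eq_biadjoint2} hold by Proposition \ref{prop:lradj}; the NilHecke relations \eqref{eq_nil_rels}--\eqref{eq_nil_dotslide} on upward strands hold by hypothesis (condition (\ref{co:KLR})); and the $Q$-cyclic relations \eqref{eq_cyclic_dot} and \eqref{eq_almost_cyclic} for dots and crossings are exactly Lemmas \ref{lem:A} and \ref{lem:B}. These cyclicity statements are what make the downward and sideways generators independent of the adjunction used to define them, and they simultaneously transport the NilHecke action onto the downward strands. For the dotted-bubble relations of section \ref{sec:dotbubbles}, negative-degree bubbles vanish by Lemma \ref{lem:main} together with the Hom vanishing of condition (\ref{co:hom}), while degree-zero bubbles equal the identity after the normalization of section \ref{subsec:adjoint}, using the equality $c_{-1}=1$ from Lemma \ref{lem_coeff}.

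The hard part will be the extended $\mathfrak{sl}_2$ relations of section \ref{sec:highersl2}, and this is where essentially all of the real content lies. Here I would invoke the alternative characterization given after that section: the extended relations are equivalent to the assertion that the sideways crossings \eqref{eq_phi-minus} and \eqref{eq_phi-plus} are mutually inverse isomorphisms with the prescribed inverses. Corollary \ref{cor:1} already establishes that the relevant sideways crossing $\zeta$ is an isomorphism, Proposition \ref{prop_form-of-inv} pins down the shape of $\zeta^{-1}$ up to the scalars $\beta_n$ and $\alpha^\ell_\lambda(n)$, Proposition \ref{prop:alpha} identifies $\alpha^\ell_\lambda(n)$ with the symmetric-function coefficients $\alpha_\lambda$ of \eqref{eq_complete_alpha}, and Lemma \ref{lem_coeff} fixes $\beta_n=-r_{i}^{2}$ and $c_0^\pm=r_{i}$. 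Substituting these values into the relations (A1)--(A5), equivalently into \eqref{eq:first}, \eqref{eq_EFdecomp_ngz} and \eqref{eq_ident_decompT}, reproduces verbatim the extended relations \eqref{eq_reduction-ngeqz}--\eqref{eq_ident_decomp-nleqz} together with their $n=0$ analogue. Thus the genuine obstacle is not the final assembly but the verification that $\zeta^{-1}$ is expressible through honest string diagrams with exactly these coefficients, which is precisely what Propositions \ref{prop_form-of-inv}--\ref{prop:alpha} and Lemma \ref{lem_coeff} supply. With every relation verified, $F$ is a well-defined graded additive $\Bbbk$-linear 2-functor, completing the proof.
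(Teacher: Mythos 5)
Your proposal is correct and follows essentially the same route as the paper: the paper's own proof of this theorem is a one-line assembly of Lemma~\ref{lem_coeff} with the relations \eqref{eq:first} through \eqref{eq_ident_decompT}, which were themselves derived through exactly the chain you cite (Proposition~\ref{prop:lradj}, Lemmas~\ref{lem:A}--\ref{lem:B}, Corollary~\ref{cor:1}, Propositions~\ref{prop_form-of-inv}--\ref{prop:alpha}). Your write-up merely makes explicit the generators-and-relations bookkeeping that the paper leaves implicit.
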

\begin{proof}
This proof is immediate from Lemma~\ref{lem_coeff} and relations (\ref{eq:first}) through (\ref{eq_ident_decompT}).
\end{proof}

% ==============================================================================
%
\section{Proof of the non-${\mf{sl}}_2$ relations}\label{sec:proofnonsl2}
%
% ==============================================================================

Fix a given $Q$-strong 2-representation of $\mf{g}$. In this section we prove the relations in $\Ucat_Q(\mf{g})$ involving strands with different labels (namely, the mixed relations and $Q$-cyclicity involving differently labeled strands). We refer to these as non-${\mf{sl}}_2$ relations.

\subsection{Preliminary results}

We begin with a couple of results on spaces of Homs.

\begin{lem}\label{lem:homEiEj}
For $i \ne j \in I$, we have
\begin{eqnarray}\label{eq:4}
\Hom^k(\E_i \E_j \1_\l, \E_j \E_i \1_\l) \cong
\left\{\begin{array}{ll}
0 & \mbox{if $k < -(\alpha_i, \alpha_j)$} \\
\k & \mbox{if $k = - (\alpha_i, \alpha_j)$}
\end{array}
\right.
\end{eqnarray}
while
\begin{eqnarray}\label{eq:5}
\Hom^k(\E_i \E_j \1_\l, \E_i \E_j \1_\l) \cong
\left\{\begin{array}{ll}
0 & \mbox{if $k < 0$,} \\
\k & \mbox{if $k = 0$.}
\end{array}
\right.
\end{eqnarray}
The same results hold if we replace all the $\E$s by $\F$s.
\end{lem}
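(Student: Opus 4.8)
The plan is to compute both graded spaces by repeatedly applying biadjointness together with the commuting isomorphism $\F_i \E_j \cong \E_j \F_i$ (condition (\ref{co:EiFj})) until the only spaces that remain are $\End^\bullet(\E_j \1_\mu)$ and $\End^\bullet(\1_\mu)$, which are already under control, and then reading off the bottom degree by tracking grading shifts. For \eqref{eq:4} I would first use that $\E_i$ and $\F_i$ are biadjoint up to a shift (Proposition \ref{prop:lradj}, applied to the sub-$\mathfrak{sl}_2$ attached to the vertex $i$, as in the single-vertex reduction of Section~\ref{sec:sl2convs}) to write, for an explicit adjunction shift $c$,
\[
\Hom^\bullet(\E_i \E_j \1_\l, \E_j \E_i \1_\l) \cong \Hom^\bullet(\E_j \1_\l, \F_i \E_j \E_i \1_\l \la c \ra) \cong \Hom^\bullet(\E_j \1_\l, \E_j \F_i \E_i \1_\l \la c \ra),
\]
where the second step is condition (\ref{co:EiFj}). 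Then I would decompose $\F_i \E_i \1_\l \cong \E_i \F_i \1_\l \oplus (\text{copies of } \1_\l)$ using condition (\ref{co:EF}) (with the dual manipulation when $\la i, \l \ra \ge 0$).

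The copies of $\1_\l$ contribute $\bigoplus \Hom^\bullet(\E_j \1_\l, \E_j \1_\l \la \cdot \ra)$, each summand of which is one-dimensional in degree $0$ and vanishes below, by Lemma~\ref{lem:1} applied to the $j$-string. The surviving ``main term'' $\E_i \F_i \1_\l$, after moving the outer $\E_i$ back across by adjunction and reapplying $\F_i \E_j \cong \E_j \F_i$, becomes a shifted copy $\End^\bullet(\F_i \E_j \1_\l) \la c' \ra$; the point I would verify is that the accumulated shift $c'$ is \emph{strictly} positive, so that this term only contributes strictly above the bottom degree. The recursion it generates terminates by induction on the weight $\l$, with integrability (the vanishing of $\1_{\l + r\alpha_i}$ for $|r| \gg 0$) furnishing the base case. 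The degree bookkeeping then shows that the minimal total shift on the $\1_\l$-copies is exactly $-(\alpha_i,\alpha_j)$, where the degree-$0$ part of $\End(\E_j \1_\l)$ supplies a single copy of $\k$, and that nothing survives in lower degree. Finally, nonvanishing and one-dimensionality at the bottom are confirmed concretely: the KLR $ij$-crossing (condition (\ref{co:KLR})) is a nonzero $2$-morphism $\E_i \E_j \1_\l \to \E_j \E_i \1_\l$ of degree $-(\alpha_i,\alpha_j)$, which must therefore span.

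For the self-Hom statement \eqref{eq:5} I would run the identical reduction, this time on $\F_i \E_i \E_j \1_\l$, decomposing $\F_i \E_i \1_{\l + \alpha_j}$ via condition (\ref{co:EF}). The identity copies now yield $\End^\bullet(\E_j \1_\l)$ at shifts whose minimum is $0$, while the main term again sits strictly higher; combined with condition (\ref{co:hom}) and Lemma~\ref{lem:1} this gives vanishing below degree $0$ and a one-dimensional bottom in degree $0$, spanned by $\Id_{\E_i \E_j \1_\l}$. The $\F$-versions of both statements follow by applying the same argument to the $\F$'s, using that they carry a KLR action and are biadjoint to the $\E$'s. (If $\cal{K}$ is triangulated so that $\Ext^k$ denotes genuine higher Ext, the reductions still apply verbatim, since every isomorphism used is induced by a biadjunction of the underlying functors and hence is an isomorphism of $\Ext$-groups.)

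The main obstacle I anticipate is purely organizational rather than conceptual: keeping the recursively reappearing main term in check and verifying that all the grading shifts line up so that the bottom degree lands \emph{exactly} at $-(\alpha_i,\alpha_j)$ (respectively $0$), \emph{uniformly} across the four sign cases of $\la i, \l \ra$ and $\la j, \l \ra$. In particular I would need to confirm that the strict positivity of the main-term shift $c'$ holds in every case, since that is what prevents spurious low-degree classes from the recursion and is the crux of isolating the one-dimensional bottom.
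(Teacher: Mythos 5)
Your proposal is correct and follows essentially the same route as the paper's proof, just in mirror image: the paper peels off the $j$-colored strand via $(\E_j)_L$ rather than the $i$-colored one via $(\E_i)_R$, then applies the mixed isomorphism (condition (\ref{co:EiFj})), decomposes $\F_j\E_j\1_\l$ by condition (\ref{co:EF}), controls the identity summands by the single-color endomorphism result (Lemma~\ref{lem:1}), and handles the recursive term \eqref{eq:6} by the same joint induction on the weight (intertwining \eqref{eq:4} and \eqref{eq:5}) with integrability furnishing the vacuous base case. The one adjustment to your anticipated bookkeeping is that the main-term shift sits $2|(\l,\alpha_i)|$ above the bottom degree, hence is \emph{not} strictly positive when $\la i,\l\ra=0$; but in exactly that case the identity summands are absent and the recursive term itself supplies the single copy of $\k$ at degree $-(\alpha_i,\alpha_j)$ (respectively $0$ for \eqref{eq:5}), so the induction still closes, and your final KLR-crossing check is redundant since the chain of isomorphisms already computes the dimension outright.
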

\begin{proof}
If the Dynkin diagram of $\g$ is simply laced then this result follows from Lemma 4.5 of \cite{CK3}. But since we are dealing with arbitrary symmetrizable Kac-Moody algebras we reproduce the proof here.

Suppose $\la j, \l \ra \le 0$ (the case $\la j, \l \ra \ge 0$ is similar). The proof is then by induction on $\la j, \l \ra$. The base case is vacuous since $\1_\l = 0$ if $\la j, \l \ra \ll 0$. Recall that $d_i := (\alpha_i, \alpha_i)/2$ for any $i \in I$. Note that under these conventions we have
$$(\E_i \1_\l)_L \cong \1_\l \F_i \la -(\l, \alpha_i)-d_i \ra \hspace{.2cm} \text{ and } \hspace{.2cm}
(\1_\l \F_i)_L \cong \E_i \1_\l \la (\l, \alpha_i)+d_i \ra.$$
Now:
\begin{eqnarray*}
& & \Hom^k(\E_i \E_j \1_\l, \E_j \E_i \1_\l) \\
&\cong& \Hom^k((\E_j \1_{\l+\alpha_i})_L \E_i \E_j \1_\l, \E_i \1_\l) \\
&\cong& \Hom^k(\F_j \la - (\l+\alpha_i, \alpha_j) - d_j \ra \E_i \E_j \1_\l, \E_i \1_\l) \\
&\cong& \Hom^k(\E_i \F_j \E_j \1_\l, \E_i \la (\l+\alpha_i, \alpha_j) + d_j \ra \1_\l) \\
&\cong& \Hom^k(\oplus_{[- \la \l,j \ra]_j} \E_i \1_\l, \E_i \la (\l+\alpha_i, \alpha_j)+d_j \ra \1_\l) \oplus \Hom^k(\E_i \E_j \F_j \1_\l, \E_i \la (\l+\alpha_i, \alpha_j)+d_j \ra \1_\l) \\
&\cong& \bigoplus_{s=0}^{- \la j,\l \ra-d_j} \Hom^k(\E_i \1_\l, \E_i \la (\alpha_i, \alpha_j) - 2sd_j \ra \1_\l) \\
& & \oplus \Hom^k(\E_i \E_j \1_{\l-\alpha_j}, \E_i (\1_{\l-\alpha_j} \F_j)_L  \la (\l+\alpha_i, \alpha_j) + d_j\ra)
\end{eqnarray*}
where to get the last equality we use that $\la j,\l \ra \cdot d_j = (\alpha_j, \l)$. Now, $\Hom^k(\E_i \1_\l, \E_i \1_\l)$ is zero if $k < 0$ and equals $\k$ if $k=0$. So the summation above is zero if $k + (\alpha_i,\alpha_j) < 0$ and $\k$ if $k + (\alpha_i,\alpha_j) = 0$. Meanwhile the second term above equals
\begin{equation}\label{eq:6}
\Hom^k(\E_i \E_j \1_{\l-\alpha_j}, \E_i \E_j \la 2 (\l, \alpha_j) + (\alpha_i,\alpha_j) \ra \1_{\l-\alpha_j}).
\end{equation}
We now use induction with respect to \eqref{eq:5} to show that  $\Hom^t(\E_i \E_j \1_{\l-\alpha_j}, \E_i \E_j \1_{\l-\alpha_j})$ is zero if $t < 0$ and $\k$ if $t=0$. Thus (\ref{eq:6}) vanishes since $(\alpha_i, \alpha_j) < 0$, and this completes the proof of (\ref{eq:4}). The proof of equation (\ref{eq:5}) is similar.
\end{proof}

\begin{cor}\label{cor:homEiFj}
For $i \ne j \in I$, we have
\begin{eqnarray}\label{eq:7}
\Hom^k(\E_i \F_j \1_\l, \F_j \E_i \1_\l) \cong
\left\{\begin{array}{ll}0 & \mbox{if $k < 0$} \\
\k & \mbox{if $k = 0$} \end{array} \right.
\end{eqnarray}
The same results hold if we interchange all the $\E$s with $\F$s and all $\F$s with $\E$s.
\end{cor}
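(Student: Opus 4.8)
The plan is to reduce the statement to Lemma~\ref{lem:homEiEj} by peeling off one of the two functors using biadjointness and then invoking the $\mf{sl}_2$ commutator relation. The first observation is that, by the mixed isomorphism $\F_j\E_i\1_\l\cong\E_i\F_j\1_\l$ of condition~(\ref{co:EiFj}) in Definition~\ref{def_Qstrong}, the source and target of the Hom in question are isomorphic $1$-morphisms. Hence
\[
\Hom^k(\E_i\F_j\1_\l,\F_j\E_i\1_\l)\cong\End^k(\E_i\F_j\1_\l),
\]
so it suffices to show that $\E_i\F_j\1_\l$ has a one-dimensional space of degree-zero endomorphisms and no endomorphisms of negative degree; that is, that $\E_i\F_j\1_\l$ behaves like a brick in degrees $\le 0$.

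First I would peel off the outer $\E_i$ using the biadjointness established in Proposition~\ref{prop:lradj}. Moving $\E_i$ across by its right adjoint $\F_i$ (up to a grading shift) rewrites $\End^\bullet(\E_i\F_j\1_\l)$ as a graded Hom of the form $\Hom^\bullet(\F_j\1_\l,\F_i\E_i\F_j\1_\l\la\cdot\ra)$. Using condition~(\ref{co:EiFj}) together with the colour $i\ne j$ isomorphism $\F_i\F_j\1\cong\F_j\F_i\1$ one slides $\F_j$ out of the way, and then the $\mf{sl}_2$ commutator relation (condition~(\ref{co:EF})) for the index $i$ decomposes the middle $\F_i\E_i$ (equivalently $\E_i\F_i$) into a straightened term plus a direct sum of shifted copies of the identity $\1$. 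The straightened term is a graded Hom between $\E$-only (resp.\ $\F$-only) composites of the two colours, which is exactly what Lemma~\ref{lem:homEiEj} controls: equations~(\ref{eq:4}) and~(\ref{eq:5}) say such Hom-spaces are concentrated in nonnegative degree with a one-dimensional bottom. The identity copies contribute graded endomorphism spaces of $\F_j\1_\l$, which are one-dimensional in degree zero and vanish in negative degree, since $\F_j\1_\l$ is a brick by the $\F$-analogue of Lemma~\ref{lem:1}.

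The hard part will be the grading bookkeeping needed to assemble these pieces into the clean answer, namely to show that exactly one copy of $\k$ survives in degree zero while every other contribution lands in strictly positive degree and is therefore invisible in degrees $\le 0$. This forces one to pin down the adjunction shifts precisely in terms of $(\alpha_i,\alpha_j)$ and $\la i,\l\ra$, and to treat separately the two cases according to the sign of the relevant pairing, which determines whether the commutator relation produces extra identity summands on the $\E_i\F_i$ side or the $\F_i\E_i$ side; one must also carry the shift by $-(\alpha_i,\alpha_j)$ so that the degree-zero class of the corollary is matched against the threshold $k=-(\alpha_i,\alpha_j)$ in~(\ref{eq:4}) and check that the vanishing below that threshold in~(\ref{eq:4})--(\ref{eq:5}) indeed covers all negative degrees. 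Finally, the version with the roles of $\E$ and $\F$ interchanged follows by the mirror-image argument, or at once by passing to adjoints: taking left or right adjoints exchanges $\E_i\F_j\1_\l$ with $\F_j\E_i\1_\l$ up to a shift and preserves graded dimensions of Hom-spaces.
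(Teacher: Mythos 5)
Your opening reduction, via condition (\ref{co:EiFj}), from $\Hom^k(\E_i\F_j\1_\l,\F_j\E_i\1_\l)$ to $\End^k(\E_i\F_j\1_\l)$ is legitimate, but the next step rests on an isomorphism that does not exist: there is no ``colour $i\ne j$ isomorphism'' $\F_i\F_j\1_\l\cong\F_j\F_i\1_\l$. Condition (\ref{co:EiFj}) of Definition \ref{def_Qstrong} only lets you commute an $\E$ past an $\F$ of a different colour; two $\F$'s of different colours do not commute unless $(\alpha_i,\alpha_j)=0$. Already on the decategorified level $F_iF_j\ne F_jF_i$ in $U_q(\g)$, and categorically the $\F$-version of (\ref{eq:4}) in Lemma \ref{lem:homEiEj} shows that $\Hom^0(\F_i\F_j\1_\l,\F_j\F_i\1_\l)=0$ whenever $(\alpha_i,\alpha_j)<0$, so no such isomorphism can exist. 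This is a concrete error, not a notational slip.

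Moreover, deleting that step does not rescue the plan (the slide is in fact unnecessary, since condition (\ref{co:EF}) can be applied in place to the subword $\F_i\E_i\1_{\l-\alpha_j}$ of $\F_i\E_i\F_j\1_\l$), because the case analysis you defer to ``bookkeeping'' genuinely breaks in one of the two cases. When $\la i,\l-\alpha_j\ra\le 0$ the commutator writes $\F_i\E_i\1_{\l-\alpha_j}$ as $\E_i\F_i\1_{\l-\alpha_j}$ plus shifted identities and your count goes through. But when $\la i,\l-\alpha_j\ra>0$ the commutator instead exhibits $\F_i\E_i\1_{\l-\alpha_j}$ as a direct \emph{summand} of $\E_i\F_i\1_{\l-\alpha_j}$, so the space you want is the complement, inside $\End^{k+2(\alpha_i,\l-\alpha_j)}(\F_i\F_j\1_\l)$, of terms of the form $\End^{k+2d_i s}(\F_j\1_\l)$ with $s\ge 1$; at $k=0$ all of these sit in strictly positive degree, where neither Lemma \ref{lem:homEiEj} nor the brick property gives any information, and the subtraction cannot be carried out with the tools you cite. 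The paper's proof sidesteps both issues: it never invokes the commutator or the mixed isomorphism at all, but simply moves both $\F_j$'s across the Hom by adjunction, using $(\1_\mu\F_j)_L\cong\E_j\1_\mu\la(\mu,\alpha_j)+d_j\ra$, which converts the question into $\Hom^k\bigl(\E_j\E_i\1_{\l-\alpha_j},\E_i\E_j\la-(\alpha_i,\alpha_j)\ra\1_{\l-\alpha_j}\bigr)$; the shift $-(\alpha_i,\alpha_j)$ matches the threshold in (\ref{eq:4}) exactly, and Lemma \ref{lem:homEiEj} finishes the argument with no case analysis whatsoever.
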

\begin{proof}
By adjunction we have:
\begin{eqnarray*}
& & \Hom^k(\E_i \F_j \1_\l, \F_j \E_i \1_\l) \\
&\cong& \Hom^k((\1_{\l+\alpha_i - \alpha_j} \F_j)_L \E_i \1_{\l-\alpha_j}, \E_i (\1_{\l-\alpha_j} \F_j)_L \1_{\l-\alpha_j}) \\
&\cong& \Hom^k(\E_j \E_i \la (\l+\alpha_i - \alpha_j, \alpha_j) + d_j \ra \1_{\l-\alpha_j}, \E_i \E_j \la (\l - \alpha_j, \alpha_j) + d_j \ra \1_{\l-\alpha_j}) \\
&\cong& \Hom^k(\E_j \E_i \1_{\l-\alpha_j}, \E_i \E_j \la - (\alpha_i, \alpha_j) \ra \1_{\l-\alpha_j})
\end{eqnarray*}
where, as before, $d_j = (\alpha_j,\alpha_j)/2$. By lemma \ref{lem:homEiEj}, this is isomorphic to zero if $k < 0$, and $\k$ if $k=0$.
\end{proof}

% -------------------------------------------------------------------------------
%
\subsection{Mixed relations $\E_i \F_j \cong \F_j \E_i$}
%
% -------------------------------------------------------------------------------

Recall the definitions of sideways crossings from (\ref{eq_crossl-gen}) and (\ref{eq_crossr-gen}). Note that these 2-morphisms are non-zero since they are related by adjunction to the unique maps in $\Hom^{-(\alpha_i,\alpha_j)}(\E_i \E_j \1_\l, \E_j \E_i \1_\l)$.

Composing these maps gives rise to degree zero endomorphisms of $\E_i\F_j \1_\l$ and of $\F_i\E_j \1_\l$ for $i \neq j$. Since $\E_i \F_j \1_\l \cong \F_j \E_i \1_\l$ these Hom spaces are 1-dimensional in degree zero by corollary \ref{cor:homEiFj}. Thus we must have
\begin{align}
 \vcenter{   \xy 0;/r.18pc/:
    (-4,-4)*{};(4,4)*{} **\crv{(-4,-1) & (4,1)}?(1)*\dir{>};
    (4,-4)*{};(-4,4)*{} **\crv{(4,-1) & (-4,1)}?(1)*\dir{<};?(0)*\dir{<};
    (-4,4)*{};(4,12)*{} **\crv{(-4,7) & (4,9)};
    (4,4)*{};(-4,12)*{} **\crv{(4,7) & (-4,9)}?(1)*\dir{>};
  (8,8)*{\lambda};(-6,-3)*{\scs i};
     (6,-3)*{\scs j};
 \endxy}
 \;\;&:= \;\;
 \text{$\xy   0;/r.14pc/:
    (-16,8)*{}="1";
    (-4,8)*{}="2";
    (8,8)*{}="3";
    (-16,-8);"1" **\dir{-};
    "1";"2" **\crv{(-16,16) & (-4,16)} ?(0)*\dir{<} ;
    "2";"3" **\crv{(-4,0) & (8,0)}?(1)*\dir{<};
    "3"; (8,16) **\dir{-};
    (-16,-8)*{}="1'";
    (-4,-8)*{}="2'";
    (8,-8)*{}="3'";
    "1'";"2'" **\crv{(-16,-16) & (-4,-16)} ?(0)*\dir{>} ;
    "2'";"3'" **\crv{(-4,0) & (8,0)};
    "3'"; (8,-16) **\dir{-};
    (14,0)*{\lambda};
    (0,16);(-6,0) **\crv{(0,8) & (-6,10)} ?(0)*\dir{<} ;;
    (0,-16);(-6,0) **\crv{(0,-8) & (-6,-10)};
    (-2.5,-14)*{\scs i};
     (10,-14)*{\scs j};
    \endxy  $}
 \;\; = \;\;\gamma_{ij}(\lambda)\;\;
\xy 0;/r.18pc/:
  (3,9);(3,-9) **\dir{-}?(.55)*\dir{>}+(2.3,0)*{};
  (-3,9);(-3,-9) **\dir{-}?(.5)*\dir{<}+(2.3,0)*{};
  (8,2)*{\lambda};(-5,-6)*{\scs i};     (5.1,-6)*{\scs j};
 \endxy \label{eq_downup_ij-genL}
\\
    \vcenter{\xy 0;/r.18pc/:
    (-4,-4)*{};(4,4)*{} **\crv{(-4,-1) & (4,1)}?(1)*\dir{<};?(0)*\dir{<};
    (4,-4)*{};(-4,4)*{} **\crv{(4,-1) & (-4,1)}?(1)*\dir{>};
    (-4,4)*{};(4,12)*{} **\crv{(-4,7) & (4,9)}?(1)*\dir{>};
    (4,4)*{};(-4,12)*{} **\crv{(4,7) & (-4,9)};
  (8,8)*{\lambda};(-6,-3)*{\scs i};
     (6,-3)*{\scs j};
 \endxy}
 \;\; &:=\;\;
 \text{$\xy   0;/r.14pc/:
    (16,8)*{}="1";
    (4,8)*{}="2";
    (-8,8)*{}="3";
    (16,-8);"1" **\dir{-};
    "1";"2" **\crv{(16,16) & (4,16)} ?(0)*\dir{<} ;
    "2";"3" **\crv{(4,0) & (-8,0)}?(1)*\dir{<};
    "3"; (-8,16) **\dir{-};
    (16,-8)*{}="1'";
    (4,-8)*{}="2'";
    (-8,-8)*{}="3'";
    "1'";"2'" **\crv{(16,-16) & (4,-16)} ?(0)*\dir{>} ;
    "2'";"3'" **\crv{(4,0) & (-8,0)};
    "3'"; (-8,-16) **\dir{-};
    (22,0)*{ \lambda};
    (0,16);(6,0) **\crv{(0,8) & (6,10)} ?(0)*\dir{<} ;;
    (0,-16);(6,0) **\crv{(0,-8) & (6,-10)};
     (-10,-14)*{\scs i};
     (2.5,-14)*{\scs j};
     %(-8,24)*{}; (4,24)*{} **\crv{(-8,32) & (4,32)};
    \endxy $}
 \;\;=\;\; \beta_{ij}(\lambda)\;\;
\xy 0;/r.18pc/:
  (3,9);(3,-9) **\dir{-}?(.5)*\dir{<}+(2.3,0)*{};
  (-3,9);(-3,-9) **\dir{-}?(.55)*\dir{>}+(2.3,0)*{};
  (8,2)*{\lambda};(-5,-6)*{\scs i};     (5.1,-6)*{\scs j};
 \endxy \label{eq_downup_ij-genR}
\end{align}
for some $\gamma_{ij}(\l), \beta_{ij}(\l) \in \k$.

\begin{prop}
For all $i,j \in I$ with $i \neq j$ the coefficients from \eqref{eq_downup_ij-genL} and \eqref{eq_downup_ij-genR} satisfy
\[
 \beta_{ij}(\lambda) = \gamma_{ji}(\lambda) = t_{ij}.
\]

\end{prop}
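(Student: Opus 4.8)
The plan is to compute the two scalars $\gamma_{ij}(\l)$ and $\beta_{ij}(\l)$ directly from the defining composites \eqref{eq_downup_ij-genL} and \eqref{eq_downup_ij-genR}. The one-dimensionality in degree zero supplied by Corollary~\ref{cor:homEiFj} already guarantees that each middle composite is a scalar multiple of the identity $2$-morphism on $\E_i\F_j\1_\l$ (respectively $\F_i\E_j\1_\l$), so the only remaining task is to identify these scalars; since the sideways crossings entering the composites are non-zero, the scalars are a priori arbitrary elements of $\Bbbk$ and the content is that they coincide with $t_{ij}$.

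To evaluate $\beta_{ij}(\l)$ I would substitute the definition of the sideways crossings into the middle of \eqref{eq_downup_ij-genR}. By \eqref{eq_crossl-gen} and \eqref{eq_crossr-gen} (whose second equalities follow from the $Q$-cyclic relation \eqref{eq_almost_cyclic}), each sideways crossing is a rotation of an ordinary upward $ij$-crossing, and exactly one of the two admissible rotations carries the scalar $t_{ij}$. Plugging these in converts the composite into an upward crossing whose four ends are bent back by a chain of cups and caps. I would then straighten every cup--cap pair using the biadjointness relations \eqref{eq_biadjoint1}--\eqref{eq_biadjoint2}; the two upward crossings that result are adjacent and of opposite type, so their composite is controlled by the $i\ne j$ KLR relation \eqref{eq_r2_ij-gen}. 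Because the whole diagram has degree zero and lands in the one-dimensional space of Corollary~\ref{cor:homEiFj}, the dotted terms and the $s_{ij}^{pq}$-corrections of \eqref{eq_r2_ij-gen} cannot contribute, so only the leading scalar term survives and the straightened diagram collapses to the identity multiplied by $t_{ij}$. This yields $\beta_{ij}(\l)=t_{ij}$, which is visibly independent of $\l$ since $t_{ij}$ is.

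The computation of $\gamma_{ji}(\l)$ from \eqref{eq_downup_ij-genL} is word-for-word the same with the roles of $i$ and $j$ interchanged: the middle composite there is obtained from the one above by the reflection of the underlying string diagram that swaps the two strands and exchanges the two crossing types, and by \eqref{eq_almost_cyclic} together with \eqref{eq_biadjoint1}--\eqref{eq_biadjoint2} this reflection sends each sideways crossing to the corresponding one in \eqref{eq_downup_ij-genR} while fixing the identity $2$-morphism. Hence the same straightening argument produces $\gamma_{ji}(\l)=t_{ij}$, and combining the two computations gives the asserted equalities $\beta_{ij}(\l)=\gamma_{ji}(\l)=t_{ij}$.

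The main obstacle I anticipate is the bookkeeping in the straightening step: one must check that resolving the cups and caps via \eqref{eq_biadjoint1}--\eqref{eq_biadjoint2} introduces no spurious scalar, and in particular that the $Q'$-normalization carried by the downward strands (the factors $t_{ij}^{-1}t_{ji}^{-1}$ and $t'_{ij}$ entering through $(s')_{ij}^{pq}$) cancels rather than contaminating the answer. A secondary delicate point is verifying that the degree-zero projection genuinely annihilates all non-leading terms of \eqref{eq_r2_ij-gen}, so that the composite reduces to a clean single scalar instead of an honest sum; this is precisely where the one-dimensionality from Corollary~\ref{cor:homEiFj} is doing the essential work.
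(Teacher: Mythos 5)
There are two genuine gaps here, and each one is fatal on its own.

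First, the straightening step does not go through with biadjointness alone. When you substitute the definitions \eqref{eq_crossl-gen}--\eqref{eq_crossr-gen} into the composite \eqref{eq_downup_ij-genR}, the two upward crossings you produce are \emph{not} separated by a zig-zag: between them sits a cap followed (above) by a cup, i.e.\ the composite factors through the middle $1$-morphism $\E_j\F_i\1_\l$, closing off the bent $i$-strand and reopening it. The relations \eqref{eq_biadjoint1}--\eqref{eq_biadjoint2} only cancel a cup--cap pair lying on one connected strand; they say nothing about a cap--cup pair of this shape. Sliding those cups and caps out of the way so that the two upward crossings become adjacent is precisely an instance of $Q$-cyclicity for mixed crossings --- which in this paper is established \emph{after} the present proposition (in the subsection on $Q$-cyclicity, via \eqref{eq_cyclic_cross-gen}) and in fact uses it. So as written your argument is circular: in a $Q$-strong $2$-representation you may only use the KLR relations on upward strands, the biadjointness, and the single-color cyclicity already proven, and these do not suffice to ``straighten'' the diagram.

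Second, even granting the reduction to the double crossing, the claim that degree reasons plus Corollary~\ref{cor:homEiFj} kill the dotted terms and the $s_{ij}^{pq}$-corrections is false. The relation \eqref{eq_r2_ij-gen} is homogeneous: by \eqref{eq_pq} every term on its right-hand side has degree $-2(\alpha_i,\alpha_j)$, the same as the double crossing, so no term can be discarded on degree grounds; and one-dimensionality of $\End^0$ only restates that the composite is \emph{some} scalar multiple of the identity (which is how $\beta_{ij}(\lambda)$ was defined), it does not decide which terms contribute to that scalar. This is exactly the difficulty the paper's proof is designed to overcome: one closes \eqref{eq_downup_ij-genR} off with a carefully chosen number $m$ of dots so that, after applying \eqref{eq_r2_ij-gen}, the unwanted $t_{ji}$- and $s_{ij}^{pq}$-terms carry dotted bubbles of \emph{negative} degree (hence vanish by positivity) while the $t_{ij}$-term carries a degree-zero bubble (hence equals $1$); this is where \eqref{eq_bubslide_lr} and \eqref{eq_bubslide_rl} come from. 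Moreover this mechanism only works for $\la i,\lambda\ra \le 1$ or $\la i,\lambda\ra \ge 1+d_{ij}$, and the intermediate range $1<\la i,\lambda\ra\le d_{ij}$ needs a separate argument using the triple-crossing relation \eqref{eq_r3_hard-gen} and curls. Your proposal, being uniform in $\lambda$ and lacking any mechanism that actually annihilates the extra terms, cannot be repaired by bookkeeping; the bubble/curl case analysis is doing essential work. (The paper also gets $\gamma_{ji}(\lambda)=\beta_{ij}(\lambda)$ for free by evaluating the triple composite of sideways crossings in two ways, rather than by a reflection symmetry, which in a $2$-representation would itself require justification.)
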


\begin{proof}
Equations \eqref{eq_downup_ij-genL} and \eqref{eq_downup_ij-genR} imply
\begin{equation}
\gamma_{ji}(\lambda)\;  \xy 0;/r.18pc/:
  (0,0)*{\xybox{
    (-4,-4)*{};(4,4)*{} **\crv{(-4,-1) & (4,1)}?(0)*\dir{<} ;
    (4,-4)*{};(-4,4)*{} **\crv{(4,-1) & (-4,1)}?(1)*\dir{>};
    (5.1,-3)*{\scs j};
     (-6.5,-3)*{\scs i};
     (9,2)*{ \lambda};
     (-12,0)*{};(12,0)*{};
     }};
  \endxy \;\; = \;\;
   \vcenter{   \xy 0;/r.18pc/:
    (-4,-12)*{};(4,-4)*{} **\crv{(-4,-9) & (4,-7)}?(0)*\dir{<};
    (4,-12)*{};(-4,-4)*{} **\crv{(4,-9) & (-4,-7)};
    (-4,-4)*{};(4,4)*{} **\crv{(-4,-1) & (4,1)}?(1)*\dir{>};
    (4,-4)*{};(-4,4)*{} **\crv{(4,-1) & (-4,1)}?(1)*\dir{<};?(0)*\dir{<};
    (-4,4)*{};(4,12)*{} **\crv{(-4,7) & (4,9)};
    (4,4)*{};(-4,12)*{} **\crv{(4,7) & (-4,9)}?(1)*\dir{>};
  (8,8)*{\lambda};(-6,-11)*{\scs i};
     (6,-11)*{\scs j};
 \endxy}
 \;\; =\;\; \beta_{ij}(\lambda) \;
   \xy 0;/r.18pc/:
  (0,0)*{\xybox{
    (-4,-4)*{};(4,4)*{} **\crv{(-4,-1) & (4,1)}?(0)*\dir{<} ;
    (4,-4)*{};(-4,4)*{} **\crv{(4,-1) & (-4,1)}?(1)*\dir{>};
    (5.1,-3)*{\scs j};
     (-6.5,-3)*{\scs i};
     (9,2)*{ \lambda};
     (-12,0)*{};(12,0)*{};
     }};
  \endxy
\end{equation}
so that $\gamma_{ji}(\lambda)=\beta_{ij}(\lambda)$ since the 2-morphism on the right and left is non-zero.

When $-\la i,\lambda+\alpha_j\ra +1 \leq 0$ closing off the left side of \eqref{eq_downup_ij-genR} with $\la i,\lambda+\alpha_j\ra -1$ dots and simplifying the left-hand side using the KLR relation~\eqref{eq_r2_ij-gen} and \eqref{eq_dot_slide_ij-gen} implies $\beta_{ij}(\lambda)=t_{ij}$ for all $i,j \in I$ and $\lambda$ satisfying $1+d_{ij} \leq \la i, \lambda \ra$.  When $\la i,\l\ra \leq 1$, one can perform an analogous computation with \eqref{eq_downup_ij-genL}: switch the labels $i$ and $j$, close off the right-hand side with $-\la i,\l\ra+1$ dots, and simplify to obtain $\gamma_{ji}(\lambda) = t_{ij}$.

It remains to show that $\beta_{ij}(\lambda)=t_{ij}$ when $1 <\la i, \lambda\ra \leq d_{ij}$.  Consider the case when $\la i ,\lambda \ra = 2$ so that $-\la i ,\lambda-\alpha_i \ra =0$.
Then switch the labels of $i$ and $j$ in \eqref{eq_downup_ij-genL}, use that $\gamma_{ji}(\lambda) = \beta_{ij}(\lambda)$, and glue the result into the diagram
\[
  \xy 0;/r.17pc/:
  (-6,12)*{\lambda-\alpha_i};
  (3,-12)*{};(-3,5)*{} **\crv{(3,-2) & (-2,1)}?(.15)*\dir{>} ?(.85)*\dir{>};
    (-3,-5)*{};(3,15)*{} **\crv{(-2,-1) & (3,2)}?(.9)*\dir{>} ?(.1)*\dir{>};
  (-3,5)*{}="t1";  (-12,5)*{}="t2";
  (-3,-5)*{}="t1'";  (-12,-5)*{}="t2'";
   "t1";"t2" **\crv{(-4,8) & (-12, 8)};
   "t1'";"t2'" **\crv{(-4,-8) & (-12, -8)};
  (-8,5)*{}="tl";
  (-24,5)*{}="tr";
  (-8,-5)*{}="bl";
  (-24,-5)*{}="br";
  "tl";"tr" **\dir{.};
  "tl";"bl" **\dir{.};
  "bl";"br" **\dir{.};
  "tr";"br" **\dir{.};
 % (-3,6)*{\bullet}+(2.5,2)*{\scs m};
  (5,-11)*{\scs i}; (-13,-11)*{\scs j};
  (-15,-12)*{}; (-20,-5)*{} **\crv{(-15,-8) & (-20,-7)}?(.5)*\dir{>};
 (-20,5)*{}; (-15,15)*{} **\crv{(-20,7) & (-15,8)}?(.5)*\dir{>};  \endxy
\]
to conclude that
\[
r_i\beta_{ij} \;\;
  \xy 0;/r.17pc/:
  (-9,12)*{\lambda-\alpha_i};
  (3,-11)*{\scs i}; (-15,-11)*{\scs j};
 (-18,-12)*{}; (-18,15)*{} **\dir{-} ?(.5)*\dir{>};
 (0,-12)*{}; (0,15)*{} **\dir{-} ?(.5)*\dir{>};
 \endxy
\quad = \quad  \beta_{ij}\;\;
  \xy 0;/r.17pc/:
  (-7,12)*{\lambda-\alpha_i};
  (3,-12)*{};(-3,5)*{} **\crv{(3,-2) & (-2,1)}?(.15)*\dir{>} ?(.85)*\dir{>};
    (-3,-5)*{};(3,15)*{} **\crv{(-2,-1) & (3,2)}?(.9)*\dir{>} ?(.1)*\dir{>};
  (-3,5)*{}="t1";  (-12,0)*{}="t2";
  (-3,-5)*{}="t1'";  (-12,0)*{}="t2'";
   "t1";"t2" **\crv{(-4,8) & (-12, 8)};
   "t1'";"t2'" **\crv{(-4,-8) & (-12, -8)};
  % "t2'";"t2" **\dir{-};
  (5,-11)*{\scs i}; (-15,-11)*{\scs j};
 (-18,-12)*{}; (-18,15)*{} **\dir{-} ?(.5)*\dir{>};
 \endxy
\quad =\quad
  \xy 0;/r.17pc/:
    (-3,5)*{}="t1";  (-14,5)*{}="t2";
  (-3,-5)*{}="t1'";  (-14,-5)*{}="t2'";
  "t2";"t2'" **\dir{-};
   "t1";"t2" **\crv{(-2,12) & (-14, 10)};
   "t1'";"t2'" **\crv{(-2,-12) & (-14, -10)};
  (-20,12)*{\lambda-\alpha_i+\alpha_j};
  (3,-15)*{};"t1" **\crv{(3,-2) & (-2,1)}?(.15)*\dir{>} ?(.85)*\dir{>};
    "t1'";(3,15)*{} **\crv{(-2,-1) & (3,2)}?(.9)*\dir{>} ?(.1)*\dir{>};
  (5,-14)*{\scs i}; (-6,-14)*{\scs j};
  (-3,-15)*{}; (-9,0)*{} **\crv{(-3,-6) & (-9,-6)}?(1)*\dir{>};
 (-9,0)*{}; (-3,15)*{} **\crv{(-9,6) & (-3,6)};%?(.5)*\dir{>};
  \endxy
\]
Simplifying using \eqref{eq_r3_hard-gen} and the fact that $d_{ij}-1 = -\la i, \lambda-\alpha_i+\alpha_j\ra -1$ completes the proof for $\la i, \lambda \ra =2$.

For the remaining cases with $2< \la i,\lambda \ra \leq d_{ij}$, switch the labels of $i$ and $j$ in \eqref{eq_downup_ij-genL}, use that $\gamma_{ji}(\lambda) = \beta_{ij}(\lambda)$, and glue the result into the diagram
\[
  \xy 0;/r.17pc/:
  (-6,12)*{\lambda-\alpha_i};
    (3,-5)*{};(-3,5)*{} **\crv{(3,-2) & (-2,1)}?(.15)*\dir{>} ?(.85)*\dir{>};
    (-3,-5)*{};(3,5)*{} **\crv{(-2,-1) & (3,2)}?(.9)*\dir{>} ?(.1)*\dir{>};
  (9,5)*{}="t0"; (-3,5)*{}="t1";  (-12,5)*{}="t2";
  (9,-5)*{}="t0'";(-3,-5)*{}="t1'";  (-12,-5)*{}="t2'";
        "t1";"t2" **\crv{(-3,8) & (-12, 8)};
        "t1'";"t2'" **\crv{(-3,-8) & (-12, -8)};
        "t0";(3,5)*{} **\crv{(9,8) & (3, 8)};
        "t0'";(3,-5)*{} **\crv{(9,-8) & (3, -8)};
        (9,-5)*{};(15,15)*{} **\crv{(9,-2) & (15,1)}?(0)*\dir{<} ?(.9)*\dir{<};
        (9,5)*{};(15,-15)*{} **\crv{(9,2) & (15,-1)}?(0)*\dir{>} ?(.9)*\dir{>};
  (-8,5)*{}="tl";
  (-24,5)*{}="tr";
  (-8,-5)*{}="bl";
  (-24,-5)*{}="br";
  "tl";"tr" **\dir{.};
  "tl";"bl" **\dir{.};
  "bl";"br" **\dir{.};
  "tr";"br" **\dir{.};
 % (-3,6)*{\bullet}+(2.5,2)*{\scs m};
  (5,-11)*{\scs i}; (-13,-11)*{\scs j};
  (-15,-15)*{}; (-20,-5)*{} **\crv{(-15,-8) & (-20,-7)}?(.5)*\dir{>};
 (-20,5)*{}; (-15,15)*{} **\crv{(-20,7) & (-15,8)}?(.5)*\dir{>};
  \endxy
\]
Using the ``step function" \cite[Equation 3.25]{BKL} (or direct computation) to simplify the  double curl, the right-hand side can be simplified to $-r_i^2\beta_{ij}\Id_{\cal{E}_j\cal{F}_i}$.
The left-hand side simplifies to give the desired result after using equation \eqref{eq_r3_hard-gen} and observing that all terms vanish except one using the assumption that $2< \la i,\lambda \ra \leq d_{ij}$.
\end{proof}

\begin{rem} One can obtain bubble sliding relations by generalizing the arguments in the proof above.
Closing off the left side of \eqref{eq_downup_ij-genR} with $\la i,\lambda+\alpha_j\ra -1+m$ dots, where $m \geq \max(-\la i,\lambda+\alpha_j\ra +1,0)$, then simplifying the left-hand side using the KLR relation~\eqref{eq_r2_ij-gen} implies
\begin{equation} \label{eq_bubslide_lr}
\beta_{ij}\;\xy 0;/r.17pc/:
  (0,9);(0,-9) **\dir{-}?(.5)*\dir{<}+(2.3,0)*{};
  (6,6)*{\lambda};(2,-8)*{\scs j};
  (-12,0)*{\icbub{(\la i,\lambda+\alpha_j\ra -1)+m\qquad }{i}}
 \endxy =
 t_{ij} \;
 \xy 0;/r.17pc/:
  (0,9);(0,-9) **\dir{-}?(.5)*\dir{<}+(2.3,0)*{};
  (18,6)*{\lambda};(-2,-8)*{\scs j};
  (12,0)*{\icbub{(\la i,\lambda\ra -1)+m}{i}}
 \endxy
 + t_{ji}
  \xy 0;/r.17pc/:
  (0,9);(0,-9) **\dir{-}?(.5)*\dir{<}+(2.3,0)*{};
  (0,6)*{\bullet}+(-3.5,1)*{\scs d_{ji}};
  (19,6)*{\lambda};(-2,-8)*{\scs j};
    (12,0)*{\icbub{
        \xy (0,9)*{};(0,3)*{\scs (\la i,\lambda\ra -1)};
        (0,-1)*{\scs  +m-d_{ij} }; \endxy
            }{i}}
 \endxy
+
 \xsum{p,q}{} s_{ji}^{pq}\;
   \xy 0;/r.17pc/:
  (0,9);(0,-9) **\dir{-}?(.5)*\dir{<}+(2.3,0)*{};
    (0,6)*{\bullet}+(-3.5,1)*{\scs p};
  (19,6)*{\lambda};(-2,-8)*{\scs j};
  (12,0)*{\icbub{
        \xy (0,9)*{};(0,3)*{\scs (\la i,\lambda\ra -1)};
        (0,-1)*{\scs  +m+q-d_{ij} }; \endxy
            }{i}}
 \endxy
\end{equation}
Interchanging the labels of the strands in \eqref{eq_downup_ij-genL} and capping off the right strand with $-\la i,\lambda-\alpha_i\ra-1+m=-\la i,\lambda\ra +1+m$ dots, where $m \geq \max(\la i,\lambda\ra -1,0)$, implies
\begin{equation} \label{eq_bubslide_rl}
\gamma_{ij}\;\xy 0;/r.17pc/:
  (0,9);(0,-9) **\dir{-}?(.5)*\dir{<}+(2.3,0)*{};
  (19,8)*{\lambda-\alpha_i};(-2,-8)*{\scs j};
    (12,-2)*{\iccbub{
        \xy (0,9)*{};(0,3)*{\scs -\la i,\lambda-\alpha_i\ra };
        (0,-1)*{\scs -1 +m }; \endxy
            }{i}}
 \endxy =
 t_{ij} \;
 \xy 0;/r.17pc/:
  (0,9);(0,-9) **\dir{-}?(.5)*\dir{<}+(2.3,0)*{};
  (9,6)*{\lambda-\alpha_i};(-2,-8)*{\scs j};
    (-12,0)*{\iccbub{
        \xy (0,9)*{};(0,3)*{\scs (-\la i,\lambda'\ra -1)};
        (0,-1)*{\scs  +m}; \endxy
            }{i}};
 \endxy
 + t_{ji}
  \xy 0;/r.17pc/:
  (0,9);(0,-9) **\dir{-}?(.5)*\dir{<}+(2.3,0)*{};
  (0,6)*{\bullet}+(-3.5,1)*{\scs d_{ji}};
  (10,6)*{\lambda-\alpha_i};
  (2,-8)*{\scs j};
    (-12,0)*{\iccbub{
        \xy (0,9)*{};(0,3)*{\scs (-\la i,\lambda'\ra -1)};
        (0,-1)*{\scs  +m-d_{ij} }; \endxy
            }{i}};
 \endxy
+
 \xsum{p,q}{} s_{ij}^{qp}\;
   \xy 0;/r.17pc/:
  (0,9);(0,-9) **\dir{-}?(.5)*\dir{<}+(2.3,0)*{};
    (0,6)*{\bullet}+(-3.5,1)*{\scs p};
 (10,6)*{\lambda-\alpha_i};
  (2,-8)*{\scs j};
  (-12,0)*{\iccbub{
        \xy (0,9)*{};(0,3)*{\scs (-\la i,\lambda'\ra -1)};
        (0,-1)*{\scs  +m+q-d_{ij} }; \endxy
            }{i}};
 \endxy
\end{equation}
with $\lambda'=\lambda-\alpha_i+\alpha_j$.
\end{rem}

% -------------------------------------------------------------------------------
%
\subsection{$Q$-Cyclicity}
%
% -------------------------------------------------------------------------------

By uniqueness of the space of 2-morphisms in the appropriate degree we have
\begin{equation} \label{eq_cyclic_cross-gen}
 \xy 0;/r.17pc/:
  (0,0)*{\xybox{
    (4,-4)*{};(-4,4)*{} **\crv{(4,-1) & (-4,1)}?(1)*\dir{>};
    (-4,-4)*{};(4,4)*{} **\crv{(-4,-1) & (4,1)};
     (-4,4)*{};(18,4)*{} **\crv{(-4,16) & (18,16)} ?(1)*\dir{>};
     (4,-4)*{};(-18,-4)*{} **\crv{(4,-16) & (-18,-16)} ?(1)*\dir{<}?(0)*\dir{<};
     (-18,-4);(-18,12) **\dir{-};(-12,-4);(-12,12) **\dir{-};
     (18,4);(18,-12) **\dir{-};(12,4);(12,-12) **\dir{-};
     (8,1)*{ \lambda};
     (-10,0)*{};(10,0)*{};
     (-4,-4)*{};(-12,-4)*{} **\crv{(-4,-10) & (-12,-10)}?(1)*\dir{<}?(0)*\dir{<};
      (4,4)*{};(12,4)*{} **\crv{(4,10) & (12,10)}?(1)*\dir{>}?(0)*\dir{>};
      (-20,11)*{\scs j};(-10,11)*{\scs i};
      (20,-11)*{\scs j};(10,-11)*{\scs i};
     }};
  \endxy
\quad =  \quad b_{ij}(\lambda)
\xy 0;/r.17pc/:
  (0,0)*{\xybox{
    (-4,-4)*{};(4,4)*{} **\crv{(-4,-1) & (4,1)}?(1)*\dir{>};
    (4,-4)*{};(-4,4)*{} **\crv{(4,-1) & (-4,1)};
     (4,4)*{};(-18,4)*{} **\crv{(4,16) & (-18,16)} ?(1)*\dir{>};
     (-4,-4)*{};(18,-4)*{} **\crv{(-4,-16) & (18,-16)} ?(1)*\dir{<}?(0)*\dir{<};
     (18,-4);(18,12) **\dir{-};(12,-4);(12,12) **\dir{-};
     (-18,4);(-18,-12) **\dir{-};(-12,4);(-12,-12) **\dir{-};
     (8,1)*{ \lambda};
     (-10,0)*{};(10,0)*{};
      (4,-4)*{};(12,-4)*{} **\crv{(4,-10) & (12,-10)}?(1)*\dir{<}?(0)*\dir{<};
      (-4,4)*{};(-12,4)*{} **\crv{(-4,10) & (-12,10)}?(1)*\dir{>}?(0)*\dir{>};
      (20,11)*{\scs i};(10,11)*{\scs j};
      (-20,-11)*{\scs i};(-10,-11)*{\scs j};
     }};
  \endxy
\end{equation}
where $b_{ij}(\lambda) \in \Bbbk^{\times}$. Take the the left hand side of \eqref{eq_cyclic_cross-gen}, switch $i$ and $j$, and glue it under the original equation \eqref{eq_cyclic_cross-gen} to obtain
\begin{equation}
   \xy 0;/r.17pc/:
  (0,0)*{\xybox{
    (4,-4)*{};(-4,4)*{} **\crv{(4,-1) & (-4,1)};
    (-4,-4)*{};(4,4)*{} **\crv{(-4,-1) & (4,1)};
    (4,4)*{};(-4,12)*{} **\crv{(4,7) & (-4,9)}?(1)*\dir{>};
    (-4,4)*{};(4,12)*{} **\crv{(-4,7) & (4,9)}?(1)*\dir{>};
    (4,-4)*{};(-18,-4)*{} **\crv{(4,-16) & (-18,-16)} ?(1)*\dir{<}?(0)*\dir{<};
     (-18,-4);(-18,18) **\dir{-};
     (-12,-4);(-12,18) **\dir{-};
     (12,12);(12,-14) **\dir{-}?(1)*\dir{>};
     (8,8)*{ \lambda};
     (-10,0)*{};(10,0)*{};
     (-4,12)*{};(18,12)*{} **\crv{(-4,24) & (18,24)}?(0)*\dir{>};
     (18,12);(18,-14) **\dir{-}?(1)*\dir{>};
     (-4,-4)*{};(-12,-4)*{} **\crv{(-4,-10) & (-12,-10)}?(1)*\dir{<}?(0)*\dir{<};
     (4,12)*{};(12,12)*{} **\crv{(4,18) & (12,18)};
     %(-4,12)*{};(-12,12)*{} **\crv{(-4,18) & (-12,18)};
      (-20,11)*{\scs j};(-10,11)*{\scs i};
     (10,-11)*{\scs j}; (20,-11)*{\scs i};
     }};
  \endxy
\;\; = \;\;
  b_{ij}(\lambda)
 \xy 0;/r.17pc/:
  (1.35,-9)*{\xybox{
    (4,-4)*{};(-4,4)*{} **\crv{(4,-1) & (-4,1)}?(1)*\dir{>};
    (-4,-4)*{};(4,4)*{} **\crv{(-4,-1) & (4,1)};
     (-4,4)*{};(18,4)*{} **\crv{(-4,16) & (18,16)} ?(1)*\dir{>};
     (4,-4)*{};(-18,-4)*{} **\crv{(4,-16) & (-18,-16)} ?(1)*\dir{<}?(0)*\dir{<};
     (-18,-4);(-18,18) **\dir{-};
     (-12,-4);(-12,18) **\dir{-};
     (18,4);(18,-12) **\dir{-};(12,4);(12,-12) **\dir{-};
     (8,1)*{ \lambda};
     (-10,0)*{};(10,0)*{};
     (-4,-4)*{};(-12,-4)*{} **\crv{(-4,-10) & (-12,-10)}?(1)*\dir{<}?(0)*\dir{<};
      (4,4)*{};(12,4)*{} **\crv{(4,10) & (12,10)}?(1)*\dir{>}?(0)*\dir{>};
      (20,-11)*{\scs i};(10,-11)*{\scs j};
     }};
     (0,20)*{\xybox{
    (-4,-4)*{};(4,4)*{} **\crv{(-4,-1) & (4,1)}?(1)*\dir{>};
    (4,-4)*{};(-4,4)*{} **\crv{(4,-1) & (-4,1)};
     (4,4)*{};(-18,4)*{} **\crv{(4,16) & (-18,16)} ?(1)*\dir{>};
     (-4,-4)*{};(18,-4)*{} **\crv{(-4,-16) & (18,-16)} ?(1)*\dir{<}?(0)*\dir{<};
     (18,-4);(18,12) **\dir{-};
     (12,-4);(12,12) **\dir{-};
     (-18,4);(-18,-12) **\dir{-};(-12,4);(-12,-12) **\dir{-};
     (8,1)*{ \lambda};
     (-10,0)*{};(10,0)*{};
      (4,-4)*{};(12,-4)*{} **\crv{(4,-10) & (12,-10)}?(1)*\dir{<}?(0)*\dir{<};
      (-4,4)*{};(-12,4)*{} **\crv{(-4,10) & (-12,10)}?(1)*\dir{>}?(0)*\dir{>};
      (20,11)*{\scs i};(10,11)*{\scs j};
      (-20,-11)*{\scs i};(-10,-11)*{\scs j};
     }};
  \endxy \label{eq_dummy}
\end{equation}
If $\la i, \lambda \ra \leq -1$ then cap off the right most strands in both diagrams with $m=-\la i, \lambda\ra -1$ dots, simplify using cyclicity for dots and using \eqref{eq_downup_ij-genL} to get
\begin{equation}
   \xy 0;/r.17pc/:
  (0,0)*{\xybox{
    (4,-4)*{};(-4,4)*{} **\crv{(4,-1) & (-4,1)};
    (-4,-4)*{};(4,4)*{} **\crv{(-4,-1) & (4,1)};
    (4,4)*{};(-4,12)*{} **\crv{(4,7) & (-4,9)}?(1)*\dir{>};
    (-4,4)*{};(4,12)*{} **\crv{(-4,7) & (4,9)}?(1)*\dir{>};
    (4,-4)*{};(-18,-4)*{} **\crv{(4,-16) & (-18,-16)} ?(1)*\dir{<}?(0)*\dir{<};
     (-18,-4);(-18,14) **\dir{-};(-12,-4);(-12,12) **\dir{-};
     (12,12);(12,-14) **\dir{-}?(1)*\dir{>};
     (24,8)*{ \lambda+\alpha_j};
     (-10,0)*{};(10,0)*{};
     (-4,-4)*{};(-12,-4)*{} **\crv{(-4,-10) & (-12,-10)}?(1)*\dir{<}?(0)*\dir{<};
     (4,12)*{};(12,12)*{} **\crv{(4,18) & (12,18)};
     (-4,12)*{};(-12,12)*{} **\crv{(-4,18) & (-12,18)};
      (-20,11)*{\scs j};(-10,11)*{\scs i};
     (10,-11)*{\scs j}; (-12,6)*{\bullet}+(-3,1)*{\scs m};
     }};
  \endxy
\;\; = \;\;
b_{ij}(\lambda) \; t_{ji} \;
 \xy 0;/r.17pc/:
  (0,12);(0,-12) **\dir{-}?(.5)*\dir{>}+(2.3,0)*{};
  (14,6)*{\lambda+\alpha_j};(-2,-8)*{\scs j};
    (-12,0)*{\iccbub{m }{i}};
 \endxy
\end{equation}
Since $m=-\la i, \lambda\ra -1$ and degree zero bubbles are multiplication by $1$, simplifying the left-hand side using the KLR relation \eqref{eq_dot_slide_ij-gen} implies $b_{ij}(\lambda)=t_{ij}t_{ji}^{-1}$ for $\la i, \lambda \ra \leq -1$.

If $\la i, \lambda \ra \geq -1$ then glue a downward crossing onto the top of \eqref{eq_cyclic_cross-gen} and close off the left most strand with $m = \la i, \lambda+\alpha_i-\alpha_j\ra -1$ dots. Note that since $\la i,\alpha_j\ra\leq 0$ we have $m \ge 0$. Then arguing as above shows that $b_{ij}(\lambda)=t_{ij}t_{ji}^{-1}$. Thus, in all cases we have $b_{ij}(\lambda)=t_{ij}t_{ji}^{-1}$ for all $i,j \in I$ with $i \neq j$.

% ==============================================================================
%
\section{Applications} \label{sec_apps}
%
% ==============================================================================

% -------------------------------------------------------------------------------
%
\subsection{Cohomology of iterated flag varieties}
%
% -------------------------------------------------------------------------------

Extending results from \cite{CR,BFK}, an action of the 2-category $\Ucat(\mf{sl}_2)$ was constructed on categories of modules over cohomology rings~\cite{Lau1} (as well as equivariant cohomology rings~\cite{Lau2}) of partial flag varieties. This action was generalized to an action of $\Ucat(\mf{sl}_n)$ in \cite{KL3}, where it was used to prove nondegeneracy of the 2-category $\Ucat(\mf{sl}_n)$.

For these 2-representations on partial flag varieties, construction of an action of the KLR algebras was fairly straightforward. However, verification of the extended ${\mf{sl}}_2$ relations was rather complicated and involved comparing by hand certain bimodule maps. Our result \ref{thm:main} provides an immediate simplification, since one only needs to check the $[\E,\F]$ commutation relation for ${\mf{sl}}_2$ and then all the extended ${\mf{sl}}_2$ relations follow formally.

% -------------------------------------------------------------------------------
%
\subsection{Cyclotomic quotients}
%
% -------------------------------------------------------------------------------

For $\Lambda = \sum_{i \in I} \lambda_i \Lambda_i \in X^+$ consider the two-sided ideal $\cal{J}^{\Lambda}$ of $R=R_Q$ generated by elements
\[
\xy 0;/r.17pc/:
  (-3,9);(-3,-9) **\dir{-}?(0)*\dir{<}+(2.3,0)*{};
  %(12,2)*{\lambda};
  (-3,2)*{\bullet};(-7.5,5)*{\scs \lambda_{i_1}};
  (-5,-6)*{\scs i_1};
 \endxy
\xy 0;/r.17pc/:
  (-3,9);(-3,-9) **\dir{-}?(0)*\dir{<}+(2.3,0)*{};
  (-5,-6)*{\scs i_2};
 \endxy  \cdots
\xy 0;/r.17pc/:
  (-3,9);(-3,-9) **\dir{-}?(0)*\dir{<}+(2.3,0)*{};
  (-5,-6)*{\scs i_m};
 \endxy
\]
over all $(i_1, i_2, \dots, i_m) \in I^m$ and $m \geq 0$. Define the cyclotomic quotient of the KLR algebra as the quotient
\begin{equation} \label{eq_def_RLambda}
R^{\Lambda} := R /\cal{J}^{\Lambda}.
\end{equation}

The cyclotomic quotient conjecture from \cite{KL,KL2} states that the category $\Proj(R^{\Lambda})$ of finitely generated graded projective $R^{\Lambda}$-modules categorifies the irreducible highest weight representation of $U_q(\mf{g})$ with dominant integral weight $\Lambda$.  This implies there is an isomorphism of $U_q(\mf{g})$-modules
\[
 V(\Lambda)_{\Z} \cong [\Proj(R^{\Lambda})],
\]
where $V(\Lambda)_{\Z}$ is the integral version of the irreducible highest weight representation for $\U(\mf{g})$ associated to $\Lambda$, and $[\Proj(R^{\Lambda})]$ is the Grothendieck ring of $\Proj(R^{\Lambda})$.

Parts of this conjecture have been proven in~\cite{BS,KR,BK2,LV,KR2,KP}. Recently Kang and Kashiwara~\cite{KasK} prove this conjecture by showing that $i$-induction and $i$-restriction functors induce a functorial action of $U_q(\mf{g})$ on the categories $\Proj(R^{\Lambda})$. They also define natural transformations between these functors giving an action of the KLR algebra~\cite[Section 6]{KasK}.

The category $\Proj(R^{\Lambda})$ decomposes as a direct sum of categories $\Proj( R^{\Lambda}(\alpha))$ for $\alpha=(i_1, i_2, \dots, i_m) \in I^m$ lifting the weight spaces of the irreducible representation $V(\Lambda)$.

\begin{conj}\label{conj:1}
For any $\alpha$ corresponding to a nonzero weight space of $V(\Lambda)$, the center of the ring $R^{\Lambda}(\alpha)$ is zero dimensional in negative degrees and one dimensional in degree zero.
\end{conj}

\begin{cor} \label{cor_cyclotomic}
Let $R^{\Lambda}$ denote the cyclotomic quotient of the KLR algebra associated to a symmetrizable Kac-Moody algebra $\mf{g}$ for any choice of scalars $Q$. Then, assuming Conjecture \ref{conj:1}, the 2-categories $\Proj(R^{\Lambda})$ form a 2-representation of $\Ucat_Q(\mf{g})$.
\end{cor}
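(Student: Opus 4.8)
The plan is to realize $\Proj(R^{\Lambda})$ as a $Q$-strong 2-representation of $\mf{g}$ in the sense of Definition~\ref{def_Qstrong} and then quote Theorem~\ref{thm:main}. Concretely, I would take $\cal{K}$ to have one object for each weight $\l = \Lambda - \alpha$, with associated Hom category $\Proj(R^{\Lambda}(\alpha))$, and let $\E_i$ and $\F_i$ be the $i$-restriction and $i$-induction functors of Kang--Kashiwara~\cite{KasK} (with $\E_i$ raising and $\F_i$ lowering the weight). Since finitely generated projective modules form a Karoubian category, $\cal{K}$ is idempotent complete. These functors are biadjoint up to the grading shifts encoded in~\eqref{eq_defF}, so $\E_i \1_\l$ has both a left and a right adjoint and $\1_\l \F_i$ is correctly identified with $(\E_i \1_\l)_R$ up to shift.

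Most of the conditions in Definition~\ref{def_Qstrong} are then supplied directly by~\cite{KasK}. The integrability condition holds because $V(\Lambda)$ is an integrable highest weight module, so $\Proj(R^{\Lambda}(\alpha)) = 0$ whenever $\Lambda - \alpha$ is not a weight of $V(\Lambda)$, and each $\mathfrak{sl}_2^{(i)}$-string is finite. Kang and Kashiwara construct an action of the KLR algebra associated to $Q$ on the $\E_i$'s, giving condition~(\ref{co:KLR}), and their categorification of the $U_q(\mf{g})$-action on $V(\Lambda)$ furnishes the $\mathfrak{sl}_2$ commutator isomorphisms of condition~(\ref{co:EF}) together with the commutation $\F_j \E_i \1_\l \cong \E_i \F_j \1_\l$ for $i \neq j$ of condition~(\ref{co:EiFj}).

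The single remaining condition is~(\ref{co:hom}), and this is exactly where Conjecture~\ref{conj:1} enters. The point is that the identity 1-morphism $\1_\l$ is the identity functor on $\Proj(R^{\Lambda}(\alpha))$, so its graded endomorphism algebra $\bigoplus_l \Hom(\1_\l, \1_\l \la l \ra)$ is canonically the graded center $Z(R^{\Lambda}(\alpha))$. Conjecture~\ref{conj:1} asserts precisely that, for $\alpha$ corresponding to a nonzero weight space, this center vanishes in negative degrees and is one-dimensional in degree zero, which is the statement of condition~(\ref{co:hom}); the finite-dimensionality of all 2-morphism spaces follows from the finite-dimensionality of the algebras $R^{\Lambda}(\alpha)$.

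With every condition of Definition~\ref{def_Qstrong} verified, Theorem~\ref{thm:main} applies and yields a 2-representation $\Ucat_Q(\mf{g}) \to \cal{K}$, which is the assertion of the corollary (and, since $\cal{K}$ is idempotent complete, extends uniquely to $\UcatD_Q(\mf{g})$). I expect the genuine obstacle to be condition~(\ref{co:hom}): as emphasized in the introduction, in the geometric settings the analogous vanishing of negative-degree endomorphisms of identities is transparent, whereas for cyclotomic quotients it is non-formal and is precisely the assertion one must import through Conjecture~\ref{conj:1} — all the other ingredients are already available from the Kang--Kashiwara categorification.
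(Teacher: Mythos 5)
Your proposal is correct and is precisely the argument the paper intends (the paper leaves it implicit, citing Kang--Kashiwara for the functors, the KLR action, and the commutation isomorphisms, with Conjecture~\ref{conj:1} supplying exactly condition~(\ref{co:hom}) via the identification of $\bigoplus_l \Hom(\1_\l, \1_\l \la l \ra)$ with the graded center of $R^{\Lambda}(\alpha)$, and then invoking Theorem~\ref{thm:main}). Your write-up actually fills in the verification details more explicitly than the paper does, and correctly pinpoints condition~(\ref{co:hom}) as the sole non-formal ingredient.
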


\begin{rem}
A statement similar to Corollary~\ref{cor_cyclotomic} appears in \cite[Theorem 1.7 (version 6)]{Web1}. However, the 2-category used there is not quite $\Ucat_Q(\mf{g})$ whenever the coefficient $t_{ij} \neq 1$.
\end{rem}

% -------------------------------------------------------------------------------
%
\subsection{Derived categories of coherent sheaves}
%
% -------------------------------------------------------------------------------

\subsubsection{Cotangent bundles to Grassmannians}

In \cite{CKL1} the idea of a ``geometric categorical ${\mf{sl}}_2$ action'' was introduced. In \cite{CKL2} we showed that such an action induces a strong (for the standard choice of $Q$) 2-representation of ${\mf{sl}}_2$, as in section \ref{sec:strongcat}. In \cite{CKL1} and \cite{CKL3} a geometric categorical ${\mf{sl}}_2$ action was constructed on derived categories of coherent sheaves on cotangent bundles of Grassmannians $T^\star \mathbb{G}(k,N)$.

If we put all these results together, we obtain a strong 2-representation of  ${\mf{sl}}_2$ where:
\begin{itemize}
\item the objects $\l \in \Z$ are $DCoh(T^\star \mathbb{G}(k,N))$ where $N \in \Z^{\ge 0}$ is fixed, $0 \le k \le N$, $\l = N-2k$ and $DCoh(X)$ denotes the bounded derived category of coherent sheaves on $X$.
\item the 1-morphisms
\begin{eqnarray*}
\E^{(r)} &:& DCoh(T^\star \mathbb{G}(k+r,N)) \rightarrow DCoh(T^\star \mathbb{G}(k,N)) \text{ and } \\
\F^{(r)} &:& DCoh(T^\star \mathbb{G}(k,N)) \rightarrow DCoh(T^\star \mathbb{G}(k+r,N))
\end{eqnarray*}
are induced by certain correspondences
$$T^\star \mathbb{G}(k+r,N) \leftarrow W^r(k,N) \rightarrow T^\star \mathbb{G}(k,N)$$
by pulling back, tensoring with a line bundle and pushing forward (Fourier-Mukai transforms).
\item the action of the KLR algebra (a.k.a. the NilHecke algebra in this case) on the $\E$s is given by the formal construction in \cite{CKL2}.
\end{itemize}

Applying theorem \ref{thm:main} we find that:

\begin{thm}\label{thm:action} The geometric categorical ${\mf{sl}}_2$ action on $\bigoplus_{k=0}^N DCoh(T^\star \mathbb{G}(k,N))$ defined in \cite{CKL1} and \cite{CKL3} induces a 2-representation $\UcatD({\mf{sl}}_2)$ on $\mathcal{K}$ where:
\begin{itemize}
\item the objects of ${\mathcal{K}}$ are the derived categories of coherent sheaves $DCoh(T^\star \mathbb{G}(k,N))$,
\item the 1-morphisms of ${\mathcal{K}}$ are the Fourier-Mukai kernels in $DCoh(T^\star \mathbb{G}(k+r,N) \times T^\star \mathbb{G}(k,N))$,
\item the 2-morphisms of ${\mathcal{K}}$ are morphisms between these Fourier-Mukai kernels.
\end{itemize}
\end{thm}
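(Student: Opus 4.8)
The plan is to obtain Theorem~\ref{thm:action} as an immediate consequence of Theorem~\ref{thm:main}, so the real content is to check that the geometric input assembles into a $Q$-strong 2-representation of $\mf{sl}_2$ in the sense of Definition~\ref{def_Qstrong}, after which the theorem follows by formally extending to the Karoubi envelope. First I would recall from \cite{CKL1,CKL3} that the Fourier--Mukai kernels attached to the correspondences $T^\star \mathbb{G}(k+r,N) \leftarrow W^r(k,N) \rightarrow T^\star \mathbb{G}(k,N)$ define 1-morphisms $\E^{(r)}, \F^{(r)}$ on $\bigoplus_{k=0}^N DCoh(T^\star \mathbb{G}(k,N))$ forming a geometric categorical $\mf{sl}_2$ action, and from \cite{CKL2} that such an action produces the remaining data of Definition~\ref{def_Qstrong}: integrability holds because $\mathbb{G}(k,N)$ is empty (so the object is zero) unless $0 \le k \le N$; each $\E$ admits both adjoints, so the defining formula~\eqref{eq_defF} for $\F$ is available; the NilHecke (a.k.a.\ KLR) action on the $\E$'s is the formal one of \cite{CKL2}; and condition~(\ref{co:EF}) is supplied by the geometric commutator isomorphisms. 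Since $I = \{i\}$ consists of a single vertex, every condition involving $i \neq j$ is vacuous.

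The one condition that is not purely formal, and which I expect to be the main obstacle, is condition~(\ref{co:hom}): the graded algebra $\Hom^\bullet(\1_\l, \1_\l)$ must vanish in negative degrees and be one-dimensional in degree zero. Under the identification of $\1_\l$ with the structure sheaf $\mathcal{O}_\Delta$ of the diagonal in $DCoh(T^\star \mathbb{G}(k,N) \times T^\star \mathbb{G}(k,N))$, these endomorphisms are computed by $\Hom^\bullet_{X \times X}(\mathcal{O}_\Delta, \mathcal{O}_\Delta)$ for $X = T^\star \mathbb{G}(k,N)$, which recovers the internally graded cohomology of $X$. The hard part is the grading bookkeeping: the shift $\la 1 \ra$ combines a cohomological shift with the $\mathbb{C}^\times$-equivariant weight coming from the scaling action on the cotangent fibers, and one must verify these are arranged so that negative internal degree forces negative cohomological degree. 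Because $X$ deformation retracts onto the compact Grassmannian $\mathbb{G}(k,N)$, whose cohomology is concentrated in non-negative even degrees with $H^0 = \k$, this analysis yields both the required vanishing and the one-dimensionality of $\Hom^0(\1_\l, \1_\l)$.

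Having verified Definition~\ref{def_Qstrong}, I would invoke Theorem~\ref{thm:main} to produce a graded additive $\k$-linear 2-functor $\Ucat(\mf{sl}_2) \to \cal{K}$. Finally, since bounded derived categories of coherent sheaves, and in particular the categories of Fourier--Mukai kernels in $DCoh(T^\star \mathbb{G}(k+r,N) \times T^\star \mathbb{G}(k,N))$, are idempotent complete, all the Hom categories $\cal{K}(x,y)$ split idempotents; by the discussion in Section~\ref{sec:categories} the 2-functor then extends uniquely to a 2-representation $\UcatD(\mf{sl}_2) \to \cal{K}$, which is exactly the asserted statement. Everything beyond the grading computation for $\End^\bullet(\1_\l)$ reduces to citing the constructions of \cite{CKL1,CKL2,CKL3} and the formal passage to the Karoubi envelope.
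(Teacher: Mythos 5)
Your skeleton coincides with the paper's actual proof, which is a citation assembly: \cite{CKL1} and \cite{CKL3} construct the geometric categorical $\mf{sl}_2$ action on $\bigoplus_k DCoh(T^\star \mathbb{G}(k,N))$, \cite{CKL2} shows that any such geometric action induces a strong 2-representation in the sense of Definition~\ref{def_Qstrong}, and then Theorem~\ref{thm:main} plus idempotent completeness of the Hom categories gives the extension to $\UcatD(\mf{sl}_2)$. Your handling of integrability, adjoints, the NilHecke action, condition~(\ref{co:EF}), the vacuousness of the $i \neq j$ conditions, and the Karoubi-envelope step all match the paper.

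The one step you treat differently --- verifying condition~(\ref{co:hom}) by hand rather than taking it from the \cite{CKL1,CKL2,CKL3} package --- is the one step that fails as written. $\Hom^\bullet_{X\times X}(\mathcal{O}_\Delta, \mathcal{O}_\Delta)$ is not the cohomology of $X$: it is Hochschild cohomology, which by Hochschild--Kostant--Rosenberg is $\bigoplus_{p+q=\bullet} H^p(X, \Lambda^q T_X)$. For the non-compact variety $X = T^\star\mathbb{G}(k,N)$ this is nothing like $H^\bullet(\mathbb{G}(k,N))$: for instance $H^0(X,\mathcal{O}_X)$ is the infinite-dimensional ring of functions on $T^\star\mathbb{G}(k,N)$, and $H^0(X,\Lambda^2 T_X)$ contains the holomorphic symplectic bivector. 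The topological deformation retraction onto $\mathbb{G}(k,N)$ controls singular cohomology, not coherent Ext, so it cannot yield the required vanishing. What actually makes condition~(\ref{co:hom}) hold is exactly the $\C^\times$-weight bookkeeping you defer: fiberwise functions and fiber-direction polyvector fields carry strictly positive equivariant weight, so only the weight-zero part $H^0(\mathbb{G}(k,N),\mathcal{O}) \cong \k$ survives in degree zero and nothing survives in negative degree. This computation is carried out in the cited CKL papers (it is part of what makes their data a geometric categorical $\mf{sl}_2$ action, and hence part of what \cite{CKL2} delivers), which is why the paper --- and you --- can simply cite it; as an independent argument, your sketch would need to be replaced by the HKR-with-weights analysis above.
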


\subsubsection{Nakajima quiver varieties}

In \cite{CKL4} one extends the notion from \cite{CKL2} to define a ``geometric categorical $\g$ action'' where $\g$ is an arbitrary simply laced Kac-Moody Lie algebra. One also constructs such a geometric categorical $\g$ action on the derived categories of coherent sheaves on Nakajima quiver varieties (this generalizes the action mentioned above on cotangent bundles of Grassmannians).

Although the analogue of the result from \cite{CKL2} has not been established we believe it should be true, at least when the Dynkin diagram of $\g$ is a tree. More precisely,

\begin{conj} If the Dynkin diagram of $\g$ is simply laced and contains no loops, then a geometric categorical $\g$ action induces a strong 2-representation of $\g$.
\end{conj}

The difficult part in proving the conjecture above is showing that the KLR algebra acts on $\E$s. Once we know this the conjecture follows.  Then theorem \ref{thm:main} implies that there is a 2-representation of $\UcatD(\g)$ on derived categories of coherent sheaves on Nakajima quiver varieties of type $\g$ (just like theorem \ref{thm:action} when $\g = {\mf{sl}}_2$).

% ==============================================================================
% REFERENCES
%
\bibliographystyle{plain}
\bibliography{bib_unique}

%
% ==============================================================================

% ==============================================================================
%
\end{document}